\theoremstyle{plain}
\newtheorem{theorem}{Theorem}[section]
\newtheorem{lemma}[theorem]{Lemma}
\newtheorem{proposition}[theorem]{Proposition}
\newtheorem{corollary}[theorem]{Corollary}
\theoremstyle{definition}
\newtheorem{definition}[theorem]{Definition}
\newtheorem{remark}[theorem]{Remark}
\newtheorem{example}[theorem]{Example}
\newtheorem{question}[theorem]{Question}
\newtheorem{convention}[theorem]{Convention}
\newcommand{\bigoperp}{%
  \mathop{\mathpalette\bigp@rp\relax}%
  \displaylimits
}
\newcommand{\bigp@rp}[2]{%
  \vcenter{
    \m@th\hbox{$#1\raisebox{2.4pt}{\begin{tikzpicture}[baseline,scale={\ifx#1\displaystyle1.4\else1\fi}]%
\draw[line width=0.66pt] (0,0) circle (.165);%
\draw[line width=0.66pt] (0,.165) -- (0,-.07);%
\draw[line width=0.66pt] (-.15,-.07) -- (.15,-.07);%
\end{tikzpicture}}$}}%
}
\newcommand{\operp}{%
  \mathop{\mathpalette\op@rp\relax}%
}
\newcommand{\op@rp}[2]{
#1\raisebox{2.4pt}{\begin{tikzpicture}[baseline,scale={\ifx#1\displaystyle.69\else.69\fi}]%
\draw[line width=0.27pt] (0,0) circle (.165);%
\draw[line width=0.27pt] (0,.165) -- (0,-.07);%
\draw[line width=0.27pt] (-.15,-.07) -- (.15,-.07);%
\end{tikzpicture}}
}
\newcommand{\Z}		{\ensuremath{\mathbb{Z}}}
\newcommand{\Q}		{\ensuremath{\mathbb{Q}}}
\newcommand{\R}		{\ensuremath{\mathbb{R}}}
\renewcommand{\C}		{\ensuremath{\mathbb{C}}}
\newcommand{\F}		{\ensuremath{\mathbb{F}}}
\newcommand{\p}		{\ensuremath{\mathfrak{p}}}
\renewcommand{\i}	{\textup{i}}
\newcommand{\ee}	{\textup{e}}
\DeclareMathOperator{\Hom}{Hom}
\DeclareMathOperator{\vol}{vol}
\DeclareMathOperator{\Tr}{Tr}
\DeclareMathOperator{\rk}{rk}
\DeclareMathOperator{\Hdim}{orth\,dim}
\DeclareMathOperator{\X}{X}
\DeclareMathOperator{\vor}{Vor}
\newcommand{\vorc}{\overline{\vor}}
\DeclarePairedDelimiter\Kab{\lvert}{\rvert_K}
\newcommand{\dif}{\mathop{}\!\mathrm{d}}
\newcommand{\tensor}{\otimes}
\newcommand{\emptyinner}{\langle\,\cdot\,,\,\cdot\,\rangle}
\date{\today}
\title[Indecomposable algebraic integers]{Indecomposable algebraic integers}
\author[D.\ M.\ H. van Gent]{D.\ M.\ H. van Gent}
\address{Mathematisch Instituut, Universiteit Leiden, The Netherlands}
\email{d.m.h.van.gent@math.leidenuniv.nl}
\begin{document}

\maketitle

\tableofcontents{}

\section{Introduction}

In number theory, in particular the theory of the geometry of numbers, one equips an order \(R\), and by extension its field of fractions, with an inner product, turning \(R\) into a lattice in \(\R\tensor_\Z R\). 
After normalizing this inner product, we may define it on an algebraic closure \(\overline{\Q}\) of \(\Q\) as
\[\langle\alpha,\beta\rangle=\frac{1}{[\Q(\alpha,\beta):\Q]}\sum_{\sigma:\Q(\alpha,\beta)\to\C}\sigma(\alpha)\cdot\overline{\sigma(\beta)},\]
where the sum ranges over all field embeddings of \(\Q(\alpha,\beta)\) in \(\C\).
We write \(\overline{\Z}\) for the ring of integers of \(\overline\Q\) and we call its elements the algebraic integers.
Although \(\overline{\Z}\) is not of finite rank, we may still meaningfully call it a lattice in a more general sense: We show it is a discrete subgroup of a Hilbert space (Theorem~\ref{thm:Zbar_is_lattice}).
In this document we prove some basic facts about such `Hilbert lattices', but we are mainly concerned with \(\overline{\Z}\).

A {\em decomposition} of an element \(\alpha\) of a Hilbert lattice \(\Lambda\) is a pair \((\beta,\gamma)\in\Lambda^2\) such that \(\alpha=\beta+\gamma\) and \(\langle \beta,\gamma\rangle\geq0\).
If \(\alpha\) is non-zero and has only the trivial decompositions \((\alpha,0)\) and \((0,\alpha)\), then \(\alpha\) is called {\em indecomposable}, or {\em Voronoi-relevant}. 
Indecomposable elements are of interest because they determine the Voronoi polyhedron (Theorem~\ref{thm:minimal_indec_set}), and can be used to detect whether \(\Lambda\) is {\em indecomposable} (Theorem~\ref{thm:eichler}), meaning \(\Lambda\) is non-zero and cannot be written as a non-trivial orthogonal sum of two sublattices. 
Moreover, indecomposable elements can be used in the context of gradings of orders \cite{Lenstra2018}.
We prove the following theorem in Section~\ref{sec:Zbar_indecomposable}.

\begin{theorem}\label{thm:Zbar_indecomposable}
The Hilbert lattice of algebraic integers is indecomposable.
\end{theorem}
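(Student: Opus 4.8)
The plan is to invoke Eichler's criterion (Theorem~\ref{thm:eichler}) and reduce to a connectivity statement: it suffices to show that the graph $\Gamma$ whose vertices are the indecomposable (Voronoi-relevant) elements of $\overline{\Z}$, with $\alpha$ joined to $\beta$ whenever $\langle\alpha,\beta\rangle\neq 0$, is connected. Concretely, in any orthogonal splitting $\overline{\Z}=\Lambda_1\operp\Lambda_2$ each indecomposable element lies in a single summand: otherwise $\alpha=\pi_1\alpha+\pi_2\alpha$, with $\pi_i\alpha\in\Lambda_i\subseteq\overline{\Z}$ and $\langle\pi_1\alpha,\pi_2\alpha\rangle=0$, would be a nontrivial decomposition. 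Since the indecomposable elements generate $\overline{\Z}$ (cf.\ Theorem~\ref{thm:minimal_indec_set}), connectedness of $\Gamma$ then forces one summand to vanish.

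First I would stock $\Gamma$ with vertices and record the edge rule. By AM--GM applied to the absolute values of the conjugates, together with $\lvert N_{\Q(\alpha)/\Q}(\alpha)\rvert\geq 1$, every nonzero $\alpha\in\overline{\Z}$ has $\langle\alpha,\alpha\rangle\geq 1$, with equality iff all conjugates have absolute value $1$, i.e.\ (Kronecker) iff $\alpha$ is a root of unity. Hence every $\alpha$ with $\langle\alpha,\alpha\rangle<2$ is Voronoi-relevant (a nontrivial decomposition would split off two pieces of norm $\geq 1$), and an $\alpha$ of norm exactly $2$ is Voronoi-relevant iff it is not a sum of two orthogonal roots of unity. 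For roots of unity one has $\langle\zeta,\xi\rangle=\tau(\zeta\xi^{-1})$, where $\tau$ denotes the normalized trace, and $\tau$ of a root of unity of order $m$ equals $\mu(m)/\varphi(m)$; so $\zeta\sim\xi$ in $\Gamma$ precisely when $\zeta\xi^{-1}$ has squarefree order.

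Next I would grow the component $\Gamma_1$ of $1$: it contains every Voronoi-relevant $\alpha$ with $\tau(\alpha)\neq 0$ (as $\langle\alpha,1\rangle\neq 0$), hence all roots of unity of squarefree order and everything non-orthogonal to one of those. The work is then to absorb what this misses—roots of unity of non-squarefree order such as $\zeta_4$, all of whose root-of-unity neighbours are orthogonal to $1$, and elements like $2^{1/3}$, which (averaging over $2^{1/3}\mapsto\zeta_3 2^{1/3}$) are orthogonal to every root of unity and to $1$. For this I would: (i) use Galois symmetrization to rewrite ``$\langle\alpha,\beta\rangle\neq 0$'' as an explicit overlap condition on $\Q(\alpha)$ and $\Q(\beta)$ plus non-vanishing of a normalized trace, making neighbourhoods computable; and (ii) for each missing vertex $\alpha$ exhibit a \emph{bridge}—a Voronoi-relevant $\beta$ non-orthogonal both to $\alpha$ and to something already in $\Gamma_1$—using that multiplication by a root of unity is an isometry (so translates $\alpha\zeta$ keep the norm bound) and, where needed, Gauss-sum constructions inside cyclotomic fields.

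I expect step (ii) to be the crux. A bridge $\beta$ must remain Voronoi-relevant \emph{in all of $\overline{\Z}$}, and enlarging the lattice only adds potential decompositions; in particular the naive bridges $\beta=\eta+\delta$ (an element $\eta$ of $\Gamma_1$ plus a correction $\delta$ reaching $\alpha$) tend to fail precisely because $\langle\eta,\delta\rangle=0$, so $\eta+\delta$ is already a nontrivial decomposition. Producing genuinely indecomposable elements that straddle the orthogonality classes, and certifying their indecomposability through the inequalities $\langle\beta,\gamma\rangle<\langle\gamma,\gamma\rangle$ for all lattice $\gamma$ (which the Kronecker bound renders tractable, since only $\gamma$ of small norm can violate them), is where the real effort lies; a prior classification of the indecomposable elements of $\overline{\Z}$, if available from earlier sections, would feed directly into this.
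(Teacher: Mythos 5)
You have set up the right reduction (Eichler's criterion, Theorem~\ref{thm:eichler}: show the graph on \(\textup{indec}(\overline{\Z})\) with edges given by non-orthogonality is connected), but the proof is not complete: the entire content of the theorem lies in your step (ii), which you explicitly leave open ("where the real effort lies"). Your plan — grow the component of \(1\), then case-by-case manufacture "bridges" to the vertices this misses (such as \(\zeta_4\) or \(2^{1/3}\)) via Galois symmetrization and Gauss sums — is not carried out, and it is not clear it would terminate or exhaust all orthogonality classes; as you yourself observe, the naive candidates \(\eta+\delta\) are decomposable for exactly the reason one wants them.

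The paper closes this gap with a single uniform statement (Proposition~\ref{prop:find_non_ortho}): for \emph{every} finite set \(S\subseteq\overline{\Z}\setminus\{0\}\) there is an \(\alpha\in\textup{indec}(\overline{\Z})\) with \(\langle\alpha,\beta\rangle\neq 0\) for all \(\beta\in S\). Applied to \(S=\{\beta,\gamma\}\) for two arbitrary indecomposables, this gives a common neighbour, so the graph is connected with no need to track the component of \(1\) at all. The construction is the idea your proposal is missing: work in \(K=\Q(S)\), note each \(x^{\bot}\cap K\) is a proper \(\Q\)-subspace so their finite union cannot be all of \(K\) (Lemma~\ref{lem:vector_space_union}), pick \(u\in\mathcal{O}_K\) non-orthogonal to every element of \(S\) and scaled so that \(|\sigma(u)|>r\) for a fixed \(1<r<\sqrt2\); then take \(\alpha_n\) a root of the Eisenstein polynomial \(X^n-uX^{n-1}-v\) with \(v\in\p\setminus\p^2\), \(u\in\p\). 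The trace lemma (Lemma~\ref{lem:trace}) gives \(\langle\alpha_n,S\rangle=\langle u,S\rangle/[K(\alpha_n):K]\not\ni 0\), so non-orthogonality to \(S\) is inherited from \(u\) for free, and Rouch\'e's theorem shows \(n-1\) of the conjugates stay in the disk of radius \(r\) while the last one tends to \(\sigma(u)\), whence \(q(\alpha_n)\to r^2<2\); indecomposability then comes for free from Proposition~\ref{prop:q_is_2}, sidestepping the delicate certification of indecomposability you flag as the obstacle. Without this (or an equivalent) construction, your argument establishes only the framework, not the theorem.
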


Our main goal is to determine the indecomposable algebraic integers.
Formally we ask the question: Does there exist an algorithm that, given a non-zero algebraic integer, decides whether it is indecomposable and if not produces a non-trivial decomposition? We give a partial answer.
The following is a result derived from classical capacity theory by T. Chinburg, for which we give a proof in Section~\ref{sec:szego}.
\begin{theorem}\label{thm:large}
If \(\alpha\in\overline{\Z}\) satisfies \(4 < \langle\alpha,\alpha\rangle\), then \(\alpha\) has infinitely many decompositions.
\end{theorem}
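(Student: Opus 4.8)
The plan is to translate ``$\alpha$ has a decomposition'' into an approximation problem and then invoke a Fekete--Szeg\H o (capacity) theorem. Write a decomposition as $\alpha=\beta+(\alpha-\beta)$ with $\beta\in\overline\Z$; since $\alpha-\beta\in\overline\Z$ automatically and $\langle\beta,\alpha-\beta\rangle=\langle\alpha,\beta\rangle-\langle\beta,\beta\rangle$, what we must produce is infinitely many $\beta\in\overline\Z$ with $\langle\alpha,\beta\rangle\ge\langle\beta,\beta\rangle$ --- equivalently, infinitely many algebraic integers in the closed ball of radius $\tfrac12\lvert\alpha\rvert$ about $\tfrac12\alpha$ in the ambient Hilbert space. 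Put $K=\Q(\alpha)$ and $n=[K:\Q]$, so that $\langle\alpha,\alpha\rangle=\tfrac1n\sum_{\rho\colon K\to\C}\lvert\rho(\alpha)\rvert^{2}$. For $\beta\in\overline\Z$ with minimal polynomial $p$ over $K$ of degree $m$ one has $\Q(\alpha,\beta)=K(\beta)$, and grouping the embeddings of $K(\beta)$ by their restriction to $K$ gives
\[\langle\alpha,\beta\rangle-\langle\beta,\beta\rangle=\frac{1}{nm}\sum_{\rho\colon K\to\C}\ \sum_{z\,:\,p^{\rho}(z)=0}\operatorname{Re}\!\bigl((\rho(\alpha)-z)\,\overline z\,\bigr),\]
where $p^{\rho}$ is $p$ with $\rho$ applied to its coefficients and the inner sum runs over the roots of $p^{\rho}$ with multiplicity.

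The engine is the identity $\operatorname{Re}\bigl((a-z)\overline z\bigr)=\tfrac14\lvert a\rvert^{2}-\bigl\lvert z-\tfrac12 a\bigr\rvert^{2}$ for $a,z\in\C$: if $z$ lies in the closed disk of radius $R$ about $\tfrac12 a$ then $\operatorname{Re}\bigl((a-z)\overline z\bigr)\ge\tfrac14\lvert a\rvert^{2}-R^{2}$, a bound that may well be negative. Fix $\varepsilon>0$ with $\langle\alpha,\alpha\rangle>4(1+\varepsilon)^{2}$, possible by hypothesis, and for each embedding $\rho\colon K\to\C$ let
\[E_{\rho}:=\bigl\{\,z\in\C:\ \bigl\lvert z-\tfrac12\rho(\alpha)\bigr\rvert\le 1+\varepsilon\,\bigr\},\]
the closed disk of radius $1+\varepsilon$ centred at $\tfrac12\rho(\alpha)$. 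The family $(E_{\rho})_{\rho}$ is stable under complex conjugation ($\overline{E_{\rho}}=E_{\bar\rho}$, and $E_{\rho}$ is symmetric about $\R$ when $\rho$ is real), and each $E_{\rho}$ has logarithmic capacity equal to its radius $1+\varepsilon>1$.

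Now I would invoke the form of the Fekete--Szeg\H o theorem valid over the number field $K$: given a conjugation-stable family of archimedean target sets, one at each complex embedding of $K$, with capacities large enough (here they are all equal to $1+\varepsilon>1$, comfortably in the range where the construction runs), there are algebraic integers $\beta$ of arbitrarily large degree over $K$ --- hence infinitely many, pairwise distinct --- such that $\tau(\beta)\in E_{\tau|_{K}}$ for every embedding $\tau\colon K(\beta)\to\C$. For such a $\beta$ the roots of each $p^{\rho}$ are precisely the values $\tau(\beta)$ with $\tau|_{K}=\rho$, so every term of the double sum is at least $\tfrac14\lvert\rho(\alpha)\rvert^{2}-(1+\varepsilon)^{2}$, and each inner sum has $m$ terms; hence
\[\langle\beta,\alpha-\beta\rangle\ \ge\ \frac1n\sum_{\rho\colon K\to\C}\Bigl(\tfrac14\lvert\rho(\alpha)\rvert^{2}-(1+\varepsilon)^{2}\Bigr)\ =\ \tfrac14\langle\alpha,\alpha\rangle-(1+\varepsilon)^{2}\ >\ 0 .\]
Thus $(\beta,\alpha-\beta)$ is a decomposition of $\alpha$; it is non-trivial once $[\Q(\beta):\Q]$ exceeds $[\Q(\alpha):\Q]$, which excludes only finitely many of the $\beta$, and distinct $\beta$ give distinct decompositions. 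Hence $\alpha$ has infinitely many decompositions.

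The arithmetic and the geometry above are routine; the substance --- and the step I expect to need the most care --- is the capacity-theoretic input: the precise statement of the Fekete--Szeg\H o theorem over $K$, its normalisation of global capacity, and the verification that the regime ``all target disks of radius $>1$'' meets its hypotheses. It is worth noting why one enlarges the disks to the common radius $1+\varepsilon$ rather than using the termwise-optimal disks $\bigl\{\lvert z-\tfrac12\rho(\alpha)\rvert\le\tfrac12\lvert\rho(\alpha)\rvert\bigr\}$, which would make every term of the double sum nonnegative on its own: those disks have radii with geometric mean $\tfrac12\lvert N_{K/\Q}(\alpha)\rvert^{1/n}$, which can be less than $1$, so the associated capacity would be too small. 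Enlarging all disks to radius $\approx 1$ buys enough capacity at the cost of a few negative terms at the embeddings where $\lvert\rho(\alpha)\rvert$ is small; those are paid for by the embeddings where it is large, and the books balance precisely because $\langle\alpha,\alpha\rangle>4$.
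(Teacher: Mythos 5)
Your proposal is correct and takes essentially the same route as the paper: both reduce the problem to producing infinitely many \(\beta\in\overline{\Z}\) all of whose conjugates lie within a fixed distance \(r\) with \(1<r\le\sqrt{q(\alpha/2)}\) of the corresponding conjugates of \(\alpha/2\), which forces \(q(\beta-\alpha/2)\le q(\alpha/2)\) and hence yields a (non-trivial, for all but finitely many \(\beta\)) decomposition — your per-embedding averaging of \(\mathrm{Re}\bigl((a-z)\overline{z}\bigr)\) is the same computation as the paper's use of \(q\le|\cdot|_\infty^2\) together with Lemma~\ref{lem:eq_dec_def}. The only difference is that you invoke the Fekete--Szeg\H{o} capacity theorem over \(K\) as a black box for the existence of such \(\beta\), whereas the paper proves the required Szeg\H{o}-type statement itself (Theorems~\ref{thm:szego_poly} and~\ref{thm:szego}, via a rounding-function construction and roots of \(g^n=1\)), which is what makes its argument constructive and supports the later algorithmic counterpart.
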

\noindent The proof of this theorem is constructive and we are able to derive an algorithm to compute arbitrarily many non-trivial decompositions in this case.
A consequence of Theorem~\ref{thm:large} is that for every \(n\in\Z_{>0}\) there exist only finitely many indecomposable algebraic integers of degree \(n\). 
We have effective upper and lower bounds, which are proven in Section~\ref{sec:analysis}.
\begin{theorem}\label{thm:counting_asymptotic}
There are least \(\exp( \frac{1}{4}(\log 2) n^2 + O(n\log n))\) and at most \(\exp(\frac{1}{2}(1+\log 2)n^2+O(n\log n))\) indecomposable algebraic integers of degree up to \(n\).
\end{theorem}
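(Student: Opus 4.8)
The plan is to establish the two inequalities independently; in both cases one converts indecomposability into a bound on the conjugates by way of Theorem~\ref{thm:large} and then counts minimal polynomials.

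\emph{Upper bound.} If $\alpha$ is indecomposable then $\langle\alpha,\alpha\rangle\le 4$, since otherwise Theorem~\ref{thm:large} produces infinitely many decompositions and in particular a non-trivial one. Let $d=[\Q(\alpha):\Q]$ and let $\alpha_1,\dots,\alpha_d$ be the conjugates of $\alpha$; then $\sum_j|\alpha_j|^2=d\langle\alpha,\alpha\rangle\le 4d$, so $\sum_j|\alpha_j|\le\sqrt{d\sum_j|\alpha_j|^2}\le 2d$ by Cauchy--Schwarz. The coefficients of the minimal polynomial of $\alpha$ are, up to sign, the elementary symmetric functions $e_k=e_k(\alpha_1,\dots,\alpha_d)$, and by the triangle inequality together with Maclaurin's inequality applied to the non-negative reals $|\alpha_j|$ one gets $|e_k|\le e_k(|\alpha_1|,\dots,|\alpha_d|)\le\binom dk\bigl(\tfrac1d\textstyle\sum_j|\alpha_j|\bigr)^k\le\binom dk 2^k$. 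Hence every indecomposable $\alpha$ of degree $d$ has minimal polynomial among the $\le\prod_{k=1}^d(2\binom dk 2^k+1)$ monic integer polynomials of degree $d$ whose $k$-th coefficient has absolute value at most $\binom dk 2^k$. Taking logarithms and using the Stirling estimates $\sum_{k=0}^d\log\binom dk=\tfrac12 d^2+O(d\log d)$ and $\sum_{k=1}^d k\log 2=\tfrac12 d^2\log 2+O(d)$ gives a bound of $\exp\bigl(\tfrac12(1+\log2)d^2+O(d\log d)\bigr)$; multiplying by $d$ for the choice of root and summing over $d\le n$ (the exponent being increasing in $d$) yields the stated upper bound.

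\emph{Lower bound.} First I record the sufficient criterion: if $0\ne\alpha\in\overline\Z$ and $\langle\alpha,\alpha\rangle<2$ then $\alpha$ is indecomposable, because a non-trivial decomposition $\alpha=\beta+\gamma$ with $\langle\beta,\gamma\rangle\ge0$ would force $\langle\alpha,\alpha\rangle\ge\langle\beta,\beta\rangle+\langle\gamma,\gamma\rangle\ge2$, using that every non-zero algebraic integer $\delta$ satisfies $\langle\delta,\delta\rangle\ge|N(\delta)|^{2/\deg\delta}\ge1$ by AM--GM and $N(\delta)\in\Z\setminus\{0\}$. It thus suffices to produce many algebraic integers all of whose conjugates have absolute value $<\sqrt2$. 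For this I would take monic $f=x^n+\sum_{k=0}^{n-1}a_kx^k\in\Z[x]$ that are Eisenstein at $2$ and satisfy $\sum_{k=0}^{n-1}2^{k/2}|a_k|<2^{n/2}$: on $|z|=\sqrt2$ the latter gives $|f(z)-z^n|<|z^n|$, so Rouch\'e places all $n$ roots of $f$ in $\{|z|<\sqrt2\}$, and since $f$ is irreducible its $n$ distinct roots are algebraic integers of degree exactly $n$ with $\langle\alpha,\alpha\rangle<2$, hence indecomposable; distinct such $f$ give disjoint root sets. Finally, the admissible $(a_k)$ are a subset of the integer points of the cross-polytope $\{\sum_k 2^{k/2}|a_k|\le 2^{n/2}\}$, of volume $\frac{2^n}{n!}2^{n(n+1)/4}=\exp\bigl(\tfrac14(\log2)n^2+O(n\log n)\bigr)$; counting lattice points in an explicit sub-box such as $|a_k|\le 2^{(n-k)/2}/(2n)$ — and checking that imposing the Eisenstein condition costs at most a factor $2^{O(n)}$ — shows there are at least $\exp\bigl(\tfrac14(\log2)n^2-O(n\log n)\bigr)$ such $f$, whence at least that many indecomposable algebraic integers of degree $n$.

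The constants $\tfrac14\log2<\tfrac12(1+\log2)$ do not agree, so the argument proves no asymptotic, and I expect the real obstacle to closing the gap to lie on the upper-bound side: the box $|e_k|\le\binom dk2^k$ is far from the true shape of the set of coefficient vectors of polynomials whose roots satisfy $\sum_j|\alpha_j|^2\le 4d$, and determining the correct exponential order would amount to a weighted equilibrium (potential-theoretic) count of integer polynomials with prescribed limiting root distribution subject to the second-moment constraint. The routine points to check are Maclaurin's inequality in the degenerate zero-root cases, that Eisenstein polynomials remain abundant inside the chosen sub-box, and that every $O(\cdot)$ error incurred along the way is indeed $O(n\log n)$.
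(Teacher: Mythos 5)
Your proposal is correct and follows essentially the same route as the paper: the upper bound via Theorem~\ref{thm:large} giving \(q(\alpha)\le 4\), then Maclaurin's inequalities to bound the minimal-polynomial coefficients by \(\binom{d}{k}2^k\) and a Stirling count (the paper pads by \(X^{m-n}\) instead of summing over degrees, a cosmetic difference); and the lower bound via Eisenstein-at-\(2\) polynomials with exponentially weighted small coefficients, Rouch\'e confining all roots inside \(|z|<\sqrt2\) so that \(q(\alpha)<2\) forces indecomposability, exactly as in Proposition~\ref{prop:lower_bound}. The details you flag as routine (strictness for large \(n\), the cost of the Eisenstein restriction, the \(O(n\log n)\) errors) are handled the same way in the paper and pose no obstacle.
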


Our main result is the following, which we prove in Section~\ref{sec:fekete}.

\begin{theorem}\label{thm:small}
If \(\alpha\in\overline{\Z}\) satisfies \( \langle\alpha,\alpha\rangle < 2\sqrt{\ee} \approx 3.297\), then \(\alpha\) has only finitely many decompositions.
\end{theorem}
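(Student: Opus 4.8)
The plan is to argue by contradiction, reducing a decomposition to a ball condition and then running a potential‑theoretic compactness argument in the spirit of Fekete's theorem and of Schur's treatment of the trace problem. First I would record the reformulation: writing $\lVert\cdot\rVert=\langle\cdot,\cdot\rangle^{1/2}$ for the norm on the ambient Hilbert space, a pair $(\beta,\gamma)$ with $\alpha=\beta+\gamma$ is a decomposition precisely when $\langle\beta,\beta\rangle\le\langle\alpha,\beta\rangle$, equivalently $\lVert\beta-\tfrac12\alpha\rVert\le\tfrac12\lVert\alpha\rVert$, and symmetrically for $\gamma$; in particular every decomposition satisfies $\langle\beta,\beta\rangle+\langle\gamma,\gamma\rangle\le\langle\alpha,\alpha\rangle<2\sqrt{\ee}$, so $\beta$ and $\gamma$ have norm square less than $2\sqrt{\ee}$. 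Since an algebraic integer is determined by its minimal polynomial and the elementary symmetric functions of its conjugates are bounded in terms of its degree and its norm, for each $d$ there are only finitely many algebraic integers of degree $d$ with bounded norm; hence if $\alpha$ had infinitely many decompositions $(\beta_k,\gamma_k)$, then after passing to a subsequence the $\beta_k$ are pairwise distinct with $\deg\beta_k\to\infty$, and then $\deg\gamma_k\to\infty$ as well, and we may discard the finitely many $k$ for which $\beta_k$ or $\gamma_k$ is a conjugate of $\alpha$.

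Next I would pass to limiting distributions of conjugates. Put $K_k=\Q(\alpha,\beta_k)$ and let $\mu_k$ be the probability measure on $\C^2$ equidistributed over $(\sigma\alpha,\sigma\beta_k)$ for $\sigma\colon K_k\to\C$; its first marginal is the fixed atomic measure $\nu$ supported on the conjugates $\alpha_1,\dots,\alpha_n$ of $\alpha$, and $\int\lvert w\rvert^2\dif\mu_k=\langle\beta_k,\beta_k\rangle$, $\int\lvert z-w\rvert^2\dif\mu_k=\langle\gamma_k,\gamma_k\rangle$ are uniformly bounded. By tightness, pass to a weak limit $\mu_k\to\mu$ with marginals $\nu$, $\rho$ (the limiting conjugate distribution of $\beta$) and $\rho'$ (the pushforward under $(z,w)\mapsto z-w$, that of $\gamma$). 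Since the first coordinate ranges in the fixed finite set $\{\alpha_j\}$, the decomposition inequalities $\langle\beta_k,\beta_k\rangle\le\langle\alpha,\beta_k\rangle$ and $\langle\gamma_k,\gamma_k\rangle\le\langle\alpha,\gamma_k\rangle$ pass to the limit and add up to
\[
\int\lvert w\rvert^2\dif\mu\;+\;\int\lvert z-w\rvert^2\dif\mu\;\le\;\langle\alpha,\alpha\rangle\;<\;2\sqrt{\ee}.
\]

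The arithmetic input is that discriminants, resultants and norms are nonzero rational integers. For each $k$ the discriminant of the minimal polynomial of $\beta_k$, the resultants of that polynomial with the minimal polynomials of $\alpha$ and (when distinct) of $\gamma_k$, and the norm of $\beta_k$, are nonzero integers, hence of absolute value at least $1$; likewise with $\gamma_k$ in place of $\beta_k$. Taking logarithms, normalizing by the appropriate products of degrees and letting $k\to\infty$ — using upper semicontinuity of $\log\lvert z-w\rvert$ and a truncation to control the noncompactness coming from the second‑moment bounds — yields a package of potential‑theoretic inequalities for the limit data: $\rho$ and $\rho'$ have nonpositive logarithmic energy (so their supports have capacity at least $1$); $\int\log\lvert Q(w)\rvert\dif\rho(w)\ge0$ and the analogue for $\rho'$, for every monic $Q\in\Z[x]$ coprime to the minimal polynomials in play — in particular $Q(x)=x$ and $Q$ the minimal polynomial of $\alpha$; and the mixed bound $\iint\log\lvert w-w'\rvert\dif\rho(w)\dif\rho'(w')\ge0$.

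The conclusion would then follow from a variational estimate: any $\mu$ on $\C^2$ with first marginal $\nu$ satisfying these capacity/positivity constraints together with the decomposition inequality $\int\lvert w\rvert^2\dif\mu\le\operatorname{Re}\int z\bar w\dif\mu$ and its $\gamma$‑analogue must satisfy $\int\lvert w\rvert^2\dif\mu+\int\lvert z-w\rvert^2\dif\mu\ge2\sqrt{\ee}$, contradicting the display above. I expect this to be the main obstacle. It is a constrained logarithmic equilibrium problem, and the constant $\sqrt{\ee}$ is exactly (half of) the one in Schur's classical lower bound for the trace of a totally positive algebraic integer; the delicate points are extracting the sharp constant from the optimization and genuinely using the interaction with the fixed $\nu$ and the coupling between $\rho$ and $\rho'$. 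These cannot be omitted: without them the estimate is false, since uniform measure on a disc of radius $\sqrt{\ee}$ has second moment $\ee/2<\sqrt{\ee}$ while already satisfying all the capacity and integer‑potential positivity constraints. Some further routine care is needed for measures with atoms and for possible escape of mass in the weak limits.
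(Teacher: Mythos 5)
Your setup (the reformulation \(\|\beta-\alpha/2\|\le\|\alpha/2\|\), finiteness in each bounded degree, passage to weak limits of conjugate distributions, and the integrality inputs from discriminants, resultants and norms) is sound, but the proof stops exactly where the theorem lives: the ``variational estimate'' asserting that every coupled limit measure satisfying your constraints has \(\int|w|^2\dif\mu+\int|z-w|^2\dif\mu\ge 2\sqrt{\ee}\) is never proved, and you yourself flag it as the main obstacle. This is a genuine gap, not a routine verification, because the constant \(2\sqrt{\ee}\) \emph{is} the content of Theorem~\ref{thm:small} and it cannot come from the constraints on \(\rho\) and \(\rho'\) taken separately: the uniform measure on the unit circle (the limiting conjugate distribution of high-order roots of unity) has \(\iint\log|w-w'|\dif\rho\,\dif\rho=0\), \(\int\log|Q(w)|\dif\rho\ge 0\) for every monic \(Q\in\Z[x]\) without roots on the circle, and second moment \(1\); so without genuinely exploiting the coupling with \(\nu\) and the decomposition inequality, this route cannot get past the trivial threshold \(q(\alpha)<2\) already covered by Proposition~\ref{prop:q_is_2}. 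Exploiting the coupling is precisely the hard part: the smallness constraint concerns the shifted variable \(w-z/2\), i.e.\ the conjugates of \(\beta_k-\alpha/2\), which is \emph{not} an algebraic integer, so the discriminant/energy constraint cannot be applied to it directly, and transferring it through the joint measure (with cross terms between conjugates of \(\beta_k\) lying over different conjugates of \(\alpha\), plus possible escape of mass and atoms) is an unproven constrained equilibrium problem, not a citation to Schur or Fekete.

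For comparison, the paper obtains the constant by a different mechanism that sidesteps this optimization entirely: Minkowski's convex body theorem is applied to the lattice \(R[X]_n\) inside \(K_\R[Y]_n\) with \(Y=X-\alpha/2\) and the convex body of polynomials satisfying \(|\sigma(f)(z)|\le\exp\big(a(|z|^2-r^2)\big)\) (Theorem~\ref{thm:exp_bounded}, Proposition~\ref{prop:agood_exists}); the resulting nonzero integral polynomial annihilates every decomposition by a norm argument (Proposition~\ref{prop:rgood_implies_dec}), and \(2\sqrt{\ee}\) emerges from the explicit volume asymptotics of Proposition~\ref{prop:unscaled_volume} --- in effect a Fekete-type construction for the weight \(\exp(|z|^2)\) recentred at \(\alpha/2\), which is legitimate there because the auxiliary polynomial has coefficients in the fixed field \(K\). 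Incidentally, this weighted-equilibrium picture is consistent with your guess (the threshold \(q(\alpha/2)<\tfrac12\sqrt{\ee}\) matches the minimal second moment of a planar measure with nonnegative \(\iint\log|w-w'|\), attained by the uniform measure on a disc of radius \(\ee^{1/4}\)), but to turn your compactness program into a proof you would have to establish exactly that sharp coupled inequality, which as written is missing.
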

\noindent The proof is inspired by capacity theory. Although the proof is not constructive, we are able to construct an algorithm to compute all decompositions which is provably correct for \(\langle \alpha,\alpha\rangle < 2\sqrt{\ee}\).
We use it in Section~\ref{sec:enum3} to compute decompositions for some algebraic integers of degree 3. We derive the following numerical results.

\begin{theorem}\label{thm:counting}
There are exactly \(2\) indecomposable algebraic integers of degree \(1\), there are exactly \(14\) of degree \(2\), and there are at least \(354\) and at most \(588\) of degree \(3\).
\end{theorem}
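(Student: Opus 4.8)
The plan is to treat the three degrees separately, in each case using Theorem~\ref{thm:large} to cut the problem down to a finite verification. For degree $1$, the algebraic integers of degree $1$ are the nonzero rational integers, and on $\Z$ the form $\langle\cdot,\cdot\rangle$ is ordinary multiplication. Applying the arithmetic--geometric mean inequality to the squared absolute values of the conjugates of a nonzero $\xi\in\overline\Z$ gives $\langle\xi,\xi\rangle\ge\abs{N(\xi)}^{2/\deg\xi}\ge 1$, since the field norm $N(\xi)$ is a nonzero rational integer. Hence a nontrivial decomposition $n=\beta+\gamma$ would force $n^2=\langle n,n\rangle\ge\langle\beta,\beta\rangle+\langle\gamma,\gamma\rangle\ge 2$, so $\abs n\ge 2$; conversely $n=1+(n-1)$ for $n\ge 2$, and $n=(-1)+(n+1)$ for $n\le-2$, are nontrivial decompositions. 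Thus $\pm 1$ are exactly the indecomposable algebraic integers of degree $1$.

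For $n\in\{2,3\}$, Theorem~\ref{thm:large} shows every indecomposable $\alpha$ of degree $n$ satisfies $\langle\alpha,\alpha\rangle\le 4$, i.e.\ $\sum_i\abs{\sigma_i(\alpha)}^2\le 4n$ for the conjugates $\sigma_i(\alpha)$. By Cauchy--Schwarz and the arithmetic--geometric mean inequality this bounds the coefficients of the minimal polynomial of $\alpha$ in terms of $n$ alone, so for each $n$ I would enumerate the finitely many candidate minimal polynomials, discard the reducible ones and those whose roots violate $\langle\alpha,\alpha\rangle\le 4$, and be left with an explicit finite list of candidate algebraic integers on which indecomposability still has to be decided.

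In degree $2$ no general algorithm is needed. If $\alpha$ has degree $2$ and $\alpha=\beta+\gamma$, a short computation inside the compositum of $\Q(\beta)$ and $\Q(\gamma)$ (using a $\Q$-basis $\{1,\sqrt{d_1},\sqrt{d_2},\sqrt{d_1d_2}\}$ when the two fields differ) shows $\beta,\gamma\in K:=\Q(\alpha)$, since otherwise $\alpha$ would be rational. So every decomposition lives in the discrete lattice $\mathcal O_K$ of rank at most $2$, where $\langle\beta,\gamma\rangle\ge 0$ gives $\langle\beta,\beta\rangle,\langle\gamma,\gamma\rangle\le\langle\alpha,\alpha\rangle\le 4$. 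Since $\langle\alpha,\alpha\rangle\le 4$ also confines the squarefree radicand of $K$ to a finite set, for each candidate $\alpha$ the set of $\beta\in\mathcal O_K$ with $\langle\beta,\beta\rangle\le 4$ is finite and explicitly listable; testing the sign of $\langle\beta,\alpha-\beta\rangle$ over this set settles every degree-$2$ candidate, and carrying out the enumeration yields the count $14$.

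In degree $3$, for candidates with $\langle\alpha,\alpha\rangle<2\sqrt\ee$ I would run the algorithm of Section~\ref{sec:fekete}: by Theorem~\ref{thm:small} it terminates and returns the complete finite set of decompositions, hence a definitive verdict, and the indecomposable ones among these give the lower bound $354$. For candidates with $2\sqrt\ee\le\langle\alpha,\alpha\rangle\le 4$ there is no finiteness guarantee, so the same procedure is used only to \emph{search} for a decomposition; those for which one is found are decomposable and discarded, and the $354$ certified-indecomposable candidates together with those that survive the search number at most $588$, giving the upper bound. The main obstacle is precisely the interval $[2\sqrt\ee,4]$: there we have neither a termination guarantee nor a certificate of indecomposability, so for degree $3$ we can only bracket the answer; the remainder is the sizeable but routine finite computation performed in Section~\ref{sec:enum3}.
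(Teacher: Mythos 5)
Your degree-1 argument is fine, and your degree-3 plan is in substance the computation the paper performs in Section~\ref{sec:enum3} (certify candidates with small square-norm via the provably correct algorithm, search for decompositions of the rest, and read off the bounds). The degree-2 step, however, has a genuine gap: the claim that any decomposition \(\alpha=\beta+\gamma\) of a degree-2 integer forces \(\beta,\gamma\in\Q(\alpha)\) is false, and no compositum computation can rescue it, since a sum of two degree-4 integers can perfectly well have degree 2. Concretely, with \(\zeta_8\) a primitive eighth root of unity, \(\zeta_8+\zeta_8^{-1}=\sqrt2\) and \(\langle\zeta_8,\zeta_8^{-1}\rangle=0\) by Proposition~\ref{prop:orthogonal_roots_of_unity}, so \((\zeta_8,\zeta_8^{-1})\) is a non-trivial decomposition of \(\sqrt2\) lying outside \(\Q(\sqrt2)\); similarly \(\sqrt3=\zeta_{12}+\zeta_{12}^{-1}\). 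On the other hand, inside \(\Z[\sqrt2]\) the element \(\sqrt{2}\) has only trivial decompositions (for \(\beta=a+b\sqrt2\) one computes \(\langle\beta,\sqrt2-\beta\rangle=-a^2+2b(1-b)<0\) unless \(\beta\in\{0,\sqrt2\}\)). Since \(q(\sqrt2)=2\le4\), \(\sqrt2\) survives your candidate enumeration, and your search restricted to \(\mathcal{O}_{\Q(\alpha)}\) would wrongly certify it (and \(\sqrt3\), among others) as indecomposable, inflating the degree-2 count beyond \(14\) and contradicting the paper's proposition that any multiple of \(\sqrt2\) or \(\sqrt3\) is decomposable.

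This is exactly why the paper's proof of the degree-2 count (Theorem~\ref{thm:deg2_indec}) does not stay inside \(\Q(\alpha)\): the decompositions are produced in quadratic extensions of \(\Q(\alpha)\) via Lemma~\ref{lem:help_deg_2_dec}, taking \(\beta\) to be a root of \(X^2-\alpha X+1\) (or a rotated variant) and verifying \(q(\beta-\alpha/2)\le q(\alpha/2)\) analytically, while the four listed elements are shown indecomposable by Proposition~\ref{prop:q_is_2} together with the ad hoc argument of Example~\ref{ex:proof-7}; the paper even remarks after the theorem that all decompositions it produces live in degree-2 extensions of \(\Q(\alpha)\). To repair your argument you would need, for each surviving degree-2 candidate, either such an explicit decomposition in a larger field or a certificate in the spirit of Sections~\ref{sec:exp_bounded}--\ref{sec:fekete} that bounds where decompositions can live; a finite search confined to \(\mathcal{O}_{\Q(\alpha)}\) can refute indecomposability but can never establish it.
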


Theorem~\ref{thm:large} and Theorem~\ref{thm:small} combined still leave a gap between \(2\sqrt{\ee}\) and \(4\). It is still an open problem what happens in this case. 
We would also like to know what the supremum of \(\langle\alpha,\alpha\rangle\) is, where \(\alpha\) ranges over the indecomposable algebraic integers.
Theorem~\ref{thm:large} gives an upper bound of \(4\).
We prove the following lower bound in Section~\ref{sec:indecomposable_integers}.

\begin{theorem}\label{thm:largest_indec}
It holds that \(2\sqrt{2}\leq\sup\{ \langle\alpha,\alpha\rangle\,|\,\textup{\(\alpha\in\overline{\Z}\) is indecomposable} \}\).
\end{theorem}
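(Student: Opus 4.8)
The plan is to produce a single algebraic integer of norm exactly $2\sqrt{2}$ and to prove that it is indecomposable; the bound follows at once. Take $\alpha$ to be a root of $x^{4}+8$. This polynomial is irreducible (it is positive on $\R$, so has no rational root, and it admits no factorisation into two rational quadratics, since $-32$ is not a square and $q^{2}=8$ has no rational solution), and its four roots are $2^{1/4}(\pm1\pm\i)$, each of absolute value $2^{3/4}$. Hence $\langle\alpha,\alpha\rangle=\tfrac14\sum_{\sigma}\abs{\sigma(\alpha)}^{2}=2\sqrt{2}$, and — this is the decisive feature — since $\abs{\sigma(\alpha)}^{2}=2\sqrt{2}$ is independent of the embedding $\sigma$, multiplication by $\alpha$ scales the inner product on $\R\tensor_\Z\overline{\Q}$ by the constant $2\sqrt{2}$, i.e.\ $\langle\alpha g,\alpha h\rangle=2\sqrt{2}\,\langle g,h\rangle$. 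I would first record the elementary reformulation of indecomposability: writing a decomposition of $\alpha$ as $(\beta,\alpha-\beta)$ and replacing $\beta$ by $\alpha-\beta$ if necessary to make $\beta$ the shorter summand, a nonzero $\alpha\in\overline{\Z}$ is indecomposable if and only if every $\beta\in\overline{\Z}$ with $0<\langle\beta,\beta\rangle\le\tfrac12\langle\alpha,\alpha\rangle$ satisfies $\langle\beta,\alpha\rangle<\langle\beta,\beta\rangle$. Setting $\beta=\alpha f$ and using the scaling property, this becomes: the only $f\in\overline{\Q}$ with $\alpha f\in\overline{\Z}$, $0<\langle f,f\rangle\le\tfrac12$ and $\langle f,1\rangle\ge\langle f,f\rangle$ are $f=0$ and $f=1$; equivalently, the fractional set $\alpha^{-1}\overline{\Z}$ meets the closed ball of radius $\tfrac12$ about $\tfrac12$ only in $\{0,1\}$.

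The second step disposes of the roots of unity. Every nonzero algebraic integer $\beta$ satisfies $\langle\beta,\beta\rangle\ge1$, with equality exactly when $\beta$ is a root of unity: apply the arithmetic–geometric mean inequality to $\prod_{j}\abs{\beta_{j}}^{2}=\abs{N(\beta)}^{2}\ge1$ and invoke Kronecker's theorem for the equality case. Now $\Q(\alpha)$ has a unique quadratic subfield, namely $\Q(\sqrt{-2})=\Q(\alpha^{2}/2)$, and the nontrivial automorphism of $\Q(\alpha)/\Q(\sqrt{-2})$ negates $\alpha$, because $\alpha$ satisfies $x^{2}-2\sqrt{-2}$ over $\Q(\sqrt{-2})$; in particular $\Tr_{\Q(\alpha)/\Q(\sqrt{-2})}(\alpha)=0$. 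Since $\Q(\zeta)\cap\Q(\alpha)\subseteq\Q(\sqrt{-2})$ for every root of unity $\zeta$ (a subfield of $\Q(\alpha)$ lying in an abelian field must be $\Q$ or $\Q(\sqrt{-2})$), the embeddings of $\Q(\zeta,\alpha)$ over $\Q(\zeta,\sqrt{-2})$ come in pairs $\sigma,\sigma'$ with $\sigma(\zeta)=\sigma'(\zeta)$ and $\sigma'(\alpha)=-\sigma(\alpha)$, and summing $\sigma(\zeta)\overline{\sigma(\alpha)}$ over each such pair gives $0$. Hence $\langle\zeta,\alpha\rangle=0$ for every root of unity $\zeta$, so no $\beta$ with $\langle\beta,\beta\rangle=1$ is a threat, and it remains only to rule out $\beta\in\overline{\Z}$ of degree at least $2$ with $1<\langle\beta,\beta\rangle\le\sqrt{2}$ and $\langle\beta,\alpha\rangle\ge\langle\beta,\beta\rangle$.

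For the final step I would pass to $f=\beta/\alpha$, so the conditions read $\tfrac1{2\sqrt{2}}<\langle f,f\rangle\le\tfrac12$ (in particular $\langle f,f\rangle<1$, so $f$ is not an algebraic integer), $\langle f,1\rangle\ge\langle f,f\rangle$, and $\alpha f\in\overline{\Z}$ — the last forcing the denominator ideal of $f$ to divide a power of $(\alpha)$, an ideal of norm $8$, so that $\abs{N_{\Q(f)/\Q}(f)}$ is a $2$-power fraction that is bounded below by $2^{-O(\deg f)}$ while also satisfying $\abs{N_{\Q(f)/\Q}(f)}\le\langle f,f\rangle^{[\Q(f):\Q]/2}$. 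Thus such an $f$ would be a non-integral algebraic number whose conjugates are confined to a disk of radius just above $1$ and whose arithmetic is tightly constrained; the goal is to contradict its existence, combining these inequalities with a Fekete/capacity-type estimate for the compact set carrying the conjugates of $f$, in the spirit of the proof of Theorem~\ref{thm:small}. I expect this last step — excluding every non-integral $f\in\alpha^{-1}\overline{\Z}$ from the small ball about $\tfrac12$, uniformly over the degree of $f$ — to be the main obstacle; everything preceding it is routine. Granting it, $\alpha$ is indecomposable with $\langle\alpha,\alpha\rangle=2\sqrt{2}$, whence the supremum is at least $2\sqrt{2}$. Should this particular $\alpha$ turn out to be decomposable, the same strategy applied to the roots of $x^{4}+bx^{2}+8$ for $\abs{b}\le5$, or to higher-degree algebraic integers all of whose conjugates lie on the circle $\abs{z}=2^{3/4}$, furnishes further candidates of norm exactly $2\sqrt{2}$; and as a fallback one may instead aim only for a sequence of indecomposable algebraic integers whose norms tend to $2\sqrt{2}$ from below.
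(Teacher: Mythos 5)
There is a genuine gap, and in fact your chosen candidate is decomposable, so the plan fails at the first step. Let \(\alpha\) be a root of \(x^4+8\), say \(\alpha=2^{1/4}(1+\i)\), and set \(\delta=\alpha/\sqrt{2}\). Then \(\delta^4=\alpha^4/4=-2\), so \(\delta\) is an algebraic integer with all conjugates of absolute value \(2^{1/4}\), i.e.\ \(\delta\) is uniform with \(q(\delta)=\sqrt{2}\). Writing \(\sqrt{2}=\zeta+\zeta^{-1}\) with \(\zeta\) a primitive \(8\)-th root of unity, we get \(\alpha=\zeta\delta+\zeta^{-1}\delta\), where both summands are non-zero algebraic integers with \(q(\zeta^{\pm1}\delta)=q(\delta)=\sqrt2\) (Lemma~\ref{lem:uniform}); since \(q(\zeta\delta)+q(\zeta^{-1}\delta)=2\sqrt2=q(\alpha)\), this is a non-trivial orthogonal decomposition of \(\alpha\). (This is exactly the paper's observation that \(\sqrt2\mid\alpha\) forces decomposability.) In particular \(\beta=\zeta\delta\) has degree \(>1\), satisfies \(1<q(\beta)=\sqrt2\) and \(\langle\beta,\alpha\rangle=q(\beta)\), so the "final step" you defer — excluding all such \(\beta\) by a capacity-type argument — is not merely the main obstacle: it is false for this \(\alpha\). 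Your reduction of indecomposability and your orthogonality computation against roots of unity are fine, but they do not touch the decompositions that actually exist.

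The paper's proof takes a different and essentially arithmetic route, and it only establishes the supremum: it does not produce (and leaves open whether there exists) an indecomposable of square-norm exactly \(2\sqrt2\). For each rational \(r\in[1,3/2)\) it constructs an indecomposable uniform \(\alpha\) with \(q(\alpha)=2^r\), using the criterion of Lemma~\ref{lem:2sqrt2_help}: a uniform \(\alpha\) with \(\alpha\mid 2\) but \(\sqrt2\nmid\alpha\) is indecomposable (the proof uses \(q(\alpha-2\beta)\leq q(\alpha)\), multiplicativity for uniform elements, Kronecker's theorem, and divisibility by \(1-\zeta\)). The elements are built as \(\overline{\beta}\gamma^{a-b}\) with \(\beta=(1+\sqrt{-7})/2\) and \(\gamma^b=\beta\), and the condition \(\sqrt2\nmid\alpha\) is verified by a \(2\)-adic valuation computation, which is precisely why \(r=3/2\) is excluded. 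Note that your candidate sits exactly on the wrong side of this criterion (\(\sqrt2\mid\alpha\)), and your fallback families of norm exactly \(2\sqrt2\) would face the same issue or, at best, the open attainment question; your last fallback — a sequence of indecomposables with norms tending to \(2\sqrt2\) from below — is the right idea and is what the paper does, but your proposal gives no construction for it.
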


A {\em grading} of a ring \(R\) is a pair \((G,\{R_g\}_{g\in G})\) such that \(G\) is an abelian group, \(R_g\) is a subgroup of \(R\) for all \(g\in G\) such that the natural map \(\bigoplus_{g\in G} R_g\to R\) is an isomorphism of groups, and such that for all \(g,g'\in G\) we have \(R_g R_{g'} \subseteq R_{gg'}\). 
We say a grading \((G,\{R_g\}_g)\) of a ring \(R\) is {\em universal} if for each grading \((H,\{S_h\}_h)\) of \(R\) there exists a unique group homomorphism \(f:G\to H\) such that \(S_h = \sum_{g\in f^{-1}h} R_g\) for all \(h\in H\).
Using the theory of Hilbert lattices we generalize two results of Lenstra and Silverberg \citep{Lenstra2018} in Section~\ref{sec:graded_rings}.

\begin{theorem}\label{thm:universal_grading}
Every subring of \(\overline{\Z}\) has a universal grading.
\end{theorem}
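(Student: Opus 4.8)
The plan is to reduce the existence of a universal grading to a finiteness statement about orthogonal decompositions, which the Hilbert-lattice machinery developed earlier in the paper should supply. First I would recall the dictionary between gradings of a ring $R$ and certain decompositions of $R$ as an additive group compatible with multiplication. If $R$ is a subring of $\overline{\Z}$, then $R$ inherits the inner product and becomes a Hilbert lattice (this uses Theorem~\ref{thm:Zbar_is_lattice} and the fact that a subgroup of a discrete subgroup of a Hilbert space is again one). The key observation, generalizing Lenstra--Silverberg, is that in a grading the homogeneous components $R_g$ are pairwise orthogonal: for $g\neq g'$ and $x\in R_g$, $y\in R_{g'}$ the trace form $\langle x,y\rangle$ vanishes because $x\bar y$ lies in a non-trivial graded piece and its trace is a sum over a Galois-type average that kills off-diagonal pieces. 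So a grading of $R$ refines to, and is essentially the same data as, an orthogonal direct-sum decomposition of the Hilbert lattice $R$ together with a compatible group structure on the index set.

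Next I would invoke the structure theory of indecomposable Hilbert lattices. By Theorem~\ref{thm:eichler} and the surrounding results, the notion of ``indecomposable'' lattice and the role of Voronoi-relevant (indecomposable) vectors give us a canonical finest orthogonal decomposition $R=\bigoperp_{i\in I} \Lambda_i$ into indecomposable sublattices — this is the analogue of the Eichler/Kneser unique-decomposition theorem, and one must check it goes through for Hilbert lattices (discreteness plus the fact that any vector has finite norm means any single vector meets only finitely many components, which is what makes the decomposition well-defined even when $I$ is infinite). Every grading of $R$ induces a partition of $I$, and conversely the finest decomposition is the ``universal'' one at the level of orthogonal decompositions. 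The universal grading should then be built by taking $G$ to be the free abelian group on $I$ modulo the relations forced by multiplication: $R_i R_j \subseteq R_k$ whenever the product of the corresponding components is non-zero, i.e. one quotients the free group on $I$ by the subgroup generated by $i + j - k$ for every such relation, and sets $R_g = \bigoperp_{i \mapsto g} \Lambda_i$.

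The main steps, in order, are: (1) show $R$ is a Hilbert lattice and that its orthogonal decompositions form a lattice (poset) with a least element, namely the decomposition into indecomposables — here I would cite or adapt the uniqueness-of-decomposition results the paper has set up around Theorems~\ref{thm:minimal_indec_set} and~\ref{thm:eichler}; (2) prove orthogonality of homogeneous components, so that every grading coarsens the finest orthogonal decomposition; (3) check that multiplication respects the decomposition enough to define the group $G$ as above, verifying $R_g R_{g'}\subseteq R_{gg'}$ and that $\bigoplus_g R_g \to R$ is an isomorphism; (4) verify the universal property: given any grading $(H,\{S_h\})$, orthogonality gives a map $I \to H$, one checks it respects the defining relations of $G$ hence factors through a homomorphism $f:G\to H$, and uniqueness follows because $G$ is generated by the images of $I$.

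The step I expect to be the main obstacle is (1) together with the well-definedness of $G$ in the infinite-rank setting. For finite-rank lattices the finest orthogonal decomposition is classical, but $R$ may have infinite rank, so I need that every element lies in a finite sub-sum of components and that ``indecomposable component containing a given vector'' is well-defined; this is where discreteness of $R$ in its Hilbert space, and the finiteness of $\langle\alpha,\alpha\rangle$, do the real work. A secondary subtlety is that the relations $i+j-k$ defining $G$ must be shown to be consistent — e.g. that $1\in R$ is homogeneous and its component acts as an identity index — so that $G$ is a genuine abelian group and not accidentally trivial or ill-defined; I expect this follows from associativity and the orthogonality lemma, but it requires care. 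Once these are in place, the universal property is a formal verification.
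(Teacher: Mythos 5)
Your overall architecture is the paper's, step for step: take the universal orthogonal decomposition $\{\textup{Y}_i\}_{i\in I}$ of the Hilbert lattice $R$ furnished by Theorem~\ref{thm:eichler} (whose proof already covers infinite rank, since $\textup{indec}(R)$ generates $R$ by Proposition~\ref{prop:indec_sum}, so the difficulty you flag in your step (1) is already discharged there), argue that every grading coarsens this finest decomposition, and then form the grading group as $\Z^{(I)}$ modulo the relations $i+j-k$ coming from $\textup{supp}(\textup{Y}_i\cdot\textup{Y}_j)$, the universal property being a formal check. This is exactly how the paper proceeds.

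The genuine gap is in your step (2), the orthogonality of distinct homogeneous components, which is the one piece of real content in the argument. Your justification --- that $\langle x,y\rangle$ is a trace form, that $x\bar y$ lies in a homogeneous piece of non-trivial degree, and that a Galois-type average kills it --- does not work as stated. The pairing here is the Hermitian form $\langle x,y\rangle=\tfrac{1}{[L:\Q]}\sum_{\sigma}\sigma(x)\overline{\sigma(y)}$, which is \emph{not} the trace pairing $(x,y)\mapsto\tfrac{1}{[L:\Q]}\Tr_{L/\Q}(xy)$ (for instance $\langle \i,\i\rangle=1$ while $\tfrac{1}{2}\Tr(\i\cdot\i)=-1$), and $\overline{y}$ need not lie in $R$ at all, so ``$x\bar y$ lies in a non-trivial graded piece'' has no meaning; the classical observation that multiplication by a homogeneous element of non-trivial degree shifts the grading and hence has a zero-diagonal matrix proves vanishing only for the trace pairing. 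The paper's Lemma~\ref{lem:pontryagin_orthogonal} uses a different mechanism: Lemma~\ref{lem:torsion_group} first shows the support of any grading is a torsion group, then Pontryagin duality provides a character $\chi$ into $\mu_n$ separating $g$ and $h$; one extends scalars to $\overline{\Z}\tensor_\Z\Z[\mu_n]$ with the inner product $\langle\cdot,\cdot\rangle_n$, checks that the twist sending $x\tensor 1\in R_i\tensor\Z[\mu_n]$ to $x\tensor\chi(i)$ is an isometry, and concludes that $n\langle R_g,R_h\rangle_n=0$ because $1-\chi(g^{-1}h)$ divides $n$. Without this lemma (or some substitute that genuinely accounts for the complex conjugation in the form), your claim that every grading refines the Eichler decomposition is unproved, and the construction of $G$ and the verification of universality in your steps (3)--(4) have nothing to stand on.
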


The universal grading of \(\overline{\Z}\) is the trivial grading, which is a consequence of Theorem~\ref{thm:Zbar_indecomposable}.

\begin{theorem}\label{thm:integral_universal_grading}
Every integrally closed subring of \(\overline{\Z}\) has a universal grading with a subgroup of \(\Q/\Z\).
\end{theorem}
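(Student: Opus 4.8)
The plan is to build on Theorem \ref{thm:universal_grading}, which already gives that every subring $R$ of $\overline{\Z}$ has a universal grading $(G,\{R_g\}_g)$; I need to show that when $R$ is integrally closed, $G$ embeds in $\Q/\Z$. The key structural observation is that if $R$ is integrally closed in its field of fractions $K$, then so is each graded piece in a suitable sense, and homogeneous units behave well: for a grading $(H,\{S_h\}_h)$ of such an $R$, the support $\{h\in H : S_h\neq 0\}$ is a subgroup of $H$, and one expects every nonzero homogeneous element to be ``locally invertible'' enough that $H$ (or at least the universal $G$) is forced to be a torsion group in which every element has order dividing the degree of the relevant algebraic integer. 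Concretely, I would first reduce to the case $R$ finitely generated over $\Z$ (hence an order in a number field, or its integral closure), since a grading of $R$ restricts compatibly to a grading of each finitely generated subring and $G$ is the colimit of the corresponding universal grading groups.

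Next I would argue that $G$ is a torsion group. The mechanism: take a nonzero homogeneous $x\in R_g$. Since $R\subseteq\overline{\Z}$, $x$ is an algebraic integer, so it satisfies a monic equation $x^n + a_{n-1}x^{n-1}+\cdots+a_0=0$ with $a_i\in\Z\subseteq R_e$ (the identity component, which contains $\Z$ because $1\in R_e$). Comparing $G$-degrees in this relation forces a relation among $g^n, g^{n-1},\dots, e$; working in the universal grading and using that $R$ is integrally closed (so that we may divide by leading coefficients after localizing, or use that the minimal polynomial has coefficients in $R_e$), one extracts that $g$ has finite order dividing some integer tied to $n$. This shows $G$ is torsion.

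Then I would upgrade ``torsion'' to ``embeds in $\Q/\Z$'', i.e. show $G$ has at most one subgroup of each finite order, equivalently $G$ is locally cyclic. For this I would use that $R$ is a subring of a field: the graded pieces $R_g$ sit inside $K$, and for homogeneous $x\in R_g$, $y\in R_h$ both nonzero, the ratio $x/y$ lies in $K$; integral closedness lets me conclude that if $x/y$ is integral over $R$ then it lies in $R$, forcing containments between homogeneous components that pin down the group structure. The cleanest route is probably: the universal grading of $K$ (as a field, or as the fraction field) has group embedding in $\Q/\Z$ by a Kummer-theoretic argument — homogeneous elements of $K^\times$ form a subgroup of $K^\times$ whose image in $K^\times/(\text{nonzero component of degree }e)$ is the grading group, and a subgroup of $K^\times$ all of whose elements are ``roots of something in the base'' is cyclic by the standard fact that finite subgroups of $K^\times$ are cyclic plus the torsion conclusion above. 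Pulling this back to $R$ via Theorem \ref{thm:universal_grading} and the naturality of universal gradings gives the embedding $G\hookrightarrow\Q/\Z$.

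I expect the main obstacle to be the step that converts the integral equation for a homogeneous element into a clean constraint on its degree in the \emph{universal} grading: a priori the monic polynomial relation only says that various $R_{g^i}$ are related, not that $g$ itself is torsion, and one must rule out that cancellation among terms of equal degree hides the obstruction. This is exactly where integral closedness (as opposed to mere integrality over $\Z$, which every subring of $\overline{\Z}$ already satisfies) must be used in an essential way — presumably via the fact that in an integrally closed domain the minimal polynomial of an element of the fraction field that is integral over the ring has coefficients in the ring, so one can pass to the minimal polynomial and use its irreducibility to force each homogeneous component relation to be ``non-degenerate.'' Making this precise, and checking the colimit reduction respects the ``embeds in $\Q/\Z$'' conclusion (a filtered colimit of locally cyclic groups is locally cyclic), will be the technical heart of the argument.
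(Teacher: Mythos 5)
The main gap is the step where you upgrade ``torsion'' to ``locally cyclic'': that is exactly where integral closedness must carry all the weight, and your Kummer-theoretic sketch does not deliver it. If \((G,\{R_g\}_g)\) is a grading of a domain \(R\) with fraction field \(K\) and \(K_1\) is the fraction field of \(R_1\), then (using torsionness) the nonzero homogeneous elements do generate a subgroup \(\Delta\subseteq K^\times\) with \(\Delta/K_1^\times\) isomorphic to the support of the grading, and this quotient is torsion; but a torsion subgroup of \(K^\times/K_1^\times\) is in general far from cyclic: the classes of \(\sqrt{2}\) and \(\sqrt{3}\) in \(\Q(\sqrt{2},\sqrt{3})^\times/\Q^\times\) generate \((\Z/2\Z)^2\), and correspondingly the (non-integrally closed) ring \(\Z[\sqrt{2},\sqrt{3}]\) genuinely carries a \((\Z/2\Z)^2\)-grading. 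So ``finite subgroups of \(K^\times\) are cyclic'' is not the relevant fact; any correct argument must use integral closedness inside the cyclicity step itself. That is precisely the content of Theorem~1.4 of \citep{Lenstra2018}, which the paper cites rather than reproves, and which your outline neither cites nor reconstructs (you flag it as the ``technical heart'' but leave it open). Note also that integral closedness is not what makes \(G\) torsion: that holds for every subring of \(\overline{\Z}\) (Lemma~\ref{lem:torsion_group}), simply by comparing degrees in \(x^n=\sum_{i<n}a_ix^i\) with \(a_i\in\Z\subseteq R_1\); no localization or division by leading coefficients is needed.

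Your reduction step is also not set up correctly. A grading of \(R\) does not restrict to a grading of an arbitrary finitely generated subring (the subring need not be the direct sum of its intersections with the components), and universal gradings are not obviously functorial in subrings, so the claim that \(G\) is a colimit of universal grading groups of finitely generated subrings needs an argument you do not give. The paper's route avoids both problems: given a finitely generated (hence finite, by Lemma~\ref{lem:torsion_group}) subgroup \(H\subseteq G\), choose nonzero \(a_h\in R_h\) for each \(h\in H\) (possible since all \(R_h\neq 0\) in the universal grading), set \(K=\Q(a_h\,|\,h\in H)\), and use Lemma~\ref{lem:homogenous_intersection} (intersection with a subfield generated by \emph{homogeneous} elements) to get an \(H\)-grading of \(S=R\cap K\), which is integrally closed with number-field fraction field; then \citep{Lenstra2018} gives that the universal grading group \(Y\) of \(S\) is cyclic, and the induced map \(Y\to H\) is surjective because \(R_h\cap K\neq 0\) for all \(h\), so \(H\) is cyclic and \(G\) embeds in \(\Q/\Z\). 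If you want a self-contained proof, you must actually prove the number-field case, which is substantially harder than the Kummer-theoretic observation you propose.
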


\noindent We also show that every subgroup of \(\Q/\Z\) occurs for some integrally closed subring of \(\overline{\Z}\).

The structure of this document is as follows. 
In sections \ref{sec:begin-hilbert} through \ref{sec:end-hilbert} we develop general theory on Hilbert lattices.
We specialize to the Hilbert lattice \(\overline{\Z}\) in sections \ref{sec:begin-integers} through \ref{sec:end-integers}.
In sections \ref{sec:begin-application} through \ref{sec:end-application} we treat some applications of the theory.
In section \ref{sec:enum2} and \ref{sec:enum3} we do some explicit computations on indecomposable integers of degree 2 respectively 3.
Sections \ref{sec:begin-capacity-large} through \ref{sec:end-capacity-large} we dedicate to the proof of Theorem~\ref{thm:large} and its algorithmic counterpart.
In sections \ref{sec:begin-capacity-small} through \ref{sec:end-capacity-small} we prove Theorem~\ref{thm:small} and its algorithmic counterpart.

\section{Inner products and Hilbert spaces}\label{sec:begin-hilbert}

\begin{definition}
Let \(R\subseteq\C\) be a subring.
An {\em \(R\)-norm} on an \(R\)-module \(M\) is a map \(\|\cdot\|:M\to\R_{\geq 0}\) that satisfies:
\begin{itemize}[align=right]
\setlength{\itemindent}{7.2em}
\item[(\textit{Absolute homogeneity})] For all \(x\in M\) and \(a\in R\) we have \(\|ax\|=|a|\cdot\|x\|\);
\item[(\textit{Triangle inequality})] For all \(x,y\in M\) we have \(\|x+y\|\leq\|x\|+\|y\|\);
\item[(\textit{Positive-definiteness})] For all non-zero \(x\in M\) we have \(\|x\|\in \R_{>0}\).
\end{itemize}
A {\em normed \(R\)-module} is an \(R\)-module \(M\) together with an \(R\)-norm on \(M\).
For normed \(R\)-modules \(M\) and \(N\) an \(R\)-module homomorphism \(f:M\to N\) is called an {\em isometric map} if \(\|x\|=\|f(x)\|\) for all \(x\in M\).
The isometric maps are the morphisms in the category of normed \(R\)-modules.
\end{definition}

Note that an isometric map is injective, but not necessarily surjective.

\begin{definition}
Let \(R\subseteq\C\) be a subring and \(M\) be an \(R\)-module.
An {\em \(R\)-inner product} on \(M\) is a map \(\emptyinner:M^2\to\C\) that satisfies:
\begin{itemize}[align=right]
\setlength{\itemindent}{7.2em}
\item[(\textit{Conjugate symmetry})] For all \(x,y\in M\) we have \(\langle x,y\rangle = \overline{\langle y,x\rangle}\);
\item[(\textit{Left linearity})] For all \(x,y,z\in M\) and \(a\in R\) we have \(\langle x+ay,z\rangle=\langle x,z\rangle+a\langle y,z\rangle\);
\item[(\textit{Positive-definiteness})] For all non-zero \(x\in M\) we have \(\langle x,x\rangle \in \R_{>0}\).
\end{itemize}
We say it is a {\em real} inner product if \(\langle M,M\rangle\subseteq \R\), which implies \(R\subseteq \R\) when \(M\neq 0\).
An {\em \(R\)-inner product space} is an \(R\)-module together with an \(R\)-inner product.
\end{definition}

\begin{remark}\label{rem:norm}
An \(R\)-inner product space \(M\) comes with an \(R\)-norm given by \(\|x\|=\sqrt{\langle x,x\rangle}\), which in turn induces a metric \(d(x,y)=\|x-y\|\) and a topology. One can then speak about the completeness of \(M\) with respect to this metric.
\end{remark}

\begin{lemma}\label{lem:parallelogram_law}
Suppose \(R\subseteq \C\) is a subring and \(M\) is a real \(R\)-inner product space. Then the induced norm satisfies the {\em parallelogram law}: For all \(x,y\in M\) we have 
\begin{align*}
\|x+y\|^2+\|x-y\|^2=2\|x\|^2+2\|y\|^2. \hfill\qed
\end{align*}
\end{lemma}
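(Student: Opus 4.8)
The plan is to reduce the identity to a direct expansion of inner products, the only mild subtlety being that we are given only \emph{left} linearity in the axioms, so I first need additivity in the second argument. This follows from conjugate symmetry: for \(x,y,z\in M\) we have \(\langle z,x+y\rangle = \overline{\langle x+y,z\rangle} = \overline{\langle x,z\rangle}+\overline{\langle y,z\rangle} = \langle z,x\rangle + \langle z,y\rangle\), and similarly \(\langle z,-y\rangle = -\langle z,y\rangle\); in the real case one does not even need to worry about conjugation of scalars since the relevant coefficients here are \(\pm 1\). (If \(M=0\) the statement is the trivial \(0=0\), so we may assume \(M\neq 0\), whence \(R\subseteq\R\).)

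Next I would expand both terms on the left-hand side. Using left linearity together with the additivity in the second argument just established,
\begin{align*}
\|x+y\|^2 &= \langle x+y,x+y\rangle = \langle x,x\rangle + \langle x,y\rangle + \langle y,x\rangle + \langle y,y\rangle,\\
\|x-y\|^2 &= \langle x-y,x-y\rangle = \langle x,x\rangle - \langle x,y\rangle - \langle y,x\rangle + \langle y,y\rangle.
\end{align*}
Adding these two identities, the cross terms \(\langle x,y\rangle\) and \(\langle y,x\rangle\) cancel, leaving \(\|x+y\|^2+\|x-y\|^2 = 2\langle x,x\rangle + 2\langle y,y\rangle = 2\|x\|^2+2\|y\|^2\), which is exactly the claimed parallelogram law. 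Recall from Remark~\ref{rem:norm} that \(\|\cdot\|\) denotes the norm induced by the inner product, so \(\|v\|^2 = \langle v,v\rangle\) throughout.

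There is essentially no obstacle here; the one point requiring a word of care is that the definition of an \(R\)-inner product only postulates linearity in the first slot, so the short verification of additivity in the second slot via conjugate symmetry should be spelled out rather than taken for granted. Everything else is a one-line algebraic manipulation.
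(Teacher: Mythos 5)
Your proof is correct and is exactly the intended argument: the paper states this lemma with a \qed and no written proof, treating the expansion of \(\|x\pm y\|^2\) via (conjugate-)symmetry and left linearity as immediate. Your extra care in deriving additivity in the second slot from conjugate symmetry is sound and fills in precisely the detail the paper leaves implicit.
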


The following is an exercise in many standard texts.

\begin{theorem}[Jordan--von Neumann \citep{parallelogram}]\label{thm:parallelogram}
Let \(R\subseteq \Q\) be a subring, \(M\) an \(R\)-module and suppose a map \(\|\cdot\|:M\to \R_{\geq 0}\) satisfies positive-definiteness and the parallelogram law.
Then \(\|\cdot\|\) is an \(R\)-norm on \(M\) induced by a real \(R\)-inner product \(\emptyinner:M^2\to\R\) given by
\[ \langle x, y\rangle = \tfrac{1}{2}( \|x+y\|^2 - \|x\|^2 - \|y\|^2 ). \]
\end{theorem}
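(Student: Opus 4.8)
The plan is to verify directly that \(\langle x,y\rangle=\tfrac12(\|x+y\|^2-\|x\|^2-\|y\|^2)\) satisfies the three axioms of a real \(R\)-inner product, and then to read off the remaining norm properties from it. Conjugate symmetry is immediate, since the right-hand side is real and visibly symmetric in \(x,y\); in particular \(\langle M,M\rangle\subseteq\R\), so the inner product is real. Before the main step I would harvest a few elementary consequences of the parallelogram law: putting \(x=y=0\) gives \(\|0\|=0\); putting one argument to \(0\) gives \(\|-x\|=\|x\|\); and the recursion \(\|(n+1)x\|^2+\|(n-1)x\|^2=2\|nx\|^2+2\|x\|^2\) gives, by induction on \(n\geq 0\), that \(\|nx\|^2=n^2\|x\|^2\) for all \(n\in\Z\). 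In particular \(\|2x\|^2=4\|x\|^2\), whence \(\langle x,x\rangle=\tfrac12(\|2x\|^2-2\|x\|^2)=\|x\|^2\); combined with positive-definiteness of \(\|\cdot\|\) this gives positive-definiteness of \(\emptyinner\) and shows \(\|\cdot\|\) is exactly the norm induced by this inner product.

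The crux is left-additivity \(\langle x+y,z\rangle=\langle x,z\rangle+\langle y,z\rangle\); this is where the parallelogram law does real work and where care is needed. First I would establish the symmetric identity \(\langle x+y,z\rangle+\langle x-y,z\rangle=2\langle x,z\rangle\) by expanding both sides, regrouping the four norm-squared terms as \(\|(x+z)+y\|^2+\|(x+z)-y\|^2\) and \(\|x+y\|^2+\|x-y\|^2\), and applying the parallelogram law to each group (the \(\|z\|^2\) and \(\|y\|^2\) contributions cancel, leaving \(\|x+z\|^2-\|x\|^2-\|z\|^2\)). Feeding \(x=0\) into this identity, together with \(\langle 0,z\rangle=0\), shows \(\langle-w,z\rangle=-\langle w,z\rangle\). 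Adding the identity to its version with \(x\) and \(y\) interchanged and using oddness to cancel \(\langle x-y,z\rangle+\langle y-x,z\rangle\) yields \(2\langle x+y,z\rangle=2\langle x,z\rangle+2\langle y,z\rangle\), hence additivity. From additivity and oddness one gets \(\langle nx,z\rangle=n\langle x,z\rangle\) for \(n\in\Z\); and for \(a\in R\), writing \(a=p/q\) with \(p,q\in\Z\) and noting \(q\cdot(ax)=px\) in the \(R\)-module \(M\) (since \(\Z\subseteq R\subseteq\Q\)), we obtain \(q\langle ax,z\rangle=\langle px,z\rangle=p\langle x,z\rangle\), so \(\langle ax,z\rangle=a\langle x,z\rangle\). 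The same division trick applied to \(\|nx\|^2=n^2\|x\|^2\) gives \(\|ax\|^2=a^2\|x\|^2\), i.e.\ absolute homogeneity of \(\|\cdot\|\).

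It remains to prove the triangle inequality, for which I would invoke Cauchy--Schwarz: \(\|x+y\|^2=\|x\|^2+2\langle x,y\rangle+\|y\|^2\leq(\|x\|+\|y\|)^2\). For Cauchy--Schwarz itself, expand \(0\leq\langle mx-ny,mx-ny\rangle=m^2\|x\|^2-2mn\langle x,y\rangle+n^2\|y\|^2\) for \(m,n\in\Z\); if \(x\neq 0\), divide by \(n^2\) (\(n\neq 0\)), note that \(\{m/n:m,n\in\Z\}=\Q\) is dense in \(\R\), and use continuity of \(s\mapsto s^2\|x\|^2-2s\langle x,y\rangle+\|y\|^2\) to conclude this quadratic is \(\geq 0\) on all of \(\R\); non-positivity of its discriminant is precisely \(|\langle x,y\rangle|\leq\|x\|\cdot\|y\|\) (the case \(x=0\) being trivial by positive-definiteness).

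I expect the additivity step to be the main obstacle, both because it requires the clever regrouping of norm terms before the parallelogram law applies and because one must avoid dividing by \(2\) inside \(M\) — which is why the symmetrize-and-swap argument is used rather than the naive substitution. By contrast, the passage from \(\Z\)-linearity to \(R\)-linearity and the density argument in Cauchy--Schwarz are routine, precisely because every subring \(R\) with \(R\subseteq\Q\) satisfies \(\Z\subseteq R\) and embeds in \(\Q\).
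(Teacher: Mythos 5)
Your proposal is correct and follows essentially the same route as the paper: the classical Jordan--von Neumann argument, deducing \(\|2x\|^2=4\|x\|^2\) and \(\langle x,x\rangle=\|x\|^2\), extracting additivity of \(x\mapsto\langle x,z\rangle\) from the parallelogram law, and then passing from \(\Z\)-linearity to \(R\)-linearity using \(\Z\subseteq R\subseteq\Q\). The only differences are cosmetic: you obtain additivity via the symmetrized identity \(\langle x+y,z\rangle+\langle x-y,z\rangle=2\langle x,z\rangle\) together with oddness, where the paper expands \(\|x+y+z\|^2\) two ways and substitutes \(z\mapsto -z\), and you spell out homogeneity, Cauchy--Schwarz (by density of \(\Q\) in \(\R\)) and the triangle inequality, which the paper instead delegates to Remark~\ref{rem:norm}.
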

\begin{proof}
Note that taking \(x=y=0\) in the parallelogram law shows \(2\|0\|^2=4\|0\|^2\), hence \(\|0\|=0\).
For all \(x\in M\) we have \(\|x+x\|^2=2\|x\|^2+2\|x\|^2-\|x-x\|^2=4\|x\|^2\), hence \(\langle x,x\rangle = \tfrac{1}{2} ( \|2x\|^2-2\|x\|^2 ) = \|x\|^2 \).
It now suffices to show that \(\emptyinner\) is an inner product, as \(\|\cdot\|\) is then the associated norm as in Remark~\ref{rem:norm}.
Clearly \(\emptyinner\) satisfies conjugate symmetry and positive definiteness, so it remains to prove left linearity.
It suffices to show for all \(x\in M\) that \(x\mapsto\langle x,z\rangle\) is \(\Z\)-linear: 
Since \(R\) is in the field of fractions of \(\Z\), any \(\Z\)-linear map to \(\R\) is also \(R\)-linear.
Let \(x,y,z\in M\) and note that \(\langle x,y\rangle = \tfrac{1}{4}(\|x+y\|^2-\|x-y\|^2)\).
By the parallelogram law we have
\begin{align*}
2\|y+z\|^2+2\|x\|^2-\|-x+y+z\|^2=\|x+y+z\|^2=2\|x+z\|^2+2\|y\|^2-\|x-y+z\|^2.
\end{align*}
so
\begin{align}\label{eq:jordan}
2\|x+y+z\|^2 = 2\|x+z\|^2+2\|y+z\|^2+2\|x\|^2+2\|y\|^2-\|-x+y+z\|^2-\|x-y+z\|^2.
\end{align}
Applying (\ref{eq:jordan}) also with \(z\) replaced by \(-z\), we obtain
\begin{align*}
8 \langle x+y,z \rangle &=2\|x+y+z\|^2-2\|x+y-z\|^2 \\
&= 2\|x+z\|^2+2\|y+z\|^2-2\|x-z\|^2-2\|y-z\|^2 \\
&= 8\langle x,z\rangle+8\langle y,z \rangle,
\end{align*}
as was to be shown.
We conclude that \(\emptyinner\) is in fact an \(R\)-inner product.
\end{proof}

Inner product spaces over \(\Z\) or \(\Q\) can be extended to \(\R\) in a `canonical' way.
This can best be expressed in a categorical sense in terms of universal morphisms.
We proceed as in Chapter III of \cite{categories}.

\begin{definition}
Let \(\mathcal{C}\) be a category. An object \(U\) of \(\mathcal{C}\) is called {\em universal} if for each object \(X\) of \(\mathcal{C}\) there exists a unique morphism \(U\to X\) in \(\mathcal{C}\).
\end{definition}

\begin{definition}
Let \(\mathcal{C}\) and \(\mathcal{D}\) be categories. 
Let \(F:\mathcal{C}\to\mathcal{D}\) be a functor and \(Z\) an object of \(\mathcal{D}\). 
For \(X\) an object of \(\mathcal{C}\) we say \(\eta\in\Hom_\mathcal{D}(Z,F(X))\) is a {\em universal morphism} from \(Z\) to \(F\) if for all objects \(Y\) of \(\mathcal{C}\) and every \(g\in\Hom_{\mathcal{D}}(Z,F(Y))\) there exists a unique \(f\in \Hom_{\mathcal{D}}(X,Y)\) such that \(F(f) \circ \eta = g\).
Equivalently, for all objects \(Y\) of \(\mathcal{C}\) and morphisms \(g:Z\to F(Y)\) we have the following diagram:
\end{definition}
\begin{center}
\begin{tikzcd}
Z \arrow{r}{\eta} \arrow[swap]{dr}{g} & F(X) \arrow[dashed]{d}{F(f)} & X \arrow[dashed]{d}{!f} \\ & F(Y) & Y
\end{tikzcd}
\vspace{1em}
\end{center}

For a reader familiar with category theory we remark that, if for a functor \(F:\mathcal{C}\to\mathcal{D}\) every object \(Z\) of \(\mathcal{D}\) has a universal morphism, then \(F\) is a {\em right adjoint} functor.
For a reader less familiar with category theory we leave it as an exercise to construct, given a functor \(F:\mathcal{C}\to\mathcal{D}\) and an object \(Z\) of \(\mathcal{D}\), a category for which the universal objects naturally correspond to the universal morphisms from \(Z\) to \(F\), justifying the name universal morphism.

\begin{example}\label{ex:universal_morphism}
We will give a concrete example of a universal morphism. 

(i) Consider the forgetful functor \(F\) from the category of \(\Q\)-vector spaces to the category of abelian groups, i.e. the functor that sends a \(\Q\)-vector space to its underlying abelian group.
Now let \(A\) be an abelian group.
We take \(V=\Q\tensor_\Z A\), which is a \(\Q\)-vector space, and \(\eta:A\to F(V)\) the map \(z\mapsto 1\tensor z\).
Because \(F\) is a forgetful functor, as will always be the case in our applications, we may omit it in the notation for simplicity and state that \(\eta\) is a morphism \(A\to V\) of abelian groups.
One can verify that \(\eta\) is a universal morphism.

Since \(\eta\) is universal the vector space \(V\) corresponding to \(A\) is `uniquely unique', meaning that any other other vector space \(V'\) with a universal morphism \(\eta':A\to V'\) gives rise to a unique isomorphism \(\varphi:V\to V'\) such that \(\varphi\circ\eta=\eta'\). 

Note that \(\eta\) need not be injective, as this is only the case when \(A\) is torsion-free. Then \(\eta\) can be thought of as a canonical embedding.

(ii) Similarly, we can consider a forgetful functor \(F\) from the category of \(\Q\)-inner product spaces to the category of \(\Z\)-inner product spaces. 
The underlying \(\Q\)-vector space \(V\) and map \(\eta\) are the same as before, and we equip \(V\) with the inner product we extend \(\Q\)-linearly from \(A\).
To show that this inner product is positive definite we use that \(A\) is torsion-free, being a \(\Z\)-inner product space.
\end{example}

\begin{definition}
A {\em Hilbert space} is an \(\R\)-module \(\mathcal{H}\) equipped with a real \(\R\)-inner product such that \(\mathcal{H}\) is complete with respect to the induced metric. The morphisms of Hilbert spaces are the isometric maps.
\end{definition}

\begin{theorem}[Theorem~3.2-3 in \citep{completions}]\label{thm:completion_inner_product}
Let \(F\) be the forgetful functor from the category of Hilbert spaces to the category of \(\Q\)-inner product spaces.
Then every \(\Q\)-inner product space \(V\) has an injective universal morphism to \(F\), and a morphism \(f:V\to \mathcal{H}\) for some Hilbert space \(\mathcal{H}\) is universal precisely when the image of \(f\) is dense in \(\mathcal{H}\). \qed
\end{theorem}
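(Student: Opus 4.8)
The plan is to run the classical metric-completion construction and verify it is compatible with every piece of structure in sight. First observe that the morphisms in both categories are the isometric (norm-preserving) maps, and that the norm \(\|x\|=\sqrt{\langle x,x\rangle}\) of any \(\Q\)-inner product space \(V\) is real-valued and satisfies the parallelogram law: in \(\|x+y\|^2+\|x-y\|^2\) the mixed terms \(\langle x,y\rangle+\langle y,x\rangle\) cancel whether or not the inner product is real. So it suffices to treat \((V,\|\cdot\|)\) as a normed \(\Q\)-vector space, build its completion, and recover an inner product at the end via Theorem~\ref{thm:parallelogram}. Concretely, let \(\mathcal{H}\) be the \(\Q\)-vector space of Cauchy sequences in \(V\) modulo null sequences, with \(\|[(x_n)_n]\|:=\lim_n\|x_n\|\); this is a complete metric space, and \(\eta:V\to\mathcal{H}\) sending \(x\) to the constant sequence is a norm-preserving \(\Q\)-linear map with dense image.

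Next I would upgrade \(\mathcal{H}\) to a Hilbert space. For \(r\in\R\) and \(\xi\in\mathcal{H}\) pick \(q_n\to r\) in \(\Q\) and set \(r\xi:=\lim_n q_n\xi\), which exists since \(\|q_n\xi-q_m\xi\|=|q_n-q_m|\,\|\xi\|\) and \(\mathcal{H}\) is complete; one checks this is well defined and makes \(\mathcal{H}\) an \(\R\)-vector space on which \(\|\cdot\|\) is absolutely homogeneous, subadditive, positive-definite, and — by continuity from the dense subspace \(\eta(V)\) — still satisfies the parallelogram law. The real-coefficient analogue of Theorem~\ref{thm:parallelogram} then applies: the polarization \(\langle\xi,\zeta\rangle:=\tfrac12(\|\xi+\zeta\|^2-\|\xi\|^2-\|\zeta\|^2)\) is symmetric, positive-definite and biadditive by the very computation in the proof of that theorem, and it is \(\R\)-homogeneous because \(r\mapsto\langle r\xi,\zeta\rangle\) is continuous and agrees with \(r\mapsto r\langle\xi,\zeta\rangle\) on the dense set \(\Q\). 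Hence \(\mathcal{H}\) is a Hilbert space and \(\eta\) is a morphism of \(\Q\)-inner product spaces, injective because isometric maps are injective.

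For the universal property, let \(g:V\to\mathcal{K}\) be any isometric map into a Hilbert space. Being isometric, \(g\) is uniformly continuous, hence sends Cauchy sequences to Cauchy sequences and null sequences to null sequences, so \(f:\mathcal{H}\to\mathcal{K}\), \(f([(x_n)_n]):=\lim_n g(x_n)\), is well defined by completeness of \(\mathcal{K}\). One checks \(f\) is \(\Q\)-linear, hence (same density/continuity argument) \(\R\)-linear, isometric, and satisfies \(f\circ\eta=g\); it is the unique morphism with this property because any two morphisms out of \(\mathcal{H}\) agreeing on the dense set \(\eta(V)\) coincide. Thus \(\eta\) is a universal morphism from \(V\) to the forgetful functor.

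Finally I would prove the characterization. If \(f:V\to\mathcal{H}'\) is universal, then since \(\eta:V\to\mathcal{H}\) is universal too there is a unique Hilbert-space isomorphism \(\varphi:\mathcal{H}\to\mathcal{H}'\) with \(\varphi\circ\eta=f\); as \(\varphi\) is an isometric bijection, hence a homeomorphism, \(f(V)=\varphi(\eta(V))\) is dense in \(\mathcal{H}'\). Conversely, if \(f:V\to\mathcal{H}'\) has dense image, the universal property of \(\eta\) yields a morphism \(\varphi:\mathcal{H}\to\mathcal{H}'\) with \(\varphi\circ\eta=f\); now \(\varphi\) is isometric, so \(\varphi(\mathcal{H})\) is a complete, hence closed, subspace of \(\mathcal{H}'\) containing the dense set \(f(V)\), so \(\varphi(\mathcal{H})=\mathcal{H}'\) and \(\varphi\) is an isomorphism, whence \(f=\varphi\circ\eta\) is universal. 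The one step that is not purely formal is the passage from the \(\Q\)-structure to the \(\R\)-structure on the completion together with the real version of Jordan--von Neumann; I expect this bookkeeping to be the main point requiring care, while everything else is the standard completion argument plus the remark that isometric maps only see the (always real) norm.
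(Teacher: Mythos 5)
Your argument is correct: it is the standard Cauchy-sequence completion proof, which is precisely what the paper relies on, since the paper offers no proof of its own but cites Theorem~3.2-3 of its reference on completions. The one point where you go beyond that citation --- extending scalars from \(\Q\) to \(\R\) on the completion and rerunning the Jordan--von Neumann polarization, with \(\R\)-homogeneity obtained by density and continuity --- is exactly the bookkeeping the paper's \(\Q\)-linear setting requires, and you handle it, as well as the density characterization via uniqueness of universal morphisms and closedness of the complete image \(\varphi(\mathcal{H})\), correctly.
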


The Hilbert space constructed for \(V\) in Theorem~\ref{thm:completion_inner_product} can be obtained as the topological completion of \(V\), similar to how \(\R\) is constructed from \(\Q\) using Cauchy sequences.

\begin{definition}\label{def:pnorm}
For a set \(B\) and \(p\in\R_{>0}\) we define the \(\R\)-vector space
\[\ell^p(B) = \Big\{ (x_b)_{b} \in \R^B \,\Big|\, x_b = 0 \text{ for all but countably many \(b\in B\) and } \sum_{b\in B} |x_b|^p < \infty \Big\} \vspace{-.7em}\]
and \(\|x\|_p = ( \sum_{b\in B} |x_b|^p )^{1/p}\) for all \(x=(x_b)_b\in\ell^p(B)\).
\end{definition}

\begin{theorem}[Minkowski's inequality, Theorem~1.2-3 in \citep{completions}]
For any set \(B\) and \(p\in\R_{\geq 1}\) the map \(\|\cdot\|_p\) is an \(\R\)-norm on \(\ell^p(B)\). \qed
\end{theorem}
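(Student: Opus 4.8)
The plan is to verify the three norm axioms directly, with the triangle inequality --- Minkowski's inequality proper --- being the only substantial point. First I would check that \(\ell^p(B)\) is genuinely an \(\R\)-vector space and that \(\|\cdot\|_p\) takes finite values on it: closure under scalar multiplication is clear, and closure under addition follows from the pointwise estimate \(|x_b + y_b|^p \leq 2^{p-1}(|x_b|^p + |y_b|^p)\), valid for \(p \geq 1\) by convexity of \(t \mapsto t^p\) on \(\R_{\geq 0}\), which simultaneously shows the support of \(x+y\) is countable. Absolute homogeneity, \(\|ax\|_p = |a|\cdot\|x\|_p\), is immediate by pulling \(|a|^p\) out of the sum, and positive-definiteness is immediate since a single non-zero coordinate already forces \(\sum_b |x_b|^p > 0\).

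For the triangle inequality I would first reduce to a finite index set. Since every vector in \(\ell^p(B)\) has countable support and the defining sum converges, \(\|x\|_p = \sup_F \big(\sum_{b \in F} |x_b|^p\big)^{1/p}\) with \(F\) ranging over the finite subsets of \(B\); so it suffices to bound \(\big(\sum_{b \in F} |x_b + y_b|^p\big)^{1/p}\) by \(\|x\|_p + \|y\|_p\) for each finite \(F\) and then take the supremum over \(F\). This reduces everything to the classical finite-dimensional Minkowski inequality.

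For finite \(F\) and \(p = 1\) the inequality is just the pointwise triangle inequality \(|x_b + y_b| \leq |x_b| + |y_b|\) summed over \(F\). For \(p > 1\) I would go through Hölder's inequality with the conjugate exponent \(q = p/(p-1)\): first establish Young's inequality \(ab \leq a^p/p + b^q/q\) for \(a,b \geq 0\) (from concavity of \(\log\), equivalently convexity of \(\exp\)), homogenize it to obtain Hölder, and then run the standard argument: write \(|x_b + y_b|^p \leq |x_b|\,|x_b+y_b|^{p-1} + |y_b|\,|x_b+y_b|^{p-1}\), apply Hölder to each of the two sums over \(F\) using \((p-1)q = p\), and obtain \(\sum_{b\in F}|x_b+y_b|^p \leq (\|x\|_p + \|y\|_p)\big(\sum_{b\in F}|x_b+y_b|^p\big)^{1/q}\). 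If the left-hand side is zero there is nothing to prove; otherwise divide by \(\big(\sum_{b\in F}|x_b+y_b|^p\big)^{1/q}\) and use \(p - p/q = 1\) to conclude.

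There is no deep obstacle here --- the statement is classical --- so the only points requiring care are the reduction to finite \(F\) (making precise that the supremum of partial sums equals the value of the infinite sum) and the division step, where the degenerate case \(\|x+y\|_p = 0\) must be separated out. It is worth emphasizing that convexity of \(t \mapsto t^p\), hence the hypothesis \(p \geq 1\), enters essentially both in closure under addition and in Young's inequality; for \(0 < p < 1\) the triangle inequality genuinely fails.
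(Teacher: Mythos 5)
Your proof is correct and is precisely the classical argument (Young's inequality \(\Rightarrow\) H\"older \(\Rightarrow\) Minkowski, with the reduction to finite subsets of \(B\) and the degenerate case \(\|x+y\|_p=0\) handled separately) that the paper itself does not spell out but delegates to the cited reference. Nothing further is needed; the only hypotheses you use, \(p\geq 1\) via convexity of \(t\mapsto t^p\), are exactly the ones the statement supplies.
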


\begin{lemma}[Example 3.1-6 in \citep{completions}]
For any set \(B\) the space \(\ell^2(B)\) is a Hilbert space with inner product given by \(\langle x,y \rangle = \sum_{b\in B} x_b \cdot y_b \) for \(x=(x_b)_b,y=(y_b)_b\in\ell^2(B)\), such that \(\langle x,x\rangle=\|x\|_2^2\).\qed
\end{lemma}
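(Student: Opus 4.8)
The plan is to verify the three defining features of a Hilbert space in turn: that the stated formula defines an $\R$-inner product, that the induced norm coincides with $\|\cdot\|_2$, and that $\ell^2(B)$ is complete in the resulting metric.

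First I would check that $\langle x,y\rangle$ is well-defined. For $x=(x_b)_b$ and $y=(y_b)_b$ in $\ell^2(B)$, only countably many products $x_b y_b$ are nonzero, and the bound $|x_b y_b|\leq\tfrac12(x_b^2+y_b^2)$ together with $x,y\in\ell^2(B)$ shows the series converges absolutely, so its value is independent of the order of summation. Conjugate symmetry is immediate since all terms are real. For left linearity, note first that Minkowski's inequality makes $\ell^2(B)$ an $\R$-vector space, so $x+ay\in\ell^2(B)$ for $a\in\R$; then additivity and $\R$-homogeneity of $\langle\cdot,z\rangle$ follow termwise from the corresponding properties of absolutely convergent series. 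Positive-definiteness and the identity $\langle x,x\rangle=\|x\|_2^2$ both drop out of $\langle x,x\rangle=\sum_b x_b^2$: this is $0$ exactly when every $x_b=0$, i.e.\ when $x=0$. Consequently the norm induced by $\langle\cdot,\cdot\rangle$ in the sense of Remark~\ref{rem:norm} is precisely $\|\cdot\|_2$.

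The substantive step is completeness. Given a Cauchy sequence $(x^{(n)})_n$ in $\ell^2(B)$, for each fixed $b\in B$ the estimate $|x^{(n)}_b-x^{(m)}_b|\leq\|x^{(n)}-x^{(m)}\|_2$ shows $(x^{(n)}_b)_n$ is Cauchy in $\R$, hence converges to some $x_b$; set $x=(x_b)_b$. The support of $x$ is contained in the union of the supports of the $x^{(n)}$, a countable union of countable sets, hence countable. To see $x\in\ell^2(B)$ and $x^{(n)}\to x$, I would work over finite subsets $S\subseteq B$, where sums and limits interchange freely: a Cauchy sequence is bounded, so $\sum_{b\in S}x_b^2=\lim_n\sum_{b\in S}(x^{(n)}_b)^2\leq\sup_n\|x^{(n)}\|_2^2<\infty$, and taking the supremum over all finite $S$ gives $\|x\|_2<\infty$. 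Likewise, fixing $\varepsilon>0$ and $N$ with $\|x^{(n)}-x^{(m)}\|_2\leq\varepsilon$ for $n,m\geq N$, for each finite $S$ and each $n\geq N$ we get $\sum_{b\in S}(x^{(n)}_b-x_b)^2=\lim_{m}\sum_{b\in S}(x^{(n)}_b-x^{(m)}_b)^2\leq\varepsilon^2$; supremizing over $S$ yields $\|x^{(n)}-x\|_2\leq\varepsilon$ for all $n\geq N$, so $x^{(n)}\to x$ in $\ell^2(B)$.

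I expect the only real obstacle to be bookkeeping rather than mathematics: one must be careful that every interchange of a limit with an infinite sum is routed through finite partial sums (treating $\sup_{S\text{ finite}}\sum_{b\in S}$ as the definition of the $\ell^2$-sum), and that the ``all but countably many coordinates vanish'' clause of Definition~\ref{def:pnorm} is preserved in the limit — both of which the argument above handles. Everything else is a direct transcription of the standard proof that $\ell^2$ over a countable index set is complete, with the index set merely enlarged.
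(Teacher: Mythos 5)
Your proof is correct and is essentially the standard argument that the paper itself does not spell out, deferring instead to the cited reference (Example~3.1-6 in the functional-analysis text): verify the inner-product axioms termwise and prove completeness by taking pointwise limits of a Cauchy sequence, routing all limit/sum interchanges through finite partial sums. Your extra care with the uncountable index set — noting that the limit's support is a countable union of countable supports and treating $\sup_{S\ \text{finite}}\sum_{b\in S}$ as the definition of the sum — is exactly the bookkeeping needed to extend the usual countable-index proof, so nothing is missing.
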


\begin{lemma}\label{lem:holder}
Let \(n\in\Z_{\geq 1}\), \(x\in\R^n\) and let \(0<p\leq q\) be real. Then we have  
\[\|x\|_q\leq \|x\|_p \quad\text{and}\quad n^{-1/p}\cdot \|x\|_p \leq n^{-1/q}\cdot\|x\|_q.\]
\end{lemma}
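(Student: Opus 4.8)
My plan is to prove the two inequalities separately, in each case reducing to an elementary pointwise estimate after a convenient normalization. Both \(x=0\) and \(p=q\) make the two inequalities trivial (with equality), so I would dispose of these at the outset and assume \(x\neq 0\) and \(p<q\) from then on.

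For \(\|x\|_q\leq\|x\|_p\): since both sides are absolutely homogeneous in \(x\) (immediate from Definition~\ref{def:pnorm}), I would rescale \(x\) so that \(\|x\|_p=1\). Then for each coordinate \(|x_i|^p\leq\sum_j|x_j|^p=1\), so \(|x_i|\leq 1\), and because \(q>p\) this gives \(|x_i|^q\leq|x_i|^p\). Summing over \(i\) yields
\[ \|x\|_q^q=\sum_i|x_i|^q\leq\sum_i|x_i|^p=1, \]
hence \(\|x\|_q\leq 1=\|x\|_p\).

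For \(n^{-1/p}\|x\|_p\leq n^{-1/q}\|x\|_q\): set \(r=q/p>1\) and use the convexity of \(t\mapsto t^{r}\) on \(\R_{\geq 0}\). Applying Jensen's inequality with uniform weights \(1/n\) to the points \(a_i=|x_i|^p\) gives
\[ \Big(\tfrac{1}{n}\sum_{i=1}^n|x_i|^p\Big)^{r}\leq\tfrac{1}{n}\sum_{i=1}^n|x_i|^{q}, \]
and raising both sides to the power \(1/q\) is exactly the claimed inequality. Equivalently, one gets \(\|x\|_p^p\leq\|x\|_q^p\,n^{1-p/q}\) directly from Hölder's inequality applied to \(\sum_i|x_i|^p\cdot 1\) with conjugate exponents \(q/p\) and \(q/(q-p)\), and then takes \(p\)-th roots.

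I do not expect a genuine obstacle: the only point requiring a little care is the degenerate case \(p=q\), where the Hölder conjugate exponent is undefined and Jensen is vacuous, but handling it first removes the issue. If a fully self-contained argument is preferred, the convexity estimate \(\big(\tfrac{1}{n}\sum a_i\big)^{r}\leq\tfrac{1}{n}\sum a_i^{r}\) for \(r\geq 1\) can be obtained by a short induction on \(n\) from the two-point inequality \(((1-\lambda)s+\lambda t)^{r}\leq(1-\lambda)s^{r}+\lambda t^{r}\); this is standard enough that citing it would also be acceptable.
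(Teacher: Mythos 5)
Your proposal is correct and follows essentially the same route as the paper: normalize so that \(\|x\|_p=1\) and use the pointwise bound \(|x_i|^q\leq|x_i|^p\) for the first inequality, then apply Jensen's inequality with the convex function \(t\mapsto t^{q/p}\) for the second. The separate handling of the degenerate cases \(x=0\) and \(p=q\) is harmless but not needed, since the paper's argument works verbatim for \(p\leq q\).
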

\begin{proof}
Clearly we may assume \(x\neq 0\).
For the first inequality, consider \(y=x / \|x\|_p\). Then \(|y_i|\leq 1\) for all \(i\), from which \(|y_i|^q\leq |y_i|^p\) follows.
Now 
\[ \|y\|_q^q = \sum_{i=1}^n |y_i|^q \leq \sum_{i=1}^n |y_i|^p = \|y\|_p^p = 1. \]
Hence \(\|x\|_q/ \|x\|_p = \|y\|_q\leq 1\), as was to be shown.
For the second inequality, note that \(x\mapsto x^{q/p}\) is a convex function on \(\R_{\geq 0}\).
We have by Jensen's inequality (Theorem~7.3 in \citep{Cvetkovski2012}) that
\[\|x\|_q^q = \sum_{i=1}^n |x_i|^q = \sum_{i=1}^n |x_i^p|^{q/p} \geq n \left( \frac{1}{n}\sum_{i=1}^n |x_i|^p \right)^{q/p} = n^{1-q/p}  \|x\|_p^q,\]
so \(n^{-1/p}\cdot\|x\|_p\leq n^{-1/q} \cdot\|x\|_q\).
\end{proof}

\begin{definition}
Let \(\mathcal{H}\) be a Hilbert space.
A subset \(S\subseteq \mathcal{H}\) is called {\em orthogonal} if \(0\not\in S\) and \(\langle x,y\rangle =0\) for all distinct \(x,y\in S\).
The {\em orthogonal dimension} of \(\mathcal{H}\), written \(\Hdim \mathcal{H}\), is the cardinality of a maximal orthogonal subset of \(\mathcal{H}\).
\end{definition}

That the orthogonal dimension is well-defined, i.e.\ that maximal orthogonal subsets of a given Hilbert space have the same cardinality follows from Proposition~4.14 in \citep{hilbertspaces}. 

\begin{theorem}[Theorem~5.4 in \citep{hilbertspaces}] \label{thm:is_ell2}
Let \(\mathcal{H}\) be a Hilbert space and \(B\) a set.
Then the Hilbert spaces \(\mathcal{H}\) and \(\ell^2(B)\) are isomorphic if and only if the cardinality of \(B\) equals \(\Hdim\mathcal{H}\).
\end{theorem}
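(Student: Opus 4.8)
The plan is to prove both implications by exhibiting an explicit isometric isomorphism, using an orthonormal basis of $\mathcal{H}$ indexed by a maximal orthogonal set. First I would normalize: given a maximal orthogonal subset $S\subseteq\mathcal{H}$, replace each $s\in S$ by $s/\|s\|$ to get an orthonormal set $E$ with $|E|=|S|=\Hdim\mathcal{H}$, and note that $E$ is still maximal as an orthogonal set (rescaling does not affect orthogonality or maximality). For the nontrivial direction, assume $|B|=\Hdim\mathcal{H}$, fix a bijection $B\to E$, $b\mapsto e_b$, and define $\varphi:\mathcal{H}\to\ell^2(B)$ by $\varphi(x)=(\langle x,e_b\rangle)_{b\in B}$.

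The key steps, in order, are: (1) \emph{Bessel's inequality} — for any $x\in\mathcal{H}$ and any finite $F\subseteq B$ we have $\sum_{b\in F}|\langle x,e_b\rangle|^2\le\|x\|^2$, since $\|x-\sum_{b\in F}\langle x,e_b\rangle e_b\|^2\ge 0$ expands to exactly this; hence $\varphi(x)\in\ell^2(B)$ and $\|\varphi(x)\|_2\le\|x\|$. (2) \emph{Completeness of $E$ forces equality} — one shows $\|\varphi(x)\|_2=\|x\|$, i.e.\ Parseval, by arguing that $y:=x-\sum_{b}\langle x,e_b\rangle e_b$ (the sum converging in $\mathcal{H}$ by completeness, since its partial sums are Cauchy by (1)) is orthogonal to every $e_b$; if $y\ne 0$ then $E\cup\{y/\|y\|\}$ is a strictly larger orthogonal set, contradicting maximality, so $y=0$ and $x=\sum_b\langle x,e_b\rangle e_b$ with $\|x\|^2=\sum_b|\langle x,e_b\rangle|^2$. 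Thus $\varphi$ is isometric (hence injective). (3) \emph{Surjectivity} — given $c=(c_b)_b\in\ell^2(B)$, only countably many $c_b$ are nonzero and $\sum|c_b|^2<\infty$, so the partial sums of $\sum_b c_b e_b$ form a Cauchy sequence in $\mathcal{H}$; by completeness they converge to some $x$, and continuity of the inner product gives $\langle x,e_b\rangle=c_b$, so $\varphi(x)=c$. Therefore $\varphi$ is an isomorphism of Hilbert spaces. For the converse, if $\mathcal{H}\cong\ell^2(B)$ then isometric isomorphisms carry maximal orthogonal sets to maximal orthogonal sets bijectively, so $\Hdim\mathcal{H}=\Hdim\ell^2(B)$; and the standard basis vectors $\{\delta_b\}_{b\in B}$ form a maximal orthogonal subset of $\ell^2(B)$ (maximality again by the Parseval argument), so $\Hdim\ell^2(B)=|B|$.

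I expect the main obstacle to be step (2): one must be careful that the infinite sum $\sum_b\langle x,e_b\rangle e_b$ makes sense. Only countably many coefficients are nonzero (by Bessel applied to all finite subsets, the coefficients form an element of $\ell^2(B)$), and for those one enumerates them and checks the partial sums are Cauchy — $\|\sum_{b\in F}\langle x,e_b\rangle e_b\|^2=\sum_{b\in F}|\langle x,e_b\rangle|^2$ by orthonormality, and tails of a convergent series are small. Completeness of $\mathcal{H}$ then supplies the limit, and one must verify the limit is independent of the chosen enumeration (unconditional convergence), which follows from the same Cauchy estimate. Everything else is a routine expansion of norms using bilinearity and the orthonormality relations $\langle e_b,e_{b'}\rangle=\delta_{bb'}$, together with continuity of $\emptyinner$ to pass limits through the inner product. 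Since the cited reference (Theorem~5.4 in \citep{hilbertspaces}) already contains this, I would in practice simply invoke it, but the sketch above is the self-contained argument.
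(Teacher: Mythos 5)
Your proposal is correct, but note that the paper does not prove this statement at all: it is imported verbatim from the cited reference (Theorem~5.4 in \citep{hilbertspaces}), so the paper's ``proof'' is the citation you yourself say you would invoke. Your sketch is exactly the standard argument that reference contains --- normalize a maximal orthogonal set, use Bessel's inequality plus maximality to get Parseval and isometry, use completeness for surjectivity, and for the converse observe that isometric isomorphisms preserve maximal orthogonal sets and that \(\{\delta_b\}_{b\in B}\) is one for \(\ell^2(B)\) --- and all the delicate points (countability of nonzero coefficients, unconditional convergence, maximality surviving normalization) are handled correctly.
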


\section{Hilbert lattices}

\begin{definition}\label{def:hilbert_lattice}
A {\em Hilbert lattice} is an abelian group \(\Lambda\) together with a map \(q:\Lambda\to\R\), which we then call the \textit{square-norm} of \(\Lambda\), that satisfies:
\begin{itemize}[align=right]
\setlength{\itemindent}{8em}
\item[(\textit{Parallelogram law})] For all \(x,y\in\Lambda\) we have \(q(x+y)+q(x-y)=2q(x)+2q(y)\);
\item[(\textit{Positive packing radius})] There exists a \(r\in\R_{>0}\) such that \(q(x)\geq r\) for all non-zero \(x\in\Lambda\).
\end{itemize}
We write \(\textup{P}(\Lambda)=\inf\{ q(x)\,|\, x\in\Lambda\setminus\{0\} \}\).
\end{definition}

The following lemma gives an equivalent definition of a Hilbert lattice.

\begin{lemma}\label{lem:trivial_hilbert_eq}
A Hilbert lattice \(\Lambda\) with square-norm \(q\) is a discrete \(\Z\)-inner product space with inner product given by \((x,y) \mapsto \frac{1}{2}(q(x+y)-q(x)-q(y))\). Conversely, every discrete \(\Z\)-inner product space \(M\) is a Hilbert lattice with square norm given by \(x \mapsto \langle x ,x \rangle\). 
\end{lemma}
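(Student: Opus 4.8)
The plan is to prove the two directions separately. The forward direction rests on the Jordan--von Neumann theorem (Theorem~\ref{thm:parallelogram}), the reverse on a direct expansion of the inner product.

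\emph{Hilbert lattice $\Rightarrow$ discrete $\Z$-inner product space.} Let $(\Lambda,q)$ be a Hilbert lattice. I would first extract the elementary consequences of the parallelogram law: taking $x=y=0$ gives $q(0)=0$, taking $x=0$ gives $q(-y)=q(y)$, and taking $y=x$ gives $q(2x)=4q(x)$. Combined with positive packing radius this shows $q\geq 0$ on all of $\Lambda$, so $\|x\|:=\sqrt{q(x)}$ is a well-defined map $\Lambda\to\R_{\geq 0}$; it satisfies positive-definiteness (for $x\neq 0$ we have $\|x\|^2=q(x)\geq\textup{P}(\Lambda)>0$) and the parallelogram law (it is just $q$ under the square root). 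Since $\Z\subseteq\Q$, Theorem~\ref{thm:parallelogram} now applies and shows $\|\cdot\|$ is a $\Z$-norm induced by the real $\Z$-inner product $\langle x,y\rangle=\tfrac12(\|x+y\|^2-\|x\|^2-\|y\|^2)=\tfrac12(q(x+y)-q(x)-q(y))$, which is the asserted formula. The induced metric is $d(x,y)=\|x-y\|=\sqrt{q(x-y)}\geq\sqrt{\textup{P}(\Lambda)}>0$ for $x\neq y$, so $\Lambda$ is uniformly discrete, in particular discrete.

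\emph{Discrete $\Z$-inner product space $\Rightarrow$ Hilbert lattice.} Let $M$ be a discrete $\Z$-inner product space and put $q(x)=\langle x,x\rangle$; this is real and $q(0)=0$. For the parallelogram law, expanding $\langle x\pm y,x\pm y\rangle$ using left-linearity and conjugate symmetry gives $q(x)+q(y)\pm(\langle x,y\rangle+\langle y,x\rangle)$, and summing the two expressions cancels the cross term, leaving $q(x+y)+q(x-y)=2q(x)+2q(y)$; note this requires no reality assumption on $\emptyinner$. For positive packing radius, discreteness of $M$ makes $0$ an isolated point, so there is $\varepsilon\in\R_{>0}$ with $\|x\|\geq\varepsilon$ for every non-zero $x\in M$, whence $q(x)=\|x\|^2\geq\varepsilon^2$ and we may take $r=\varepsilon^2$.

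I do not expect a genuine obstacle here; the lemma is largely bookkeeping. The one point needing care is that Theorem~\ref{thm:parallelogram} is phrased for a norm whereas a Hilbert lattice is packaged via a square-norm, so one must first establish $q\geq 0$ from the axioms in order to pass to $\|\cdot\|=\sqrt q$ and invoke it; everything else is routine. As a sanity check one can note that the two passages are compatible, since $\langle x,x\rangle=\tfrac12(q(2x)-2q(x))=q(x)$.
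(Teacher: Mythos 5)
Your proof is correct and takes essentially the same route as the paper: the forward direction is Theorem~\ref{thm:parallelogram} together with the observation that the positive packing radius gives positivity (so \(\sqrt{q}\) is defined) and discreteness, and the converse is the parallelogram-law computation plus the observation that discreteness yields a positive packing radius. Your direct expansion in the converse, noting that no reality assumption on the inner product is needed, is if anything slightly more careful than the paper's citation of Lemma~\ref{lem:parallelogram_law}, which is stated for real inner product spaces.
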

\begin{proof}
The first statement is Theorem~\ref{thm:parallelogram} with the observation that the positive packing radius implies non-degeneracy and discreteness.
The second statement is Lemma~\ref{lem:parallelogram_law} with the observation that discreteness implies a positive packing radius.
\end{proof}

\begin{example}
Consider for some \(n\in\Z_{\geq 0}\) the vector space \(\R^n\) with the standard inner product.
If \(\Lambda\subseteq \R^n\) is a discrete subgroup, then \(\Lambda\) is a Hilbert lattice when \(q\) is given by \(x\mapsto\|x\|^2\).
\end{example}

\begin{example}\label{ex:free_Hilbert_lattice}
Let \(B\) be a set. Then \(\Z^{(B)}=\{(x_b)_b \in \Z^B \,|\, x_b=0 \text{ for all but finitely many \(b\)}\}\) is a Hilbert lattice in \(\ell^2(B)\) when \(q\) is given by \(x\mapsto\|x\|^2\). In fact, any discrete subgroup of \(\ell^2(B)\) is a Hilbert lattice.
\end{example}

\begin{example}\label{ex:pre_general_lattice}
The infimum defining \(\textup{P}(\Lambda)\) of a Hilbert lattice \(\Lambda\) need not be attained. 
Certainly if \(\Lambda = 0\) we have that \(\textup{P}(\Lambda)=\infty\) is not attained.
For an example of a non-degenerate \(\Lambda\) consider the following.
For a set \(I\) and a map \(f:I\to \R_{\geq 0}\) we write \(\smash{\Lambda^f}\) for the group \(\Z^{(I)}\) together with the map \(q((x_i)_i)=\sum_{i\in I} f(i)^2 x_i^2\).
Note that \(\inf\{q(x)\,|\,x\in\smash{\Lambda^f}\setminus\{0\}\}=\inf\{f(i)^2\,|\, i\in I\}\), so \(\smash{\Lambda^f}\) is a Hilbert lattice if and only if \(\inf\{f(i)\,|\,i\in I\}>0\).
We now simply take \(f:\Z_{>0}\to\R_{\geq 0}\) given by \(n\mapsto 1 + 1/n\).
\end{example}

\begin{lemma}\label{lem:sublattice}
Let \(\Lambda\) be a Hilbert lattice with square-norm \(q\). 
Then any subgroup \(\Lambda'\subseteq\Lambda\) is a Hilbert lattice when equipped with the square-norm \(q|_{\Lambda'}\). \qed
\end{lemma}

\begin{theorem}\label{thm:lattice_eq}
Let \(F\) be the forgetful functor from the category of Hilbert spaces to the category of \(\Z\)-inner product spaces.
Then every \(\Z\)-inner product space \(L\) has an injective universal morphism \(\eta\) to \(F\). 
For every \(\Z\)-inner product space \(L\), Hilbert space \(\mathcal{H}\) and injective morphism \(f: L\to\mathcal{H}\) we have that \(f\) is universal if and only if \(\Q \cdot f(L)\) is dense in \(\mathcal{H}\), and \(L\) is a Hilbert lattice if and only if \(f(L)\) is discrete in \(\mathcal{H}\).
\end{theorem}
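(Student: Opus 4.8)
The plan is to realise the universal morphism as a composite of two universal constructions already available: extension of scalars from $\Z$ to $\Q$ (Example~\ref{ex:universal_morphism}(ii)) followed by metric completion (Theorem~\ref{thm:completion_inner_product}). Let $G$ denote the forgetful functor from $\Q$-inner product spaces to $\Z$-inner product spaces and $H$ the forgetful functor from Hilbert spaces to $\Q$-inner product spaces, so that the functor $F$ in the statement is $G\circ H$. Given a $\Z$-inner product space $L$, put $V=\Q\tensor_\Z L$ with the $\Q$-linearly extended inner product and $\eta_1:L\to V$, $x\mapsto 1\tensor x$. Positive-definiteness forces $L$ to be torsion-free (from $nx=0$ with $n\neq 0$ one gets $n^2\langle x,x\rangle=0$), so $\eta_1$ is injective, and by Example~\ref{ex:universal_morphism}(ii) it is a universal morphism from $L$ to $G$. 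By Theorem~\ref{thm:completion_inner_product} the completion $\mathcal{H}$ of $V$ carries an injective universal morphism $\eta_2:V\to\mathcal{H}$ from $V$ to $H$, with $\eta_2(V)$ dense in $\mathcal{H}$. I would then invoke the standard fact that universal morphisms compose along a composite functor: a morphism $g:L\to F(\mathcal{H}')$ factors uniquely through $\eta_1$ as a $\Q$-inner product morphism $V\to H(\mathcal{H}')$, which factors uniquely through $\eta_2$; chaining these shows $\eta:=\eta_2\circ\eta_1:L\to\mathcal{H}$ is a universal morphism from $L$ to $F$, and it is injective as a composite of injective maps. This proves the first assertion.

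For the second assertion, keep $\eta:L\to\mathcal{H}_0$ as the universal morphism just built. Every element of $V$ has the form $\tfrac1n(1\tensor x)$ with $n\in\Z_{>0}$ and $x\in L$, so $V$ equals the $\Q$-span of $\eta_1(L)$, and hence $\Q\cdot\eta(L)=\eta_2(V)$ is dense in $\mathcal{H}_0$ by $\Q$-linearity of $\eta_2$. Now let $f:L\to\mathcal{H}$ be an arbitrary injective morphism into a Hilbert space. The universal property of $\eta$ gives a unique isometric map $\varphi:\mathcal{H}_0\to\mathcal{H}$ with $\varphi\circ\eta=f$; being isometric and $\R$-linear, $\varphi$ is injective, its image is complete and hence closed in $\mathcal{H}$, and that image contains $\varphi(\Q\cdot\eta(L))=\Q\cdot f(L)$. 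If $\Q\cdot f(L)$ is dense in $\mathcal{H}$, then $\varphi$ is surjective, so it is an isomorphism of Hilbert spaces and $f=\varphi\circ\eta$ is again universal. Conversely, if $f$ is universal then $\varphi$ is forced to be an isomorphism (universal morphisms from $L$ to $F$ are unique up to a unique isomorphism), so $\Q\cdot f(L)=\varphi(\Q\cdot\eta(L))$ is the image of a dense set under a homeomorphism and is therefore dense.

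For the last assertion I would use only that any injective morphism $f:L\to\mathcal{H}$ is isometric and $\Z$-linear, hence restricts to an isometry of metric spaces from $L$ (with the metric of Remark~\ref{rem:norm}) onto $f(L)$; consequently $L$ is discrete if and only if $f(L)$ is discrete in $\mathcal{H}$. By Lemma~\ref{lem:trivial_hilbert_eq}, $L$ is a Hilbert lattice if and only if it is a discrete $\Z$-inner product space, and combining the two equivalences yields the claim.

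I expect the main obstacle to be organisational rather than mathematical: setting up the three categories and forgetful functors carefully enough to make the ``universal morphisms compose'' step rigorous, and keeping track that ``morphism'' in each category means ``isometric (hence injective) linear map'' so that the uniqueness clauses of the various universal properties match up. The only analytic input needed is that the isometric image of a complete inner product space is closed, which is immediate, so no estimates arise.
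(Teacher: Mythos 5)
Your proposal is correct and is essentially the paper's own argument: the paper proves this theorem in one sentence by combining Example~\ref{ex:universal_morphism}.ii with Theorem~\ref{thm:completion_inner_product} and invoking Lemma~\ref{lem:trivial_hilbert_eq} for the discreteness claim, which is precisely the composite-of-universal-morphisms construction, the density characterisation via the completion, and the isometry argument that you spell out in detail.
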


It follows from this theorem that the Hilbert lattices are, up to isomorphism, precisely the discrete subgroups of Hilbert spaces.
Hence Theorem~\ref{thm:lattice_eq} allows us to assume without loss of generality that a Hilbert lattice is a discrete subgroup of a Hilbert space.

\begin{proof}
The first and second statement are just a combination of Example~\ref{ex:universal_morphism}.ii and Theorem~\ref{thm:completion_inner_product}, while the third is trivial when taking the equivalent definition of Lemma~\ref{lem:trivial_hilbert_eq}.
\end{proof}

\begin{remark}\label{rem:fingen_lattice}
Let \(\Lambda\) be a Hilbert lattice in a Hilbert space \(\mathcal{H}\) and suppose that \(\Lambda\) is finitely generated.
Then \(\R\Lambda\) is a finite dimensional \(\R\)-inner product space and thus complete.
It follows that \(\Lambda\to \R\Lambda\) is a universal morphism because \(\Q\Lambda\) is dense in \(\R\Lambda\).
Since \(\R\Lambda\) is finite dimensional, \(\Lambda\) is a lattice in the classical sense: a discrete subgroup of a Euclidean vector space.
\end{remark}

\begin{lemma}\label{lem:injective_tensor_hilbert}
Let \(\Lambda\) be a Hilbert lattice in a Hilbert space \(\mathcal{H}\). Then the natural map \(\R\tensor_\Z\Lambda\to\mathcal{H}\) is injective. 
\end{lemma}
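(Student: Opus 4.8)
The plan is to reduce the statement to a finite-dimensional fact. The kernel of the natural map $\R\tensor_\Z\Lambda\to\mathcal{H}$ is an $\R$-subspace, so it suffices to show that every nonzero element $v\in\R\tensor_\Z\Lambda$ maps to a nonzero element of $\mathcal{H}$. Any such $v$ is a finite $\R$-linear combination $v=\sum_{i=1}^n r_i\tensor\lambda_i$ with $\lambda_i\in\Lambda$, so $v$ lies in the image of $\R\tensor_\Z\Lambda'$ for the finitely generated subgroup $\Lambda'=\sum_i\Z\lambda_i\subseteq\Lambda$. By Lemma~\ref{lem:sublattice}, $\Lambda'$ is itself a Hilbert lattice, and the inclusion $\Lambda'\hookrightarrow\mathcal{H}$ is an injective morphism of $\Z$-inner product spaces. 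Thus it is enough to prove the lemma for finitely generated $\Lambda$.

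Now assume $\Lambda$ is finitely generated. By Remark~\ref{rem:fingen_lattice}, $\R\Lambda\subseteq\mathcal{H}$ is a finite-dimensional $\R$-inner product space and $\Lambda$ is a classical lattice inside it; moreover the natural inclusion $\R\Lambda\subseteq\mathcal{H}$ restricts to the given inclusion of $\Lambda$. The natural map $\R\tensor_\Z\Lambda\to\mathcal{H}$ factors through the comparison map $\R\tensor_\Z\Lambda\to\R\Lambda\subseteq\mathcal{H}$, so it remains to check that $\R\tensor_\Z\Lambda\to\R\Lambda$ is injective. Pick a $\Z$-basis-up-to-finite-index, i.e.\ a maximal $\Z$-linearly independent subset $\{\lambda_1,\dots,\lambda_m\}$ of $\Lambda$; since $\Lambda$ is a discrete subgroup of the finite-dimensional space $\R\Lambda$ it is free of finite rank $m$ and $\{\lambda_1,\dots,\lambda_m\}$ can be taken to be an actual $\Z$-basis. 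Then $\R\tensor_\Z\Lambda$ is free of rank $m$ over $\R$ with basis $\{1\tensor\lambda_i\}$, and these map to $\{\lambda_i\}\subseteq\R\Lambda$, which are $\R$-linearly independent because they span the $m$-dimensional space $\R\Lambda$ and number $m$. Hence the map sends a basis to a linearly independent set and is injective.

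I do not expect a serious obstacle here; the only point requiring a little care is the first paragraph's reduction, namely that one may replace $\Lambda$ by a finitely generated subgroup without losing the relevant structure — this uses Lemma~\ref{lem:sublattice} to know the subgroup is still a Hilbert lattice and the naturality/functoriality of $\R\tensor_\Z(-)$ to know the diagram of inclusions commutes. After that, everything is the standard fact that tensoring a finite free $\Z$-module with $\R$ is injective into any real vector space containing it as a lattice, which is immediate once one invokes Remark~\ref{rem:fingen_lattice} to put $\Lambda$ inside a finite-dimensional real inner product space. One subtlety worth stating explicitly: the construction of $\R\Lambda$ in Remark~\ref{rem:fingen_lattice} already identifies $\Q\tensor_\Z\Lambda$ densely in it, so the map in question is precisely the $\R$-linear extension of that identification, which makes injectivity transparent.
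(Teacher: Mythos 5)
Your reduction to the finitely generated case is fine and matches the paper, but the finitely generated case itself --- which is where all the content lies --- contains a circular step. You define $m$ as the $\Z$-rank of $\Lambda$, so $\R\Lambda$ is spanned by a $\Z$-basis $\lambda_1,\dotsc,\lambda_m$ and hence has dimension \emph{at most} $m$; you then argue that the $\lambda_i$ are $\R$-linearly independent ``because they span the $m$-dimensional space $\R\Lambda$ and number $m$''. But that $\dim_\R(\R\Lambda)=m$ is exactly equivalent to the $\R$-linear independence you are trying to prove; Remark~\ref{rem:fingen_lattice} only gives finite-dimensionality of $\R\Lambda$, not its dimension. Note also that, as written, your argument never genuinely uses discreteness: you invoke it only to conclude that $\Lambda$ is free of finite rank, which already follows from finite generation plus torsion-freeness. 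Discreteness is essential, however: $\Lambda=\Z+\Z\sqrt{2}\subseteq\R$ is finitely generated, torsion-free of rank $2$, and a $\Z$-inner product space, yet $\R\tensor_\Z\Lambda\cong\R^2$ maps onto $\R$ non-injectively --- precisely because there $\dim_\R(\R\Lambda)=1<2=m$. So the assertion ``$\R\Lambda$ is $m$-dimensional'' is the whole point and cannot be had for free.

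If you mean to invoke the classical structure theorem (a discrete subgroup of a finite-dimensional real vector space is generated by $\R$-linearly independent vectors), you must say so explicitly, and be aware that this is essentially the statement at stake: the paper deduces it, in the form of Proposition~\ref{prop:fingen_is_free}, \emph{from} this lemma rather than the other way around. The paper's own proof fills the gap as follows: choose an $\R$-basis of $V=\R\Lambda$ consisting of elements of $\Lambda$, let $\Lambda'$ be the subgroup it generates (so $\R\tensor_\Z\Lambda'\to V$ is an isomorphism), use discreteness of $\Lambda$ in $V$ together with compactness of $V/\Lambda'$ to see that $\Lambda/\Lambda'$ is finite, hence $\Lambda\subseteq\frac{1}{n}\Lambda'$ with $n=\#(\Lambda/\Lambda')$, and then conclude by flatness of $\R$ over $\Z$ applied to $\Lambda\subseteq\frac{1}{n}\Lambda'$. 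Some argument of this kind (or an explicit proof of the structure theorem) is needed where you currently have the dimension count.
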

\begin{proof}
To show \(\R\tensor_\Z\Lambda\to\mathcal{H}\) is injective we may assume by Lemma~\ref{lem:sublattice} without loss of generality that \(\Lambda\) is finitely generated, as any element in the kernel is also in \(\R\tensor_\Z\Lambda'\) for some finitely generated sublattice \(\Lambda'\subseteq\Lambda\).
Write \(V=\R\Lambda\subseteq\mathcal{H}\).
We may choose an \(\R\)-basis for \(V\) in \(\Lambda\), and let \(\Lambda'\) be the group generated by this basis.
Then \(\R\tensor_\Z\Lambda'\to V\) is an isomorphism.
As \(\Lambda\) is discrete in \(V\), also \(\Lambda/\Lambda'\) is discrete in \(V/\Lambda'\).
Now \(V/\Lambda'\) is compact, so the quotient \(\Lambda/\Lambda'\) is finite. Then \(\Lambda \subseteq \tfrac{1}{n} \Lambda' \), where \(n=\#(\Lambda/\Lambda')\).
Now the natural map \(\R\tensor_\Z\Lambda\to\mathcal{H}\) is injective because it is the composition of the map \(\R\tensor_\Z\Lambda\to\R\tensor_\Z(\tfrac{1}{n}\Lambda')=\R\tensor_\Z\Lambda'\), which is injective since \(\R\) is flat over \(\Z\), and the map \(\R\tensor_\Z\Lambda'\to V\), which is injective by construction. 
\end{proof}

\begin{proposition}\label{prop:fingen_is_free}
Let \(\Lambda\) be a Hilbert lattice in a Hilbert space \(\mathcal{H}\) and suppose \(\Lambda\) is finitely generated as \(\Z\)-module. 
Then \(\Lambda\) has a \(\Z\)-basis and any \(\Z\)-basis is \(\R\)-linearly independent.
\end{proposition}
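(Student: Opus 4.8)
The plan is to split the statement into two parts: (a) $\Lambda$ is free as a $\Z$-module, so that it has a $\Z$-basis at all; and (b) every $\Z$-basis of $\Lambda$ remains $\R$-linearly independent once $\Lambda$ is viewed inside $\mathcal{H}$.

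For (a), I would first observe that $\Lambda$ is torsion-free as a $\Z$-module: it is a subgroup of $\mathcal{H}$, and $\mathcal{H}$ is an $\R$-vector space, hence torsion-free. Then I would invoke the structure theorem for finitely generated modules over a principal ideal domain, applied to $\Z$: a finitely generated torsion-free $\Z$-module is free, so $\Lambda$ admits a finite $\Z$-basis $b_1,\dots,b_n$. I would point out that finite generation is used precisely here (for freeness), not for torsion-freeness — indeed $\Q$ shows that torsion-freeness alone is not enough. If one prefers a more self-contained argument, I would instead use Remark~\ref{rem:fingen_lattice} to get that $\R\Lambda$ is finite dimensional, choose a maximal $\R$-linearly independent subset of $\Lambda$ spanning a sublattice $\Lambda'$, and run exactly the argument of Lemma~\ref{lem:injective_tensor_hilbert} to conclude that $\Lambda/\Lambda'$ is finite; then $\Lambda'\subseteq\Lambda\subseteq\tfrac{1}{N}\Lambda'$ with $N=\#(\Lambda/\Lambda')$, and $\Lambda$ is sandwiched between two free $\Z$-modules of the same finite rank, hence free of that rank.

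For (b), I would take an arbitrary $\Z$-basis $b_1,\dots,b_n$ of $\Lambda$ and use Lemma~\ref{lem:injective_tensor_hilbert}, which says the natural $\R$-linear map $\R\tensor_\Z\Lambda\to\mathcal{H}$ is injective. Since $\Lambda$ is free on $b_1,\dots,b_n$, the elements $1\tensor b_1,\dots,1\tensor b_n$ form an $\R$-basis of $\R\tensor_\Z\Lambda$. Hence if $\sum_{i=1}^n c_i b_i=0$ in $\mathcal{H}$ with $c_i\in\R$, then $\sum_{i=1}^n c_i(1\tensor b_i)$ lies in the kernel of the injective map above and therefore vanishes, which forces every $c_i=0$. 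Thus the images $b_1,\dots,b_n$ are $\R$-linearly independent in $\mathcal{H}$.

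I do not expect a genuine obstacle: the only points needing care are to apply the finite-generation hypothesis at the right step and to have Lemma~\ref{lem:injective_tensor_hilbert} available so that $\Z$-linear independence upgrades to $\R$-linear independence.
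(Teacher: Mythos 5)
Your proof is correct and follows essentially the same route as the paper: freeness comes from $\Lambda$ being a finitely generated torsion-free $\Z$-module, and the upgrade from $\Z$-linear to $\R$-linear independence comes from the injectivity of $\R\tensor_\Z\Lambda\to\mathcal{H}$ in Lemma~\ref{lem:injective_tensor_hilbert}. You merely spell out in more detail what the paper's two-line proof leaves implicit.
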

\begin{proof}
Since \(\Lambda\) is finitely generated and torsion free, it is clear that \(\Lambda\) is free.
By Lemma~\ref{lem:injective_tensor_hilbert}, any \(\Z\)-linearly independent subset of \(\Lambda\) is \(\R\)-linearly independent.
\end{proof}

\begin{proposition}\label{prop:projection_lattice}
Suppose \(\Lambda\) is a Hilbert lattice in a Hilbert space \(\mathcal{H}\) and let \(\Lambda'\subseteq\Lambda\) be a finitely generated subgroup.
Let \(\pi:\mathcal{H}\to\mathcal{H}\) be the orthogonal projection onto the orthogonal complement of \(\Lambda'\).
Then for each \(0\leq t < \tfrac{1}{4}\textup{P}(\Lambda)\) there are only finitely many \(z\in\pi\Lambda\) such that \(q(z)\leq t\), and \(\pi\Lambda\) is a Hilbert lattice.
\end{proposition}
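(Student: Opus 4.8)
The plan is to reduce the statement to a finiteness question inside a finite-dimensional subspace. Put \(V=\R\Lambda'\); since \(\Lambda'\) is finitely generated, Proposition~\ref{prop:fingen_is_free} shows \(\Lambda'\) is free with an \(\R\)-linearly independent \(\Z\)-basis, so \(V\) is finite-dimensional and \(\Lambda'\) is a full-rank lattice in \(V\), with finite covering radius \(\rho=\sup_{v\in V}\mathrm{dist}(v,\Lambda')\). The orthogonal complement of \(\Lambda'\) in \(\mathcal{H}\) is \(W:=V^{\perp}\), we have an orthogonal decomposition \(\mathcal{H}=V\oplus W\) (as \(V\) is finite-dimensional, hence closed), and writing \(p=\mathrm{id}_{\mathcal{H}}-\pi\) for the orthogonal projection onto \(V\) we get \(q(h)=\|p(h)\|^{2}+\|\pi(h)\|^{2}\) for all \(h\in\mathcal{H}\).

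Fix \(t\) with \(0\leq t<\tfrac14\textup{P}(\Lambda)\) and let \(S=\{z\in\pi\Lambda\mid q(z)\leq t\}\). For each \(z\in S\) choose \(\lambda\in\Lambda\) with \(\pi(\lambda)=z\) and a point \(w\in\Lambda'\) nearest to \(p(\lambda)\) in \(V\), and set \(\mu_z=\lambda-w\in\Lambda\); since \(w\in V\) we have \(\pi(\mu_z)=z\) and \(\|p(\mu_z)\|=\mathrm{dist}(p(\lambda),\Lambda')\leq\rho\). The key point is that \(z\mapsto p(\mu_z)\) is injective on \(S\) with uniformly separated image: for distinct \(z_1,z_2\in S\), the lattice vector \(\mu_{z_1}-\mu_{z_2}\) is nonzero because its \(\pi\)-part \(z_1-z_2\) is nonzero, so \(q(\mu_{z_1}-\mu_{z_2})\geq\textup{P}(\Lambda)\), while \(q(\mu_{z_1}-\mu_{z_2})=\|z_1-z_2\|^{2}+\|p(\mu_{z_1})-p(\mu_{z_2})\|^{2}\) with \(\|z_1-z_2\|\leq\|z_1\|+\|z_2\|\leq 2\sqrt t\); hence \(\|p(\mu_{z_1})-p(\mu_{z_2})\|^{2}\geq\textup{P}(\Lambda)-4t>0\). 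Thus \(\{p(\mu_z)\mid z\in S\}\) is a \(\sqrt{\textup{P}(\Lambda)-4t}\)-separated subset of the closed ball of radius \(\rho\) in the finite-dimensional space \(V\), so it is finite by compactness, and injectivity then forces \(S\) to be finite.

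For the last assertion, \(\pi\Lambda\) is a subgroup of \(\mathcal{H}\), so \(q|_{\pi\Lambda}=\|\cdot\|^{2}\) satisfies the parallelogram law by Lemma~\ref{lem:parallelogram_law}; and choosing any \(t\) with \(0<t<\tfrac14\textup{P}(\Lambda)\) (possible since \(\textup{P}(\Lambda)>0\)), the finiteness of \(S\) just proved gives \(\textup{P}(\pi\Lambda)\geq\min\bigl(\{t\}\cup\{q(z)\mid z\in S\setminus\{0\}\}\bigr)>0\), so \(\pi\Lambda\) is a Hilbert lattice. I expect the main obstacle to be the following point, which also explains why the hypothesis \(t<\tfrac14\textup{P}(\Lambda)\) cannot be weakened: in infinite dimensions a bounded, uniformly discrete set need not be finite (an orthonormal family is the standard example), so one cannot simply invoke the bound \(\|\mu_z\|^{2}\leq t+\rho^{2}\) to finish; the separation has to be moved into the finite-dimensional space \(V\), and the constant \(4=2^{2}\) is exactly what keeps \(\textup{P}(\Lambda)-4t\) positive after the triangle-inequality estimate \(\|z_1-z_2\|\leq 2\sqrt t\).
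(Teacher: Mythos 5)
Your proof is correct and follows essentially the same route as the paper: split \(q\) orthogonally into the \(\pi\)-part and the \(\R\Lambda'\)-part, bound the \(\pi\)-part of a difference by \(4t\), and use compactness of \(\R\Lambda'\) modulo \(\Lambda'\) (which you phrase via the finite covering radius and a separated-set argument, the paper via pigeonhole on the compact quotient) to conclude finiteness, then deduce discreteness of \(\pi\Lambda\). The only difference is presentational — you argue directly with representatives reduced modulo \(\Lambda'\), while the paper argues by contradiction — and both hinge on exactly the same inequality \(\textup{P}(\Lambda)\leq q(\text{difference})=q(\pi\text{-part})+q(V\text{-part})<4t+(\textup{P}(\Lambda)-4t)\).
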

\begin{proof}
Suppose that \(\pi\Lambda\) contains infinitely many points \(z\) with \(q(z)\leq t\), or equivalently there exists some infinite set \(S\subseteq\Lambda\) such that \(\pi|_S\) is injective and \(q(\pi(x))\leq t\) for all \(x\in S\).
Consider the map \(\tau:\mathcal{H} \to \R\Lambda'\), the complementary projection to \(\pi\).
As \((\R\Lambda')/\Lambda'\) is compact, there must exist distinct \(x,y\in S\) such that \(q(\tau(x)-\tau(y)+w)<\textup{P}(\Lambda)-4t\) for some \(w\in\Lambda'\).
Then 
\[0<q(x-y+w)=q(\pi(x-y))+q(\tau(x-y)+w) < 2\big(q(\pi(x))+q(\pi(y))\big)+\textup{P}(\Lambda)-4t \leq \textup{P}(\Lambda),\]
a contradiction. Hence there are only finitely many \(z\in\pi\Lambda\) such that \(q(z)\leq t\).
To verify that \(\pi\Lambda\) is a Hilbert lattice it suffices to show that it is discrete in \(\mathcal{H}\), which follows from the previous by taking any non-zero value for \(t\).
\end{proof}

\begin{lemma}\label{lem:basis_ext}
Let \(\Lambda\) be a Hilbert lattice which is finitely generated as \(\Z\)-module and let \(S\subseteq\Lambda\) be a set of vectors that forms a basis for \(\Lambda\cap(\R S)\). Then there exists a basis \(B\supseteq S\) of \(\Lambda\).
\end{lemma}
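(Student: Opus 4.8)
The plan is to reduce to a standard fact about submodules of finitely generated free abelian groups, namely that a pure (saturated) subgroup of a free abelian group is a direct summand, and that a basis of a direct summand extends to a basis of the whole group. So the first step is to observe that by Proposition~\ref{prop:fingen_is_free} the group $\Lambda$ is free of finite rank, say with basis giving $\Lambda\cong\Z^n$. Let $\Lambda_0=\Lambda\cap(\R S)$. Since $S$ is a basis of $\Lambda_0$, the subgroup $\Lambda_0$ is free with $\R\Lambda_0=\R S$, and $\#S=\rk\Lambda_0$ by Proposition~\ref{prop:fingen_is_free} applied to $\Lambda_0$ (which is again a Hilbert lattice by Lemma~\ref{lem:sublattice}).

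The key point is that $\Lambda_0$ is a \emph{pure} subgroup of $\Lambda$: if $m z\in\Lambda_0$ for some nonzero $m\in\Z$ and $z\in\Lambda$, then $z\in\R S$ (as $\R S$ is an $\R$-subspace and $mz\in\R S$ with $m\neq 0$), so $z\in\Lambda\cap(\R S)=\Lambda_0$. Hence $\Lambda/\Lambda_0$ is torsion-free, and being finitely generated it is free; therefore the short exact sequence $0\to\Lambda_0\to\Lambda\to\Lambda/\Lambda_0\to 0$ splits, giving $\Lambda=\Lambda_0\oplus\Lambda_1$ for some complement $\Lambda_1$. Now choose any basis $T$ of the free group $\Lambda_1$ and set $B=S\cup T$; since $\Lambda_0\cap\Lambda_1=0$ the union is disjoint and $B$ is a $\Z$-basis of $\Lambda$ containing $S$, as required. (Alternatively, one can argue via Smith normal form: extend $S$ to a maximal $\Z$-independent subset, apply the stacked-basis theorem to $\Lambda_0\subseteq\Lambda$, and use purity to conclude all the elementary divisors are $1$.)

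I do not expect a serious obstacle here; the only thing to be careful about is making sure that "basis of $\Lambda\cap(\R S)$" does the right thing, i.e. that purity genuinely holds, which is exactly the computation $mz\in\R S\Rightarrow z\in\R S$ above — this uses that we intersect with the full real span $\R S$ rather than, say, the $\Q$-span of $S$, but since $S$ is finite these coincide on $\Lambda$ anyway. Everything else is the standard structure theory of finitely generated abelian groups, so the proof should be short.
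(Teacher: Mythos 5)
Your proof is correct, but it takes a genuinely different route from the paper. The paper works geometrically: it takes the orthogonal projection \(\pi\) onto the orthogonal complement of \(\R S\), invokes Proposition~\ref{prop:projection_lattice} to see that \(\pi\Lambda\) is again a Hilbert lattice, takes a basis of \(\pi\Lambda\) via Proposition~\ref{prop:fingen_is_free}, lifts it to \(T\subseteq\Lambda\), and checks that \(S\cup T\) is a basis. You instead argue purely algebraically: \(\Lambda_0=\Lambda\cap(\R S)\) is a pure subgroup (since \(\R S\) is closed under division by nonzero integers), so \(\Lambda/\Lambda_0\) is finitely generated torsion-free, hence free, the exact sequence \(0\to\Lambda_0\to\Lambda\to\Lambda/\Lambda_0\to0\) splits, and a basis of the complement together with \(S\) gives the desired \(B\). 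Your splitting argument needs nothing beyond Proposition~\ref{prop:fingen_is_free} (really just torsion-freeness and finite generation) and in particular avoids Proposition~\ref{prop:projection_lattice} entirely; it would prove the statement for any finitely generated torsion-free abelian group sitting in a real vector space, with the inner product playing no role. The paper's projection argument, by contrast, stays inside the Hilbert-lattice framework it has already set up (the same projection machinery is what makes the countable case, Theorem~\ref{thm:almost_free}, run), at the price of an ``easy to show'' verification that the lifted set is a basis — which in your version is replaced by the standard fact that a direct sum decomposition yields a basis by concatenation. The parenthetical remark about the \(\Q\)-span versus \(\R\)-span is unnecessary (purity holds directly for \(\R S\)), but it is harmless since \(\Lambda\cap\R S=\Z S\subseteq\Q S\) anyway.
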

\begin{proof}
Let \(\pi\) be the projection onto the orthogonal complement of \(S\). Then \(\pi\Lambda\) is a Hilbert lattice by Proposition~\ref{prop:projection_lattice} with a basis \(B_\pi\) by Proposition~\ref{prop:fingen_is_free}. Now choose for every \(b_\pi\in B_\pi\) a lift \(b\in\Lambda\) and let \(T\) be the set of those elements.
It is easy to show that \(B=S\cup T\) is a basis of \(\Lambda\). 
\end{proof}

We say an abelian group is {\em almost free} if all its countable subgroups are free.

\begin{theorem}\label{thm:almost_free}
If \(\Lambda\) is a Hilbert lattice, then \(\Lambda\) is almost free as an abelian group.
\end{theorem}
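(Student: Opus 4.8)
The plan is to reduce to countable Hilbert lattices and then construct a $\Z$-basis by exhausting $\Lambda$ with finitely generated subgroups of a special shape, so that the basis-extension lemma (Lemma~\ref{lem:basis_ext}) applies at each stage. By Lemma~\ref{lem:sublattice} every subgroup of a Hilbert lattice is again a Hilbert lattice, so it suffices to prove that a countable Hilbert lattice is free; by Theorem~\ref{thm:lattice_eq} we may assume $\Lambda$ is a discrete subgroup of a Hilbert space $\mathcal{H}$, and then $\Lambda$ is torsion-free (it embeds in an $\R$-vector space). Enumerate $\Lambda=\{v_0,v_1,\dots\}$, put $\Lambda_n=\langle v_0,\dots,v_{n-1}\rangle$, and set $\Lambda_n'=\Lambda\cap\R\Lambda_n$, with the span and intersection taken inside $\mathcal{H}$. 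Since $\Lambda_n$ is finitely generated, $\R\Lambda_n$ is finite-dimensional (Proposition~\ref{prop:fingen_is_free}), and since $\Lambda$ is discrete in $\mathcal{H}$ it is discrete in $\R\Lambda_n$; hence $\Lambda_n'$ is a discrete subgroup of a finite-dimensional real vector space and so is finitely generated (and free of finite rank). Moreover $\R\Lambda_n'=\R\Lambda_n$ because $\Lambda_n\subseteq\Lambda_n'\subseteq\R\Lambda_n$, and $\Lambda_0'\subseteq\Lambda_1'\subseteq\cdots$ is an increasing chain whose union contains $\bigcup_n\Lambda_n=\Lambda$ and hence equals $\Lambda$.

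Now I would build nested $\Z$-bases $B_0\subseteq B_1\subseteq\cdots$ of $\Lambda_0'\subseteq\Lambda_1'\subseteq\cdots$ recursively. Start with any $\Z$-basis $B_0$ of $\Lambda_0'$, which exists by Proposition~\ref{prop:fingen_is_free}. Given $B_n$, note that $\R B_n=\R\Lambda_n'=\R\Lambda_n$, so
\[\Lambda_{n+1}'\cap\R B_n=(\Lambda\cap\R\Lambda_{n+1})\cap\R\Lambda_n=\Lambda\cap\R\Lambda_n=\Lambda_n',\]
of which $B_n$ is a basis by construction; applying Lemma~\ref{lem:basis_ext} to the finitely generated Hilbert lattice $\Lambda_{n+1}'$ and the set $B_n$ produces a $\Z$-basis $B_{n+1}\supseteq B_n$ of $\Lambda_{n+1}'$. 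Then $B:=\bigcup_n B_n$ is a $\Z$-basis of $\Lambda$: every finite subset of $B$ lies in some $B_n$ and is therefore $\Z$-linearly independent, and every $x\in\Lambda=\bigcup_n\Lambda_n'$ lies in some $\Lambda_n'$ and is thus a $\Z$-linear combination of elements of $B_n\subseteq B$. Hence $\Lambda$ is free.

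The step requiring care --- and the reason one cannot simply use the $\Lambda_n$ themselves --- is that an increasing union of finitely generated free abelian groups need not be free (e.g.\ $\Q=\bigcup_n\frac{1}{n!}\Z$). Passing to the saturations $\Lambda_n'=\Lambda\cap\R\Lambda_n$ and using discreteness to guarantee they are finitely generated is exactly what makes Lemma~\ref{lem:basis_ext} applicable at every stage; this is in effect a proof of Pontryagin's freeness criterion in the present context, and one could alternatively invoke that criterion directly after checking, by the same span argument, that every finite-rank subgroup of $\Lambda$ is finitely generated.
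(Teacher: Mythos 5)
Your proof is correct and takes essentially the same route as the paper: the paper also reduces to the countable case via Lemma~\ref{lem:sublattice}, sets \(\Lambda_i = \Lambda\cap\R\{x_1,\dotsc,x_i\}\) (your saturations \(\Lambda_n'\)), extends nested bases step by step with Lemma~\ref{lem:basis_ext}, and takes the union. Your added remarks --- explicitly checking that the saturations are finitely generated so that Lemma~\ref{lem:basis_ext} applies, and explaining why the unsaturated \(\Lambda_n\) would not suffice --- are details the paper leaves implicit, but the argument is the same.
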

\begin{proof}
By Lemma~\ref{lem:sublattice} suffices to show that if \(\Lambda\) is countable then \(\Lambda\) is free.
We may write \(\Lambda=\{x_1,x_2,\dotsc\}\) and let \(V_i=\sum_{j=1}^i \R x_j\) and \(\Lambda_i=V_i\cap\Lambda\).
We claim that there exist bases \(B_i\) for \(\Lambda_i\) such that \(B_i\subseteq B_j\) for all \(i\leq j\).
Indeed, take \(B_0=\emptyset\) and inductively for \(\Lambda_{i+1}\) note that \(B_i\) is a basis for \(\Lambda_i=\Lambda_{i+1}\cap V_i\), so that by Lemma~\ref{lem:basis_ext} there exists some basis \(B_{i+1}\) for \(\Lambda_{i+1}\) containing \(B_i\).
Then \(B=\bigcup_{i=0}^\infty B_i\) is a basis for \(\Lambda\), so \(\Lambda\) is free.
\end{proof}

\begin{question}
We have by Example~\ref{ex:free_Hilbert_lattice} and Theorem~\ref{thm:almost_free} two inclusions
\[ \{ \textup{free abelian groups} \} \subseteq \{\textup{underlying groups of Hilbert lattices} \} \subseteq\{\textup{almost free abelian groups}\}. \]
Is one of these inclusions an equality, and if so, which?
\end{question}

\begin{example}
There are abelian groups which are almost free but not free.
Let \(X\) be a countably infinite set and consider the Baer--Specker group \(B=\Z^X\).
Then by Theorem~21 in \citep{kaplansky}, we have that \(B\) is not free.
Since \(B\) is a torsion-free \(\Z\)-module, so is any countable subgroup, which is then free by Theorem~16 in \citep{kaplansky}, i.e.\ \(B\) is almost free.
\end{example}

\begin{definition}
For a Hilbert lattice \(\Lambda\) we define its {\em rank} as \(\rk \Lambda=\dim_\Q(\Lambda\tensor_\Z\Q)\).
We will say a Hilbert lattice \(\Lambda\) is of {\em full rank} in an ambient Hilbert space \(\mathcal{H}\) if \(\Q\Lambda\) is dense in \(\mathcal{H}\).
\end{definition}

For free Hilbert lattices \(\Lambda\) we have \(\Lambda\cong\Z^{(\rk\Lambda)}\) as abelian group.
By Theorem~\ref{thm:lattice_eq} every Hilbert lattice has a uniquely unique Hilbert space in which it is contained and of full rank.

\begin{lemma}\label{lem:aux_orthdim}
Let \(\mathcal{H}\) be a Hilbert space and let \(S,T\subseteq\mathcal{H}\) be subsets such that \(S\) is infinite, the \(\Q\)-vector space generated by \(S\) is dense in \(\mathcal{H}\) and \(\inf\{ \|x-y\| \,|\, x,y\in T,\,x\neq y\} >0\). Then \(\# S \geq \#T\).
\end{lemma}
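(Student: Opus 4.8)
The plan is to run the standard density-character argument: a uniformly discrete set cannot be larger than any dense subset, and the canonical dense subset here, the $\Q$-span of $S$, has cardinality exactly $\#S$ because $S$ is infinite.

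First I would set $\delta=\inf\{\|x-y\|\,:\,x,y\in T,\ x\neq y\}$, which is positive by hypothesis (the cases $\#T\leq 1$ being trivial, so we may assume $T$ has at least two points and $\delta>0$ is genuinely used). Next, let $D$ denote the $\Q$-vector subspace of $\mathcal{H}$ generated by $S$, so that $S\subseteq D$ and $D$ is dense in $\mathcal{H}$ by hypothesis. I would then record the cardinality computation $\#D=\#S$: on one hand $S\subseteq D$ gives $\#S\leq\#D$; on the other hand $D=\bigcup_{n\geq 0}\{\sum_{i=1}^{n}q_i s_i\,:\,q_i\in\Q,\ s_i\in S\}$ is a countable union of sets each of cardinality at most $\#(\Q^n\times S^n)=\#S$ (here using that $S$ is infinite), so $\#D\leq\aleph_0\cdot\#S=\#S$.

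Then comes the heart of the argument. Using density, for each $t\in T$ choose (by the axiom of choice) a point $d_t\in D$ with $\|d_t-t\|<\delta/2$. The assignment $t\mapsto d_t$ is injective: if $d_t=d_{t'}$ for $t\neq t'$ in $T$, then $\|t-t'\|\leq\|t-d_t\|+\|d_{t'}-t'\|<\delta$, contradicting the definition of $\delta$. Hence there is an injection $T\hookrightarrow D$, so $\#T\leq\#D=\#S$, which is the claim.

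I do not expect any real obstacle here; the only points requiring a little care are the cardinal arithmetic $\#D=\#S$ (which genuinely needs $S$ infinite, explaining that hypothesis) and the trivial verification that the chosen points $d_t$ are distinct, which is immediate from the triangle inequality. Everything uses only elementary properties of Hilbert spaces and no results beyond the hypotheses.
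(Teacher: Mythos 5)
Your proposal is correct and follows essentially the same argument as the paper: note that the $\Q$-span of $S$ has the same cardinality as $S$ (since $S$ is infinite), pick for each $t\in T$ a point of the span within half the separation distance, and observe that this assignment is injective by the triangle inequality. No meaningful differences.
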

\begin{proof}
Because \(S\) is infinite, the set \(S\) and the \(\Q\)-vector space \(V\) generated by \(S\) have the same cardinality.
Let \(\rho=\inf\{ \|x-y\| \,|\, x,y\in T,\,x\neq y\}\).
Since \(V\) is dense in \(\mathcal{H}\) we may for each \(x\in T\) choose \(f(x)\in V\) such that \(\|x-f(x)\|<\rho/2\).
If \(f(x)=f(y)\), then \(\|x-y\|=\|(x-f(x))-(y-f(y))\|\leq \|x-f(x)\|+\|y-f(y)\|<\rho\), so \(x=y\).
Hence \(f\) is injective and we have \(\# S = \# V \geq \# T\).
\end{proof}

The following theorem was independently proven by O. Berrevoets and independently by B. Kadets.

\begin{theorem}\label{thm:onno}
If \(\Lambda\) is a Hilbert lattice in a Hilbert space \(\mathcal{H}\), then \(\rk \Lambda \leq \Hdim \mathcal{H}\) with equality if \(\Lambda\) is of full rank in \(\mathcal{H}\). 
\end{theorem}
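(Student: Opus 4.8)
The plan is to establish the inequality $\rk\Lambda \le \Hdim\mathcal{H}$ first and then, under the full-rank hypothesis, the reverse inequality, in each part splitting according to whether the relevant cardinal is finite or infinite. Two observations will be used repeatedly. First, $\Lambda$ is a \emph{separated} subset of $\mathcal{H}$: if $x,y\in\Lambda$ are distinct, then $x-y$ is a non-zero element of $\Lambda$, so $\|x-y\|^2 = q(x-y)\ge\textup{P}(\Lambda)>0$. Second, whenever $\rk\Lambda$ is infinite one has $\#\Lambda = \rk\Lambda$; indeed $\Lambda$ embeds into the $\Q$-vector space $\Lambda\tensor_\Z\Q$, which has dimension $\rk\Lambda$ and hence cardinality $\max(\aleph_0,\rk\Lambda)=\rk\Lambda$, while the reverse inequality $\#\Lambda\ge\rk\Lambda$ is clear.

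For the inequality $\rk\Lambda\le\Hdim\mathcal{H}$, suppose first that $\Hdim\mathcal{H}$ is finite, say equal to $n$. By Theorem~\ref{thm:is_ell2} we may identify $\mathcal{H}$ with $\R^n$, and by Lemma~\ref{lem:injective_tensor_hilbert} the natural map $\R\tensor_\Z\Lambda\to\mathcal{H}$ is injective; since $\dim_\R(\R\tensor_\Z\Lambda)=\rk\Lambda$, this forces $\rk\Lambda\le n$. If $\Hdim\mathcal{H}$ is infinite, the inequality is trivial unless $\rk\Lambda$ is also infinite, in which case I take an orthonormal basis $(e_b)_{b\in B}$ of $\mathcal{H}$, which exists with $\#B=\Hdim\mathcal{H}$ by Theorem~\ref{thm:is_ell2} through the model $\ell^2(B)$. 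The $\Q$-span of $\{e_b:b\in B\}$ is dense in $\mathcal{H}$, so Lemma~\ref{lem:aux_orthdim}, applied with $S=\{e_b:b\in B\}$ (infinite, with dense $\Q$-span) and $T=\Lambda$ (separated), yields $\Hdim\mathcal{H}=\#B\ge\#\Lambda=\rk\Lambda$.

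Now assume $\Q\Lambda$ is dense in $\mathcal{H}$; it remains to prove $\Hdim\mathcal{H}\le\rk\Lambda$. If $\rk\Lambda=d$ is finite, choose $v_1,\dots,v_d\in\Lambda$ forming a $\Q$-basis of $\Q\Lambda$; by Lemma~\ref{lem:injective_tensor_hilbert} they are $\R$-linearly independent in $\mathcal{H}$, so $\sum_{i=1}^d\R v_i$ is a $d$-dimensional, hence closed, subspace of $\mathcal{H}$ containing the dense set $\Q\Lambda$. Therefore $\mathcal{H}=\sum_{i=1}^d\R v_i$, whence $\Hdim\mathcal{H}=d=\rk\Lambda$ by Theorem~\ref{thm:is_ell2}. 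If $\rk\Lambda$ is infinite, then $\Lambda$ is infinite, and I apply Lemma~\ref{lem:aux_orthdim} in the opposite direction, with $S=\Lambda$ (infinite, with dense $\Q$-span by hypothesis) and $T$ an orthonormal basis of $\mathcal{H}$ (separated, with pairwise distances $\sqrt2$), obtaining $\rk\Lambda=\#\Lambda\ge\Hdim\mathcal{H}$.

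The only real difficulty I anticipate is organizational rather than mathematical: Lemma~\ref{lem:aux_orthdim} requires an infinite set, so the finite-rank and finite-orthogonal-dimension cases must be dispatched separately, using that a finite-dimensional subspace of a Hilbert space is closed, and one should be mildly careful with the cardinal arithmetic $\#\Lambda=\rk\Lambda$ in the infinite case. Beyond that, everything reduces to direct appeals to Lemma~\ref{lem:injective_tensor_hilbert}, Lemma~\ref{lem:aux_orthdim}, and Theorem~\ref{thm:is_ell2}.
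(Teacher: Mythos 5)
Your proof is correct and follows essentially the same route as the paper's: Lemma~\ref{lem:injective_tensor_hilbert} together with completeness of finite-dimensional subspaces handles the finite-rank/finite-dimension cases, and Lemma~\ref{lem:aux_orthdim} is applied in both directions (orthonormal basis versus $\Lambda$) in the infinite case, with Theorem~\ref{thm:is_ell2} supplying the $\ell^2(B)$ model and the identification of $\Hdim$ with dimension. The only difference is a slightly different organization of the case split, which does not change the substance of the argument.
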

\begin{proof}
First suppose \(\rk\Lambda\) is finite.
It follows from Lemma~\ref{lem:injective_tensor_hilbert} that \(\rk \Lambda = \dim_\R (\R\tensor_\Z\Lambda) = \dim_\R(\R\Lambda) \leq \dim_\R \mathcal{H}\).
If \(\Lambda\) is of full rank, then \(\R\Lambda\) is dense in \(\mathcal{H}\), but \(\R\Lambda\) is complete as it is finite-dimensional, so \(\R\Lambda=\mathcal{H}\) and \(\rk \Lambda = \dim_\R\mathcal{H}\).
Lastly, it follows from Theorem~\ref{thm:is_ell2} that \(\dim_\R\mathcal{H}=\Hdim\mathcal{H}\) when \(\dim_\R\mathcal{H}\) is finite.

Now suppose \(\rk \Lambda\) is infinite and thus \(\# \Lambda = \rk \Lambda\).
By Theorem~\ref{thm:is_ell2} we may assume without loss of generality that \(\mathcal{H}=\ell^2(B)\) for some set \(B\) of cardinality \(\Hdim \mathcal{H}\), which must be infinite.
Observe that the \(\Q\)-vector space generated by \(B\) is dense in \(\mathcal{H}\).
We may apply Lemma~\ref{lem:aux_orthdim} by discreteness of \(\Lambda\) to obtain \(\Hdim \mathcal{H} = \#B \geq \# \Lambda = \rk \Lambda\), as was to be shown.
If \(\Q\Lambda\) is dense in \(\mathcal{H}\), then we may apply Lemma~\ref{lem:aux_orthdim} since \(\|b-c\|^2=\|b\|^2+\|c\|^2=2\) for all distinct \(b,c\in B\) to conclude that \(\rk \Lambda = \#\Lambda \geq \# B = \Hdim \mathcal{H}\), and thus we have equality.
\end{proof}

\section{Decompositions}

\begin{definition}
Let \(\Lambda\) be a Hilbert lattice.
A {\em decomposition} of an element \(z\in \Lambda\) is a pair \((x,y)\in \Lambda^2\) such that \(z=x+y\) and \(\langle x, y \rangle \geq 0\).
A decomposition \((x,y)\) of \(z\in\Lambda\) is {\em trivial} if \(x=0\) or \(y=0\).
We say \(z\in \Lambda\) is {\em indecomposable} if it has exactly two decompositions, i.e.\ \(z\neq 0\) and the only decompositions of \(z\) are trivial.
Write \(\text{dec}(z)\) for the set of decompositions of \(z\in\Lambda\) and \(\text{indec}(\Lambda)\) for the set of indecomposable elements of \(\Lambda\).
\end{definition}

Indecomposable elements are in the compute science literature often called Voronoi-relevant vectors, for example in \cite{compsci}.
This name is clearly inspired by Theorem~\ref{thm:minimal_indec_set}.

\begin{example}\label{ex:compute_indecomposables}
Let \(f:I\to\R_{\geq 0}\) be such that \(\smash{\Lambda^f}\) as in Example~\ref{ex:pre_general_lattice} is a Hilbert lattice.
We will compute the indecomposables of \(\Lambda^f\).
Let \(x=(x_i)_i\in \textup{indec}(\smash{\Lambda^f})\) and write \(e_i\) for the \(i\)-th standard basis vector. 
Note that \(x\) must be {\em primitive}, i.e.\ not be of the form \(n y\) for any \(y\in\smash{\Lambda^f}\) and \(n\in\Z_{>1}\), because otherwise \(\langle y,(n-1)y\rangle = (n-1)\langle y,y\rangle > 0\) shows \((y,(n-1)y)\) is a non-trivial decomposition.
If \(x_i\) and \(x_j\) are non-zero for distinct \(i,j \in I\), then \(q(x-e_ix_i) + q(e_ix_i) = q(x)\) and we have a non-trivial decomposition of \(x\).
Hence \(x=\pm e_i\) for some \(i\in I\).
Note that \(e_i\) is indeed indecomposable for all \(i\in I\): Any decomposition \((x,y)\in\textup{dec}(e_i)\) must have \(|x_i|+|y_i|=1\) and \(x_j=y_j=0\) for \(i\neq j\), so \(x=0\) or \(y=0\).
As \(z\mapsto -z\) is an isometry of \(\smash{\Lambda^f}\), we have that \(-e_i\) is indecomposable as well.
Hence \(\textup{indec}(\smash{\Lambda^f})=\{\pm e_i\,|\,i\in I\}\).
\end{example}

\begin{lemma}\label{lem:eq_dec_def}
Let \(\Lambda\) be a Hilbert lattice and let \(x,y,z\in\Lambda\). Then the following are equivalent:
\begin{enumerate}[label=\textup{(\roman*)}]
\item The pair \((x,y)\) is a decomposition of \(z\).
\item We have \(x+y=z\) and \(q(x)+q(y)\leq q(z)\).
\item We have \(x+y=z\) and \(q(x-z/2) \leq q(z/2)\).
\item We have \(x+y=z\) and \(q(z-2y)\leq q(z)\).
\end{enumerate}
\end{lemma}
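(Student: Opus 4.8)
The plan is to fix the common hypothesis $x+y=z$ shared by all four statements and then show that each of the inequalities in (ii), (iii), (iv) is equivalent to $\langle x,y\rangle\geq 0$, which is exactly the remaining content of (i). A small preliminary point is that in (iii) the square-norm $q$ is applied to $x-z/2$ and $z/2$, which need not lie in $\Lambda$. I would dispose of this at the outset: by Lemma~\ref{lem:trivial_hilbert_eq} and Theorem~\ref{thm:lattice_eq} we may view $\Lambda$ as a discrete subgroup of a Hilbert space $\mathcal{H}$, and $q$ extends to the quadratic form $w\mapsto\langle w,w\rangle$ on $\mathcal{H}\supseteq\Q\Lambda$, so every expression in the statement is meaningful and $q(2w)=4q(w)$ holds for all $w$.

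The key identities are the bilinear expansions $q(z)=q(x+y)=q(x)+2\langle x,y\rangle+q(y)$ and $q(x-y)=q(x)-2\langle x,y\rangle+q(y)$. From the first, $q(x)+q(y)\leq q(z)$ holds iff $0\leq 2\langle x,y\rangle$, which gives (i) $\Leftrightarrow$ (ii). For (iv): substituting $x+y=z$ gives $z-2y=x-y$, so $q(z-2y)=q(x)-2\langle x,y\rangle+q(y)$, and comparing with $q(z)=q(x)+2\langle x,y\rangle+q(y)$ shows $q(z-2y)\leq q(z)$ iff $\langle x,y\rangle\geq 0$; hence (i) $\Leftrightarrow$ (iv). For (iii): note $x-z/2=(x-y)/2$ and $z/2=(x+y)/2$, so multiplying the inequality $q(x-z/2)\leq q(z/2)$ by $4$ turns it into $q(x-y)\leq q(x+y)$, i.e.\ again $\langle x,y\rangle\geq 0$; hence (i) $\Leftrightarrow$ (iii).

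I do not expect any real obstacle here: the lemma is a routine unwinding of the parallelogram law, and each equivalence is a one-line computation once the bilinear expansion of $q$ is in hand. The only thing requiring a little care is the bookkeeping of which vectors are integral and which are merely rational, and making explicit that $q$ extends to $\Q\Lambda$ so that the expression $q(x-z/2)$ in (iii) is legitimate as written; granting that, the argument is purely formal.
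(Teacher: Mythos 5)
Your proposal is correct and follows essentially the same route as the paper: after fixing $x+y=z$, everything reduces to one-line computations with the bilinear expansion of $q$ (equivalently the parallelogram law) and the observation that $q$ extends to $\Q\Lambda$ so that $q(2w)=4q(w)$ makes the expressions in (iii) legitimate. The only cosmetic difference is that you compare each of (ii)--(iv) directly with $\langle x,y\rangle\geq 0$, whereas the paper chains (i)$\Leftrightarrow$(ii)$\Leftrightarrow$(iii)$\Leftrightarrow$(iv), which is an immaterial reorganization.
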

For a visual aid to this lemma see Figure~\ref{fig:decomposition}.
\begin{proof}
(i \(\Leftrightarrow\) ii) By bilinearity we have
\[ q(z) = \langle x+y,x+y\rangle = q(x)+q(y) + 2\langle x, y\rangle. \]

(ii \(\Leftrightarrow\) iii) By the parallelogram law we have
\[ q(x)+q(y)=2q\Big(\frac{x+y}{2}\Big)+2q\Big(\frac{x-y}{2}\Big)=2q\Big(\frac{z}{2}\Big)+2q\Big(x-\frac{z}{2}\Big), \]
so \(q(x)+q(y)-q(z)=2\cdot[ q(x-z/2)-q(z/2) ]\). The claim then follows trivially.

(iii \(\Leftrightarrow\) iv) Note that \(z-2y = x-y = 2(x/2-y/2) = 2(x-z/2)\).
By the parallelogram law we have \(q(2w)=4q(w)\) for all \(w\in\Q\Lambda\), from which this equivalence trivially follows.
\end{proof}

By Lemma~\ref{lem:eq_dec_def}, finding decompositions of \(z\in\Lambda\) amounts to finding \(x\in\Lambda\) sufficiently close to \(z/2\).

\begin{lemma}\label{lem:small_indec}
Let \(z\in\Lambda\) such that \(0<q(z)\leq2\textup{P}(\Lambda)\). Suppose that the latter inequality is strict or \(\textup{P}(\Lambda)\) is not attained by any vector in \(\Lambda\). Then \(z\in\textup{indec}(\Lambda)\). 
\end{lemma}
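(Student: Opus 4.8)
The plan is to argue by contradiction: suppose $z$ has a nontrivial decomposition $(x,y)$, so $x,y \in \Lambda \setminus \{0\}$, $x+y = z$, and $q(x)+q(y) \le q(z)$ by Lemma~\ref{lem:eq_dec_def}(ii). Since $x$ and $y$ are nonzero, the definition of $\mathrm{P}(\Lambda)$ gives $q(x) \ge \mathrm{P}(\Lambda)$ and $q(y) \ge \mathrm{P}(\Lambda)$, hence
\[
q(z) \ge q(x) + q(y) \ge 2\,\mathrm{P}(\Lambda).
\]
Combined with the hypothesis $q(z) \le 2\,\mathrm{P}(\Lambda)$, this forces $q(z) = 2\,\mathrm{P}(\Lambda)$ and, moreover, $q(x) = q(y) = \mathrm{P}(\Lambda)$. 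In particular $\mathrm{P}(\Lambda)$ is attained (by $x$, say), and the inequality $q(z) \le 2\,\mathrm{P}(\Lambda)$ is not strict. This already handles both clauses of the hypothesis: if the inequality is strict, or if $\mathrm{P}(\Lambda)$ is not attained, we reach a contradiction immediately, so $z \in \mathrm{indec}(\Lambda)$.

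The one remaining case to rule out — to be safe, though the statement as phrased already excludes it — would be $q(z) = 2\,\mathrm{P}(\Lambda)$ with $\mathrm{P}(\Lambda)$ attained; the hypothesis of the lemma precludes this combination, so there is nothing further to prove. Thus the argument is essentially a two-line estimate, and the only subtlety is bookkeeping the two alternative hypotheses correctly: strictness of the inequality kills the case, and non-attainment of $\mathrm{P}(\Lambda)$ kills the case, and at least one of these is assumed.

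I do not anticipate a genuine obstacle here; the main point is simply to invoke Lemma~\ref{lem:eq_dec_def}(ii) to convert the inner-product condition $\langle x,y\rangle \ge 0$ into the additive inequality $q(x)+q(y) \le q(z)$, and then apply the packing radius bound to each of $x$ and $y$ separately. If one wanted, the same contradiction can be phrased via Lemma~\ref{lem:eq_dec_def}(iii): a nontrivial decomposition gives a lattice point $x$ with $q(x - z/2) \le q(z/2) = \tfrac12\,\mathrm{P}(\Lambda)$, but then both $x$ and $z - x$ are short, etc. I would present the additive-inequality version as it is cleanest.
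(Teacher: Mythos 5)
Your proposal is correct and follows essentially the same argument as the paper: invoke Lemma~\ref{lem:eq_dec_def}(ii) on a putative non-trivial decomposition, bound $q(x)$ and $q(y)$ below by $\textup{P}(\Lambda)$, and observe that the forced equalities contradict either the strictness hypothesis or the non-attainment hypothesis. No gaps.
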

\begin{proof}
If \((x,y)\in\textup{dec}(z)\) is non-trivial, then by Lemma~\ref{lem:eq_dec_def} we have \(2\textup{P}(\Lambda) \geq q(z) \geq q(x)+q(y) \geq \textup{P}(\Lambda)+\textup{P}(\Lambda)\) with either the first or last inequality strict, which is a contradiction. Since \(z\neq 0\) it follows that \(z\) is indecomposable.
\end{proof}

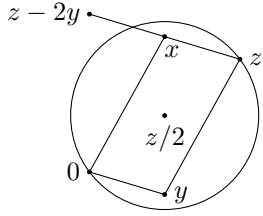
\begin{figure}
\centering
\begin{tikzpicture}[scale=1]
\draw[fill=black] (0,0) circle (0.025);
\node[left] (O) at (0,0) {$0$};

\draw[fill=black] (1,1.8) circle (0.025);
\node (X) at (1.1,1.6) {$x$};

\draw[fill=black] (2,1.5) circle (0.025);
\node[right] (Z) at (2,1.5) {$z$};

\draw[fill=black] (1,-.3) circle (0.025);
\node[right] (Y) at (1,-.3) {$y$};

\draw[fill=black] (1,.75) circle (0.025);
\node[below] (Z2) at (1,.75) {$z/2$};

\draw[fill=black] (0,2.1) circle (0.025);
\node[left] (Zm2Y) at (0,2.1) {$z-2y$};

\draw (1,.75) circle (1.25);

\draw (1,1.8) -- (0,0) -- (1,-.3) -- (2,1.5) -- (0,2.1);
\end{tikzpicture}
\caption{A decomposition \(z=x+y\)}
\label{fig:decomposition}
\end{figure}

\begin{proposition}\label{prop:indec_sum}
If \(\Lambda\) is a non-zero Hilbert lattice, then every \(z\in\Lambda\) can be written as a sum of at most \(q(z)/\textup{P}(\Lambda)\) indecomposables \(z_1,\dotsc,z_n\in\Lambda\) such that \(\sum_i q(z_i) \leq q(z)\).
\end{proposition}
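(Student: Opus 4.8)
The plan is to proceed by strong induction on $q(z)/\textup{P}(\Lambda)$, or more precisely on the natural number $\lceil q(z)/\textup{P}(\Lambda)\rceil$, which is finite since $\textup{P}(\Lambda)>0$. The base case is $z=0$, where we take $n=0$ and the empty sum; both conclusions hold vacuously. So assume $z\neq 0$. If $z$ is indecomposable, we are done by taking $n=1$ and $z_1=z$. Otherwise $z$ admits a non-trivial decomposition $(x,y)$, so $x,y\neq 0$, $z=x+y$, and by Lemma~\ref{lem:eq_dec_def}(ii) we have $q(x)+q(y)\leq q(z)$.

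Since $x$ and $y$ are non-zero, $q(x)\geq\textup{P}(\Lambda)>0$ and $q(y)\geq\textup{P}(\Lambda)>0$, hence $q(x)\leq q(z)-q(y)<q(z)$ and similarly $q(y)<q(z)$. Thus $q(x)/\textup{P}(\Lambda)$ and $q(y)/\textup{P}(\Lambda)$ are both strictly smaller than $q(z)/\textup{P}(\Lambda)$, which is what drives the induction: I would phrase the induction on a suitable integer bound so that this strict decrease suffices. Applying the induction hypothesis to $x$ and to $y$, we obtain indecomposables $x_1,\dots,x_k$ and $y_1,\dots,y_\ell$ with $\sum_i q(x_i)\leq q(x)$, $\sum_j q(y_j)\leq q(y)$, $k\leq q(x)/\textup{P}(\Lambda)$, and $\ell\leq q(y)/\textup{P}(\Lambda)$. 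Concatenating these lists gives a writing of $z=x+y$ as a sum of $n=k+\ell$ indecomposables with
\[
\sum_{i=1}^k q(x_i)+\sum_{j=1}^\ell q(y_j)\leq q(x)+q(y)\leq q(z),
\]
and $n=k+\ell\leq (q(x)+q(y))/\textup{P}(\Lambda)\leq q(z)/\textup{P}(\Lambda)$, completing the induction.

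The only subtlety — and the step I would be most careful about — is making the induction formally sound: $q(z)/\textup{P}(\Lambda)$ is a real number, not an integer, so I would induct on $m=\lfloor q(z)/\textup{P}(\Lambda)\rfloor\in\Z_{\geq 0}$ (finite by the positive packing radius), observing that the strict inequalities $q(x)<q(z)$ and $q(y)<q(z)$ together with $q(x),q(y)\geq\textup{P}(\Lambda)$ force $\lfloor q(x)/\textup{P}(\Lambda)\rfloor<m$ and $\lfloor q(y)/\textup{P}(\Lambda)\rfloor<m$, since decreasing a positive quantity by at least $\textup{P}(\Lambda)$ strictly decreases its floor once the quantity exceeds $\textup{P}(\Lambda)$. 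Everything else is bookkeeping with the two accumulated inequalities, which pass through concatenation additively.
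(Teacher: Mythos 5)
Your proposal is correct and follows essentially the same argument as the paper: induct on $\lfloor q(z)/\textup{P}(\Lambda)\rfloor$ (the paper just rescales so that $\textup{P}(\Lambda)=1$ and inducts on $\lfloor q(z)\rfloor$), split a decomposable $z$ via a non-trivial decomposition using Lemma~\ref{lem:eq_dec_def}, apply the inductive hypothesis to both parts, and concatenate. The floor-decrease argument you flag as the subtle point is exactly the step the paper makes via $\lfloor q(x)\rfloor \leq \lfloor q(z)-q(y)\rfloor < \lfloor q(z)\rfloor$, so there is nothing to fix.
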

\begin{proof}
By scaling \(q\) we may assume without loss of generality that \(\textup{P}(\Lambda)=1\).
We apply induction to \(\lfloor q(z)\rfloor\).
When this equals \(0\) we have \(q(z)<\textup{P}(\Lambda)\), so \(z=0\), which we can write as a sum of zero indecomposables.
Now suppose \(\lfloor q(z)\rfloor\geq 1\) and thus \(z\neq 0\).
If \(z\) is indecomposable then indeed it is the sum of \(1\leq \lfloor q(z) \rfloor\) indecomposable, so suppose there is a non-trivial \((x,y)\in\text{dec}(z)\). 
Then \(q(x)+q(y)\leq q(z)\) by Lemma~\ref{lem:eq_dec_def} and since \(y\neq 0\) also \(q(y)\geq \textup{P}(\Lambda) = 1\).
Hence \(\lfloor q(x)\rfloor \leq \lfloor q(z)-q(y)\rfloor < \lfloor q(z) \rfloor\), so by the induction hypothesis we may write \(x=\sum_i x_i\) with \(x_1,\dotsc,x_a\in\text{indec}(\Lambda)\) and \(a\leq q(x)\) such that \(\sum_i q(x_i)\leq q(x)\).
By symmetry we may similarly write \(y\) as a sum of at most \(q(y)\) indecomposables \(y_1,\dotsc,y_b\).
Hence we can write \(z=\sum_i x_i + \sum_i y_i\) as a sum of \(a+b\leq q(x)+q(y)\leq q(z)\) indecomposables such that \(\sum_i q(x_i) + \sum_i q(y_i) \leq q(x)+q(y) \leq q(z)\).
The proposition follows by induction.
\end{proof}

\begin{lemma}\label{lem:reverse_indec_sum}
Suppose \(\Lambda\) is a Hilbert lattice and \(z\in\Lambda\) is the sum of some non-zero \(z_1,\dotsc,z_n\in\Lambda\) and \(n\in\Z_{\geq 2}\).
If \(\sum_i q(z_i)\leq q(z)\), then \(z\) has a non-trivial decomposition.
\end{lemma}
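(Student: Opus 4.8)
The plan is to exhibit a non-trivial decomposition of \(z\) of the very simple shape \((z_i, z - z_i)\) for a suitable index \(i\). Recall from Lemma~\ref{lem:trivial_hilbert_eq} that \(\Lambda\) carries a \(\Z\)-inner product, and that by definition \((z_i, z - z_i)\) is a decomposition of \(z\) exactly when \(\langle z_i, z - z_i\rangle \geq 0\); it is non-trivial exactly when \(z_i \neq 0\) (which holds by hypothesis) and \(z - z_i \neq 0\). So the task reduces to producing an index \(i\) with \(\langle z_i, z - z_i\rangle \geq 0\) and \(z_i \neq z\).

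First I would find an index satisfying the inner-product condition by averaging over all indices. Using bilinearity of \(\emptyinner\) together with \(\sum_{i=1}^n z_i = z\),
\[ \sum_{i=1}^n \langle z_i, z - z_i\rangle = \Big\langle \sum_{i=1}^n z_i,\, z \Big\rangle - \sum_{i=1}^n q(z_i) = q(z) - \sum_{i=1}^n q(z_i) \geq 0, \]
where the last step is the hypothesis. Hence at least one summand is non-negative; fix such an \(i\). Next I would rule out \(z = z_i\): if \(z = z_i\), then \(\sum_{j\neq i} z_j = z - z_i = 0\), and since \(n \geq 2\) the index set \(\{j : j\neq i\}\) is non-empty, so \(\sum_{j\neq i} q(z_j) \geq \textup{P}(\Lambda) > 0\) because each \(z_j\) is non-zero and \(\Lambda\) has positive packing radius. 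This would give \(\sum_{j=1}^n q(z_j) = q(z_i) + \sum_{j\neq i} q(z_j) > q(z_i) = q(z)\), contradicting the hypothesis. Therefore \(z - z_i \neq 0\), and \((z_i, z - z_i)\) is the required non-trivial decomposition of \(z\).

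The argument is short once one sees the averaging step, which is the only point that requires an idea rather than bookkeeping: it is precisely what converts the global inequality \(\sum_i q(z_i) \leq q(z)\) into the existence of a single index obeying a local sign condition. The remaining checks — that \((z_i, z - z_i)\) is a decomposition, and that neither of the two ways it could degenerate to a trivial decomposition can occur — are immediate from the definitions and the positive packing radius, so I do not anticipate any real obstacle.
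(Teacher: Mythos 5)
Your proof is correct and follows essentially the same route as the paper: the averaging identity \(\sum_i \langle z_i, z-z_i\rangle = q(z)-\sum_i q(z_i)\geq 0\) produces an index \(i\) with \(\langle z_i, z-z_i\rangle\geq 0\), and \((z_i, z-z_i)\) is the non-trivial decomposition. The only difference is that you spell out the check that \(z-z_i\neq 0\) (via positivity of \(q\) on non-zero vectors), which the paper asserts without comment; that is a welcome but minor elaboration.
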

\begin{proof}
We have
\[\sum_{i=1}^n \langle z_i,z-z_i\rangle = \sum_{i=1}^n \langle z_i,z\rangle - \sum_{i=1}^n \langle z_i,z_i\rangle = q(z)-\sum_{i=1}^n q(z_i) \geq 0,\]
so \(\langle z_i,z-z_i\rangle \geq 0\) for some \(i\). As neither \(z_i\) nor \(z-z_i\) are \(0\), we conclude that \((z_i,z-z_i)\) is a non-trivial decomposition of \(z\).
\end{proof}

\begin{proposition}\label{prop:hilbert_eq}
Let \(\Lambda\) be a Hilbert lattice. The group \(\{\pm 1\}\) acts on \(\textup{indec}(\Lambda)\) by multiplication, and the natural map \(\textup{indec}(\Lambda) / \{\pm 1\} \to \Lambda/2\Lambda\) is injective.
\end{proposition}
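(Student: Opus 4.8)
The plan is to handle the three ingredients in turn: that $\{\pm 1\}$ acts on $\textup{indec}(\Lambda)$, that the stated map is well defined on the quotient, and that it is injective. For the action, I would observe that negation $z\mapsto -z$ is $\Z$-linear and an isometry (so $q(-z)=q(z)$), and that it carries decompositions to decompositions: if $(x,y)\in\text{dec}(z)$ then $-x+(-y)=-z$ and $\langle -x,-y\rangle=\langle x,y\rangle\geq 0$, so $(-x,-y)\in\text{dec}(-z)$. Hence $z\in\textup{indec}(\Lambda)$ forces $-z\in\textup{indec}(\Lambda)$, and $\{\pm 1\}$ acts on $\textup{indec}(\Lambda)$. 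Since $z-(-z)=2z\in 2\Lambda$, the reduction map $\textup{indec}(\Lambda)\to\Lambda/2\Lambda$, $z\mapsto z+2\Lambda$, is constant on $\{\pm 1\}$-orbits and therefore descends to the map in the statement.

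For injectivity, suppose $z,w\in\textup{indec}(\Lambda)$ have the same image, i.e.\ $z-w\in 2\Lambda$. The key observation is that then $z+w=(z-w)+2w\in 2\Lambda$ as well, so both $a:=\tfrac12(z-w)$ and $b:=\tfrac12(z+w)$ lie in $\Lambda$, with $z=a+b$ and $w=b-a$. Now split on the sign of $\langle a,b\rangle$: if $\langle a,b\rangle\geq 0$ then $(a,b)$ is a decomposition of $z$; if $\langle a,b\rangle\leq 0$ then $\langle -a,b\rangle\geq 0$, so $(-a,b)$ is a decomposition of $w=(-a)+b$. Either way, indecomposability of $z$ (respectively $w$) forces one summand to vanish: $a=0$ gives $w=b=z$, and $b=0$ gives $w=-a=-z$. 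Hence $w\in\{\pm z\}$, which is exactly injectivity of the quotient map.

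I do not expect a genuine obstacle here; the only point that needs care is the elementary remark that $z-w\in 2\Lambda$ automatically gives $z+w\in 2\Lambda$, so that $\tfrac12(z\pm w)$ can be taken inside $\Lambda$ and the definition of decomposition (or, if preferred, Lemma~\ref{lem:eq_dec_def}) applies directly. It is worth noting in passing — though not needed for the statement — that the $\{\pm 1\}$-action is in fact free, since $z=-z$ would yield $2z=0$, contradicting the positive packing radius.
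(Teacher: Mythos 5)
Your proof is correct and is in substance the same as the paper's: the decomposition \((\tfrac{z-w}{2},\tfrac{z+w}{2})\) you construct is exactly the one the paper extracts from Lemma~\ref{lem:eq_dec_def} (i \(\Leftrightarrow\) iv), and your case split on the sign of \(\langle a,b\rangle=\tfrac14\bigl(q(z)-q(w)\bigr)\) is precisely the paper's observation that an indecomposable element is the \(q\)-minimal element of its coset in \(\Lambda/2\Lambda\), unique up to sign. The only presentational difference is that you argue directly from the definition of a decomposition rather than through the parameterization of \(\textup{dec}(z)\) by the elements \(a\in z+2\Lambda\) with \(q(a)\leq q(z)\).
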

\begin{proof}
Note that \(\{\pm 1\}\) acts on \(\Lambda\) and thus also on \(\textup{indec}(\Lambda)\).
By (i \(\Leftrightarrow\) iv) of Lemma~\ref{lem:eq_dec_def} we have
\[ \text{dec}(z)=\{(\tfrac{z-a}{2},\tfrac{z+a}{2}) \,|\, a\in z+2\Lambda,\, q(a)\leq q(z) \}. \]
Let \(z\in \textup{indec}(\Lambda)\). 
Then \(\{(0,z),(z,0)\}=\text{dec}(z)\), so the only \(a\in z+2\Lambda\) such that \(q(a)\leq q(z)\) are \(a=z\) and \(a=-z\). Thus \(z\) is a \(q\)-minimal element of its coset in \(\Lambda/2\Lambda\) and this minimal element is unique up to sign. Consequently, the map \(\textup{indec}(\Lambda)/\{\pm 1\}\to\Lambda/2\Lambda\) is injective.
\end{proof}

\begin{corollary}\label{cor:finite_indec}
Let \(\Lambda\) be a Hilbert lattice. Then \(\textup{indec}(\Lambda)\) is finite if and only if \(\rk \Lambda<\infty\).
\end{corollary}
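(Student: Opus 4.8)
The plan is to deduce both implications from a single observation: for a Hilbert lattice $\Lambda$, having finite rank is equivalent to being finitely generated as a $\Z$-module. Once this is in hand, the corollary follows by combining Proposition~\ref{prop:hilbert_eq} (the injection $\textup{indec}(\Lambda)/\{\pm1\}\hookrightarrow\Lambda/2\Lambda$) with Proposition~\ref{prop:indec_sum} (every element of a non-zero Hilbert lattice is a finite sum of indecomposables).

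For the implication ``$\rk\Lambda<\infty\ \Rightarrow\ \textup{indec}(\Lambda)$ finite'', I would realize $\Lambda$ as a discrete subgroup of a Hilbert space $\mathcal{H}$ using Theorem~\ref{thm:lattice_eq}. By Lemma~\ref{lem:injective_tensor_hilbert} the natural map $\R\tensor_\Z\Lambda\to\mathcal{H}$ is injective, so its image $\R\Lambda$ is an $\R$-subspace of dimension $\rk\Lambda<\infty$; hence $\Lambda$ is a discrete subgroup of the finite-dimensional Euclidean space $\R\Lambda$ and is therefore finitely generated. Then $\Lambda/2\Lambda$ is finite, and Proposition~\ref{prop:hilbert_eq} forces $\textup{indec}(\Lambda)$ to be finite.

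For the converse, suppose $\textup{indec}(\Lambda)$ is finite. If $\Lambda=0$ then $\textup{indec}(\Lambda)=\emptyset$ and $\rk\Lambda=0$, so assume $\Lambda\neq0$. By Proposition~\ref{prop:indec_sum}, every $z\in\Lambda$ is a finite sum of elements of $\textup{indec}(\Lambda)$, so the finite set $\textup{indec}(\Lambda)$ generates $\Lambda$ as a group; in particular $\rk\Lambda\le\#\textup{indec}(\Lambda)<\infty$. This settles both directions.

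I do not anticipate any real difficulty here. The one step that deserves a sentence of justification is the assertion that a discrete subgroup of a finite-dimensional real vector space is finitely generated: this is the classical structure theorem for lattices and can simply be cited, or, to remain internal to the paper, one can note that $\R\Lambda$ is separable so that $\Lambda$ is a countable discrete set, hence free by Theorem~\ref{thm:almost_free}, and free of finite rank because $\Lambda\tensor_\Z\Q$ has dimension $\rk\Lambda$.
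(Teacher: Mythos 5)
Your proof is correct and follows essentially the same route as the paper: the direction ``\(\textup{indec}(\Lambda)\) finite \(\Rightarrow\) \(\rk\Lambda<\infty\)'' via generation by indecomposables (Proposition~\ref{prop:indec_sum}), and the converse via the injection \(\textup{indec}(\Lambda)/\{\pm1\}\hookrightarrow\Lambda/2\Lambda\) of Proposition~\ref{prop:hilbert_eq}. The only difference is that you spell out why \(\Lambda/2\Lambda\) is finite in the finite-rank case (discreteness in the finite-dimensional space \(\R\Lambda\), or freeness via Theorem~\ref{thm:almost_free}), a point the paper passes over by simply writing \(\#(\Lambda/2\Lambda)=2^{\rk\Lambda}\).
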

\begin{proof}
Recall that \(\textup{indec}(\Lambda)\) generates \(\Lambda\) by Proposition~\ref{prop:indec_sum}.
Hence if \(\textup{indec}(\Lambda)\) is finite, then \(\rk \Lambda <\infty\). 
If \(\rk \Lambda<\infty\), then Proposition~\ref{prop:hilbert_eq} implies \(\#\textup{indec}(\Lambda)\leq 2 \cdot \#(\Lambda/2\Lambda) = 2^{1+\rk \Lambda}<\infty\).
\end{proof}

The zero coset of \(\Lambda/2\Lambda\) is never in the image of the map of Proposition~\ref{prop:hilbert_eq}, as any non-zero element of the form \(2x\) with \(x\in\Lambda\) has a non-trivial decomposition \((x,x)\).
A non-zero coset \(C\) of \(\Lambda/2\Lambda\) can fail to be in the image for two reasons:
Either \(C\) has no minimal element or a minimal element exists but is not unique up to sign.
In the latter case, with \(z\in C\) minimal, there exists a \((x,y)\in\text{dec}(z)\) with \(x,y\neq 0\) and \(q(x)+q(y)=q(z)\) and thus \(\langle x,y\rangle =0\), i.e.\ \(z\) has an orthogonal decomposition. This is exhibited, for example, by the lattice \(\Z^2\subseteq\R^2\) with the standard inner product and \(z=(1,1)\), where \((-1,1)\in z+2\Z^2\) gives rise to the orthogonal decomposition \((1,0)+(0,1)=z\).
In the former case, \(\rk \Lambda\) has to be infinite: 
If \(\rk \Lambda \) is finite, then for any \(x \in \Lambda \) there are only finitely many \(y\in\Lambda\) with \(q(y)\leq q(x)\), so \(q\) assumes a minimum on any non-empty subset of \(\Lambda\), in particular \(C\).
An example is the following.

\begin{example}\label{ex:minimum_on_coset} \label{ex:general_lattice}
We will exhibit a Hilbert lattice \(\Lambda\) and a coset of \(2\Lambda\) on which \(q\) does not attain a minimum.
Let \(f:I\to\R_{\geq 0}\) be such that \(\smash{\Lambda^f}\) as in Example~\ref{ex:pre_general_lattice} is a Hilbert lattice.
We define the \(\smash{\Lambda_2^f}\) to be the sublattice
\[\Lambda_2^f = \ker( \Lambda^f \xrightarrow{\Sigma} (\Z/2\Z) ) = \Big\{ (x_i)_{i} \in \Z^{(I)} \,\Big|\, \sum_{i\in I} x_i \equiv 0 \mod 2 \Big\}.\]

Consider \(f:\Z_{\geq 0}\to \R_{\geq 0}\) strictly decreasing, write \(f(\infty)\) for its limit, assume \(f(\infty)>0\), and let \(\Lambda=\smash{\Lambda_2^f}\).
Let \(z=2e_k\in\Lambda\) for any \(k\in\Z_{\geq 0}\).
Then for all \(y=(y_i)_i\in\Lambda\) we have
\[ q(z-2y) = 4\Big( (1-y_k)^2 f(k)^2 + \sum_{i \neq k} y_i^2 f(i)^2 \Big) > 4 f(\infty)^2, \]
since either \(y_i^2\geq 1\) for some \(i \neq k\) or \(y_k\) is even.
However, we also have for \(y=e_k+e_i\) that \(q(z-2y)=q(2e_i)=4f(i)^2\to 4 f(\infty)^2\) as \(i\to\infty\).
Hence \(\{q(z-2y)\,|\,y\in\Lambda\}\) does not contain a minimum.
\end{example}

If we eliminate the zero coset in the proof of Corollary~\ref{cor:finite_indec} the upper bound on the number of indecomposables becomes \(2(2^{\rk(\Lambda)}-1)\) and this bound is tight. 
If one took the effort to define a sensible probability measure on the space of all Hilbert lattices of given finite rank, then this upper bound will in fact be an equality with probability \(1\).

\section{Hilbert lattice decompositions}

The main motivation for considering indecomposable elements is found in the study of decompositions of lattices.
In this section we generalize a result of Eichler \citep{eichler} on the existence of a universal decomposition in lattices to Hilbert lattices.

\begin{definition}\label{def:orth_sum}
Let \(\Lambda\) be a Hilbert lattice. 
We call a family \(\{\Lambda_i\}_{i\in I}\) of sub-Hilbert lattices \(\Lambda_i\subseteq\Lambda\) for some index set \(I\) a {\em decomposition} of \(\Lambda\), and if so write \(\bigoperp_{i\in I}\Lambda_i = \Lambda\), if \(\langle \Lambda_i,\Lambda_j\rangle = 0\) for all \(i\neq j\) and the natural map of abelian groups \(\bigoplus_{i\in I}\Lambda_i\to\Lambda\) is an isomorphism.
We make the set of decompositions of \(\Lambda\) into a category, where the morphisms from \(\{\Lambda_i\}_{i\in I}\) to \(\{\textup{M}_{j}\}_{j\in J}\) are the maps \(f:I\to J\) such that \(\textup{M}_j = \bigoperp_{i \in f^{-1} \{j\}} \Lambda_i\) for all \(j\in J\).
We say \(\Lambda\) is {\em indecomposable} if \(\Lambda \neq 0\) and for all \(\Lambda_1,\Lambda_2\subseteq\Lambda\) such that \(\Lambda_1\operp\Lambda_2=\Lambda\) we have \(\Lambda_1=0\) or \(\Lambda_2=0\).
\end{definition}

\begin{definition}
A {\em graph} is a pair \((V,E)\) where \(V\) is a (potentially infinite) set and \(E\) is a set of size-2 subsets of \(V\).
For a graph \((V,E)\) we call the elements of \(V\) its {\em vertices} and those of \(E\) its {\em edges}.
A {\em connected component} of \((V,E)\) is a non-empty set \(S\subseteq V\) such that there is no \(\{u,v\}\in E\) such that \(u\in S\) and \(v\not\in S\) and which is minimal with respect to inclusion given these properties.
We say a graph is {\em connected} if it has precisely one connected component.
\end{definition}

\begin{lemma}\label{lem:graph_facts}
Let \(G=(V,E)\) be a graph.
Then the connected components of \(G\) are pairwise disjoint, and if for \(S\subseteq V\) there exist no \(\{u,v\}\in E\) such that \(u\in S\) and \(v\not\in S\), then \(S\) is a union of connected components.
\end{lemma}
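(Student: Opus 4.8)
The plan is to isolate the combinatorial content in a single notion: call a subset $S\subseteq V$ \emph{$E$-closed} if there is no $\{u,v\}\in E$ with $u\in S$ and $v\notin S$. This is precisely the hypothesis appearing in the lemma, and since an edge is an unordered pair the condition says that no edge of $G$ has exactly one endpoint in $S$; in particular $S$ is $E$-closed if and only if $V\setminus S$ is. One checks directly from the definition that an arbitrary intersection of $E$-closed sets is $E$-closed, and hence, by taking complements, that $C\setminus C'$ is $E$-closed whenever $C'\subseteq C$ are both $E$-closed. In this language a connected component of $G$ is exactly a non-empty $E$-closed set that is minimal, with respect to inclusion, among non-empty $E$-closed sets.

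For the disjointness statement, let $S$ and $T$ be connected components with $S\cap T\neq\emptyset$. Then $S\cap T$ is a non-empty $E$-closed set contained in $S$, so minimality of $S$ forces $S\cap T=S$; by symmetry $S\cap T=T$, and hence $S=T$. Thus two distinct connected components are disjoint.

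For the second statement, fix an $E$-closed set $S$ and a vertex $v\in S$; it suffices to produce a connected component $C$ with $v\in C\subseteq S$, for then $S=\bigcup\{C : C\text{ is a connected component and }C\subseteq S\}$ (the inclusion $\supseteq$ is trivial and $\subseteq$ follows by applying the claim to every $v\in S$, the case $S=\emptyset$ being the empty union). Consider the collection $\mathcal{F}$ of $E$-closed sets $T$ with $v\in T\subseteq S$. It is non-empty since $S\in\mathcal{F}$, and every chain in $\mathcal{F}$ has the intersection of its members as a lower bound lying in $\mathcal{F}$, because an intersection of $E$-closed sets is $E$-closed and still contains $v$ and is contained in $S$. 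By Zorn's lemma $\mathcal{F}$ has a minimal element $C$. It remains to see that $C$ is minimal among \emph{all} non-empty $E$-closed subsets of $V$: if $C'\subseteq C$ is non-empty and $E$-closed, then $C'$ and $C\setminus C'$ are $E$-closed subsets of $S$ partitioning $C$, and $v$ lies in exactly one of them; minimality of $C$ in $\mathcal{F}$ forces that part to equal $C$, so the other part is empty, and since $C'\neq\emptyset$ this gives $C'=C$. Thus $C$ is a connected component with $v\in C\subseteq S$, as needed.

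The only genuine subtlety is existence: since the graphs considered here may be infinite, one cannot just take a smallest $E$-closed set, and it is the appeal to Zorn's lemma that makes the argument work. (Alternatively, one could first show that the connected components coincide with the equivalence classes of the reachability relation generated by $E$; this partitions $V$ a priori and makes both statements immediate. I would keep the $E$-closed formulation, as it matches the stated hypotheses verbatim.) Finally, note that the union appearing in the second statement is automatically disjoint by the first.
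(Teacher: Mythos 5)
Your proof is correct, and the disjointness half is the same as the paper's; but your existence argument for the second half takes a genuinely different route. The paper works with the collection $\mathcal{C}$ of what you call $E$-closed sets, observes it is closed under complements and \emph{arbitrary} intersections, and for $s\in S$ simply defines $C_s=\bigcap\{T\in\mathcal{C}\mid s\in T\}$; this is again in $\mathcal{C}$, and for any $T\in\mathcal{C}$ either $T$ or $V\setminus T$ contains $s$, so either $C_s\subseteq T$ or $C_s\cap T=\emptyset$, which immediately shows $C_s$ is a connected component with $s\in C_s\subseteq S$. You instead apply Zorn's lemma to the family of $E$-closed sets squeezed between $\{v\}$ and $S$, and then upgrade minimality within that family to global minimality via the partition $C=C'\cup(C\setminus C')$ — a correct but heavier argument. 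In fact your own observation that arbitrary intersections of $E$-closed sets are $E$-closed makes Zorn unnecessary: the intersection of all $E$-closed sets containing $v$ \emph{is} the smallest such set, so your closing remark that ``one cannot just take a smallest $E$-closed set'' in the infinite case is not accurate, and the paper's canonical-intersection construction is exactly that shortcut. What your version buys is that the global-minimality upgrade is made very explicit; what the paper's buys is a choice-free, one-line identification of the component containing a given vertex.
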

\begin{proof}
Let \(\mathcal{C}\) be the set of \(S\subseteq V\) such that there are no \(\{u,v\}\in E\) such that \(u\in S\) and \(v\not\in S\), so that the connected components of \(G\) become the minimal non-empty elements of \(\mathcal{C}\) with respect to inclusion.
Note that \(\mathcal{C}\) is closed under taking complements, arbitrary unions and arbitrary intersections, i.e.\ \(\mathcal{C}\) is a clopen topology on \(V\).
Suppose \(S,T\in\mathcal{C}\) are connected components that intersection non-trivially, then \(S\cap T\in\mathcal{C}\) is non-empty, so by minimality \(S=T\).
Hence the connected components are pairwise disjoint.

Now let \(S\in\mathcal{C}\) and for all \(s\in S\) let \(A_s=\{T\in\mathcal{C}\,|\,s\in T\}\) and \(C_s=\bigcap_{T\in A_s} T\), which is an element of \(A_s\). For all \(T\in\mathcal{C}\) we either have \(T\in A_s\) or \(V\setminus T \in A_s\), and thus either \(C_s\subseteq T\) or \(C_s\cap T = \emptyset\). It follows that no non-empty \(T\in\mathcal{C}\) is strictly contained in \(C_s\), i.e.\ \(C_s\) is a connected component of \(G\).
As \(S\in A_s\) we have \(s\in C_s\subseteq S\) and thus \(S = \bigcup_{s\in S} C_s\) is a union of connected components.
\end{proof}

The following is a generalization of a theorem due to Eichler \citep{eichler}, although the proof more closely resembles that of Theorem 6.4 in \citep{milnor}.

\begin{theorem}[Eichler] \label{thm:eichler}
Let \(\Lambda\) be a Hilbert lattice and \(V\subseteq\textup{indec}(\Lambda)\) such that \(V\) generates \(\Lambda\) as a group.
Let \(G\) be the graph with vertex set \(V\) and with an edge between \(x\) and \(y\) if and only if \(\langle x,y\rangle \neq 0\). 
Let \(U\) be the set of connected components of \(G\) and for \(u\in U\) let \(\textup{Y}_u\subseteq \Lambda\) be the subgroup generated by the elements in \(u\).
Then \(\{\textup{Y}_u\}_{u\in U}\) is a universal decomposition of \(\Lambda\).
\end{theorem}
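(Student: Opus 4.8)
The plan is to split the proof into two parts: showing that \(\{\textup{Y}_u\}_{u\in U}\) is a decomposition of \(\Lambda\) in the sense of Definition~\ref{def:orth_sum}, and then showing it is universal in the category of decompositions.

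\textbf{Step 1 (it is a decomposition).} Each \(\textup{Y}_u\) is a sub-Hilbert lattice by Lemma~\ref{lem:sublattice}. If \(u\neq u'\) are distinct connected components of \(G\), then no vertex of \(u\) is adjacent to a vertex of \(u'\), so \(\langle x,y\rangle=0\) for all \(x\in u\) and \(y\in u'\), and by bilinearity \(\langle \textup{Y}_u,\textup{Y}_{u'}\rangle=0\). The natural map \(\bigoplus_u\textup{Y}_u\to\Lambda\) is surjective because \(V=\bigcup_u u\) generates \(\Lambda\). For injectivity I would take \(\sum_u z_u=0\) with \(z_u\in\textup{Y}_u\) and all but finitely many zero, pair with a fixed \(z_u\), and use orthogonality of distinct blocks to get \(q(z_u)=\langle z_u,z_u\rangle=0\), hence \(z_u=0\).

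\textbf{Step 2 (universality).} Let \(\{\textup{M}_j\}_{j\in J}\) be any decomposition of \(\Lambda\). The key point is that every \(x\in V\) lies in a unique block \(\textup{M}_j\): writing \(x=\sum_j m_j\) with \(m_j\in\textup{M}_j\) (finitely many nonzero) and using orthogonality of distinct blocks gives \(\sum_j q(m_j)=q(x)\), so if two or more \(m_j\) were nonzero, Lemma~\ref{lem:reverse_indec_sum} would hand us a non-trivial decomposition of the indecomposable \(x\) — a contradiction. This gives a map \(g\colon V\to J\) with \(x\in\textup{M}_{g(x)}\). If \(\{x,y\}\) is an edge of \(G\), i.e.\ \(\langle x,y\rangle\neq0\), then \(g(x)=g(y)\), since otherwise \(\langle x,y\rangle\in\langle\textup{M}_{g(x)},\textup{M}_{g(y)}\rangle=0\). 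To promote this to constancy on components, I would fix \(u\in U\) and observe that for each \(j\) the set \(\{x\in u:g(x)=j\}\) has no edges leaving it in \(G\) (no edge leaves \(u\) at all, and an edge inside \(u\) preserves the \(g\)-value); by Lemma~\ref{lem:graph_facts} this set is a union of connected components, hence empty or all of \(u\). Thus \(g\) is constant on \(u\), yielding \(f\colon U\to J\) with \(g|_u\equiv f(u)\).

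\textbf{Step 3 (\(f\) is the unique morphism).} For \(u\in f^{-1}(j)\) we have \(u\subseteq\textup{M}_j\) and hence \(\textup{Y}_u\subseteq\textup{M}_j\), so \(\bigoplus_{u\in f^{-1}(j)}\textup{Y}_u\) embeds in \(\textup{M}_j\). Conversely, any \(m\in\textup{M}_j\), expanded in \(\bigoplus_u\textup{Y}_u\) and regrouped along the fibres of \(f\), becomes a sum of elements of the various \(\textup{M}_{j'}\); directness of \(\bigoplus_j\textup{M}_j\) forces all terms with \(j'\neq j\) to vanish, so \(m\in\sum_{u\in f^{-1}(j)}\textup{Y}_u\). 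Combined with the orthogonality from Step 1 this gives \(\textup{M}_j=\bigoperp_{u\in f^{-1}(j)}\textup{Y}_u\), so \(f\) is a morphism. For uniqueness, any morphism \(f'\) satisfies \(\textup{Y}_u\subseteq\textup{M}_{f'(u)}\); choosing any \(x\in u\) (nonempty, as components are nonempty) forces \(f'(u)=g(x)=f(u)\).

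The main obstacle I anticipate is Step 2's propagation of the edge-constancy of \(g\) to constancy on connected components, because the notion of connected component used here is the order-theoretic one rather than path-connectivity; but Lemma~\ref{lem:graph_facts} is exactly what makes this go through. The rest is routine bookkeeping with the orthogonality of blocks, together with the single application of Lemma~\ref{lem:reverse_indec_sum}.
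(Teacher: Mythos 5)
Your proof is correct and follows essentially the same route as the paper's: orthogonality of blocks together with indecomposability pins each element of \(V\) to a single block, Lemma~\ref{lem:graph_facts} promotes this to constancy on connected components, and a regrouping/directness argument yields the block equalities and uniqueness of \(f\). The only differences are cosmetic — you invoke Lemma~\ref{lem:reverse_indec_sum} where the paper argues the single-block step directly, and you apply Lemma~\ref{lem:graph_facts} to the level sets of \(g\) inside a component rather than to the sets \(\Lambda_i\cap V\) — and you are somewhat more explicit about injectivity of \(\bigoplus_u\textup{Y}_u\to\Lambda\).
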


As corollary to this theorem we have that \(\Lambda\) is indecomposable if and only if \(G\) is connected.
Note that \(V=\textup{indec}(\Lambda)\) generates \(\Lambda\) as a group by Proposition~\ref{prop:indec_sum}.
It then follows from the theorem that every Hilbert lattice has a universal decomposition.
This universal decomposition is unique and thus independent of choice of \(V\).

\begin{proof}
We have \(V\subseteq\bigcup_{u\in U}\textup{Y}_u\) by Lemma~\ref{lem:graph_facts}, so \(\sum_{u\in U} \textup{Y}_u=\Lambda\) by assumption on \(V\). 
For \(u,v\in U\) distinct we have \(\langle u,v\rangle =\{0\}\) by definition of \(G\), so \(\langle \textup{Y}_u, \textup{Y}_v\rangle = \{0\}\).
We conclude that \(\Lambda=\bigoperp_{u\in U}\textup{Y}_u\) is a decomposition.

To show it is universal, let \(\{\Lambda_i\}_{i\in I}\) be a family of sublattices of \(\Lambda\) such that \(\bigoperp_{i\in I}\Lambda_i=\Lambda\).
Let \(x\in\textup{indec}(\Lambda)\) and write \(x=\sum_{i\in I} \lambda_i\) with \(\lambda_i\in\Lambda_i\) for all \(i\in I\).
If \(j\in I\) is such that \(\lambda_j\neq 0\), then \(\langle \lambda_j,\sum_{i\neq j} \lambda_i \rangle = 0\) and thus \(\lambda_j=x\), because otherwise we obtain a non-trivial decomposition of \(x\).
Therefore every indecomposable of \(\Lambda\) is in precisely one of the \(\Lambda_i\). 
We conclude that the \(S_i=\Lambda_i\cap V\) for \(i\in I\) are pairwise disjoint and have \(V\) as their union.
Then by Lemma~\ref{lem:graph_facts} every connected component \(u\in U\) is contained in precisely one of the \(S_i\), say in \(S_{f(u)}\).
By definition of the map \(f:U\to I\) and the \(\textup{Y}_u\) we have \(\bigoperp_{u\in f^{-1}\{i\}} \textup{Y}_u \subseteq \Lambda_i\) for all \(i\), and since both the \(\textup{Y}_u\) and the \(\Lambda_i\) sum to \(\Lambda\) we must have equality for all \(i\).
Hence a map \(f\) as in the theorem exists and it follows trivially from the construction that it is unique.
We conclude that \(\{\textup{Y}_u\}_{u\in U}\) is a universal decomposition of \(\Lambda\).
\end{proof}

We will use this theorem in Section~\ref{sec:graded_rings} to generalize some theorems from \citep{Lenstra2018}.

\section{Voronoi cells} 

We will generalize the Voronoi cell as defined for classical lattices to Hilbert lattices and extend some known definitions and properties.
Some of these definitions relate to the ambient Hilbert space of the Hilbert lattice, which exists and is uniquely unique by Theorem~\ref{thm:lattice_eq} when we require the \(\Q\)-vector space generated by the lattice to lie dense in the Hilbert space. 
In this section we will write \(\mathcal{H}_\Lambda\) for this ambient Hilbert space of a Hilbert lattice \(\Lambda\).

\begin{definition}\label{def:radius}
Let \(\Lambda\) be a Hilbert lattice. The {\em packing radius} of \(\Lambda\) is 
\[\rho(\Lambda)=\inf\{ \tfrac{1}{2}\|x-y\|\,|\, x,y\in\Lambda,\,x\neq y\} = \tfrac{1}{2} \sqrt{\textup{P}(\Lambda)},\]
the {\em covering radius} of \(\Lambda\) is
\[r(\Lambda) = \inf\{ b\in \R_{>0} \,|\, (\forall z\in\mathcal{H}_\Lambda)\,(\exists\, x\in\Lambda)\, \|z-x\|\leq b \}\]
and the {\em Voronoi cell} of \(\Lambda\) in \(\mathcal{H}\) is the set 
\[\vor(\Lambda)=\{ z\in\mathcal{H}_\Lambda \,|\, (\forall x\in\Lambda\setminus\{0\})\ \|z\| < \|z-x\| \}.\]
\end{definition}

We call \(\rho(\Lambda)\) the packing radius because it is the radius of the largest sphere \(B\subseteq\mathcal{H}_\Lambda\) such that the spheres \(x+B\) for \(x\in\Lambda\) are pairwise disjoint.
Similarly \(r(\Lambda)\) is the radius of the smallest sphere \(B\subseteq\mathcal{H}_\Lambda\) for which \(\bigcup_{x\in \Lambda} (x+B)=\mathcal{H}_\Lambda\).
Note that \(r(\Lambda)=0\) only for \(\Lambda=0\) by discreteness.

\begin{example}
The covering radius of a Hilbert lattice need not be finite.
Take \(\Lambda^f\) as in Example~\ref{ex:pre_general_lattice} but with \(f:\Z_{\geq 0}\to\R_{\geq 0}\) diverging to infinity.
The lattice point closest to \(\frac{1}{3}e_i\) is \(0\) for all \(i\in \Z_{\geq 0}\), so it has distance \(\frac{1}{3}f(i)\) to the lattice.
Hence \(r(\smash{\Lambda^f})\geq \sup\{ \frac{1}{3} f(i)  \,|\, i\in \Z_{\geq 0} \} = \infty\).
\end{example}

\begin{example}
The Voronoi cell does not need to be an open set.
Consider the lattice \(\Lambda=\Lambda^f_2\) as in Example~\ref{ex:minimum_on_coset} with \(f:\Z_{\geq 0}\to\R_{>0}\) strictly decreasing.
Let \(i\in\Z_{\geq 0}\) and \(\textup{A}=(1+f(\infty)^2 f(i)^{-2})/2\) and write \(e_i\) for the \(i\)-th standard basis vector.
We claim that \(\alpha e_i \in \vor(\Lambda)\) for \(\alpha\in\R\) precisely when \(|\alpha|\leq \textup{A}\), which proves the Voronoi cell is not open.

Let \(x=\sum_{j} x_j e_j \in\Lambda\) such that \(x_i\neq 0\). 
Then \(|x_i|=1\) or \(q(x)/|x_i| \geq |x_i| f(i)^2 > f(i)^2 + f(i+1)^2\).
It follows that 
\[\inf\Big\{ \frac{q(x)}{2 |x_i| f(i)^2} \,\Big|\, x\in\Lambda,\, x_i\neq 0 \Big\} = \frac{f(i)^2 + f(\infty)^2}{2f(i)^2} = \textup{A} \]
and that the infimum is not attained.
By definition \(\alpha e_i\in \vor(\Lambda)\) if and only if \(g(x) := q(\alpha e_i-x)-q(\alpha e_i) > 0\) for all \(x\in\Lambda\setminus\{0\}\).
Note that \(g(x) = q(x) - 2 \alpha x_i f(i)^2\).

Suppose \(|\alpha|\leq \textup{A}\) and let \(x\in\Lambda \setminus\{0\}\).
If \(\alpha x_i \leq 0\), then \(g(x) \geq q(x) > 0\), so suppose \(\alpha x_i > 0\).
Then
\[ \frac{g(x)}{2|x_i| f(i)^2} = \frac{q(x)}{2|x_i|f(i)^2} - |\alpha| > \textup{A} - |\alpha| \geq 0, \]
so \(g(x)>0\).
We conclude that \(\alpha e_i\in \vor(\Lambda)\). 
Conversely, if \(|\alpha|>\textup{A}\), then 
\[\inf\Big\{ \frac{g(x)}{2 |x_i| f(i)^2} \,\Big|\, x\in\Lambda,\, x_i\neq 0 \Big\} = \textup{A}-|\alpha| < 0,\]
hence there exists some \(x\in \Lambda\) such that \(g(x)<0\) and thus \(\alpha e_i \not\in\vor(\Lambda)\).
\end{example}

\begin{definition}
Let \(\mathcal{H}\) be a Hilbert space and let \(S\subseteq\mathcal{H}\) be a subset.
We say \(S\) is {\em symmetric} if for all \(x\in S\) also \(-x\in S\).
We say \(S\) is {\em convex} if for all \(x,y\in S\) and \(t\in[0,1]\) also \((1-t)x+ty\in S\).
\end{definition}

\begin{lemma}\label{lem:convex_facts}
Let \(\mathcal{H}\) be a Hilbert space. For \(X\subseteq\mathcal{H}\) write \(\overline{X}\) for the topological closure of \(X\). Then
\begin{enumerate}[nosep,label=\textup{(\roman*)}]
\item The intersection \(\bigcap_i S_i\) of convex sets \((S_i)_{i\in I}\) in \(\mathcal{H}\) is convex;
\item The topological closure \(\overline{S}\) of a convex set \(S\) in \(\mathcal{H}\) is convex;
\item For all \(S\) in \(\mathcal{H}\) open convex, \(x\in S\), \(y\in \overline{S}\) and \(t\in[0,1)\) we have \((1-t)x+ty\in S\).
\item For convex open sets \((S_i)_{i\in I}\) in \(\mathcal{H}\) with non-empty intersection we have \(\overline{\bigcap_i S_i}=\bigcap_i \overline{S_i}\).
\end{enumerate}
\end{lemma}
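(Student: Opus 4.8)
The plan is to dispatch (i) and (ii) directly, then prove (iii), and finally deduce (iv) from (iii); the only step with genuine content is (iii), and even there the argument is the standard ``interior meets closure'' trick for convex sets.

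For (i), I would simply note that if $x$ and $y$ lie in every $S_i$, then by convexity of each $S_i$ so does $(1-t)x+ty$ for $t\in[0,1]$, hence this point lies in $\bigcap_i S_i$. For (ii), I would use that $\mathcal H$ is a metric space: given $x,y\in\overline S$, pick sequences $x_n\to x$ and $y_n\to y$ with $x_n,y_n\in S$; then $(1-t)x_n+ty_n\in S$ by convexity and converges to $(1-t)x+ty$, which therefore lies in $\overline S$.

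For (iii), fix $x\in S$, $y\in\overline S$ and $t\in[0,1)$, and put $z=(1-t)x+ty$; I may assume $t>0$, as otherwise $z=x\in S$. Since $S$ is open, I would choose $\varepsilon>0$ with the open ball around $x$ of radius $\varepsilon$ contained in $S$, and since $y\in\overline S$ I would pick $y'\in S$ with $\|y-y'\|<(1-t)\varepsilon/t$. Setting $x'=(z-ty')/(1-t)$ one has $z=(1-t)x'+ty'$ and, by a one-line computation, $x'-x=\tfrac{t}{1-t}(y-y')$, so $\|x'-x\|<\varepsilon$ and hence $x'\in S$. Then $z$ is a convex combination of the two points $x',y'$ of $S$, so $z\in S$ by convexity. (Running the same estimate with $z$ replaced by a nearby point $w$ would show an entire ball around $z$ lies in $S$, but for the statement only $z\in S$ is needed.)

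Finally, for (iv): the inclusion $\overline{\bigcap_i S_i}\subseteq\bigcap_i\overline{S_i}$ is immediate because the right-hand side is an intersection of closed sets, hence closed, and contains $\bigcap_i S_i$. For the reverse inclusion I would fix a point $x\in\bigcap_i S_i$ (which exists by the non-emptiness hypothesis) and an arbitrary $y\in\bigcap_i\overline{S_i}$; applying (iii) to each open convex $S_i$ with this $x$ and $y$ shows $(1-t)x+ty\in S_i$ for every $t\in[0,1)$ and every $i$, hence $(1-t)x+ty\in\bigcap_i S_i$, and letting $t\to1$ gives $y\in\overline{\bigcap_i S_i}$. I expect (iii) to be the only real obstacle — it is the one place where openness is genuinely used — while (i), (ii) and both halves of (iv) are formal once (iii) is in hand.
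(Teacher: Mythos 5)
Your proof is correct and follows essentially the same route as the paper: (i) and (ii) are handled identically, (iv) is deduced from (i) and (iii) in the same way, and your part (iii) is the paper's argument in algebraic form — your bound \(\|y-y'\|<(1-t)\varepsilon/t\) is exactly the paper's radius \(r_y=r_x(1-t)/t\), with the explicit point \(x'=(z-ty')/(1-t)\) replacing the paper's similar-triangles picture. No gaps.
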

\begin{proof}
(i) Trivial. (ii) Let \(x,y\in \overline{S}\) and let \((x_n)_n\) and \((y_n)_n\) be sequences in \(S\) with limit \(x\) respectively \(y\).
For all \(t\in[0,1]\) we have \((1-t)x_n+t y_n \in S\), and since addition and scalar multiplication are continuous also \((1-t)x+ty=\lim_{n\to\infty} [ (1-t)x_n+ty_n] \in \overline{S}\). 
(iii) By translating \(S\) we may assume without loss of generality that \((1-t)x+ty=0\). Since \(x\in S\) and \(S\) is open there exists some \(r_x>0\) such that the open ball \(B_x\) of radius \(r_x\) around \(x\) is contained in \(S\). 
For \(r_y>0\) sufficiently small (in fact \(r_y=r_x \cdot (1-t)/t\) suffices, see Figure~\ref{fig:triangles}) it holds that for any \(z\) in the open ball \(B_y\) of radius \(r_y\) around \(y\) the line through \(0\) and \(z\) intersects \(B_x\). 
Taking \(z\in B_y\cap S\), which exists because \(y\) is in the closure of \(S\), there exists some \(w\in B_x\subseteq S\) such that \(0\) lies on the line segment between \(w\) and \(z\). 
By convexity \(0\in S\) follows, as was to be shown. 
(iv) Since \(\bigcap_i \overline{S_i}\) is closed and contains \(\bigcap_i S_i\), clearly \(\overline{\bigcap_i S_i}\subseteq\bigcap_i \overline{S_i}\). 
By (i) the set \(\bigcap_i S_i\) is convex and by assumption it contains some \(x\). 
For \(t\in[0,1)\) and \(y\in \bigcap_i \overline{S_i}\) we have \(z_t=(1-t)x+ty\in S_i\) by (iii). 
Thus \(z_t\in \bigcap_i S_i\) for all \(t\in[0,1)\), so \(y=\lim_{t\to 1} z_t\in \overline{\bigcap_i S_i}\), proving the reverse inclusion.
\end{proof}

\begin{figure}
\def\tikza{-11}
\def\tikzll{3}
\def\tikzlr{4}
\begin{tikzpicture}
\draw (0,0) -- ({\tikzll*cos(\tikza+180)},{\tikzll*sin(\tikza+180)}) -- ({\tikzll/cos(\tikza+180)},0) -- (0,0);
\draw (0,0) -- ({\tikzlr*cos(\tikza)},{\tikzlr*sin(\tikza)}) -- ({\tikzlr/cos(\tikza)},0) -- (0,0);
\draw ({\tikzll/cos(\tikza+180)},0) circle ({\tikzll*tan(\tikza)});
\draw ({\tikzlr/cos(\tikza)},0) circle ({\tikzlr*tan(\tikza)});
\draw[fill=black] (0,0) circle (0.025);
\node[above] at (0,0) {\small $0$};
\draw[fill=black] ({\tikzll/cos(\tikza+180)},0) circle (0.025);
\node[below left] at ({\tikzll/cos(\tikza+180)},0) {\small $x$};
\draw[fill=black] ({\tikzlr/cos(\tikza)},0) circle (0.025);
\node[above right] at ({\tikzlr/cos(\tikza)},0) {\small $y$};
\node[above] at ({\tikzlr/cos(\tikza)/2},0) {\small $(1-t)\|x-y\|\phantom{abc}$};
\node[below] at ({\tikzll/cos(\tikza+180)/2},0) {\small $\phantom{ab}t\|x-y\|$};
\end{tikzpicture}
\caption{\label{fig:triangles}Computation of \(r_y\) from \(r_x\) via similar triangles.}
\end{figure}
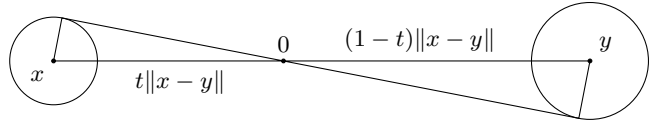

\begin{lemma}\label{lem:eq_def_voronoi}
Let \(\mathcal{H}\) be a Hilbert space. 
Then for \(x,y\in\mathcal{H}\) we have \(\|y\|\leq\|y-x\|\) if and only if \(2\langle x,y\rangle \leq \langle x,x\rangle\), and similarly with \(\leq\) replaced by \(<\).
\end{lemma}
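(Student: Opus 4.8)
The plan is to expand $\|y-x\|^2$ using bilinearity of the inner product and compare with $\|y\|^2$. Concretely, since the norm on a Hilbert space is induced by the real inner product, we have $\|y-x\|^2 = \langle y-x, y-x\rangle = \langle y,y\rangle - 2\langle x,y\rangle + \langle x,x\rangle$, using conjugate symmetry (which over $\R$ just says $\langle x,y\rangle = \langle y,x\rangle$) to combine the cross terms. Subtracting $\|y\|^2 = \langle y,y\rangle$ from both sides, the inequality $\|y\|^2 \leq \|y-x\|^2$ becomes $0 \leq \langle x,x\rangle - 2\langle x,y\rangle$, which is exactly $2\langle x,y\rangle \leq \langle x,x\rangle$. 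Since both sides of the original inequality $\|y\|\leq\|y-x\|$ involve non-negative reals, squaring is an equivalence, so the chain of equivalences is complete.

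First I would note that squaring is harmless: $\|y\|\leq\|y-x\|$ iff $\|y\|^2\leq\|y-x\|^2$ because both norms are non-negative. Then I would carry out the expansion above in a single display, and read off the equivalence with $2\langle x,y\rangle\leq\langle x,x\rangle$. The identical computation with every $\leq$ replaced by $<$ gives the strict version — the algebra is line-for-line the same, so I would simply remark that the argument applies verbatim with strict inequalities, since $a^2 < b^2 \iff a < b$ for $a,b\geq 0$ as well.

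There is essentially no obstacle here; this is a one-line computation. The only thing to be slightly careful about is that we are working with a real inner product (as Hilbert spaces are defined in the excerpt to carry a real $\R$-inner product), so that $\langle x,y\rangle = \langle y,x\rangle$ and all quantities appearing are real, which is what makes the cross terms combine cleanly and makes the squaring step legitimate. I would state the proof as follows.

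\begin{proof}
Since $\|\cdot\|$ is non-negative, $\|y\|\leq\|y-x\|$ is equivalent to $\|y\|^2\leq\|y-x\|^2$, and likewise with $\leq$ replaced by $<$. As the inner product is real and symmetric, expanding by bilinearity gives
\[ \|y-x\|^2 = \langle y-x,y-x\rangle = \langle y,y\rangle - 2\langle x,y\rangle + \langle x,x\rangle = \|y\|^2 - 2\langle x,y\rangle + \langle x,x\rangle. \]
Hence $\|y\|^2\leq\|y-x\|^2$ if and only if $0\leq \langle x,x\rangle - 2\langle x,y\rangle$, that is, if and only if $2\langle x,y\rangle\leq\langle x,x\rangle$. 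The same computation with $\leq$ replaced by $<$ throughout yields the strict version.
\end{proof}
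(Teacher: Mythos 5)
Your proof is correct and is essentially the paper's argument: both rest on the identity \(\|y-x\|^2-\|y\|^2=\langle x,x\rangle-2\langle x,y\rangle\), with the squaring step justified by non-negativity of the norm. Nothing further is needed.
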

\begin{proof}
We have \(\|y-x\|^2-\|y\|^2=\langle x,x\rangle - 2\langle x,y\rangle\), from which the lemma trivially follows.
\end{proof}

\begin{proposition}\label{prop:vor_closure}
For all Hilbert lattices \(\Lambda\) the set \(\vor(\Lambda)\) is symmetric, convex and has topological closure \(\vorc(\Lambda) \vcentcolon= \{z\in\mathcal{H}_\Lambda\,|\, (\forall x\in\Lambda)\ \|z\|\leq \|z-x\|\}\).
\end{proposition}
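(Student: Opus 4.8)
The plan is to reduce the whole statement to the description of \(\vor(\Lambda)\) as an intersection of open half-spaces. Symmetry is immediate: if \(z\in\vor(\Lambda)\), then for every non-zero \(x\in\Lambda\) we have \(\|{-z}\|=\|z\|<\|z-(-x)\|=\|{-z}-x\|\), and since \(-x\) ranges over \(\Lambda\setminus\{0\}\) together with \(x\), this gives \(-z\in\vor(\Lambda)\). For the rest, for each non-zero \(x\in\Lambda\) put \(H_x=\{z\in\mathcal{H}_\Lambda\mid 2\langle x,z\rangle<\langle x,x\rangle\}\). By Lemma~\ref{lem:eq_def_voronoi} we have \(H_x=\{z\in\mathcal{H}_\Lambda\mid\|z\|<\|z-x\|\}\), hence \(\vor(\Lambda)=\bigcap_{x\in\Lambda\setminus\{0\}}H_x\). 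Since the inner product is positive-definite and \(x\neq0\), the continuous linear functional \(z\mapsto 2\langle x,z\rangle\) is non-zero, so each \(H_x\) is an open half-space; moreover \(0\in H_x\) because \(\langle x,x\rangle>0\).

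Convexity of \(\vor(\Lambda)\) then follows from Lemma~\ref{lem:convex_facts}(i), since each \(H_x\) is convex. For the closure I would invoke Lemma~\ref{lem:convex_facts}(iv) applied to the family \(\{H_x\}_{x\in\Lambda\setminus\{0\}}\): the \(H_x\) are convex and open, and \(0\in\bigcap_x H_x=\vor(\Lambda)\), so the intersection is non-empty and the hypotheses hold. It remains to identify \(\overline{H_x}\). The closed half-space \(\{z\mid 2\langle x,z\rangle\le\langle x,x\rangle\}\) contains \(H_x\) and is closed, hence contains \(\overline{H_x}\); conversely if \(2\langle x,z_0\rangle=\langle x,x\rangle\) then \(2\langle x,z_0-tx\rangle=(1-2t)\langle x,x\rangle<\langle x,x\rangle\) for all \(t>0\), so \(z_0-tx\in H_x\) and \(z_0=\lim_{t\to0^+}(z_0-tx)\in\overline{H_x}\). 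Thus \(\overline{H_x}=\{z\mid 2\langle x,z\rangle\le\langle x,x\rangle\}\), which by Lemma~\ref{lem:eq_def_voronoi} equals \(\{z\mid\|z\|\le\|z-x\|\}\). Then Lemma~\ref{lem:convex_facts}(iv) gives \(\overline{\vor(\Lambda)}=\bigcap_x\overline{H_x}=\bigcap_{x\in\Lambda\setminus\{0\}}\{z\mid\|z\|\le\|z-x\|\}=\vorc(\Lambda)\), the last equality because the defining inequality is vacuous at \(x=0\). (When \(\Lambda=0\) the index set is empty and all of \(\vor(\Lambda)\), \(\vorc(\Lambda)\), \(\mathcal{H}_\Lambda\) equal \(\{0\}\), so the claim is trivial.)

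There is no genuinely hard step here; the one point that needs an argument rather than pure bookkeeping is the identification of \(\overline{H_x}\) with the closed half-space, together with checking that \(\bigcap_x H_x\) is non-empty — both done above. If one prefers to sidestep Lemma~\ref{lem:convex_facts}(iv) entirely, the same conclusion follows directly: \(\vorc(\Lambda)\) is closed (an intersection of the closed sets \(\{z\mid\|z\|\le\|z-x\|\}\)) and contains \(\vor(\Lambda)\), so \(\overline{\vor(\Lambda)}\subseteq\vorc(\Lambda)\); and for \(z\in\vorc(\Lambda)\) and \(t\in[0,1)\) one has \(tz\in\vor(\Lambda)\), by splitting on the sign of \(\langle x,z\rangle\) (if \(\langle x,z\rangle\le0\) then \(2\langle x,tz\rangle\le0<\langle x,x\rangle\); if \(\langle x,z\rangle>0\) then \(2\langle x,tz\rangle<2\langle x,z\rangle\le\langle x,x\rangle\)), whence \(z=\lim_{t\to1^-}tz\in\overline{\vor(\Lambda)}\).
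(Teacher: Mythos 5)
Your proof is correct and follows essentially the same route as the paper: writing \(\vor(\Lambda)\) as the intersection of the open half-spaces \(H_x\), getting convexity from Lemma~\ref{lem:convex_facts}(i), and the closure from Lemma~\ref{lem:convex_facts}(iv) after identifying \(\overline{H_x}\) with the corresponding closed half-space (a step the paper leaves implicit but you spell out, along with a direct alternative via the segment \(tz\), \(t\in[0,1)\)). No gaps.
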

\begin{proof}
It follows readily from the definition that \(\vor(\Lambda)\) is symmetric.
Now for \(x\in\Lambda\) consider \(H_x=\{z\in\mathcal{H}_\Lambda\,|\,2\langle x,z\rangle < \langle x,x\rangle\}\).
It is easy to show for all \(x\in\Lambda\) that \(H_x\) is convex: For \(a,b\in H_x\) and \(t\in[0,1]\) we have 
\[2\langle (1-t)a+tb,x\rangle = (1-t)\cdot2\langle a,x\rangle+t\cdot 2\langle b ,x\rangle < (1-t)\langle x,x\rangle + t\langle x,x\rangle = \langle x,x\rangle, \]
so \((1-t)a+tb\in H_x\). 
As \(\vor(\Lambda)\) is the intersection of all \(H_x\) with \(x\in\Lambda\) non-zero by Lemma~\ref{lem:eq_def_voronoi}, it follows from Lemma~\ref{lem:convex_facts}.i that \(\vor(\Lambda)\) is convex.
The \(H_x\) are all open, and for \(x\) non-zero we have \(0\in H_x\).
Hence the topological closure of \(\vor(\Lambda)\) equals \(\{z\in\mathcal{H}_\Lambda\,|\, (\forall x\in\Lambda\setminus\{0\})\ \|z\|\leq \|z-x\|\}\) by Lemma~\ref{lem:convex_facts}.iv, from which the proposition follows.
\end{proof}

\begin{example}
We do not have in general that \(\mathcal{H}_\Lambda=\Lambda+\vorc(\Lambda)\) for all Hilbert lattices \(\Lambda\), as in the finite-dimensional case.
Note that \(z\in\mathcal{H}_\Lambda\) is in \(\Lambda+\vorc(\Lambda)\) if and only if the infimum \(\inf\{\|z-x\|\,|\,x\in\Lambda\}\) is attained for some \(x\in\Lambda\).
Consider Example~\ref{ex:minimum_on_coset}, where we exhibit a lattice \(\Lambda\) and a coset \(z+2\Lambda\) of \(\Lambda/2\Lambda\) where \(\inf\{q(z+2x)\,|\, x\in\Lambda \}\) is not attained.
Equivalently, \(\inf\{\|\frac{1}{2}z-x\|\,|\,x\in\Lambda\}\) is not attained, so \(\frac{1}{2}z\not\in\Lambda+\vorc(\Lambda)\).
\end{example}

\begin{proposition}\label{prop:almost_fundamental}
Let \(\Lambda\) be a Hilbert lattice. Then for all \(\epsilon > 0\) we have \(\Lambda + (1+\epsilon)\vor(\Lambda) = \mathcal{H}_\Lambda\).
\end{proposition}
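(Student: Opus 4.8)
The plan is to prove the statement pointwise: given any \(z\in\mathcal{H}_\Lambda\), I produce an explicit \(x_0\in\Lambda\) with \((z-x_0)/(1+\epsilon)\in\vor(\Lambda)\), so that \(z=x_0+(z-x_0)\in\Lambda+(1+\epsilon)\vor(\Lambda)\). The point \(x_0\) should be a lattice vector that is \emph{almost} closest to \(z\); crucially I do \emph{not} want to assume that a genuinely closest lattice vector exists, since the paper has already exhibited Hilbert lattices where such minima fail to be attained (Example~\ref{ex:minimum_on_coset}).

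First I would set \(d=\inf\{\|z-x\|\,:\,x\in\Lambda\}\), which is finite because \(0\in\Lambda\), and fix a slack parameter \(\delta>0\) to be chosen at the end. By the definition of the infimum, pick \(x_0\in\Lambda\) with \(\|z-x_0\|\leq d+\delta\) and set \(u=z-x_0\in\mathcal{H}_\Lambda\). I would then reformulate the target: by Lemma~\ref{lem:eq_def_voronoi} (applied with \(y=u/(1+\epsilon)\) and \(x=v\)), the condition \(u/(1+\epsilon)\in\vor(\Lambda)\) — which is exactly \(z-x_0\in(1+\epsilon)\vor(\Lambda)\) — is equivalent to the family of strict inequalities \(2\langle u,v\rangle<(1+\epsilon)\|v\|^2\) for all \(v\in\Lambda\setminus\{0\}\).

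The heart of the argument is then a one-line estimate. For \(v\in\Lambda\setminus\{0\}\) we have \(x_0+v\in\Lambda\), so \(\|u-v\|=\|z-(x_0+v)\|\geq d\); expanding \(\|u-v\|^2\geq d^2\) and using \(\|u\|^2\leq(d+\delta)^2\) gives \(2\langle u,v\rangle\leq\|u\|^2+\|v\|^2-d^2\leq\delta(2d+\delta)+\|v\|^2\). Hence the required inequality holds as soon as \(\delta(2d+\delta)<\epsilon\|v\|^2\), and since \(\|v\|^2\geq\textup{P}(\Lambda)>0\) (the degenerate case \(\Lambda=0\), where \(\mathcal{H}_\Lambda=0\), being trivial), it suffices to take \(\delta>0\) small enough that \(\delta(2d+\delta)<\epsilon\,\textup{P}(\Lambda)\); such \(\delta\) exists because the left-hand side tends to \(0\) with \(\delta\). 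Then \(z=x_0+u\in\Lambda+(1+\epsilon)\vor(\Lambda)\), which finishes the proof.

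I do not expect a genuine obstacle here. The one place to be careful — and the reason I route through Lemma~\ref{lem:eq_def_voronoi} rather than the metric definition of \(\vor\) — is that the dilation by \(1+\epsilon\) interacts with the lattice, and a naive expansion of \(\|u/(1+\epsilon)-v\|^2\) would produce a spurious factor of \((1+\epsilon)^2\); the inner-product reformulation makes the correct single factor of \((1+\epsilon)\) manifest and keeps the estimate clean. The other point worth emphasizing in the write-up is that the slack \(\delta\) makes the whole argument completely insensitive to whether the infimum defining \(d\) is attained, so no compactness or completeness input is needed.
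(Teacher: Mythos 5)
Your proof is correct and follows essentially the same route as the paper: pick a lattice point whose distance to \(z\) is within a small additive slack of the infimum (so no attainment is needed), and verify membership of \((z-x_0)/(1+\epsilon)\) in \(\vor(\Lambda)\) via the inner-product criterion of Lemma~\ref{lem:eq_def_voronoi}, absorbing the slack into \(\epsilon\|v\|^2\geq\epsilon\,\textup{P}(\Lambda)\). The only differences from the paper's proof are cosmetic: the paper normalizes \(\textup{P}(\Lambda)=2\) and puts the slack \(\epsilon\) directly on the squared distance, whereas you keep a free parameter \(\delta\) on the distance and fix it at the end.
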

\begin{proof}
Without loss of generality we assume \(\Lambda \neq 0\) and \(\textup{P}(\Lambda)=2\).
Let \(z\in\mathcal{H}_\Lambda\).
Choose \(y\in\Lambda\) such that \(q(z-y)\leq \epsilon + \inf\{q(z-w)\,|\, w\in\Lambda\}\).
Suppose \(x\in\Lambda\setminus\{0\}\).
Then 
\begin{align*}
(1+\epsilon)\langle x,x \rangle -2 \langle x,z-y\rangle = \epsilon q(x) +( q(z-y-x)-q(z-y) ) \geq \epsilon q(x) - \epsilon \geq \epsilon (\textup{P}(\Lambda)-1)=\epsilon>0.
\end{align*}
It follows that \(2\langle x,(z-y)/(1+\epsilon)\rangle < \langle x,x\rangle\) for all \(x\in\Lambda\setminus\{0\}\), so \((z-y)/(1+\epsilon)\in\vor(\Lambda)\) by Lemma~\ref{lem:eq_def_voronoi}.
Thus \(z\in\Lambda+(1+\epsilon)\vor(\Lambda)\), as was to be shown.
\end{proof}

\begin{proposition}\label{prop:vor_incl}
For all Hilbert lattices \(\Lambda\) the set \(\vor(\Lambda)\) contains the open sphere of radius \(\rho(\Lambda)\) around \(0\in\Lambda\) and \(\vorc(\Lambda)\) is contained in the closed sphere of radius \(r(\Lambda)\) around \(0\).
\end{proposition}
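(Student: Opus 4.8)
The plan is to treat the two inclusions separately, each by a short direct argument. Throughout I work inside the ambient Hilbert space $\mathcal{H}_\Lambda$; the case $\Lambda=0$ is trivial (then $\mathcal{H}_\Lambda=0$ and both statements are vacuous), so I may assume $\Lambda\neq 0$.

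For the first inclusion I would take $z\in\mathcal{H}_\Lambda$ with $\|z\|<\rho(\Lambda)$ and verify the defining condition of $\vor(\Lambda)$, namely $\|z\|<\|z-x\|$ for every $x\in\Lambda\setminus\{0\}$. Since $q(x)\geq\textup{P}(\Lambda)=4\rho(\Lambda)^2$ we have $\|x\|\geq 2\rho(\Lambda)$, and then the triangle inequality gives
\[
\|z-x\|\geq\|x\|-\|z\|\geq 2\rho(\Lambda)-\|z\|>\|z\|,
\]
the last step because $\|z\|<\rho(\Lambda)$. (Alternatively one may use Cauchy--Schwarz: $2\langle x,z\rangle\leq 2\|x\|\,\|z\|<2\|x\|\rho(\Lambda)\leq\|x\|^2=\langle x,x\rangle$, and then invoke Lemma~\ref{lem:eq_def_voronoi}.) Hence $z\in\vor(\Lambda)$, so $\vor(\Lambda)$ contains the open ball of radius $\rho(\Lambda)$ about $0$.

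For the second inclusion I would argue by contradiction. Suppose $z\in\vorc(\Lambda)$ but $\|z\|>r(\Lambda)$. Writing $S=\{b\in\R_{>0}\mid(\forall w\in\mathcal{H}_\Lambda)(\exists x\in\Lambda)\ \|w-x\|\leq b\}$, so that $r(\Lambda)=\inf S$, the inequality $\inf S<\|z\|$ produces some $b\in S$ with $b<\|z\|$. Applying the covering property of $b$ to $w=z$ yields $x\in\Lambda$ with $\|z-x\|\leq b<\|z\|$. If $x=0$ this contradicts $b<\|z\|$ directly; if $x\neq 0$ it contradicts $\|z\|\leq\|z-x\|$, which holds by the description of $\vorc(\Lambda)$ in Proposition~\ref{prop:vor_closure}. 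Therefore $\|z\|\leq r(\Lambda)$, i.e.\ $\vorc(\Lambda)$ is contained in the closed ball of radius $r(\Lambda)$ about $0$. There is no genuine obstacle here; the only point needing care is the extraction of an element $b\in S$ strictly below $\|z\|$ from the fact that $\|z\|$ exceeds the infimum $r(\Lambda)$ (which itself need not lie in $S$), together with the correct use of the non-strict inequalities defining $\vorc(\Lambda)$.
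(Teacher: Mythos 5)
Your proposal is correct and follows essentially the same route as the paper: the first inclusion is the paper's Cauchy--Schwarz argument (which you give as your alternative; your primary reverse-triangle-inequality version is a trivially equivalent variant using \(\|x\|\geq 2\rho(\Lambda)\)), and the second inclusion is the paper's limit argument \(\|z\|\leq\|z-x\|\leq r\) for every \(r>r(\Lambda)\), merely rephrased as a proof by contradiction. Both your handling of the infimum defining \(r(\Lambda)\) and of the non-strict inequalities in \(\vorc(\Lambda)\) match the paper's reasoning.
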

\begin{proof}
Let \(z\in\mathcal{H}_\Lambda\) be such that \(\|z\|<\rho(\Lambda)\) and let \(x\in\Lambda\setminus\{0\}\).
By Cauchy--Schwarz we have \(\langle x,z\rangle \leq \|x\|\cdot\|z\| < \|x\| \cdot \frac{1}{2}\|x\| = \frac{1}{2} \langle x,x\rangle\), so \(z\in\vor(\Lambda)\) by Lemma~\ref{lem:eq_def_voronoi}.

Let \(z\in\vorc(\Lambda)\). 
For each \(r>r(\Lambda)\) there exists \(x\in \Lambda\) such that \(\|z-x\|\leq r\) by definition of \(r(\Lambda)\).
Then by Proposition~\ref{prop:vor_closure} we have \(\|z\| \leq \|z-x\|\leq r\). Taking the limit of \(r\) down to \(r(\Lambda)\) proves the second inclusion.
\end{proof}

\begin{theorem}\label{thm:minimal_indec_set}
Let \(\Lambda\) be a Hilbert lattice.
Write \(C\) for the collection of subsets \(S\subseteq\Lambda\) for which \(\vor(\Lambda) = \{ z\in\mathcal{H}_\Lambda \,|\, (\forall x\in S)\ \|z\| < \|z-x\| \}\).
Then \(\textup{indec}(\Lambda)\in C\), and it is the unique minimum with respect to inclusion.
\end{theorem}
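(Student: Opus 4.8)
The plan is to prove the two assertions of the statement separately: first that the half-space constraints indexed by $\textup{indec}(\Lambda)$ already cut out $\vor(\Lambda)$, so $\textup{indec}(\Lambda)\in C$; and second that no member of $C$ can omit any indecomposable, so $\textup{indec}(\Lambda)$ lies inside every $S\in C$ and is therefore the unique minimum.

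For $\textup{indec}(\Lambda)\in C$, I would translate everything into inequalities via Lemma~\ref{lem:eq_def_voronoi}: membership of $z$ in $\vor(\Lambda)$ is equivalent to $2\langle x,z\rangle<\langle x,x\rangle$ for all non-zero $x\in\Lambda$, and membership of $z$ in the candidate set is the same condition restricted to $x\in\textup{indec}(\Lambda)$. The inclusion $\vor(\Lambda)\subseteq\{\,z\mid(\forall x\in\textup{indec}(\Lambda))\ \|z\|<\|z-x\|\,\}$ is immediate since $\textup{indec}(\Lambda)\subseteq\Lambda\setminus\{0\}$. For the reverse inclusion, take $z$ satisfying all the indecomposable constraints and an arbitrary non-zero $y\in\Lambda$; by Proposition~\ref{prop:indec_sum} write $y=\sum_{i=1}^n y_i$ with $y_i\in\textup{indec}(\Lambda)$, $n\geq 1$, and $\sum_i q(y_i)\leq q(y)$. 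Summing the strict inequalities $2\langle y_i,z\rangle<q(y_i)$ over $i$ yields $2\langle y,z\rangle<\sum_i q(y_i)\leq q(y)$, which is the constraint coming from $y$; hence $z\in\vor(\Lambda)$.

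For minimality, let $S\in C$; I would show $\textup{indec}(\Lambda)\subseteq S$. The zero lattice is trivial ($\textup{indec}(\Lambda)=\emptyset$), so assume $\Lambda\neq 0$; then $0\in\vor(\Lambda)$, the set defining $S$ is non-empty, and consequently $0\notin S$ (otherwise that set would be empty). Fix $x_0\in\textup{indec}(\Lambda)$ and test the midpoint $z^*=x_0/2\in\mathcal{H}_\Lambda$. On the one hand $\|z^*\|=\|z^*-x_0\|$, so $z^*\notin\vor(\Lambda)$. On the other hand, for any $y\in\Lambda$ with $y\notin\{0,x_0\}$ we have $q(z^*-y)=\tfrac14 q(x_0-2y)$, and $x_0-2y$ lies in the coset $x_0+2\Lambda$ and differs from both $x_0$ and $-x_0$; Proposition~\ref{prop:hilbert_eq} (indecomposability of $x_0$ means $x_0$ is, up to sign, the unique $q$-minimal vector of its coset) then gives $q(x_0-2y)>q(x_0)$, i.e.\ $\|z^*-y\|>\|z^*\|$. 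Now if $x_0\notin S$, then every element of $S$ is non-zero and distinct from $x_0$, so $z^*$ satisfies $\|z^*\|<\|z^*-x\|$ for all $x\in S$; since $S\in C$ this forces $z^*\in\vor(\Lambda)$, a contradiction. Hence $x_0\in S$, and as $x_0$ was arbitrary, $\textup{indec}(\Lambda)\subseteq S$. Together with the first assertion this exhibits $\textup{indec}(\Lambda)$ as a minimum of $C$, and a minimum with respect to inclusion is automatically unique.

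Most of this is bookkeeping with results already in hand; the one step that carries the weight is the minimality direction, where one must produce the correct test point $z^*=x_0/2$ and transport its optimality through Proposition~\ref{prop:hilbert_eq}. The only other care needed is ruling out the degenerate cases $\Lambda=0$ and $0\in S$, handled by the observation that $0\in\vor(\Lambda)$ whenever $\Lambda\neq0$.
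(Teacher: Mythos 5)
Your proof is correct and follows essentially the same route as the paper: the inclusion $\textup{indec}(\Lambda)\in C$ via Proposition~\ref{prop:indec_sum} and summing the strict inequalities from Lemma~\ref{lem:eq_def_voronoi}, and minimality via the test point $x_0/2$, which the paper treats contrapositively by reading a decomposition of $x_0$ off a violated constraint using Lemma~\ref{lem:eq_dec_def}. The only cosmetic remark is that the coset-minimality fact you attribute to Proposition~\ref{prop:hilbert_eq} appears in that proposition's proof rather than its statement, though it follows at once from Lemma~\ref{lem:eq_dec_def} and indecomposability, so nothing is missing.
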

\begin{proof}
For \(S\subseteq\Lambda\) write \(V(S)=\{z\in\mathcal{H}_\Lambda\,|\, (\forall x\in S)\ \|z\| < \|z-x\|\}\). 

First suppose \(V(S)=\vor(\Lambda)\) for some \(S\subseteq\Lambda\). 
Let \(z\in\text{indec}(\Lambda)\) and note that \(\frac{1}{2}z\not\in\vor(\Lambda)\) since \(\|\frac{1}{2}z\|\geq \|\frac{1}{2}z-z\|\).
As \(V(S)=\vor(\Lambda)\) there must be some \(x\in S\) such that \(\|\frac{1}{2}z\|\geq\|\frac{1}{2}z-x\|\).
Hence \((x,z-x)\) is a decomposition of \(z\) by Lemma~\ref{lem:eq_dec_def}, so \(0=x\) or \(z=x\) since \(z\) is indecomposable.
If \(0=x\in S\), then \(V(S)=\emptyset\neq\vor(\Lambda)\), hence \(z=x\in S\).
We conclude that \(\text{indec}(\Lambda)\subseteq S\).

It remains to show that \(V(\text{indec}(\Lambda))=\vor(\Lambda)\).
We clearly have that \(\vor(\Lambda)\subseteq V(\text{indec}(\Lambda))\).
Suppose \(z\in V(\text{indec}(\Lambda))\) and let \(x\in\Lambda\setminus\{0\}\).
By Proposition~\ref{prop:indec_sum} we may write \(x=\sum_{i=1}^n x_i\) for some \(n\in\Z_{\geq 1}\) and \(x_i\in\textup{indec}(\Lambda)\) such that \(\sum_{i=1}^n \langle x_i,x_i\rangle\leq \langle x,x\rangle\). Then by Lemma~\ref{lem:eq_def_voronoi} we have
\[ 2\langle x,z\rangle = \sum_{i=1}^n 2\langle x_i,z\rangle < \sum_{i=1}^n \langle x_i,x_i\rangle \leq \langle x,x\rangle  \]
and thus \(z\in\vor(\Lambda)\). We conclude that \(\vor(\Lambda)=V(\text{indec}(\Lambda))\).
\end{proof}

\begin{corollary}\label{cor:vorc}
Let \(\Lambda\) be a Hilbert lattice. Then \(\vorc(\Lambda) = \{z\in\mathcal{H}_\Lambda\,|\,(\forall x\in\textup{indec}(\Lambda))\ \|z\|\leq \|z-x\|\}\).
\end{corollary}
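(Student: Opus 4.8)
The plan is to mimic the second half of the proof of Theorem~\ref{thm:minimal_indec_set}, replacing the strict inequalities by non-strict ones, so that essentially no new idea is needed. Write $W = \{z\in\mathcal{H}_\Lambda\,|\,(\forall x\in\textup{indec}(\Lambda))\ \|z\|\leq\|z-x\|\}$ for the set on the right-hand side. The inclusion $\vorc(\Lambda)\subseteq W$ is immediate: by Proposition~\ref{prop:vor_closure} we have $\vorc(\Lambda)=\{z\in\mathcal{H}_\Lambda\,|\,(\forall x\in\Lambda)\ \|z\|\leq\|z-x\|\}$, and since $\textup{indec}(\Lambda)\subseteq\Lambda$ the defining condition of $W$ is weaker.

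For the reverse inclusion, take $z\in W$ and an arbitrary $x\in\Lambda$; I must show $\|z\|\leq\|z-x\|$. The case $x=0$ is trivial, so assume $x\neq 0$. By Proposition~\ref{prop:indec_sum} write $x=\sum_{i=1}^n x_i$ with $x_i\in\textup{indec}(\Lambda)$ and $\sum_{i=1}^n\langle x_i,x_i\rangle\leq\langle x,x\rangle$. By Lemma~\ref{lem:eq_def_voronoi} (the non-strict version, which is part of its statement) the hypothesis $z\in W$ gives $2\langle x_i,z\rangle\leq\langle x_i,x_i\rangle$ for each $i$, hence
\[ 2\langle x,z\rangle = \sum_{i=1}^n 2\langle x_i,z\rangle \leq \sum_{i=1}^n\langle x_i,x_i\rangle \leq \langle x,x\rangle, \]
and applying Lemma~\ref{lem:eq_def_voronoi} once more yields $\|z\|\leq\|z-x\|$. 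Since $x\in\Lambda$ was arbitrary, $z\in\vorc(\Lambda)$ by the description of $\vorc(\Lambda)$ in Proposition~\ref{prop:vor_closure}. This proves $W\subseteq\vorc(\Lambda)$, and combined with the first inclusion gives equality.

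There is no real obstacle here; the only point worth care is that one genuinely needs the non-strict form of Lemma~\ref{lem:eq_def_voronoi} (which that lemma does provide) and the fact that $\textup{indec}(\Lambda)$ generates $\Lambda$ in the quantitative form of Proposition~\ref{prop:indec_sum} rather than just as a generating set. An alternative route would be to invoke Proposition~\ref{prop:vor_closure} together with Theorem~\ref{thm:minimal_indec_set} and a closure computation via Lemma~\ref{lem:convex_facts}, but the direct argument above is shorter and self-contained given the results already available.
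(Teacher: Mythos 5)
Your proof is correct, but it takes a different route from the paper. The paper deduces the corollary topologically: it notes (via Lemma~\ref{lem:convex_facts}.iv, applicable because each half-space in the intersection contains \(0\) when \(0\notin S\)) that \(\overline{V}(S)=\{z\,|\,(\forall x\in S)\ \|z\|\leq\|z-x\|\}\) is the topological closure of \(V(S)\), and then simply takes closures in the identity \(\vor(\Lambda)=V(\textup{indec}(\Lambda))\) of Theorem~\ref{thm:minimal_indec_set}, invoking Proposition~\ref{prop:vor_closure} to identify \(\overline{\vor(\Lambda)}\) with \(\vorc(\Lambda)\). You instead re-run the second half of the proof of Theorem~\ref{thm:minimal_indec_set} with non-strict inequalities: Proposition~\ref{prop:indec_sum} gives the quantitative decomposition \(x=\sum_i x_i\) with \(\sum_i q(x_i)\leq q(x)\), and the non-strict half of Lemma~\ref{lem:eq_def_voronoi} lets the inequalities \(2\langle x_i,z\rangle\leq\langle x_i,x_i\rangle\) be summed; this is sound, and the degenerate cases (\(x=0\), or \(\Lambda=0\) so that Proposition~\ref{prop:indec_sum} is never invoked) are handled correctly. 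What each approach buys: yours is self-contained and purely algebraic, needing only the definition of \(\vorc(\Lambda)\) from Proposition~\ref{prop:vor_closure}, Lemma~\ref{lem:eq_def_voronoi} and Proposition~\ref{prop:indec_sum}, and avoiding the convexity/closure machinery entirely; the paper's argument is shorter given that Theorem~\ref{thm:minimal_indec_set} is already available, and it yields the extra topological information that the right-hand set is precisely the closure of \(V(\textup{indec}(\Lambda))\), which your direct argument does not address.
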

\begin{proof}
By Lemma~\ref{lem:convex_facts}.iv we have for \(S\subseteq\Lambda\) not containing \(0\) that \(\overline{V}(S)=\{z\in\mathcal{H}_\Lambda\,|\,(\forall x\in S)\ \|z\|\leq \|z-x\|\}\) is the topological closure of \(V(S)\) as defined in the proof of Theorem~\ref{thm:minimal_indec_set}.
The corollary then follows from Proposition~\ref{prop:vor_closure} and Theorem~\ref{thm:minimal_indec_set}.
\end{proof}

\begin{example}
For a Hilbert lattice \(\Lambda\) the set \(\text{indec}(\Lambda)\) can fail to be the minimum among all sets \(S\subseteq\Lambda\) such that \(\overline{V}(S)=\{z\in\mathcal{H}_\Lambda\,|\,(\forall x\in S)\ \|z\|\leq \|z-x\|\}\) equals \(\vorc(\Lambda)\). 
We will give a counterexample. 

Let \(I=\Z_{\geq 0}\cup\{\infty\}\) and let \(f:I\to\R_{\geq 0}\) such that \(f|_{\Z_{\geq 0}}\) is strictly decreasing with limit \(f(\infty)>0\).
Consider the lattice \(\Lambda=\smash{\Lambda_2^f}\) as in Example~\ref{ex:general_lattice}.
Note that \(\textup{P}(\Lambda)=2f(\infty)^2\) and that \(\textup{P}(\Lambda)\) is not attained by any vector.
Hence \(2e_\infty\in\Lambda\) is indecomposable by Lemma~\ref{lem:small_indec}.
Now let \(S=\textup{indec}(\Lambda)\setminus\{\pm 2e_\infty\}\). 
We claim that \(\overline{V}(S)=\overline{V}(\text{indec}(\Lambda))\), the latter being equal to \(\vorc(\Lambda)\) by Corollary~\ref{cor:vorc}.
It remains to show for all \(z=(z_i)_i\in\overline{V}(S)\) that \(\|z\|\leq\|z-2e_\infty\|\) by symmetry.

For all \(i\) let \(s_i\in\{\pm1\}\) such that \(s_i z_i = |z_i|\).
Since \(f\) is strictly decreasing there exists some \(N\in\Z_{\geq 0}\) such that \(f(n)<\sqrt{3}f(\infty)\) for all integers \(n\geq N\).
For all integers \(n\geq N\) we have \(s_ne_n + e_\infty\in S\) by Lemma~\ref{lem:small_indec} as \(q(s_ne_n+e_\infty)=f(n)^2+f(\infty)^2< 4 f(\infty)^2 = 2\textup{P}(\Lambda)\).
Then
\[0 \leq \|z-(s_ne_n+e_\infty)\|^2-\|z\|^2=(1-2|z_n|)f(n)^2+(1-2z_\infty)f(\infty)^2.\]
As \(z\in\mathcal{H}_\Lambda\) we must have \(\lim_{n\to\infty}|z_n|=0\), so taking the limit over the above inequality we get \(0\leq 2(1-z_\infty)f(\infty)^2\) and thus \(z_\infty\leq 1\). 
But then \(\|z-2e_\infty\|^2-\|z\|^2=4f(\infty)^2(1-z_\infty)\geq 0\) and we are done.
\end{example}

\label{sec:end-hilbert}
\section{The lattice of algebraic integers}\label{sec:algebraic_integers}\label{sec:begin-integers}

We will write \(\overline{\Q}\) for an algebraic closure of \(\Q\). 
An {\em algebraic integer} is an element \(\alpha\in\overline{\Q}\) for which there exists a monic \(f\in\Z[X]\) such that \(f(\alpha)=0\).
The algebraic integers form a subring of \(\overline{\Q}\), which we denote \(\overline{\Z}\).
In this section we will prove that \(\overline{\Z}\) together with a natural choice of square-norm is a Hilbert lattice.

\begin{definition}\label{def:fundamental_set}
For a ring \(K\) we define the {\em fundamental set} \,to be the set \(\X(K)\) of ring homomorphisms from \(K\) to \(\C\). 
For a ring \(L\) with subring \(K\) and \(\sigma\in\X(K)\) we define \(\X_\sigma(L)=\{\rho\in\X(L)\,|\,\rho|_K = \sigma\}\).
\end{definition}

\begin{lemma}\label{lem:extension_dont_care}
Let \(\alpha\in\overline{\Q}\) and \(\Q(\alpha)\subseteq L\subseteq\overline{\Q}\) subfields with \([L:\Q]<\infty\).
Then the quantities
\[ \prod_{\sigma\in \X(L)} |\sigma(\alpha)|^{1/[L:\Q]} \quad\text{and}\quad \frac{1}{[L:\Q]} \sum_{\sigma\in \X(L)} |\sigma(\alpha)|^2 \]
are in \(\R_{\geq 0}\), equal to zero if and only if \(\alpha=0\), and do not depend on the choice of \(L\). \qed
\end{lemma}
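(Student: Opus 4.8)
The plan is to reduce everything to the single field \(\Q(\alpha)\) and then use the standard tower-of-embeddings counting argument. Write \(K = \Q(\alpha)\), \(d = [K:\Q]\) and \(m = [L:K]\), so \([L:\Q] = dm\). First I would observe that every embedding \(\sigma \colon K \to \C\) extends to exactly \(m\) embeddings \(L \to \C\): this is the primitive element theorem applied to \(L/K\), or more directly the fact that \(\#\X_\sigma(L) = [L:K]\) for every \(\sigma \in \X(K)\) since \(\overline\Q/\Q\) is separable (char \(0\)). Consequently the restriction map \(\X(L) \to \X(K)\) is \(m\)-to-\(1\). Since the value \(|\rho(\alpha)|\) depends only on \(\rho|_{\Q(\alpha)} = \rho|_K\) (as \(\alpha \in K\)), grouping the embeddings \(\rho \in \X(L)\) by their restriction gives
\[
\sum_{\rho \in \X(L)} |\rho(\alpha)|^2 = m \sum_{\sigma \in \X(K)} |\sigma(\alpha)|^2,
\qquad
\prod_{\rho \in \X(L)} |\rho(\alpha)| = \Bigl(\prod_{\sigma \in \X(K)} |\sigma(\alpha)|\Bigr)^{m}.
\]
Dividing by \([L:\Q] = dm\) in the first case and taking the \([L:\Q]\)-th root \(= (dm)\)-th root in the second, the factor \(m\) cancels and both quantities become \(\frac1d\sum_{\sigma\in\X(K)}|\sigma(\alpha)|^2\) and \(\bigl(\prod_{\sigma\in\X(K)}|\sigma(\alpha)|\bigr)^{1/d}\) respectively — expressions depending only on \(\alpha\), not on \(L\). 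This settles the independence claim.

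Next I would verify membership in \(\R_{\geq 0}\) and the vanishing criterion. Both expressions are manifestly non-negative reals since they are built from absolute values \(|\sigma(\alpha)| \in \R_{\geq 0}\) by sums, products, positive scalar multiples, and real roots. For the vanishing: the sum \(\frac{1}{[L:\Q]}\sum_\sigma |\sigma(\alpha)|^2\) is zero iff \(|\sigma(\alpha)| = 0\) for every \(\sigma\), and likewise the product \(\prod_\sigma |\sigma(\alpha)|^{1/[L:\Q]}\) is zero iff \(|\sigma(\alpha)| = 0\) for some \(\sigma\); in either case \(\sigma(\alpha) = 0\) for at least one embedding \(\sigma \colon L \to \C\). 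But \(\sigma\) is an injective ring homomorphism (any field homomorphism is injective), so \(\sigma(\alpha) = 0\) forces \(\alpha = 0\). Conversely if \(\alpha = 0\) both expressions are visibly \(0\). Actually one should note all the \(|\sigma(\alpha)|\) are simultaneously zero or simultaneously nonzero — because they are the absolute values of the Galois conjugates of \(\alpha\), which are permuted transitively — so the "for some" and "for all" conditions coincide; this is a convenient remark but not strictly needed for the stated equivalence.

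The only genuinely load-bearing step is the claim that restriction \(\X(L) \to \X(\Q(\alpha))\) has all fibers of the same size \([L:\Q(\alpha)]\); everything else is bookkeeping. This is where separability of the extension \(L/\Q(\alpha)\) is used, and since we are in characteristic zero it is automatic — one can cite the primitive element theorem to write \(L = \Q(\alpha)(\theta)\) and count the roots of the (separable) minimal polynomial of \(\theta\) over \(\Q(\alpha)\), each giving exactly one extension of a fixed \(\sigma\). I do not anticipate any real obstacle; the lemma is essentially the well-definedness check that makes the normalized inner product \(\langle\,\cdot\,,\,\cdot\,\rangle\) from the introduction independent of the auxiliary field chosen to compute it.
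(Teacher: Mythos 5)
Your proof is correct, and since the paper states this lemma with no written proof (it is marked as immediate), your argument --- grouping the embeddings of \(L\) by their restriction to \(\Q(\alpha)\), using that each fiber of \(\X(L)\to\X(\Q(\alpha))\) has size \([L:\Q(\alpha)]\) in characteristic zero, and then invoking injectivity of field embeddings for the vanishing criterion --- is exactly the standard verification the paper leaves implicit. Nothing is missing.
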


\begin{definition}\label{def:Nq}
We define the maps \(N,q:\overline{\Q}\to\R_{\geq 0}\) by
\[ N(\alpha) = \prod_{\sigma\in \X(\Q(\alpha))} |\sigma(\alpha)|^{1/[\Q(\alpha):\Q]} \quad\text{and}\quad q(\alpha) = \frac{1}{[\Q(\alpha):\Q]} \sum_{\sigma\in \X(\Q(\alpha))} |\sigma(\alpha)|^2. \]
\end{definition}

\begin{lemma}\label{lem:parallogram_law}
For \(\alpha,\beta\in\overline{\Q}\) we have \(q(\alpha+\beta)+q(\alpha-\beta)=2q(\alpha)+2q(\beta)\).
\end{lemma}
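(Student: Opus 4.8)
The plan is to reduce the parallelogram law for $q$ on $\overline{\Q}$ to the ordinary parallelogram law in a finite-dimensional Hermitian inner product space, exactly as one does for the trace form of a number field. Given $\alpha,\beta\in\overline{\Q}$, first I would pass to a common finite extension: let $L$ be any number field with $\Q(\alpha,\beta)\subseteq L\subseteq\overline{\Q}$ and $[L:\Q]=:n<\infty$ (for instance $L=\Q(\alpha,\beta)$, which is finite over $\Q$ since $\alpha,\beta$ are algebraic). The point of Lemma~\ref{lem:extension_dont_care} is that
\[ q(\gamma) = \frac{1}{n}\sum_{\sigma\in\X(L)} |\sigma(\gamma)|^2 \]
for every $\gamma\in L$, regardless of which such $L$ we picked; in particular this formula computes $q(\alpha)$, $q(\beta)$, $q(\alpha+\beta)$ and $q(\alpha-\beta)$ all with the same index set $\X(L)$ and the same normalizing factor $1/n$.

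Next I would observe that $\X(L)$ is a finite set (of size $n$, since $L/\Q$ is separable), so we may regard $\gamma\mapsto (\sigma(\gamma))_{\sigma\in\X(L)}$ as a map $L\to\C^{\X(L)}$, and $q(\gamma)=\tfrac1n\|(\sigma(\gamma))_\sigma\|^2$ where $\|\cdot\|$ is the standard Hermitian norm on $\C^{\X(L)}$. Since each $\sigma$ is a ring homomorphism, it is additive: $\sigma(\alpha\pm\beta)=\sigma(\alpha)\pm\sigma(\beta)$. Writing $u=(\sigma(\alpha))_\sigma$ and $v=(\sigma(\beta))_\sigma$, the claim becomes
\[ \tfrac1n\|u+v\|^2 + \tfrac1n\|u-v\|^2 = \tfrac2n\|u\|^2 + \tfrac2n\|v\|^2, \]
which is just $\tfrac1n$ times the parallelogram law for the Hermitian norm on the finite-dimensional space $\C^{\X(L)}$. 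That identity is elementary — expand $\|u\pm v\|^2 = \|u\|^2 \pm 2\operatorname{Re}\langle u,v\rangle + \|v\|^2$ coordinatewise and add — or one can cite Lemma~\ref{lem:parallelogram_law} applied to the real inner product space underlying $\C^{\X(L)}$. Dividing by $n$ gives exactly $q(\alpha+\beta)+q(\alpha-\beta)=2q(\alpha)+2q(\beta)$.

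I do not expect a serious obstacle here; the only thing to be careful about is the bookkeeping of which field one sums over. The honest content is entirely in Lemma~\ref{lem:extension_dont_care}, which guarantees that all four terms can be evaluated using a single $L\supseteq\Q(\alpha,\beta)$ with a single common index set and normalization — without that, the sums defining $q(\alpha)$, $q(\beta)$, $q(\alpha\pm\beta)$ range over embeddings of different fields and cannot be combined. Once everything is expressed over the common $L$, the statement is just the classical parallelogram law in $\C^{\X(L)}\cong\R^{2n}$, so the proof is a few lines.
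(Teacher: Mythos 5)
Your proposal is correct and follows essentially the same route as the paper: pass to a common finite extension \(L\supseteq\Q(\alpha,\beta)\), use Lemma~\ref{lem:extension_dont_care} to express all four quantities as sums over the single index set \(\X(L)\) with the same normalization, and apply the parallelogram law for \(|\cdot|\) on \(\C\) term-wise (your packaging of this as the parallelogram law in \(\C^{\X(L)}\) is just a reformulation). No gap.
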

\begin{proof}
By Lemma~\ref{lem:extension_dont_care} the restriction of \(q\) to \(L=\Q(\alpha,\beta)\) is given by \(q(\gamma)=\frac{1}{[L:\Q]}\sum_{\sigma\in \X(L)} |\sigma(\gamma)|^2\).
The norm \(|\cdot|\) on \(\C\) satisfies the parallelogram law and we may apply this term-wise to the sum defining \(q\) to obtain the lemma.
\end{proof}

\begin{lemma}[AM-GM inequality, Theorem~5.1 in \citep{Cvetkovski2012}]\label{lem:amgm}
Let \(n\in\Z_{\geq 1}\) and \(x_1,\dotsc,x_n\in\R_{\geq 0}\). Then
\[ \sqrt[n]{x_1\dotsm x_n} \leq \frac{x_1+\dotsm+x_n}{n}, \]
with equality if and only if \(x_1=x_2=\dotsm=x_n\). \qed
\end{lemma}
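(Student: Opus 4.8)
The plan is to prove the inequality by \emph{Cauchy's forward--backward induction}, which keeps everything elementary and makes the equality case transparent.

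First I would dispose of the degenerate case: if some \(x_i=0\) then the left-hand side is \(0\) and the inequality is clear, with equality precisely when every \(x_j=0\), i.e.\ when all the \(x_i\) agree. So from now on assume all \(x_i\in\R_{>0}\). The base case \(n=2\) is the identity \(\bigl(\sqrt{x_1}-\sqrt{x_2}\bigr)^2\geq 0\) rewritten as \(\sqrt{x_1x_2}\leq\tfrac12(x_1+x_2)\), with equality iff \(\sqrt{x_1}=\sqrt{x_2}\). For the forward step I would show that if the statement holds for \(n\) then it holds for \(2n\): split \(x_1,\dotsc,x_{2n}\) into the two blocks \(x_1,\dotsc,x_n\) and \(x_{n+1},\dotsc,x_{2n}\), apply the \(n\)-case to each block to get geometric means \(g_1,g_2\), then apply the \(n=2\) case to \(g_1,g_2\); chaining the inequalities gives \(\sqrt[2n]{x_1\dotsm x_{2n}}\leq\tfrac{1}{2n}(x_1+\dotsb+x_{2n})\), and equality forces equality in all three applications, hence all \(x_i\) equal. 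Iterating, the inequality (with its equality case) holds whenever \(n\) is a power of \(2\).

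For the backward step, suppose the statement holds for \(n\) and let \(x_1,\dotsc,x_{n-1}>0\) with arithmetic mean \(A=\tfrac{1}{n-1}(x_1+\dotsb+x_{n-1})\). Applying the \(n\)-case to the list \(x_1,\dotsc,x_{n-1},A\), whose arithmetic mean is again \(A\), yields \(\sqrt[n]{x_1\dotsm x_{n-1}\cdot A}\leq A\), i.e.\ \(x_1\dotsm x_{n-1}\leq A^{n-1}\), which is the desired inequality for \(n-1\); equality in the \(n\)-case forces \(x_1=\dotsb=x_{n-1}=A\). Given any \(n\in\Z_{\geq 1}\), choose a power of two \(2^k\geq n\) and descend from \(2^k\) to \(n\) by the backward step \(2^k-n\) times; this yields both the inequality and the equality characterisation for that \(n\), completing the argument.

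The one point that needs care is the bookkeeping of the equality conditions through the composition of inequalities --- in particular checking in the forward step that equality in the outer \(n=2\) application together with equality in the two inner \(n\)-applications really forces all \(2n\) of the \(x_i\) to coincide, and in the backward step that reintroducing the value \(A\) does not spuriously create an equality case. An alternative, shorter route is to invoke strict convexity of \(t\mapsto-\log t\) on \(\R_{>0}\) and apply Jensen's inequality exactly as in the proof of Lemma~\ref{lem:holder}, then exponentiate and treat the zeros separately; but the elementary argument above avoids any appeal to analysis.
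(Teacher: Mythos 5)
Your proof is correct, but note that the paper does not prove this lemma at all: it is stated with a \qed and an attribution to Theorem~5.1 of \citep{Cvetkovski2012}, so the comparison here is between your self-contained argument and a citation to the literature. Your Cauchy forward--backward induction is a standard and complete route: the $n=2$ base via $(\sqrt{x_1}-\sqrt{x_2})^2\geq 0$, the doubling step, and the descent step with the arithmetic mean $A$ appended all go through, and the equality bookkeeping you flag does work out --- in the forward step equality forces $g_1=A_1$, $g_2=A_2$ and $g_1=g_2$, hence constancy within and across the two blocks, and in the backward step equality at level $n-1$ lifts to equality at level $n$ for the augmented list, forcing $x_1=\dotsb=x_{n-1}=A$; the case of a zero entry is also handled correctly. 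What your elementary argument buys is independence from any convexity or measure-theoretic input; the alternative you mention (strict convexity of $t\mapsto-\log t$ plus Jensen) is closer in spirit to how the paper uses Jensen's inequality elsewhere (e.g.\ in Lemma~\ref{lem:holder} and Example~\ref{ex:not_convex_exp_bounded}), and strictness of the convexity is what delivers the equality case there, but the paper itself never writes out either proof, so either route would be an acceptable way to make the lemma self-contained.
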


\begin{definition}\label{def:uniform}
An element \(\delta\in\overline{\Q}\) is called {\em uniform} if \(|\sigma(\delta)|=|\tau(\delta)|\) for all \(\sigma,\tau\in \X(\overline{\Q})\).
\end{definition}

\begin{lemma}\label{lem:Nq_ineq}
For all \(\alpha\in\overline{\Q}\) we have \(N(\alpha)^2\leq q(\alpha)\) with equality if and only if \(\alpha\) is uniform.
\end{lemma}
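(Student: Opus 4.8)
The plan is to apply the AM--GM inequality (Lemma~\ref{lem:amgm}) essentially directly. Write \(n = [\Q(\alpha):\Q]\) and, for each \(\sigma\in\X(\Q(\alpha))\), set \(x_\sigma = |\sigma(\alpha)|^2 \in \R_{\geq 0}\). Since a number field is separable over \(\Q\), we have \(\#\X(\Q(\alpha)) = n\), so by the definitions in Definition~\ref{def:Nq} we get \(N(\alpha)^2 = \big(\prod_\sigma x_\sigma\big)^{1/n}\) and \(q(\alpha) = \tfrac1n\sum_\sigma x_\sigma\). Lemma~\ref{lem:amgm} applied to the \(n\) numbers \((x_\sigma)_{\sigma\in\X(\Q(\alpha))}\) yields exactly \(N(\alpha)^2 \leq q(\alpha)\).

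For the equality clause, Lemma~\ref{lem:amgm} moreover says that \(N(\alpha)^2 = q(\alpha)\) holds if and only if all the \(x_\sigma\) coincide, i.e.\ if and only if \(|\sigma(\alpha)| = |\tau(\alpha)|\) for all \(\sigma,\tau\in\X(\Q(\alpha))\). It then remains to identify this with \(\alpha\) being uniform in the sense of Definition~\ref{def:uniform}. First I would note that restriction gives a map \(\X(\overline\Q)\to\X(\Q(\alpha))\), \(\rho\mapsto\rho|_{\Q(\alpha)}\), satisfying \(|\rho(\alpha)| = |(\rho|_{\Q(\alpha)})(\alpha)|\); conversely every \(\sigma\in\X(\Q(\alpha))\) extends to some \(\rho\in\X(\overline\Q)\) because \(\overline\Q\) is algebraic over \(\Q(\alpha)\), so this restriction map is surjective. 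Hence \(\{|\rho(\alpha)| : \rho\in\X(\overline\Q)\} = \{|\sigma(\alpha)| : \sigma\in\X(\Q(\alpha))\}\), and the equality condition from AM--GM is precisely uniformity of \(\alpha\).

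I do not expect a genuine obstacle here: the inequality is an immediate invocation of AM--GM, and the only step demanding a little care is the equality case — specifically the translation between embeddings of \(\Q(\alpha)\) and of \(\overline\Q\), together with the bookkeeping that the product and sum defining \(N\) and \(q\) run over exactly \(n = [\Q(\alpha):\Q]\) embeddings (which uses separability, automatic in characteristic \(0\)).
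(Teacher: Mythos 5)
Your proposal is correct and matches the paper's proof, which is exactly the same one-line application of the AM--GM inequality (Lemma~\ref{lem:amgm}) to the numbers \(|\sigma(\alpha)|^2\) for \(\sigma\in\X(\Q(\alpha))\), with equality precisely when all these values coincide. The only difference is that you spell out the (surjective) restriction map \(\X(\overline\Q)\to\X(\Q(\alpha))\) to reconcile the equality case with Definition~\ref{def:uniform}, a point the paper leaves implicit.
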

\begin{proof}
This lemma follows from a straightforward application of Lemma~\ref{lem:amgm}:
\[ q(\alpha) = \frac{1}{[\Q(\alpha):\Q]} \sum_{\sigma\in \X(\Q(\alpha))} |\sigma(\alpha)|^2 \geq \Big( \prod_{\sigma\in \X(\Q(\alpha))} |\sigma(\alpha)|^2 \Big)^{1/[\Q(\alpha):\Q]} = N(\alpha)^2, \]
with equality if and only if \(|\sigma(\alpha)|^2=|\rho(\alpha)|^2\) for all \(\sigma,\rho\in\X(\Q(\alpha))\).
\end{proof}

\begin{proposition}\label{prop:root_of_unity}
If \(\alpha\in\overline{\Z}\), then \(q(\alpha)\leq 1\) if and only if \(\alpha=0\) or \(\alpha\) is a root of unity.
If \(\alpha\) is a root of unity, then \(q(\alpha)=1\).
\end{proposition}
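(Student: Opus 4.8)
The plan is to dispose of the two easy implications by direct computation and to reduce the remaining one to Kronecker's theorem, which I would then prove inline by a finiteness-of-minimal-polynomials argument. The easy directions: if \(\alpha=0\) then \(q(\alpha)=0\leq 1\); and if \(\alpha\) is a root of unity, then so is every conjugate \(\sigma(\alpha)\) for \(\sigma\in\X(\Q(\alpha))\), so \(|\sigma(\alpha)|=1\) for all such \(\sigma\) and hence \(q(\alpha)=\tfrac{1}{[\Q(\alpha):\Q]}\sum_{\sigma}1=1\). This proves both ``\(q(\alpha)\leq 1\) in this case'' and the last sentence of the proposition.

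For the converse, assume \(\alpha\in\overline{\Z}\) with \(\alpha\neq 0\) and \(q(\alpha)\leq 1\); the aim is to show \(|\sigma(\alpha)|=1\) for every \(\sigma\in\X(\Q(\alpha))\). Since \(\alpha\) is a nonzero algebraic integer, its minimal polynomial lies in \(\Z[X]\), with constant term equal up to sign to \(\prod_{\sigma\in\X(\Q(\alpha))}\sigma(\alpha)=N_{\Q(\alpha)/\Q}(\alpha)\), a nonzero rational integer. Hence \(N(\alpha)^{[\Q(\alpha):\Q]}=\prod_{\sigma}|\sigma(\alpha)|\geq 1\), so \(N(\alpha)\geq 1\), and together with Lemma~\ref{lem:Nq_ineq} this forces \(1\leq N(\alpha)^2\leq q(\alpha)\leq 1\). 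Thus every inequality is an equality: \(q(\alpha)=1\) and \(N(\alpha)^2=q(\alpha)\), the latter meaning by Lemma~\ref{lem:Nq_ineq} that \(\alpha\) is uniform. So all the numbers \(|\sigma(\alpha)|\) with \(\sigma\in\X(\Q(\alpha))\) are equal, and since their product is \(1\) each of them equals \(1\).

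It remains to deduce that \(\alpha\) is a root of unity, knowing that all of its conjugates have absolute value \(1\). Put \(n=[\Q(\alpha):\Q]\). For each \(m\in\Z_{\geq 1}\), the element \(\alpha^m\) is an algebraic integer with \([\Q(\alpha^m):\Q]\leq n\), and every conjugate of \(\alpha^m\) is of the form \(\tau(\alpha)^m\) for an embedding \(\tau\) of \(\Q(\alpha)\), hence has absolute value \(1\). Its minimal polynomial is therefore monic in \(\Z[X]\) of degree at most \(n\), and each coefficient, being up to sign an elementary symmetric function of these unit-modulus conjugates, has absolute value at most \(2^n\). Only finitely many polynomials satisfy these constraints, so \(\{\alpha^m : m\in\Z_{\geq 1}\}\) is finite; by pigeonhole \(\alpha^k=\alpha^l\) for some \(1\leq k<l\), and since \(\alpha\neq 0\) we conclude \(\alpha^{l-k}=1\).

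The substantive step is the last one, Kronecker's theorem; everything preceding it is a short deduction from Lemma~\ref{lem:Nq_ineq} together with integrality. Within that step the only care required is bookkeeping: verifying that the conjugates of \(\alpha^m\) genuinely all lie on the unit circle (extend embeddings from \(\Q(\alpha^m)\) up to \(\Q(\alpha)\)), and that bounding the coefficients of these minimal polynomials does confine \(\alpha^m\) to a finite set.
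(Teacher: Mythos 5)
Your proof is correct and follows essentially the same route as the paper: combining \(N(\alpha)^2\leq q(\alpha)\) (Lemma~\ref{lem:Nq_ineq}) with the integrality of \(N_{\Q(\alpha)/\Q}(\alpha)\) forces \(q(\alpha)=N(\alpha)^2=1\) and hence \(|\sigma(\alpha)|=1\) for all embeddings, after which Kronecker's theorem finishes. The only difference is that you prove Kronecker's theorem inline via the standard bounded-degree, bounded-coefficient pigeonhole argument, whereas the paper simply cites it; that inline argument is also correct.
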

\begin{proof}
Let \(\alpha\in\overline{\Z}\).
The `if' part of the implication follows directly from the definition.
For the `only if' part, suppose \(\alpha\) is non-zero. 
If \(q(\alpha)\leq 1\), then \(N(\alpha)^2\leq 1\) by Lemma~\ref{lem:Nq_ineq}.
Then \(N(\alpha)^{[\Q(\alpha):\Q]}=|N_{\Q(\alpha)/\Q}(\alpha)|\in\Z_{\geq 1}\), so \(N(\alpha)^2=q(\alpha)=1\).
By Lemma~\ref{lem:Nq_ineq} we have \(|\sigma(\alpha)|=1\) for all \(\sigma\in \X(\Q(\alpha))\), so \(\alpha\) is a root of unity by Kronecker's theorem \citep{kronecker}.
\end{proof}

Recall for a Hilbert lattice \(\Lambda\) with square-norm \(q\) the definition \(\textup{P}(\Lambda)=\{q(x) \,|\, x\in\Lambda\setminus\{0\}\}\) from Definition~\ref{def:hilbert_lattice}.

\begin{theorem}\label{thm:Zbar_is_lattice}
The group \(\overline{\Z}\) with square-norm \(q\) is a Hilbert lattice and \(\textup{P}(\overline{\Z})=1\).
\end{theorem}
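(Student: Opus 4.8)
The plan is to verify the two defining properties of a Hilbert lattice (Definition~\ref{def:hilbert_lattice}) for the pair $(\overline{\Z}, q)$, and simultaneously pin down the packing constant. The parallelogram law is already in hand: Lemma~\ref{lem:parallogram_law} states it for all of $\overline{\Q}$, hence in particular for $\overline{\Z}$. So the entire content of the theorem is the \emph{positive packing radius} condition, namely that there is a real $r>0$ with $q(\alpha)\geq r$ for every non-zero $\alpha\in\overline{\Z}$, together with the identification of $\textup{P}(\overline{\Z})$ with $1$.

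First I would establish the lower bound $q(\alpha)\geq 1$ for all non-zero $\alpha\in\overline{\Z}$. This is essentially a restatement of Proposition~\ref{prop:root_of_unity}: if $q(\alpha)<1$ then that proposition forces $\alpha=0$, so every non-zero algebraic integer has $q(\alpha)\geq 1$. This already shows that $r=1$ is a valid packing radius, so $(\overline{\Z},q)$ is a Hilbert lattice, and moreover $\textup{P}(\overline{\Z})=\inf\{q(\alpha)\mid \alpha\in\overline{\Z}\setminus\{0\}\}\geq 1$.

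For the reverse inequality $\textup{P}(\overline{\Z})\leq 1$ I would simply exhibit a non-zero algebraic integer with $q$-value exactly $1$: for instance $\alpha=1$, for which $\X(\Q(1))=\X(\Q)$ is a single embedding and $q(1)=|1|^2=1$; or more generally any root of unity, which has $q(\alpha)=1$ by the second assertion of Proposition~\ref{prop:root_of_unity}. Combining the two inequalities gives $\textup{P}(\overline{\Z})=1$, and the fact that this infimum is actually attained (by $1$, or by any root of unity) is a bonus that will be relevant later when invoking Lemma~\ref{lem:small_indec}.

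I do not anticipate a genuine obstacle here, since all the real work has been front-loaded into Lemmas~\ref{lem:extension_dont_care}--\ref{lem:Nq_ineq} and Proposition~\ref{prop:root_of_unity} — in particular the appeal to Kronecker's theorem, which is where the arithmetic substance (an algebraic integer all of whose conjugates have absolute value $1$ must be a root of unity) enters. The only point requiring a word of care is that $q$ is well-defined independent of the field $L$ chosen to contain $\alpha$, but that is exactly Lemma~\ref{lem:extension_dont_care}, so the proof reduces to assembling these ingredients.
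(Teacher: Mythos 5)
Your proposal is correct and follows essentially the same route as the paper, which cites exactly Lemma~\ref{lem:parallogram_law} for the parallelogram law and Proposition~\ref{prop:root_of_unity} for the bound \(\inf\{q(\alpha)\mid\alpha\in\overline{\Z}\setminus\{0\}\}=1\). Your additional remark that the infimum is attained at \(1\) (or any root of unity) is a correct, mild elaboration of the same argument.
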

\begin{proof}
By Lemma~\ref{lem:parallogram_law} and Proposition~\ref{prop:root_of_unity} respectively the group \(\overline\Z\) together with \(q\) satisfies the parallelogram law and has \(\inf\{q(\alpha)\,|\,\alpha\in\overline{\Z}\setminus\{0\}\}=1\).
\end{proof}

\begin{remark}
Since \(\overline{\Z}\) is a Hilbert lattice, it is a discrete subgroup of a Hilbert space by Theorem~\ref{thm:lattice_eq}, which has an inner product.
For \(\alpha,\beta\in\overline{\Q}\) and \(\Q(\alpha,\beta)\subseteq L \subseteq\overline{\Q}\) with \([L:\Q]<\infty\) it is given by
\[ \langle \alpha,\beta \rangle = \frac{1}{[L:\Q]} \sum_{\sigma\in \X(L)} \sigma(\alpha)\overline{\sigma(\beta)}. \]
\end{remark}

\begin{lemma}\label{lem:uniform}
Suppose \(\alpha,\delta\in\overline{\Q}\) and \(\delta\) is uniform. 
Then \(q(\alpha\delta)=q(\alpha)q(\delta)\). 
If also \(\alpha,\delta\in\overline{\Z}\) and \(\alpha\delta\) is indecomposable, then \(\alpha\) is indecomposable.
\end{lemma}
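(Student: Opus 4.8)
The first assertion is a direct computation: pick a finite extension $L \supseteq \Q(\alpha,\delta)$ and use Lemma~\ref{lem:extension_dont_care} to evaluate $q$ on $L$. Since $\delta$ is uniform, $|\sigma(\delta)|$ is a constant $c$ independent of $\sigma \in \X(L)$, and $c^2 = q(\delta)$ because $q(\delta) = \frac{1}{[L:\Q]}\sum_\sigma |\sigma(\delta)|^2 = c^2$. Then
\[
q(\alpha\delta) = \frac{1}{[L:\Q]}\sum_{\sigma\in\X(L)} |\sigma(\alpha)|^2 |\sigma(\delta)|^2 = c^2 \cdot \frac{1}{[L:\Q]}\sum_{\sigma\in\X(L)} |\sigma(\alpha)|^2 = q(\delta)q(\alpha).
\]

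**The indecomposability claim.** Suppose $\alpha,\delta\in\overline\Z$ with $\alpha\delta$ indecomposable; I want to show $\alpha$ is indecomposable. First, $\alpha\delta \neq 0$ forces $\alpha \neq 0$. Now suppose for contradiction that $(\beta,\gamma)$ is a non-trivial decomposition of $\alpha$, so $\alpha = \beta+\gamma$ with $\beta,\gamma\in\overline\Z$ both non-zero and $\langle\beta,\gamma\rangle \geq 0$. The natural candidate is to multiply through by $\delta$: set $(\beta\delta,\gamma\delta)$. Then $\beta\delta + \gamma\delta = \alpha\delta$, and $\beta\delta,\gamma\delta\in\overline\Z$ are non-zero. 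It remains to check $\langle\beta\delta,\gamma\delta\rangle\geq 0$ and that neither $\beta\delta$ nor $\gamma\delta$ is zero (the latter is immediate since $\overline\Q$ is a domain and $\delta\neq 0$, as $q(\delta) = q(\alpha\delta)/q(\alpha) > 0$).

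**The key step.** The crux is the identity $\langle\beta\delta,\gamma\delta\rangle = q(\delta)\langle\beta,\gamma\rangle$, which I would prove by the same term-wise argument over a common finite field $L\supseteq\Q(\beta,\gamma,\delta)$: using conjugate symmetry and the definition of the inner product,
\[
\langle\beta\delta,\gamma\delta\rangle = \frac{1}{[L:\Q]}\sum_{\sigma\in\X(L)} \sigma(\beta)\sigma(\delta)\overline{\sigma(\gamma)}\,\overline{\sigma(\delta)} = \frac{1}{[L:\Q]}\sum_{\sigma\in\X(L)} |\sigma(\delta)|^2\,\sigma(\beta)\overline{\sigma(\gamma)},
\]
and now uniformity of $\delta$ pulls the constant $|\sigma(\delta)|^2 = q(\delta)$ out of the sum, leaving $q(\delta)\langle\beta,\gamma\rangle$. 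Since $q(\delta)>0$ and $\langle\beta,\gamma\rangle\geq 0$, we get $\langle\beta\delta,\gamma\delta\rangle\geq 0$, so $(\beta\delta,\gamma\delta)$ is a non-trivial decomposition of $\alpha\delta$, contradicting its indecomposability. Hence $\alpha$ has no non-trivial decomposition, i.e.\ $\alpha$ is indecomposable.

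**Main obstacle.** There is no serious obstacle here; the only subtlety is being careful that the inner product (which a priori lives in the ambient Hilbert space of $\overline\Z$ from Theorem~\ref{thm:lattice_eq}) is computed by the stated averaged-embedding formula — this is exactly the content of the remark following Theorem~\ref{thm:Zbar_is_lattice}, and once invoked the whole argument is the same term-wise application of a multiplicative identity that proves Lemma~\ref{lem:parallogram_law}. One should also note that $\beta\delta$ and $\gamma\delta$ genuinely lie in $\overline\Z$ since $\overline\Z$ is a ring, which is needed for them to be elements of the Hilbert lattice.
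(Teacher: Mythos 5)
Your proof is correct and follows essentially the same route as the paper: multiply a decomposition \((\beta,\gamma)\) of \(\alpha\) by \(\delta\) and use uniformity to see that \((\beta\delta,\gamma\delta)\) is again a decomposition, so indecomposability of \(\alpha\delta\) forces triviality. The only cosmetic difference is that you verify the decomposition condition via the identity \(\langle\beta\delta,\gamma\delta\rangle=q(\delta)\langle\beta,\gamma\rangle\), whereas the paper uses the equivalent square-norm criterion \(q(\beta\delta)+q(\gamma\delta)=\bigl(q(\beta)+q(\gamma)\bigr)q(\delta)\leq q(\alpha)q(\delta)=q(\alpha\delta)\) from Lemma~\ref{lem:eq_dec_def}.
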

\begin{proof}
Let \(L\supseteq\Q(\alpha,\delta)\). 
Then for all \(\sigma\in \X(L)\) we have \(q(\delta)=|\sigma(\delta)|^2\). 
Moreover,
\[ q(\alpha\delta) = \frac{1}{[L:\Q]}\sum_{\sigma\in\X(L)} |\sigma(\alpha\delta)|^2 = \frac{1}{[L:\Q]}\sum_{\sigma\in\X(L)} |\sigma(\alpha)|^2 \cdot q(\delta) = q(\alpha)q(\delta).  \]
Now suppose \(\alpha,\delta\in\overline{\Z}\) and let \((\beta,\gamma)\in\textup{dec}(\alpha)\).
Then \(\alpha\delta=\beta\delta+\gamma\delta\) and 
\[q(\alpha\delta)=q(\alpha)q(\delta)\geq (q(\beta)+q(\gamma))q(\delta)=q(\beta\delta)+q(\gamma\delta),\] 
so \((\beta\delta,\gamma\delta)\in\textup{dec}(\alpha\delta)\). 
If \(\alpha\delta\) is indecomposable, then \(\delta\neq 0\) and \(0\in\{\beta\delta,\gamma\delta\}\), so \(0\in\{\beta,\gamma\}\) and \((\beta,\gamma)\) must be a trivial decomposition.
Hence \(\alpha\) is indecomposable.
\end{proof}

\begin{definition}\label{def:isometries}\label{def:roots_of_unity}
Write \(\mu_\infty\) for the group of roots of unity in \(\overline{\Z}\) and \(\textup{Gal}(\overline{\Q})\) for the group of ring automorphisms of \(\overline{\Q}\).
Note that \(\textup{Gal}(\overline{\Q})\) naturally acts on \(\mu_\infty\), and write \(\mu_\infty \rtimes \textup{Gal}(\overline{\Q})\) for their semi-direct product with respect to this action.
\end{definition}

\begin{lemma}\label{lem:isometries} 
The group \(\mu_\infty \rtimes \textup{Gal}(\overline{\Q})\) acts faithfully on the Hilbert lattice \(\overline{\Z}\), where \(\mu_\infty\) acts by multiplication and \(\textup{Gal}(\overline{\Q})\) by application.
\end{lemma}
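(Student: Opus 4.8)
The plan is to show that \(\Phi\colon(\zeta,\tau)\mapsto\big(\alpha\mapsto\zeta\cdot\tau(\alpha)\big)\) defines a group homomorphism from \(\mu_\infty\rtimes\textup{Gal}(\overline{\Q})\) into the group of isometries of the Hilbert lattice \(\overline{\Z}\), and that \(\ker\Phi\) is trivial. So there are three things to verify: each \(\Phi(\zeta,\tau)\) is an isometry of \(\overline{\Z}\); \(\Phi\) respects the group laws; and \(\Phi\) is injective.

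For the first, fix \((\zeta,\tau)\). The map \(\Phi(\zeta,\tau)\) sends \(\overline{\Z}\) into itself because \(\overline{\Z}\) is a ring, \(\tau\) preserves integrality, and \(\zeta\in\overline{\Z}\); it is additive, being a composition of the additive map \(\tau\) with multiplication by \(\zeta\); and it is bijective, since \(\tau\) restricts to a bijection of \(\overline{\Z}\) and multiplication by \(\zeta\) is inverted by multiplication by \(\zeta^{-1}\in\overline{\Z}\). It remains to see it preserves the square-norm \(q\). Multiplication by \(\zeta\) does, because a root of unity is uniform with \(q(\zeta)=1\) by Proposition~\ref{prop:root_of_unity}, so \(q(\zeta\alpha)=q(\zeta)q(\alpha)=q(\alpha)\) by Lemma~\ref{lem:uniform}. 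For \(\tau\) and \(\alpha\in\overline{\Z}\), choose a finite normal extension \(L/\Q\) containing \(\alpha\), e.g.\ a normal closure of \(\Q(\alpha)\); then \(\tau(\alpha)\in L\) as well, \(\tau|_L\) is an automorphism of \(L\), and \(\sigma\mapsto\sigma\circ\tau|_L\) permutes \(\X(L)\), so by Lemma~\ref{lem:extension_dont_care}
\[q(\tau\alpha)=\frac{1}{[L:\Q]}\sum_{\sigma\in\X(L)}|\sigma(\tau\alpha)|^2=\frac{1}{[L:\Q]}\sum_{\sigma\in\X(L)}\big|(\sigma\circ\tau|_L)(\alpha)\big|^2=q(\alpha).\]

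That \(\Phi\) is a homomorphism is the one-line identity
\[\Phi(\zeta_1,\tau_1)\big(\Phi(\zeta_2,\tau_2)(\alpha)\big)=\zeta_1\cdot\tau_1\big(\zeta_2\cdot\tau_2(\alpha)\big)=\zeta_1\tau_1(\zeta_2)\cdot(\tau_1\tau_2)(\alpha)=\Phi\big((\zeta_1,\tau_1)(\zeta_2,\tau_2)\big)(\alpha),\]
using that \(\tau_1\) is a ring homomorphism and the semidirect product law \((\zeta_1,\tau_1)(\zeta_2,\tau_2)=(\zeta_1\tau_1(\zeta_2),\tau_1\tau_2)\). For injectivity, suppose \(\zeta\cdot\tau(\alpha)=\alpha\) for all \(\alpha\in\overline{\Z}\); then \(\alpha=1\) forces \(\zeta=1\), so \(\tau\) fixes \(\overline{\Z}\) pointwise, and since every element of \(\overline{\Q}\) is of the form \(\beta/n\) with \(\beta\in\overline{\Z}\) and \(n\in\Z_{>0}\), it follows that \(\tau\) fixes \(\overline{\Q}\), i.e.\ \((\zeta,\tau)\) is the identity.

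I do not expect a genuine obstacle. The one step needing care is the identity \(q(\tau\alpha)=q(\alpha)\): the auxiliary field \(L\) must be chosen \(\tau\)-stable — a normal closure works, as any finite normal extension of \(\Q\) inside \(\overline{\Q}\) is mapped to itself by every element of \(\textup{Gal}(\overline{\Q})\) — so that precomposition with \(\tau|_L\) is a genuine permutation of \(\X(L)\), and Lemma~\ref{lem:extension_dont_care} is what guarantees the value of \(q\) does not depend on this choice. Everything else is routine bookkeeping with the semidirect product.
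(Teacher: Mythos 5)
Your proposal is correct and follows essentially the same route as the paper: multiplication by a root of unity is an isometry via uniformity and Lemma~\ref{lem:uniform}, the Galois action preserves \(q\) because a normal closure is stable under \(\tau\) so precomposition permutes \(\X(L)\), the semidirect product law gives the homomorphism property, and evaluating at \(1\) gives faithfulness. The only difference is that you spell out a few routine details (additivity, bijectivity, and passing from \(\overline{\Z}\) to \(\overline{\Q}\) in the injectivity step) that the paper leaves implicit.
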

\begin{proof}
Let \(\alpha\in\overline{\Z}\), \(\zeta\in\mu_\infty\) and \(\rho\in \textup{Gal}(\overline{\Q})\).
Let \(K\) be the normal closure of \(\Q(\zeta,\alpha)\) and \(n=[K:\Q]\).

First we show that the individual group actions on \(\overline{\Z}\) are well-defined. 
Clearly \(\zeta \alpha \in\overline\Z\) and note that \(\zeta\) is uniform with \(q(\zeta)=1\).
Hence multiplication by \(\zeta\) map is an isometry, i.e. preserves length, by Lemma~\ref{lem:uniform}.
Recall that automorphisms preserve integrality and thus \(\rho(\alpha)\in\overline{\Z}\). 
Since \(K\) is normal over \(\Q\) we have \(\rho K = K\) and thus \(\X(K)\circ \rho = \X(K)\).
Hence applying \(\rho\) to \(\alpha\) simply results in a reordering of the terms in the sum defining \(q\) with respect to \(K\), and thus \(\rho\) is an isometry.

Note that for \((\chi,\sigma),(\xi,\tau)\in \mu_\infty \rtimes \textup{Gal}(\overline{\Q})\) we have 
\[(\chi,\sigma)\big((\xi,\tau)\,\alpha\big) = \chi\cdot\sigma(\xi\cdot\tau(\alpha)) = (\chi\sigma(\xi))((\sigma\tau)(\alpha)) = \big((\chi,\sigma)\cdot(\xi,\tau)\big)\,\alpha,\]
so the semi-direct product acts on \(\overline{\Z}\) as well.
Finally, suppose \((\zeta, \rho)\) acts as the identity.
Note that \(\textup{Gal}(\overline{\Q})\) fixes \(1\), so letting \((\zeta,\rho)\) act on \(1\) shows that \(\zeta=1\), and thus \(\rho=\textup{id}\).
Hence the action is faithful.
\end{proof}

\begin{question}
Is \(\mu_\infty \rtimes \textup{Gal}(\overline{\Q})\) the entire isometry group of \(\overline{\Z}\)?
\end{question}

\begin{proposition}\label{prop:holder_q}
Let \(\alpha\in\overline{\Z}\), \(r\in\Z_{\geq 0}\) and \(s\in\Z_{>0}\) such that \(r/s\leq 1\).
Then any root \(\beta\) of \(X^s-\alpha^r\) satisfies \(q(\beta)\leq q(\alpha)^{r/s}\).
\end{proposition}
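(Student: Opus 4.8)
The plan is to reduce the statement to the AM–GM inequality applied to the absolute values of conjugates, working inside a single sufficiently large field. First I would pick a number field $L$ containing $\Q(\alpha)$ and a root $\beta$ of $X^s - \alpha^r$; such a root exists in $\overline\Q$, and enlarging $L$ we may assume $\beta \in L$ and $L$ is finite over $\Q$. By Lemma~\ref{lem:extension_dont_care} both $q(\alpha)$ and $q(\beta)$ can be computed using the embeddings $\X(L)$, with the normalizing factor $1/[L:\Q]$. The defining relation $\beta^s = \alpha^r$ gives $|\sigma(\beta)|^s = |\sigma(\alpha)|^r$, hence $|\sigma(\beta)|^2 = |\sigma(\alpha)|^{2r/s}$ for every $\sigma \in \X(L)$.

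Next I would write, abbreviating $n = [L:\Q]$ and $t = r/s \in [0,1]$,
\[
q(\beta) = \frac{1}{n}\sum_{\sigma\in\X(L)} |\sigma(\alpha)|^{2t},
\]
and I want to bound this by $q(\alpha)^t = \bigl(\frac{1}{n}\sum_\sigma |\sigma(\alpha)|^2\bigr)^t$. Since $x \mapsto x^t$ is concave on $\R_{\geq 0}$ for $t \in [0,1]$, Jensen's inequality applied to the uniform average over $\sigma \in \X(L)$ gives exactly
\[
\frac{1}{n}\sum_{\sigma\in\X(L)} \bigl(|\sigma(\alpha)|^2\bigr)^{t} \leq \Bigl(\frac{1}{n}\sum_{\sigma\in\X(L)} |\sigma(\alpha)|^2\Bigr)^{t},
\]
which is precisely $q(\beta) \leq q(\alpha)^{t}$. (If one prefers to avoid invoking concavity directly, the case $t = r/s$ rational can instead be deduced from the weighted AM–GM inequality of Lemma~\ref{lem:amgm}: apply it to the multiset consisting of $s$ copies of $1$ and $r$ copies of $|\sigma(\alpha)|^2$ for each $\sigma$ — or more cleanly, raise $q(\beta)$ to the power $s$, expand the $s$-fold product $\bigl(\sum_\sigma |\sigma(\alpha)|^{2r/s}\bigr)^s$ via the multinomial theorem, and bound each term by AM–GM — but the Jensen route is shortest.)

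The only genuinely delicate point is the bookkeeping of normalizations: one must make sure the same field $L$ and the same counting measure on $\X(L)$ are used for $\alpha$ and for $\beta$, and that passing from $\Q(\alpha)$ (resp. $\Q(\beta)$) up to $L$ does not change $q$ — this is exactly what Lemma~\ref{lem:extension_dont_care} guarantees. There is a minor degenerate case $\alpha = 0$ (forcing $\beta = 0$, so both sides are $0$, using $0^0 = 1$ if $r = 0$, or $0$ if $r>0$) and the case $r = 0$ (then $\beta$ is a root of unity or zero, $q(\beta) \leq 1 = q(\alpha)^0$), both of which are immediate. No obstacle beyond this; the heart of the argument is a one-line convexity estimate.
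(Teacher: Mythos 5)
Your proof is correct and follows essentially the same route as the paper: both reduce to \(|\sigma(\beta)|^2=|\sigma(\alpha)|^{2r/s}\) over a common finite extension and then bound the average of the \((r/s)\)-th powers by the \((r/s)\)-th power of the average. The only cosmetic difference is that you invoke Jensen's inequality for the concave map \(x\mapsto x^{r/s}\) directly, whereas the paper cites Lemma~\ref{lem:holder}, which is the same power-mean inequality (itself proved via Jensen).
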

\begin{proof}
Let \(\beta\) be a root of \(X^s-\alpha^r\), let \(K=\Q(\alpha,\beta)\) and \(n=[K:\Q]\).
The case \(r=0\) follows from Proposition~\ref{prop:root_of_unity}, so suppose \(r>0\).
Then
\begin{align*}
q(\beta) &= \frac{1}{n}\sum_{\sigma\in\X(K)} |\sigma(\beta)|^2 = \frac{1}{n}\sum_{\sigma\in\X(K)} |\sigma(\beta^s)|^{2/s} = \frac{1}{n}\sum_{\sigma\in\X(K)} |\sigma(\alpha^r)|^{2/s} \\
&= \frac{1}{n}\sum_{\sigma\in\X(K)} \big(|\sigma(\alpha)|^2\big)^{r/s}  \leq \Big(\frac{1}{n}\sum_{\sigma\in\X(K)} |\sigma(\alpha)|^2\Big)^{r/s} = q(\alpha)^{r/s},
\end{align*}
where the inequality is the inequality \(n^{-1/r}\|x\|_r \leq n^{-1/s} \|x\|_s\) from Lemma~\ref{lem:holder} applied to the vector \(x=(|\sigma(\alpha)|^{2/s})_{\sigma\in\X(K)}\), using that \(0<r\leq s\).
\end{proof}

\section{Indecomposable algebraic integers}\label{sec:indecomposable_integers}

We will now focus on the indecomposables of the lattice \(\overline{\Z}\).

\begin{figure}[h]
\centering
\begin{tikzpicture}[scale=1.6]
\coordinate (A) at (0.5,1.322875);
\draw (-0.7,0) -- (1.4,0); 
\draw (0,-0.2) -- (0,1.4); 
\draw[fill=black] (A) circle (.025);
\node[right] (alpha) at ($(A)+(.1,0)$) {$\alpha$};
\node (O) at (-.15,-.15) {0};
\node (one) at (1,-.15) {1};
\draw (1,.05) -- (1,-.05);
\node (i) at (.12,.85) {i};
\draw (.05,1) -- (-.05,1);
\begin{scope}
    \clip (-.65,0) rectangle (1.2,1.2);
    \draw[dashed] (0,0) circle(1);
    \draw[dashed] (A) circle(1);
\end{scope}
\coordinate (zeta) at (0.9114,0.4114);
\coordinate (xi) at (-0.4114,0.9114);
\draw[fill=black] (zeta) circle (0.025);
\draw[fill=black] (xi) circle (0.025);
\draw (0,0) -- (zeta) -- (A) -- (xi) -- (0,0);
\node (z) at ($(zeta)+(0.2,-.05)$) {$\beta$};
\node (x) at ($(xi)+(-.07,-.2)$) {$\gamma$};
\end{tikzpicture}
\caption{Integral \(\alpha=\frac{1}{2}(1+\i\sqrt{7})\) with \(q(\alpha)=2\).}\label{fig:roots_of_unity}
\end{figure}
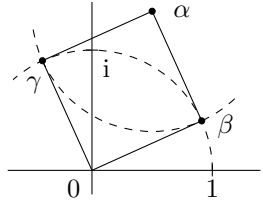

\begin{proposition}\label{prop:q_is_2}
Let \(\alpha\in\overline{\Z}\). If \(0<q(\alpha)<2\), then \(\alpha\) is indecomposable.
If \(q(\alpha)=2\), then \(\alpha\) is decomposable if and only if it is the sum of two roots of unity.
Such roots of unity are necessarily orthogonal, unique up to reordering, and of degree at most $2$ over \(\Q(\alpha)\).
\end{proposition}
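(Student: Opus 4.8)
The plan is to dispatch the three assertions separately, using only results already available: Lemma~\ref{lem:small_indec}, Lemma~\ref{lem:eq_dec_def}, Proposition~\ref{prop:root_of_unity}, and Theorem~\ref{thm:Zbar_is_lattice} (which supplies \(\textup{P}(\overline{\Z})=1\)) for the decomposability statements, and one elementary fact about the complex plane for the uniqueness and degree claims. For the first claim: when \(0<q(\alpha)<2=2\,\textup{P}(\overline{\Z})\), the inequality \(q(\alpha)<2\,\textup{P}(\overline{\Z})\) is strict, so Lemma~\ref{lem:small_indec} applies verbatim and \(\alpha\) is indecomposable. (The strictness is needed, since \(\textup{P}(\overline{\Z})=1\) is attained, e.g.\ by \(q(1)\).)

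\textbf{The \(q(\alpha)=2\) equivalence.} If \(\alpha=\zeta+\xi\) with \(\zeta,\xi\) roots of unity, then \(q(\zeta)=q(\xi)=1\) by Proposition~\ref{prop:root_of_unity}, so \(\langle\zeta,\xi\rangle=\tfrac12(q(\alpha)-q(\zeta)-q(\xi))=0\ge 0\); as \(\zeta,\xi\neq 0\) this is a nontrivial decomposition, so \(\alpha\) is decomposable, and the two summands are orthogonal. Conversely, given a nontrivial decomposition \((\beta,\gamma)\) of \(\alpha\), Lemma~\ref{lem:eq_dec_def} gives \(q(\beta)+q(\gamma)\le q(\alpha)=2\), while \(q(\beta),q(\gamma)\ge \textup{P}(\overline{\Z})=1\) since \(\beta,\gamma\) are nonzero; hence \(q(\beta)=q(\gamma)=1\) and \(q(\beta)+q(\gamma)=q(\alpha)\), which forces \(\langle\beta,\gamma\rangle=0\). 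Then \(\beta,\gamma\) are roots of unity by Proposition~\ref{prop:root_of_unity}, so \(\alpha\) is a sum of two orthogonal roots of unity. This also settles the orthogonality assertion for whichever representation we end up looking at.

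\textbf{Uniqueness and the degree bound.} The key input here is the elementary fact that for \(z\in\C\) with \(0<|z|\le 2\) there is a \emph{unique} unordered pair \(\{u,v\}\) of complex numbers of modulus \(1\) with \(u+v=z\): writing \(u=z/2+w\), the conditions \(|u|=|z-u|=1\) force \(w\) orthogonal to \(z\) and \(|w|^2=1-|z|^2/4\), so the two choices of sign for \(w\) give the same set \(\{z/2\pm w\}\). To prove uniqueness up to reordering, I would take \(\alpha=\zeta_1+\xi_1=\zeta_2+\xi_2\) with all four summands roots of unity, fix an embedding \(\iota\) of the number field \(\Q(\zeta_1,\xi_1,\zeta_2,\xi_2)\) into \(\C\), observe that \(\iota(\zeta_j),\iota(\xi_j)\) have modulus \(1\) and sum to \(\iota(\alpha)\neq 0\), apply the fact to get \(\{\iota(\zeta_1),\iota(\xi_1)\}=\{\iota(\zeta_2),\iota(\xi_2)\}\), and conclude \(\{\zeta_1,\xi_1\}=\{\zeta_2,\xi_2\}\) by injectivity of \(\iota\). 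For the degree bound, fix the representation \(\alpha=\zeta+\xi\) (necessarily \(\zeta\neq\xi\), else \(q(\alpha)=q(2\zeta)=4\)), put \(L=\Q(\alpha,\zeta)=\Q(\alpha,\xi)\) since \(\xi=\alpha-\zeta\), and note that any embedding \(\tau\) of \(L\) fixing \(\Q(\alpha)\) sends \((\zeta,\xi)\) to \((\tau(\zeta),\alpha-\tau(\zeta))\), again a representation of \(\alpha\) as a sum of two roots of unity; by the uniqueness just proved \(\tau(\zeta)\in\{\zeta,\xi\}\), and since \(\tau\) is determined by \(\tau(\zeta)\) there are at most two such \(\tau\), so \([L:\Q(\alpha)]\le 2\).

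\textbf{Main obstacle.} I expect the only delicate point to be pinning down the geometric fact and its descent: one must check that the two radius-\(1\) circles centered at \(0\) and \(z\) meet in a pair interchanged by \(u\mapsto z-u\), so that the two solutions for \(u\) produce the \emph{same} unordered pair, and handle the boundary case \(|z|=2\) (where \(u=v=z/2\)). Everything else reduces to short bookkeeping with \(\textup{P}(\overline{\Z})=1\), Lemma~\ref{lem:eq_dec_def}, and Proposition~\ref{prop:root_of_unity}.
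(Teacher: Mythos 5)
Your proposal is correct and follows essentially the same route as the paper: the small-norm case via Lemma~\ref{lem:small_indec}, the \(q(\alpha)=2\) equivalence by forcing \(q(\beta)=q(\gamma)=1\) and invoking Proposition~\ref{prop:root_of_unity}, and the uniqueness/degree bound from the same geometric fact (the two unit circles through \(0\) and \(z\) meet in a pair swapped by \(u\mapsto z-u\), i.e.\ the rhombus picture) combined with counting embeddings over \(\Q(\alpha)\). The only difference is organizational: you prove uniqueness of the pair first and then count \(\Q(\alpha)\)-embeddings, whereas the paper extracts both conclusions at once from the embedding picture.
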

\begin{proof}
If \(0<q(\alpha)< 2\), then \(\alpha\) is indecomposable by combining Theorem~\ref{thm:Zbar_is_lattice} and Lemma~\ref{lem:small_indec}.
Suppose \(q(\alpha)=2\). 
If \(\alpha=\zeta+\xi\) for roots of unity \(\zeta,\xi\in\overline{\Q}\), then \(q(\alpha)=2=q(\zeta)+q(\xi)\), so \((\zeta,\xi)\in\textup{dec}(\alpha)\) is non-trivial.
Conversely, suppose \((\beta,\gamma)\in\text{dec}(\alpha)\) is non-trivial. 
By Theorem~\ref{thm:Zbar_is_lattice} we have \(q(\beta),q(\gamma)\geq 1\). 
Then \(0\leq q(\alpha)-q(\beta)-q(\gamma)=2-q(\beta)-q(\gamma)\leq 0\), so we must have \(q(\beta)=q(\gamma)=1\).
It follows that \(\beta\) and \(\gamma\) are orthogonal, and by Proposition~\ref{prop:root_of_unity} they are roots of unity.

Suppose \((\beta,\gamma)\in\text{dec}(\alpha)\) is non-trivial.
For any \(\sigma\in\X(\Q(\alpha))\) and \(\rho\in\X_\sigma(\Q(\alpha,\beta))\) the points \(0\), \(\rho(\alpha)\), \(\rho(\beta)\) and \(\rho(\gamma)\) form the vertices of a rhombus with unit length sides, as can be seen in Figure~\ref{fig:roots_of_unity}. 
It follows that \(\{\rho(\beta),\rho(\gamma)\}\) is uniquely determined by \(\rho(\alpha)=\sigma(\alpha)\).
As \(\rho\) is uniquely determined by \(\rho(\beta)\) there are at most two elements in \(\X_\sigma(\Q(\alpha,\beta))\), in other words \([\Q(\alpha,\beta):\Q(\alpha)]\leq 2\).
\end{proof}

\begin{remark}
Proposition~\ref{prop:q_is_2} gives us a way to decide whether an \(\alpha\in\overline{\Z}\) with \(q(\alpha)=2\) is indecomposable, as it puts an upper bound on the degree of the roots of unity, leaving only finitely many to check.
Knowledge of \(\Q(\alpha)\) can further reduce this number. 
\end{remark}

\begin{example}\label{ex:proof-7}
There exist \(\alpha\in\overline{\Z}\) with \(q(\alpha)=2\) which are indecomposable.
Consider \(f=X^2-X+2\) with root \(\alpha\in\overline{\Z}\), as in Figure~\ref{fig:roots_of_unity}.
Note that the roots of \(f\) in \(\C\) are \(\frac{1}{2}(1\pm \i\sqrt{7})\) with absolute value \(\frac{1}{2}\sqrt{1^2+7}=\sqrt{2}\), so \(q(\alpha)=2\).
Suppose \((\beta,\gamma)\in\textup{dec}(\alpha)\) is a non-trivial decomposition.
Proposition~\ref{prop:q_is_2} shows that \(\beta\) and \(\gamma\) are roots of unity.
Note that \(|\alpha^2|=2\) under all embeddings of \(\alpha\) in \(\C\), and \(\alpha^2=|\alpha|^2\cdot \beta\gamma\).
Hence \(\alpha^2/2=\frac{1}{2}\alpha-1\) is a root of unity.
Either one notes that \(\alpha^2/2\) is not even integral, or that \(\alpha^2/2=\pm 1\), as those are the only roots of unity in \(\Q(\alpha)\), which is clearly absurd.
Hence we have a contradiction and \(\alpha\) is indecomposable.
\end{example}

\begin{lemma}\label{lem:trace}
Let \(\Q\subseteq K\subseteq L \subseteq \overline{\Q}\) be fields with \([L:\Q]<\infty\).
Then for all \(\alpha\in K\) and \(\beta\in L\) we have
\[ [L:K] \cdot \langle \alpha,\beta \rangle = \langle \alpha, \Tr_{L/K}(\beta)\rangle. \] 
\end{lemma}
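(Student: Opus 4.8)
The plan is to evaluate both sides directly from the defining formula for the inner product on \(\overline{\Q}\), passing through the common finite extension \(L\), and to recognise the relative trace \(\Tr_{L/K}(\beta)\) as a partial sum of conjugates of \(\beta\). First I would use multiplicativity of degrees, \([L:K]=[L:\Q]/[K:\Q]\), to write
\[ [L:K]\cdot\langle\alpha,\beta\rangle = \frac{1}{[K:\Q]}\sum_{\rho\in\X(L)}\rho(\alpha)\,\overline{\rho(\beta)}, \]
which is legitimate because \(\Q(\alpha,\beta)\subseteq L\) and \([L:\Q]<\infty\), so the inner product may be computed over \(L\) (the formula recorded after Theorem~\ref{thm:Zbar_is_lattice} being independent of the chosen finite extension, as in Lemma~\ref{lem:extension_dont_care}).

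Next I would partition \(\X(L)\) according to restriction to \(K\), that is \(\X(L)=\bigsqcup_{\sigma\in\X(K)}\X_\sigma(L)\) in the notation of Definition~\ref{def:fundamental_set}. Since \(\alpha\in K\), every \(\rho\in\X_\sigma(L)\) satisfies \(\rho(\alpha)=\sigma(\alpha)\), so that factor comes out of the inner sum, leaving \(\frac{1}{[K:\Q]}\sum_{\sigma\in\X(K)}\sigma(\alpha)\,\overline{\sum_{\rho\in\X_\sigma(L)}\rho(\beta)}\). Now I invoke the standard description of the relative trace of a separable extension — automatic here since the characteristic is \(0\): for \(\beta\in L\) and \(\sigma\in\X(K)\) one has \(\sigma(\Tr_{L/K}(\beta))=\sum_{\rho\in\X_\sigma(L)}\rho(\beta)\). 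Substituting this identity gives \(\frac{1}{[K:\Q]}\sum_{\sigma\in\X(K)}\sigma(\alpha)\,\overline{\sigma(\Tr_{L/K}(\beta))}\), which is exactly \(\langle\alpha,\Tr_{L/K}(\beta)\rangle\) computed over \(K\) — valid because \(\alpha,\Tr_{L/K}(\beta)\in K\) and \([K:\Q]<\infty\). This yields the claimed equality.

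The only step carrying content is the trace identity \(\sigma(\Tr_{L/K}(\beta))=\sum_{\rho\in\X_\sigma(L)}\rho(\beta)\); it follows from the classical formula \(\Tr_{L/K}(\beta)=\sum_i\sigma_i(\beta)\) over the \([L:K]\) distinct \(K\)-embeddings \(\sigma_i\) of \(L\) by applying an extension of \(\sigma\) to an embedding of \(\overline{\Q}\) into \(\C\) and noting that the resulting composites run exactly over \(\X_\sigma(L)\). Everything else is bookkeeping with the definition of \(\langle\cdot,\cdot\rangle\) and with degrees; the one point deserving a word of care is that embeddings of \(L\) into \(\C\) have image in \(\overline{\Q}\) (as \(L/\Q\) is finite), so they may be identified with the \(K\)-embeddings appearing in the classical trace formula, but this is routine.
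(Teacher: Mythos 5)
Your proposal is correct and follows essentially the same route as the paper: computing the inner product over \(L\), splitting \(\X(L)\) into the fibres \(\X_\sigma(L)\) over \(\sigma\in\X(K)\), and using \(\sigma(\Tr_{L/K}(\beta))=\sum_{\rho\in\X_\sigma(L)}\rho(\beta)\) to collapse the inner sum. No gaps to report.
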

\begin{proof}
Recall for \(\sigma\in \X(K)\) the definition \(\X_\sigma(L)=\{\rho\in\X(L)\,|\,\rho|_K = \sigma\}\) from Definition~\ref{def:fundamental_set}.
For all \(\sigma\in \X(K)\) and \(\beta\in L\) we have \(\sigma(\Tr_{L/K}(\beta))=\sum_{\rho\in \X_\sigma(L)} \rho(\beta)\). 
Then with \(\alpha\in K\) and \(\beta\in L\) we have
\begin{align*}
[L:K] \cdot \langle \alpha,\beta \rangle 
&= \frac{[L:K]}{[L:\Q]} \sum_{\sigma\in \X(K)} \sum_{\rho\in\X_\sigma(L)} \rho(\alpha) \overline{\rho(\beta)}
= \frac{1}{[K:\Q]} \sum_{\sigma\in \X(K)} \sigma(\alpha) \overline{\sum_{\rho\in\X_\sigma(L)} \rho(\beta) } \\
&= \frac{1}{[K:\Q]} \sum_{\sigma\in \X(K)} \sigma(\alpha) \overline{\sigma( \Tr_{L/K}(\beta) )}
= \langle \alpha, \Tr_{L/K}(\beta) \rangle,
\end{align*}
as was to be shown.
\end{proof}

\begin{proposition}\label{prop:orthogonal_roots_of_unity}
Roots of unity \(\zeta,\xi\in\overline{\Z}\) are orthogonal, i.e. \(\langle \zeta,\xi\rangle = 0\), if and only if \(\zeta^{-1} \xi\) does not have square-free order.
\end{proposition}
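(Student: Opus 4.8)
The plan is to reduce the computation of $\langle\zeta,\xi\rangle$ to a question about a single root of unity $\omega=\zeta^{-1}\xi$ and then to evaluate $\langle 1,\omega\rangle$ explicitly using the structure of cyclotomic fields. First I would observe that multiplication by $\zeta^{-1}$ is an isometry of $\overline{\Z}$ by Lemma~\ref{lem:uniform} (since $\zeta^{-1}$ is uniform with $q(\zeta^{-1})=1$), and it is in fact unitary for the Hermitian inner product; hence $\langle\zeta,\xi\rangle=\langle 1,\zeta^{-1}\xi\rangle=\langle 1,\omega\rangle$. So the statement becomes: for a root of unity $\omega$ of order $m$, we have $\langle 1,\omega\rangle=0$ if and only if $m$ is not square-free.

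Next I would compute $\langle 1,\omega\rangle$ directly. Take $L=\Q(\omega)=\Q(\zeta_m)$, the $m$-th cyclotomic field, with $[L:\Q]=\varphi(m)$. Then
\[
\langle 1,\omega\rangle=\frac{1}{\varphi(m)}\sum_{\sigma\in\X(L)}\sigma(1)\overline{\sigma(\omega)}=\frac{1}{\varphi(m)}\sum_{\sigma\in\X(L)}\overline{\sigma(\omega)}=\frac{1}{\varphi(m)}\,\overline{\Tr_{L/\Q}(\omega)},
\]
using that $\sigma(1)=1$ for every embedding and that the conjugates of $\omega$ are exactly its Galois conjugates. (Alternatively one invokes Lemma~\ref{lem:trace} with $K=\Q$.) So everything reduces to the classical fact that the trace of a primitive $m$-th root of unity over $\Q$ equals the Möbius function $\mu(m)$. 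This is standard: $\Tr_{\Q(\zeta_m)/\Q}(\zeta_m)$ is the coefficient relation giving the sum of primitive $m$-th roots of unity, which is $\mu(m)$ — provable by Möbius inversion from $\sum_{d\mid m}(\text{sum of primitive }d\text{-th roots})=\sum_{d\mid m}\mu(d)=[m=1]$, or by induction on the number of prime factors of $m$.

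Finally I would assemble: $\langle 1,\omega\rangle=\mu(m)/\varphi(m)$, which is zero precisely when $\mu(m)=0$, i.e. precisely when $m$ is divisible by the square of a prime, i.e. precisely when $m$ is not square-free. Since $m$ is the order of $\omega=\zeta^{-1}\xi$, this is exactly the claimed criterion. The main (minor) obstacle is bookkeeping the two reductions cleanly — justifying that the inner product is genuinely conjugate-bilinear and unitary under multiplication by a root of unity (so that the shift $\langle\zeta,\xi\rangle=\langle 1,\zeta^{-1}\xi\rangle$ is legitimate), and confirming that the embeddings of $\Q(\zeta_m)$ into $\C$ biject with the Galois conjugates so the sum over $\X(L)$ really is the trace; neither is deep, but both should be stated rather than glossed. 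The arithmetic heart, $\Tr(\zeta_m)=\mu(m)$, is the one genuinely nontrivial input, and I would either cite it or include the short Möbius-inversion argument.
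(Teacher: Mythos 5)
Your proposal is correct and follows essentially the same route as the paper: reduce to \(\langle 1,\zeta^{-1}\xi\rangle\) using that multiplication by a root of unity is an isometry (Lemma~\ref{lem:isometries}), express this via the trace (Lemma~\ref{lem:trace}), and invoke the classical identity \(\Tr_{\Q(\zeta_m)/\Q}(\zeta_m)=\mu(m)\). The only cosmetic difference is that you spell out the M\"obius-inversion proof of the trace identity, which the paper simply cites as known.
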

\begin{proof}
Let \(K=\Q(\zeta^{-1}\xi)\).
We have \([K:\Q]\cdot\langle\zeta,\xi\rangle= [K:\Q]\cdot\langle 1, \zeta^{-1}\xi\rangle = \Tr_{K/\Q}(\zeta^{-1}\xi) \) by Lemma~\ref{lem:isometries} and Lemma~\ref{lem:trace}. 
Recall that the trace of an \(n\)-th root of unity equals \(\mu(n)\), the M\"obius function, which is zero precisely when \(n\) has a square divisor in \(\Z_{>1}\).
\end{proof}

For \(\alpha,\beta\in\overline{\Z}\) we say \(\beta\) {\em divides} \(\alpha\), and write \(\beta\mid\alpha\), if there exists some \(\gamma\in\overline{\Z}\) such that \(\alpha=\beta\gamma\). 
We write \(\beta\nmid\alpha\) if \(\beta\) does not divide \(\alpha\).
Recall from Definition~\ref{def:uniform} that for \(\delta\in\overline{\Z}\) we say \(\delta\) is uniform if \(|\sigma(\delta)|=|\tau(\delta)|\) for all \(\sigma,\tau\in\X(\overline{\Q})\)

\begin{proposition}
If \(\alpha\in\overline{\Z}\) is such that \(\sqrt{2}\mid\alpha\) or \(\sqrt{3}\mid\alpha\), then \(\alpha\not\in\textup{indec}(\overline{\Z})\).
\end{proposition}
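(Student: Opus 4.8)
The plan is to exhibit an explicit non-trivial decomposition of $\alpha$ whenever $\alpha\neq0$; since $0$ is not indecomposable by definition, this suffices. The key observation is that both $\sqrt2$ and $\sqrt3$ are \emph{uniform} elements of $\overline{\Z}$ that are sums of two roots of unity: $\sqrt2=\zeta+\zeta^{-1}$ for a suitable primitive $8$th root of unity $\zeta$ (since $2\cos(\pi/4)=\sqrt2$), and $\sqrt3=\xi+\xi^{-1}$ for a suitable primitive $12$th root of unity $\xi$ (since $2\cos(\pi/6)=\sqrt3$). Moreover, being uniform, $\sqrt2$ and $\sqrt3$ satisfy $q(\sqrt2)=2$ and $q(\sqrt3)=3$ directly from the definition of $q$ (all conjugates of $\sqrt n$ have absolute value $\sqrt n$), or alternatively from Lemma~\ref{lem:uniform} applied to $\sqrt n\cdot\sqrt n=n$.

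Now suppose $\alpha\neq0$ and $\sqrt2\mid\alpha$, say $\alpha=\sqrt2\,\gamma$ with $\gamma\in\overline{\Z}$; then $\gamma\neq0$. I would choose a primitive $8$th root of unity $\zeta\in\mu_\infty$ with $\zeta+\zeta^{-1}=\sqrt2$: such a $\zeta$ exists because $(\zeta+\zeta^{-1})^2=\zeta^2+2+\zeta^{-2}=2$ for every primitive $8$th root of unity $\zeta$ (as $\zeta^2$ is a primitive $4$th root of unity, so $\zeta^{-2}=-\zeta^2$), and the two values $\pm\sqrt2$ of $\zeta+\zeta^{-1}$ are both attained on the primitive $8$th roots of unity. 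Then $\alpha=\zeta\gamma+\zeta^{-1}\gamma$ is a sum of two nonzero elements of $\overline{\Z}$. Since $\zeta$ is uniform with $q(\zeta)=1$ (Proposition~\ref{prop:root_of_unity}), Lemma~\ref{lem:uniform} gives $q(\zeta\gamma)=q(\zeta^{-1}\gamma)=q(\gamma)$, whereas $q(\alpha)=q(\sqrt2)\,q(\gamma)=2q(\gamma)$ by Lemma~\ref{lem:uniform} applied to the uniform element $\sqrt2$. Hence $q(\zeta\gamma)+q(\zeta^{-1}\gamma)=q(\alpha)$, so Lemma~\ref{lem:reverse_indec_sum} produces a non-trivial decomposition of $\alpha$, and $\alpha\notin\textup{indec}(\overline{\Z})$.

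The case $\sqrt3\mid\alpha$ is analogous: writing $\alpha=\sqrt3\,\gamma=\xi\gamma+\xi^{-1}\gamma$ with $\xi$ a primitive $12$th root of unity satisfying $\xi+\xi^{-1}=\sqrt3$ (here $\xi^2$ is a primitive $6$th root of unity with $\xi^2+\xi^{-2}=1$, so $(\xi+\xi^{-1})^2=3$), one gets $q(\xi\gamma)+q(\xi^{-1}\gamma)=2q(\gamma)<3q(\gamma)=q(\alpha)$, and Lemma~\ref{lem:reverse_indec_sum} again applies. I do not expect a genuine obstacle; the only step requiring a word of care is choosing the root of unity so that $\zeta+\zeta^{-1}$ (resp. $\xi+\xi^{-1}$) equals the prescribed square root rather than its negative, which is possible precisely because that map is surjective onto $\{\pm\sqrt2\}$ (resp. $\{\pm\sqrt3\}$) as the root of unity varies.
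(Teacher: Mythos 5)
Your proof is correct and takes essentially the same route as the paper: both write $\alpha=\sqrt2\,\gamma$ (resp.\ $\sqrt3\,\gamma$) and decompose it as $\zeta\gamma+\zeta^{-1}\gamma$ with $\zeta$ a primitive $8$th (resp.\ $12$th) root of unity satisfying $\zeta+\zeta^{-1}=\sqrt2$ (resp.\ $\sqrt3$), then use uniformity and the multiplicativity of $q$ from Lemma~\ref{lem:uniform} to verify the decomposition inequality. The only cosmetic difference is that you finish via Lemma~\ref{lem:reverse_indec_sum} and explicitly dispose of $\alpha=0$, whereas the paper checks the decomposition condition directly.
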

\begin{proof}
Let \(\zeta\in\overline{\Q}\) be a primitive \(8\)-th root of unity, which we may choose such that \(\zeta+\zeta^{-1}=\sqrt{2}\).
Thus \((\zeta,\zeta^{-1})\in\textup{dec}(\sqrt{2})\), because \(\langle\zeta,\zeta^{-1}\rangle=0\) by Proposition~\ref{prop:orthogonal_roots_of_unity}. 
Moreover, \(\sqrt{2}\), \(\zeta\) and \(\zeta^{-1}\) are all uniform. 
For any \(\beta\in\overline{\Z}\) we get from Lemma~\ref{lem:uniform} that 
\[q(\zeta \beta)+q(\zeta^{-1}\beta)=(q(\zeta)+q(\zeta^{-1})) \cdot q(\beta) = q(\sqrt{2})\cdot q(\beta) = q(\sqrt{2}\beta),\]
so \(\sqrt{2}\beta\) has a non-trivial decomposition.

With \(\xi\) a twelfth root of unity we have \(\xi+\xi^{-1}=\sqrt{3}\) with \(\xi\), \(\xi^{-1}\) and \(\sqrt{3}\) uniform. 
We have a decomposition because \(\langle\xi,\xi^{-1}\rangle=\langle1,\xi^{-2}\rangle =\frac{1}{2}\geq 0\), so the argument from before applies.
\end{proof}

We will now begin the proof of Theorem~\ref{thm:largest_indec}.

\begin{lemma}\label{lem:2sqrt2_help}
If \(\alpha\in\overline{\Z}\) is such that \(\sqrt{2}\nmid \alpha \mid 2\) and \(\alpha\) is uniform, then \(\alpha\in\textup{indec}(\overline{\Z})\).
\end{lemma}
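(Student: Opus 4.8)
The plan is to argue by contradiction, pushing a hypothetical decomposition of \(\alpha\) forward to a decomposition of \(2\), which I can classify completely. Suppose \((\beta,\gamma)\) is a non-trivial decomposition of \(\alpha\), and set \(\delta=2/\alpha\), which lies in \(\overline{\Z}\) because \(\alpha\mid 2\) (and \(\alpha\neq 0\), since \(\sqrt2\mid 0\)). The first point is that \(\delta\) is again uniform, as \(|\sigma(\delta)|=2/|\sigma(\alpha)|\) is independent of \(\sigma\). Hence by Lemma~\ref{lem:uniform} the square-norm is multiplicative against \(\delta\), so \(q(\beta\delta)+q(\gamma\delta)=q(\delta)\bigl(q(\beta)+q(\gamma)\bigr)\leq q(\delta)q(\alpha)=q(\alpha\delta)=q(2)\), where the middle inequality is Lemma~\ref{lem:eq_dec_def}. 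By the same lemma \((\beta\delta,\gamma\delta)\) is a decomposition of \(\alpha\delta=2\), and it is non-trivial since \(\delta\neq 0\).

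Next I would classify \(\textup{dec}(2)\). If \((\mu,\nu)\in\textup{dec}(2)\), write \(w=\mu-1\), so that \(\mu=1+w\) and \(\nu=2-\mu=1-w\); the parallelogram law gives \(q(\mu)+q(\nu)=2q(1)+2q(w)=2+2q(\mu-1)\), while Lemma~\ref{lem:eq_dec_def} gives \(q(\mu)+q(\nu)\leq q(2)=4\). Hence \(q(\mu-1)\leq 1\), and by Proposition~\ref{prop:root_of_unity} the element \(\mu-1\) is \(0\) or a root of unity. Applying this to \(\mu=\beta\delta\) yields \(\beta\delta=1+\zeta\) for some \(\zeta\in\{0\}\cup\mu_\infty\), and therefore \(\beta=(1+\zeta)\alpha/2\). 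Since \(\beta\in\overline{\Z}\), this is exactly the divisibility \(2\mid(1+\zeta)\alpha\) in \(\overline{\Z}\).

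It remains to extract a contradiction with \(\sqrt2\nmid\alpha\); this local analysis at the maximal ideals \(\mathfrak{P}\) of \(\overline{\Z}\) above \(2\) is the step I expect to require the most care, though it is routine. Recall that divisibility in \(\overline{\Z}\) is detected by the \(\Q\)-valued valuations \(v_{\mathfrak{P}}\) extending \(v_2\); in particular \(\sqrt2\mid\alpha\) is equivalent to \(v_{\mathfrak{P}}(\alpha)\geq\tfrac12 v_{\mathfrak{P}}(2)\) for all \(\mathfrak{P}\mid 2\). If \(\zeta=1\) then \(\beta=\alpha\) and if \(\zeta=-1\) then \(\beta=0\), both contradicting non-triviality, so I may assume \(\zeta\in\{0\}\cup\mu_\infty\) with \(\zeta\notin\{1,-1\}\). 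The key claim is then that \(v_{\mathfrak{P}}(1+\zeta)\leq\tfrac12 v_{\mathfrak{P}}(2)\) for every \(\mathfrak{P}\mid 2\). When \(\zeta=0\), or when \(\zeta\) has order with a non-trivial odd part, the reduction of \(1+\zeta\) modulo \(\mathfrak{P}\) is nonzero in the characteristic-\(2\) residue field — the odd-order part of \(\zeta\) injects there while the \(2\)-power part reduces to \(1\), so \(\overline{\zeta}\neq 1\) and \(1+\overline{\zeta}\neq 0\) — whence \(v_{\mathfrak{P}}(1+\zeta)=0\). When \(\zeta\) has order \(2^a\) with \(a\geq 2\), then \(-\zeta\) is again a primitive \(2^a\)-th root of unity, \(1+\zeta=1-(-\zeta)\), and using the total ramification of \(2\) in \(\Q(\zeta_{2^a})\) one gets \(v_{\mathfrak{P}}(1+\zeta)=v_{\mathfrak{P}}(2)/2^{a-1}\leq\tfrac12 v_{\mathfrak{P}}(2)\). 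Combining the claim with \(2\mid(1+\zeta)\alpha\) gives \(v_{\mathfrak{P}}(\alpha)\geq v_{\mathfrak{P}}(2)-v_{\mathfrak{P}}(1+\zeta)\geq\tfrac12 v_{\mathfrak{P}}(2)\) for all \(\mathfrak{P}\mid 2\), that is, \(\sqrt2\mid\alpha\), contradicting the hypothesis. Hence \(\alpha\) has no non-trivial decomposition and is indecomposable.
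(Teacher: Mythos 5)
Your proof is correct and takes essentially the same route as the paper's: you multiply by \(\delta=2/\alpha\) (the paper's \(\gamma\)), use uniformity to reduce to \(q(\beta\delta-1)\leq 1\), conclude via Proposition~\ref{prop:root_of_unity} that \(\beta\delta-1\) is \(0\) or a root of unity \(\zeta\), and then eliminate all cases except \(\beta\in\{0,\alpha\}\) by a \(2\)-adic analysis of \(1+\zeta\) split according to the \(2\)-part of its order. The only differences are cosmetic: you package the middle step as a classification of \(\textup{dec}(2)\) via the parallelogram law (which is Lemma~\ref{lem:eq_dec_def}(iii) in disguise), and you spell out the local facts (\(1+\zeta\) coprime to \(2\), respectively of valuation at most that of \(\sqrt{2}\)) that the paper simply asserts.
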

\begin{proof}
By assumption we may write \(2=\alpha\gamma\) for some non-zero \(\gamma\in\overline{\Z}\).
Note that \(\gamma\) is not a unit, since otherwise \(\sqrt{2} \mid 2 \mid \alpha \).
Now let \((\beta,\alpha-\beta)\in\textup{dec}(\alpha)\).
Then by Lemma~\ref{lem:eq_dec_def} and Lemma~\ref{lem:uniform} we have \(q(\alpha)\geq q(\alpha-2\beta)=q(\alpha-\alpha\beta\gamma)=q(\alpha) \cdot q(1-\beta\gamma)\), so \(q(1-\beta\gamma)\leq 1\).
As \(\gamma\) is not a unit we have \(\beta\gamma\neq 1\), so \(\beta\gamma=1-\zeta\) for some root of unity \(\zeta\) of order say \(n\) by Proposition~\ref{prop:root_of_unity}.
Suppose \(n\) is not a power of \(2\).
Then \(1-\zeta\) and \(2\) are coprime.
As \(2 \mid 2\beta = \alpha (1-\zeta)\) we have that \(2 \mid \alpha\), which contradicts \(\sqrt{2}\nmid \alpha\).
Hence \(n\) is a power of \(2\).
If \(n>2\), then \(1-\zeta \mid \sqrt{2}\) so \(\sqrt{2} \mid \alpha\), which is again a contradiction.
Therefore \(n=1\) or \(n=2\), which correspond to the trivial decompositions with \(\beta=0\) and \(\beta=\alpha\) respectively.
We conclude that \(\alpha\) is indecomposable.
\end{proof}

\noindent\textbf{Theorem~\ref{thm:largest_indec}. }{\em It holds that \(2\sqrt{2}\leq\sup\{ \langle\alpha,\alpha\rangle\,|\,\textup{\(\alpha\in\overline{\Z}\) is indecomposable} \}\).}

\begin{proof}
We will prove that for each \(r\in\Q\cap[1,3/2)\) there exists \(\alpha\in\textup{indec}(\overline{\Z})\) such that \(q(\alpha)=2^r\).

Consider \(\beta = \frac{1+\sqrt{-7}}{2}\) as in Example~\ref{ex:proof-7} and write \(\overline{\beta}=1-\beta\) for its conjugate. 
Write \(r=\frac{a}{b}\) with integers \(a\geq b>0\) and let \(\gamma\in\overline{\Z}\) be a zero of \(X^{b}-\beta\).
Now take \(\alpha=\overline{\beta}\cdot \gamma^{a-b}\).
We will show \(\alpha\) satisfies the conditions to Lemma~\ref{lem:2sqrt2_help}.
Because \(|\sigma(\alpha)|=|\sigma(\beta)|^r=2^{r/2}\) for all \(\sigma\in\X(\overline{\Q})\), and hence \(\alpha\) is uniform, we then have that \(\alpha\) is indecomposable and \(q(\alpha)=2^r\).

Note that \(\alpha \cdot \gamma^{2b-a} = \overline{\beta} \cdot \beta = 2\), so \(\alpha \mid 2\).
Let \(v:K\to\R\cup\{\infty\}\) be a valuation over \(2\) for some number field \(K\) which is Galois over \(\Q\) containing the relevant elements.
Because \(0=v(1)=v(\overline{\beta}+\beta)\geq \min\{v(\overline{\beta}),v(\beta)\}\), we have \(v(\beta)=0\) or \(v(\overline{\beta})=0\).
By potentially composing \(v\) with an automorphism swapping \(\beta\) and \(\overline{\beta}\) we obtain a valuation \(v'\) such that \(v'(\overline{\beta})=0\). 
We have \(1=v'(2)=v'(\beta\cdot\overline{\beta})=v'(\beta)\) and thus \(v'(\gamma)=1/b\).
Then \(v'(\alpha)=r-1 < 1/2 = v'(\sqrt{2})\), from which we conclude that \(\sqrt{2}\nmid \alpha\).
Thus \(\alpha\) satisfies the conditions to Lemma~\ref{lem:2sqrt2_help}, as was to be shown.
\end{proof}

\label{sec:end-integers}
\section{Decompositions of the lattice of algebraic integers}\label{sec:Zbar_indecomposable} \label{sec:begin-application}

In this section we will show that \(\overline{\Z}\) is indecomposable as a Hilbert lattice.
The following is a standard result from linear algebra.

\begin{lemma}\label{lem:vector_space_union}
Let \(V\) be a vector space over an infinite field and let \(S\) be a finite set of subspaces of \(V\).
If \(\bigcup_{U\in S} U = V\), then \(V\in S\). \qed
\end{lemma}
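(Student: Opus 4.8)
The plan is to prove this by contradiction, exploiting that an infinite field has strictly more elements than any finite set of ``forbidden'' scalars. So I would suppose $V \notin S$; since every $U \in S$ is contained in $V$, this forces every member of $S$ to be a \emph{proper} subspace. The hypothesis $\bigcup_{U \in S} U = V$ then says $V$ is a union of finitely many proper subspaces, and among all such finite families I would choose one, say $S = \{U_1,\dots,U_n\}$, with $n = |S|$ as small as possible. A proper subspace cannot equal $V$, so $n \geq 2$.

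Next I would extract two well-chosen vectors. By minimality of $n$, the smaller family $\{U_2,\dots,U_n\}$ fails to cover $V$, so I can pick $x \in V \setminus \bigcup_{i=2}^{n} U_i$; since $\bigcup_{i=1}^{n} U_i = V$ this forces $x \in U_1$. Since $U_1$ is proper I can also pick $y \in V \setminus U_1$. I would then run over the affine line $\{\,y + tx : t \in F\,\}$, where $F$ is the ground field. This line avoids $U_1$ entirely: if $y + tx \in U_1$ then $y = (y+tx) - tx \in U_1$ because $x \in U_1$, a contradiction. And it meets each $U_i$ with $i \geq 2$ in at most one point, since two distinct solutions $y + t_1 x,\, y + t_2 x \in U_i$ would give $(t_1 - t_2)x \in U_i$ and hence $x \in U_i$, contrary to the choice of $x$.

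Putting these observations together, the assignment $t \mapsto y + tx$ maps all of $F$ into $U_2 \cup \dots \cup U_n$ while hitting each of the $n-1$ subspaces $U_2,\dots,U_n$ for at most one value of $t$; hence $|F| \leq n - 1$, contradicting that $F$ is infinite. Therefore no such $S$ exists, i.e. whenever $\bigcup_{U \in S} U = V$ we must have $V \in S$.

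I expect the only real subtlety to be securing the pair $x,y$ with $x \in U_1 \setminus \bigcup_{i\geq 2} U_i$ and $y \notin U_1$, which is exactly where minimality of $n$ and properness of $U_1$ enter; once that is arranged the counting is immediate. An induction on $|S|$ is also possible, but I would prefer the direct argument above, as it keeps everything inside $V$ and avoids passing to quotients or to subspaces of the $U_i$.
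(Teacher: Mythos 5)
Your proof is correct, and it is the standard argument for this fact; the paper itself offers no proof, simply labelling the lemma a standard result from linear algebra (the \(\qed\) appears in the statement). Your minimal-covering setup, the choice of \(x\in U_1\setminus\bigcup_{i\geq 2}U_i\) and \(y\notin U_1\), and the count that the affine line \(\{y+tx : t\in F\}\) avoids \(U_1\) and meets each remaining \(U_i\) for at most one value of \(t\) are all sound, so \(|F|\leq n-1\) gives the desired contradiction; the only implicit point worth noting is \(x\neq 0\) (clear, since \(0\in U_2\)), though your counting over values of \(t\) does not even need injectivity of \(t\mapsto y+tx\).
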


\begin{proposition}\label{prop:find_non_ortho}
Let \(S\subseteq \overline{\Z}\) with \(S\) finite and \(0\not\in S\). Then there exist \(\alpha\in\textup{indec}(\overline{\Z})\) such that \(\langle \alpha,\beta\rangle \neq 0\) for all \(\beta\in S\).
\end{proposition}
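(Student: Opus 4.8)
The goal is to find an indecomposable algebraic integer $\alpha$ that is non-orthogonal to every element of a given finite set $S \subseteq \overline{\Z} \setminus \{0\}$. The natural source of indecomposables is Proposition~\ref{prop:q_is_2}: any $\alpha$ with $0 < q(\alpha) < 2$ is indecomposable. So I would look for $\alpha$ with small square-norm; roots of unity are the prototypical examples, having $q = 1$. The plan is to produce $\alpha$ as a suitable root of unity (or a small modification thereof).

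First I would try $\alpha = \zeta$ a root of unity and compute $\langle \zeta, \beta \rangle$ for $\beta \in S$. Fix $\beta \in S$ and let $L \supseteq \Q(\beta)$ be a number field containing $\zeta$; writing $n = [L:\Q]$ we have $\langle \zeta, \beta \rangle = \frac{1}{n}\sum_{\sigma \in \X(L)} \sigma(\zeta)\overline{\sigma(\beta)}$. As $\zeta$ ranges over the $m$-th roots of unity (say $m$ is a prime $\ell$ not dividing the relevant data, so $\Q(\zeta_\ell)$ and the normal closure of $\Q(S)$ are linearly disjoint), I would sum $\langle \zeta, \beta \rangle$ over the primitive $\ell$-th roots of unity, or more simply observe that $\sum_{j=0}^{\ell-1} \langle \zeta_\ell^j, \beta\rangle$ relates to $\langle \Tr_{\Q(\zeta_\ell,\beta)/\Q(\beta)}(\cdot), \beta\rangle$-type expressions via Lemma~\ref{lem:trace}, forcing at least one $j$ with $\langle \zeta_\ell^j, \beta\rangle \neq 0$. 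The cleaner route: since $1, \zeta_\ell, \dots, \zeta_\ell^{\ell-1}$ sum to $0$, we get $\sum_j \langle \zeta_\ell^j, \beta \rangle = 0$ for $\beta \neq 0$ only if $\langle 0, \beta\rangle = 0$, which is automatic — so that does not immediately work. Instead I would use $\sum_{j=0}^{\ell-1}\langle \zeta_\ell^j, \beta\rangle = \ell\langle e, \beta\rangle$ where $e$ is an idempotent-like average; more robustly, note $\langle \zeta_\ell^j, \beta \rangle$ for $j = 0,\dots,\ell-1$ cannot all vanish, since their "$j=0$" term $\langle 1, \beta\rangle$ plus a Vandermonde-type independence argument (the characters $\sigma \mapsto \sigma(\zeta_\ell)$ are distinct) shows the vectors $(\sigma(\zeta_\ell^j))_\sigma$ span enough of the space that orthogonality to the fixed nonzero vector $(\overline{\sigma(\beta)})_\sigma$ for all $j$ is impossible.

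Having shown that for each $\beta \in S$ only finitely many "bad" roots of unity $\zeta$ in a chosen infinite family satisfy $\langle \zeta, \beta\rangle = 0$, I would take the union over the finitely many $\beta \in S$: still only finitely many bad $\zeta$. Since the family is infinite, pick a good $\zeta = \alpha$; it is a root of unity, hence $q(\alpha) = 1 < 2$, hence indecomposable by Proposition~\ref{prop:q_is_2}, and $\langle \alpha, \beta\rangle \neq 0$ for all $\beta \in S$. This is where Lemma~\ref{lem:vector_space_union} likely enters: the "bad" condition $\langle \zeta, \beta\rangle = 0$ for a fixed $\beta$ carves out a proper subspace (or finite union thereof) of the relevant $\Q$-vector space spanned by roots of unity of bounded order inside a large cyclotomic field, and a vector space over $\Q$ (infinite field) is not a finite union of proper subspaces — so some root of unity avoids all bad conditions simultaneously.

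**Main obstacle.**
The delicate point is making the non-orthogonality count precise: showing that for a fixed nonzero $\beta \in \overline{\Z}$, the set $\{\zeta \in \mu_\infty : \langle \zeta, \beta\rangle = 0\}$, restricted to roots of unity lying in some fixed large cyclotomic field $\Q(\zeta_N)$, does not exhaust all of them. The clean formulation is: inside $\Q(\zeta_N)$ (with $N$ chosen coprime to everything relevant so that $\Q(\zeta_N)$ and $\Q(\beta)$ are linearly disjoint over $\Q$), the linear functional $\zeta \mapsto \langle \zeta, \beta\rangle$ extended $\Q$-linearly to $\Q(\zeta_N) \otimes \Q(\beta)$ is not identically zero on the roots of unity, because $\langle 1, \beta \rangle = \frac{1}{[\Q(\beta):\Q]}\Tr_{\Q(\beta)/\Q}(\beta) \cdot (\text{something})$ — actually $\langle 1, \beta\rangle = \overline{\langle \beta, 1\rangle}$ and $[\Q(\beta):\Q]\langle \beta, 1\rangle = \Tr_{\Q(\beta)/\Q}(\beta)$, which can itself vanish, so I cannot just use $\zeta = 1$. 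The fix is to use the full set of conjugate roots of unity: the vectors $(\sigma(\zeta_N^j))_{\sigma \in \X(L)}$ for $j = 0, \dots, \phi(N)-1$ (or a basis of them) span a subspace whose orthogonal complement, intersected with the line through $(\overline{\sigma(\beta)})_\sigma \neq 0$, is either trivial or forces $\beta$ to have a special form; quantifying this with Galois theory / linear independence of characters is the crux. I expect the cleanest writeup to phrase the bad set as a proper $\Q$-subspace and invoke Lemma~\ref{lem:vector_space_union} directly, with the linear-disjointness of cyclotomic and $\Q(S)$-fields doing the work of guaranteeing properness.
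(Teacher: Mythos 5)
There is a genuine gap: your candidate indecomposables are (essentially only) roots of unity, and there exist nonzero $\beta\in\overline{\Z}$ that are orthogonal to \emph{every} root of unity, so no amount of counting ``bad'' $\zeta$ can save the argument. Concretely, take $\beta=\sqrt[3]{2}$ and $K=\Q(\beta)$. For any root of unity $\zeta$ the field $\Q(\zeta)$ is abelian, hence Galois over $\Q$, and $K\cap\Q(\zeta)=\Q$ (the only proper subfield of $K$ is $\Q$, and $K$ itself is not Galois, so it cannot lie in an abelian extension); therefore $K$ and $\Q(\zeta)$ are linearly disjoint and $\X(K(\zeta))$ is identified with $\X(K)\times\X(\Q(\zeta))$. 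The inner product then factors:
\[
\langle \zeta,\beta\rangle=\frac{1}{[K(\zeta):\Q]}\Big(\sum_{\sigma\in\X(\Q(\zeta))}\sigma(\zeta)\Big)\overline{\Big(\sum_{\tau\in\X(K)}\tau(\beta)\Big)}
=\frac{\Tr_{\Q(\zeta)/\Q}(\zeta)\cdot\Tr_{K/\Q}(\beta)}{[K(\zeta):\Q]}=0,
\]
since $\Tr_{K/\Q}(\sqrt[3]{2})=0$. So with $S=\{\sqrt[3]{2}\}$ your plan fails at the start: the vectors $(\sigma(\zeta^j))_\sigma$ only span the ``cyclotomic part'' of the ambient space ($\Q\cdot\mu_\infty$ spans $\Q^{\mathrm{ab}}$), and the component of $\beta$ in that part can vanish for $\beta\neq0$; this is exactly the point where your hoped-for Vandermonde/linear-independence argument breaks down, and it is not a fixable technicality but a failure of the whole family of candidates.

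The paper's proof avoids this by making the indecomposable ``point in the direction of $S$'': first, inside $K=\Q(S)$, Lemma~\ref{lem:vector_space_union} is applied to the proper subspaces $\beta^{\bot}\cap K$ ($\beta\in S$) to find $u\in\mathcal{O}_K$ with $\langle u,\beta\rangle\neq0$ for all $\beta\in S$ and with all archimedean absolute values $>r$ for a fixed $1<r<\sqrt{2}$ (this is where the union-of-subspaces lemma is really used, not for roots of unity). Then one takes Eisenstein polynomials $f_n=X^n-uX^{n-1}-v$ at a prime containing $u$; a root $\alpha_n$ satisfies $\Tr_{K(\alpha_n)/K}(\alpha_n)=u$, so by Lemma~\ref{lem:trace} the non-orthogonality to $S$ is inherited from $u$, and a Rouch\'e-type estimate shows $q(\alpha_n)\to r^2<2$, so $\alpha_n$ is indecomposable for large $n$ by Proposition~\ref{prop:q_is_2}. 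If you want to rescue your approach, you would have to replace roots of unity by some family of small-norm integers whose $\Q$-span is dense enough to meet every direction in $\Q(S)$ — which is precisely what the trace-controlled Eisenstein construction accomplishes.
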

\begin{proof}
Let \(K\) be the field generated by \(S\) and fix \(1<r<\sqrt{2}\).

We will construct an element \(u\in \mathcal{O}_K\) such that \(0\not\in\langle u, S\rangle\) and \(|\sigma(u)|>r\) for all \(\sigma\in \X(K)\).
For \(x\in K\) write \(x^{\bot} =\{ y \in K \,|\, \langle x,y\rangle = 0 \}\), which is a proper \(\Q\)-vector subspace of \(K\) when \(x\neq 0\) because \( x\not\in x^\bot\).
Hence \(\bigcup_{x\in S} x^\bot \neq K\) by Lemma~\ref{lem:vector_space_union}, so there exists some non-zero \(u\in K\) such that \(0\not\in\langle u, S\rangle\).
By scaling \(u\) by some non-zero integer we may assume \(u\in\overline{\Z}\) as well.
By further scaling \(u\) with integers we may assume \(|\sigma(u)|>r\) for all \(\sigma\in\X(K)\), as was to be shown.

As \(|\sigma(u)|>r\) for all \(\sigma\in\X(K)\) we have \(N(u)>r> 1\), where \(N\) is as in Definition~\ref{def:Nq}, so \(u\) is not a unit. 
Let \(\p\subseteq\mathcal{O}_K\) be a prime containing \(u\) and let \(v\in \p\setminus\p^2\).
Let \(f_n=X^n-uX^{n-1}-v\in\mathcal{O}_K[X]\) for \(n\geq 2\) and note that it is Eisenstein at \(\p\) and therefore irreducible.
Let \(\alpha_n\in\overline{\Z}\) be a root of \(f_n\).
It suffices to show that for \(n\) sufficiently large \(\alpha_n\) is indecomposable and satisfies \(0\not\in\langle \alpha_n, S\rangle\). 
By Lemma~\ref{lem:trace} and by construction of \(u\) it holds for any \(n\geq 2\) that
\[ \langle \alpha_n, S\rangle = \frac{\langle \Tr_{K(\alpha_n)/K}(\alpha_n), S \rangle}{[K(\alpha_n):K]}  = \frac{\langle u, S \rangle}{[K(\alpha):K]} \not\ni 0, \]
so it remains to be shown that \(\alpha_n\) is indecomposable for \(n\) sufficiently large.

Let \(D\subseteq\C\) be the closed disk of radius \(r\) around \(0\).
Let \(n\) be sufficiently large such that we have \(|\sigma(v)| \cdot r^{1-n} < |\sigma(u)|-r\) for all \(\sigma\in\X(K)\).
Fix \(\sigma\in\X(K)\). 
For all \(x\) on the boundary of \(D\) we have
\[ |x^n-\sigma(v)|\leq r^n+|\sigma(v)| = r^{n-1}(r+|\sigma(v)|\cdot r^{1-n}) < |\sigma(u)| \cdot r^{n-1} = |\sigma(u)\cdot x^{n-1}|.  \]
Hence by Rouch\'e's Theorem \(\sigma(u) X^{n-1}\) and \(\sigma(f_n)=(X^n-\sigma(v))-\sigma(u)X^{n-1}\) have the same number of zeros in \(D\), counting multiplicities, which for \(\sigma(u)X^{n-1}\) clearly is \(n-1\).
For the remaining zero \(x_{\sigma,n}\in\C\) of \(\sigma(f_n)\) with \(|x_{\sigma,n}|>r\) we have \(x_{\sigma,n}^{n-1}(x_{\sigma,n}-\sigma(u))=\sigma(v)\) and thus 
\[|x_{\sigma,n}-\sigma(u)|=|\sigma(v)| \cdot |x_{\sigma,n}|^{1-n} < |\sigma(v)| \cdot r^{1-n} \to 0 \quad(\text{as }n\to\infty),\] 
i.e.\ \(\lim_{n\to\infty} x_{\sigma,n} = \sigma(u)\).
Now summing over all \(\sigma\in\X(K)\) we get
\begin{align*}
q(\alpha_n) &= \frac{1}{n\cdot [K:\Q]} \sum_{\sigma\in X(K)} \sum_{\rho\in \X_\sigma(K(\alpha_n))} |\rho(\alpha_n)|^2 \leq \frac{1}{n \cdot[K:\Q]} \sum_{\sigma\in X(K)} \big( (n-1)r^2+|x_{\sigma,n}|^2\big) \\
&\leq r^2 + \frac{1}{n\cdot [K:\Q]}\sum_{\sigma\in\X(K)} |x_{\sigma,n}|^2 \to r^2 \quad(\text{as }n\to\infty).
\end{align*}
Because \(r^2<2\) we have for sufficiently large \(n\) that \(q(\alpha_n) < 2\).
From Proposition~\ref{prop:q_is_2} we may then conclude that \(\alpha_n\) is indecomposable, as was to be shown.
\end{proof}

\noindent\textbf{Theorem~\ref{thm:Zbar_indecomposable}. }{\em The Hilbert lattice \(\overline{\Z}\) is indecomposable.}

\begin{proof}
Let \(\beta,\gamma\in\textup{indec}(\overline\Z)\). 
Then there exists some \(\alpha\in\textup{indec}(\overline{\Z})\) such that \(\langle\alpha,\beta\rangle\neq 0 \neq \langle\alpha,\gamma\rangle\) by Proposition~\ref{prop:find_non_ortho}.
Hence \(\alpha\), \(\beta\) and \(\gamma\) must be in the same connected component of the graph of Theorem~\ref{thm:eichler}.
As this holds for all \(\beta\) and \(\gamma\) the graph is connected and hence \(\overline{\Z}\) is indecomposable.
\end{proof}

\section{Universal gradings} \label{sec:graded_rings}

\begin{definition}
Let \(R\) be a commutative ring.
A {\em grading} of \(R\) is a pair \((G,\mathcal{R})\) where \(G\) is an abelian group and \(\mathcal{R}\) is a \(G\)-indexed collection \(\{R_g\}_{g}\) of additive subgroups of \(R\) such that the natural map \(\bigoplus_{g\in G} R_g \to R\) is an isomorphism and for all \(g,h\in\Gamma\) we have \(R_g \cdot R_h \subseteq R_{gh}\).
We equip the set of gradings of \(R\) with a category structure, where the morphisms \((G,\{R_g\}_{g\in G})\) to \((H,\{R_h'\}_{h\in H})\) are the group homomorphisms \(f:G\to H\) such that \(R_{h}'=\bigoplus_{g\in f^{-1}\{h\}} R_g\) for all \(h\in H\).
A {\em universal grading} of \(R\) is a universal object in the category of gradings of \(R\).
\end{definition}

Note that a universal grading, when it exists, is {\em uniquely unique}, i.e. for any two universal gradings of a given commutative ring there exists a unique isomorphism between them.

An {\em order} is a commutative ring for which the additive group is isomorphic to \(\Z^r\) for some \(r\in \Z_{\geq 0}\).
Lenstra and Silverberg \citep{Lenstra2018} showed using a classical version of Theorem~\ref{thm:eichler}, that every grading of a reduced order, in particular every order in \(\overline{\Q}\), has a universal grading.
Theorem~\ref{thm:eichler} is powerful enough to prove Theorem~\ref{thm:universal_grading}, which states that every subring of \(\overline{\Z}\) has a universal grading.
We quickly go through the proof, which generalizes that of Lenstra and Silverberg.

\begin{lemma}\label{lem:subring}
Suppose \(R\) is a commutative ring with a grading \((G,\{R_g\}_{g})\). Then \(R_1\subseteq R\) is a subring.
If \(y\in R_h\) for some \(h\in G\) is invertible in \(R\), then \(y^{-1}\in R_{h^{-1}}\) and \(y R_1=R_h\).
\end{lemma}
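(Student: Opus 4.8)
The plan is to reduce everything to comparing homogeneous components, using only the directness of $R=\bigoplus_{g\in G}R_g$ (i.e.\ injectivity of the natural map) together with the inclusions $R_g R_{g'}\subseteq R_{gg'}$ from the definition of a grading. Here $1$ denotes both the identity of $G$ and of $R$, and no confusion should arise.

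First I would show that $1\in R_1$. Write $1=\sum_{g\in G}r_g$ with $r_g\in R_g$ and almost all $r_g=0$. For any $h\in G$ and any $x\in R_h$ we have $x=x\cdot 1=\sum_{g}x r_g$ with $x r_g\in R_{hg}$; comparing the $R_h$-components of the two sides (legitimate by directness) gives $x=x r_1$. Applying this to each homogeneous summand of an arbitrary $z\in R$ yields $z=z r_1$ for all $z\in R$, and taking $z=1$ gives $1=r_1\in R_1$. Since $R_1$ is by definition an additive subgroup of $R$, satisfies $R_1 R_1\subseteq R_{1\cdot 1}=R_1$, and now contains $1$, it is a subring.

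Next, for the statement about units, let $y\in R_h$ be invertible in $R$ and write $y^{-1}=\sum_{g}z_g$ with $z_g\in R_g$. Then $1=y y^{-1}=\sum_{g}y z_g$ with $y z_g\in R_{hg}$; comparing with $1\in R_1$ forces $y z_{h^{-1}}=1$ and $y z_g=0$ for all $g\neq h^{-1}$. Multiplying the latter identities by $y^{-1}$ gives $z_g=0$ for $g\neq h^{-1}$, hence $y^{-1}=z_{h^{-1}}\in R_{h^{-1}}$. Finally $y R_1\subseteq R_h R_1\subseteq R_{h}$, while conversely $R_h=(y y^{-1})R_h\subseteq y R_{h^{-1}}R_h\subseteq y R_{1}$, so $y R_1=R_h$.

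The only point that is not entirely mechanical is the first one, namely that the $1$-component $r_1$ of the ring identity is itself the identity of $R$ and therefore lies in $R_1$; the definition of a grading as stated does not put this in by hand. Everything after that is a routine application of $R_g R_{g'}\subseteq R_{gg'}$ and the uniqueness of homogeneous decompositions.
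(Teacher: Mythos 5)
Your proof is correct and follows essentially the same route as the paper: decompose \(1\) into homogeneous components, use uniqueness of homogeneous decompositions together with \(R_gR_{g'}\subseteq R_{gg'}\) to see that the \(R_1\)-component of \(1\) acts as the identity and hence equals \(1\), and then compare components of \(yy^{-1}=1\) and use \(y^{-1}R_h\subseteq R_1\) for the statements about units. The only cosmetic difference is that you explicitly verify \(yz_g=0\) for \(g\neq h^{-1}\), whereas the paper deduces \(y^{-1}=z_{h^{-1}}\) directly from \(yz_{h^{-1}}=1\) via uniqueness of inverses; both are fine.
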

\begin{proof}
It follows directly from the definition that \(R_1\) is an additive group and closed under multiplication.
Write \(1=\sum_{g\in G} x_g\) with \(x_g\in R_g\) for all \(g\).
Then \(x_h=1\cdot x_h = \sum_{g\in G} x_g x_h \) with \(x_g x_h\in R_{gh}\) for all \(h\in G\). 
Since \(\{R_g\}_g\) is a decomposition of \(R\) we have \(x_1 x_h = x_h\) and thus \(x_1 \cdot 1 = 1\).
Hence \(1=x_1\in R_1\), and \(R_1\) is a subring of \(R\).

As before write \(y^{-1}=\sum_{g\in G} z_g\) with \(z_g\in R_g\) for all \(g\).
As \(R_1 \ni 1 = y y^{-1} = \sum_{g\in G} y z_g\) it follows that \(yz_{h^{-1}} = 1\) and thus \(y^{-1}=z_{h^{-1}}\in R_{h^{-1}}\).
Finally \(yR_1\subseteq R_h =yy^{-1} R_h \subseteq y R_1\), so that \(yR_1=R_h\).
\end{proof}

\begin{lemma}\label{lem:torsion_group}
Suppose \(R\subseteq\overline{\Z}\) is a subring and \((G,\{R_g\}_{g})\) is a grading of \(R\). 
Then \(H=\{ g\in G\,|\, R_g\neq 0\}\) is a countable torsion subgroup of \(G\).
Moreover, if \((G,\{R_g\}_g)\) is universal, then \(G=H\).
\end{lemma}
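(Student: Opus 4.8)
The plan is to verify first, by elementary means, that $H$ is a countable torsion subgroup of $G$, and then to deduce the final clause purely formally from the uniqueness part of the universal property. For the elementary part I would use that $R$, being a subring of the field $\overline{\Q}$, is an integral domain, and that $R \subseteq \overline{\Z}$ is countable, since $\overline{\Z}$ is. By Lemma~\ref{lem:subring} we have $1 \in R_1$, so the identity of $G$ lies in $H$; and if $x \in R_g$ and $y \in R_h$ are nonzero, then $xy \in R_{gh}$ is nonzero as $R$ is a domain, so $H$ is closed under the group operation.

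Next I would show every $g \in H$ has finite order, which is where the hypothesis on $R$ really enters. Picking a nonzero $x \in R_g$, the fact that $x$ is an algebraic integer gives a monic relation $x^e = -(a_{e-1}x^{e-1} + \cdots + a_0)$ with $e = [\Q(x):\Q] \geq 1$ and $a_i \in \Z$; since $x^i \in R_{g^i}$, the left-hand side lies in $R_{g^e}$ and the right-hand side in $R_{g^{e-1}} + \cdots + R_{g^0}$. If $g$ had infinite order the exponents $g^0, \dots, g^e$ would be pairwise distinct, and as $\bigoplus_{g'\in G} R_{g'} \to R$ is an isomorphism of abelian groups, $R_{g^e}$ would meet $R_{g^{e-1}} + \cdots + R_{g^0}$ only in $0$, forcing $x^e = 0$ and hence $x = 0$, a contradiction. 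So $g$ has finite order, whence $g^{-1}$ is a nonnegative power of $g$ and lies in $H$ by closure under multiplication; thus $H$ is a subgroup, necessarily torsion. Finally, $H$ is countable because choosing a nonzero $r_h \in R_h$ for each $h \in H$ gives an injection $H \hookrightarrow R$ (distinct graded pieces intersect only in $0$) and $R$ is countable.

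For the last clause, assume $(G,\{R_g\}_g)$ is universal. Since $R_g = 0$ for $g \notin H$ and $H$ is a subgroup closed under multiplication, $(H, \{R_h\}_{h\in H})$ is again a grading of $R$, and the inclusion $\iota : H \hookrightarrow G$ is a morphism of gradings $(H, \{R_h\}_{h\in H}) \to (G, \{R_g\}_g)$, because $\iota^{-1}\{g\} = \{g\}$ for $g \in H$ and $\iota^{-1}\{g\} = \emptyset$ with $R_g = 0$ otherwise. By universality of $(G,\{R_g\}_g)$ applied to $(H, \{R_h\}_{h\in H})$ there is a morphism $f : G \to H$ of gradings; then $\iota \circ f$ is an endomorphism of $(G,\{R_g\}_g)$ in the category of gradings of $R$, as is $\textup{id}_G$, so by the uniqueness clause of the universal property $\iota \circ f = \textup{id}_G$. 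Hence every $g \in G$ equals $\iota(f(g)) \in H$, i.e.\ $G = H$.

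The step needing the most care is the linear-algebra argument for torsion: it is essential to combine the fact that an algebraic integer satisfies a nontrivial monic integer polynomial relation with the fact that the decomposition $R = \bigoplus_g R_g$ is direct as abelian groups, so a relation whose terms lie in pairwise-distinct graded pieces splits into term-by-term trivial relations. The rest — the subgroup and torsion bookkeeping, the countability count, and the category-theoretic deduction of $G = H$ from uniqueness of morphisms out of the universal object — is routine.
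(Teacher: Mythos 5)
Your proof is correct and follows essentially the same route as the paper: closure of \(H\) under multiplication via the domain property of \(\overline{\Z}\), torsion via a monic integral relation clashing with the directness of \(\bigoplus_g R_g\), countability from countability of \(R\), and the final clause by composing the inclusion \(H\hookrightarrow G\) with the universal morphism \(G\to H\) and invoking uniqueness of endomorphisms of the universal object. The only differences are cosmetic (you make \(1\in H\), \(g^{-1}\in H\), and the morphism check for the inclusion explicit, and phrase the torsion step as a contradiction with infinite order rather than deducing \(g^n=g^i\) directly).
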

\begin{proof}
Since \(0\) is the only zero-divisor in \(\overline{\Z}\), we have for \(g,h\in H\) that \(0\subsetneq R_g R_h \subseteq R_{gh}\), so \(gh\in H\).
For \(g\in H\) and \(x\in R_g\) non-zero we have \(x^n=\sum_{i=0}^{n-1} a_i x^i\) for some \(n\in\Z_{\geq 1}\) and \(a_i\in\Z\), so \(0\neq x^n \in R_{g^n} \cap \sum_{i=0}^{n-1} R_{g^i}\). 
Hence \(g^n=g^i\) for some \(0\leq i < n\), so the order of \(g\) is finite, and \(H\) is a torsion group. That it is countable follows from the countability of \(R\).

Suppose now that \((G,\{R_g\}_g)\) is universal.
By what we have shown before \((H,\{R_h\}_h)\) is a grading of \(R\), and the inclusion \(i:H\to G\) is a morphism of gradings \((H,\{R_h\}_h)\to(G,\{R_g\}_g)\) of \(R\).
By universality there exists a map \(u:G\to H\) of gradings in the opposite direction. 
The composition \(i\circ u\) is an endomorphism of \((G,\{R_g\}_g)\), which by universality is unique and thus equal to the identity.
Hence \(i\) is surjective. It follows that \(G = H\), as was to be shown.
\end{proof}

\begin{definition}
Let \(n\in\Z_{\geq 1}\).
We write \(\mu_n\subseteq\overline{\Z}\) for the group of \(n\)-th roots of unity.
We equip  \(\overline{\Z}\tensor_\Z\Z[\mu_n]\) with a Hilbert lattice structure with the inner product given by
\[ \langle x,y\rangle_n = \frac{1}{[L:\Q]} \sum_{\sigma\in\X(L\tensor\Z[\mu_n])} \sigma(x)\cdot \overline{\sigma(y)}\]
for \(x,y\in \overline{\Z}\tensor_\Z\Z[\mu_n]\) and any \(\Q\subseteq L \subseteq \overline{\Q}\) such that \([L:\Q]<\infty\) and \(x,y\in L\tensor \Z[\mu_n]\). 
\end{definition}

The proof that this is a Hilbert lattice is completely analogous to the proof for \(\overline{\Z}\).
One can show that for \(x,y\in\overline{\Z}\) we have \(\langle x,y\rangle_n = \varphi(n) \langle x,y\rangle\), where \(\varphi\) is the Euler totient function, using that we have a natural bijection \(\X(L)\times \X(\Z[\mu_n]) \to \X(L\tensor\Z[\mu_n])\).

\begin{lemma}\label{lem:pontryagin_orthogonal}
Suppose \(R\subseteq\overline{\Z}\) is a subring and \((G,\{R_g\}_{g})\) is a grading of \(R\). 
Then for all \(g,h\in G\) distinct we have \(\langle R_g,R_h\rangle = 0\).
\end{lemma}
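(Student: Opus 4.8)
The plan is to exploit Pontryagin duality for the grading group restricted to $\langle g,h\rangle$, which is forced to be finite, together with the auxiliary Hilbert lattice $\overline{\Z}\otimes_\Z\Z[\mu_n]$ in which enough roots of unity become units of the degree‑$1$ piece. First I would reduce: if $R_g=0$ or $R_h=0$ there is nothing to prove, so I may assume $g,h$ lie in the support $H=\{g'\in G : R_{g'}\neq 0\}$, which by Lemma~\ref{lem:torsion_group} is a torsion group; hence $\Gamma:=\langle g,h\rangle$ is a finite abelian group. Put $n:=\exp(\Gamma)$. The degree‑$\Gamma$ part $R_\Gamma:=\bigoplus_{g'\in\Gamma}R_{g'}$ is a subring of $R$ (closed under multiplication since $R_{g'}R_{g''}\subseteq R_{g'g''}$ and $g'g''\in\Gamma$, and containing $1\in R_1$ by Lemma~\ref{lem:subring}), graded by $\Gamma$.

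Next I would pass to $A:=R_\Gamma\otimes_\Z\Z[\mu_n]$, a Hilbert lattice with the inner product $\langle\cdot,\cdot\rangle_n$ restricted from $\overline{\Z}\otimes_\Z\Z[\mu_n]$, graded by $\Gamma$ via $A_{g'}:=R_{g'}\otimes_\Z\Z[\mu_n]$ (the sum over $g'$ is still direct since $\Z[\mu_n]$ is free over $\Z$). The gain is that the $1\otimes\zeta$ with $\zeta\in\mu_n$ lie in $A_1$ and are units. For each character $\chi\in\widehat{\Gamma}=\Hom(\Gamma,\mu_n)$ I would define $\phi_\chi\colon A\to A$ to act on $A_{g'}$ as multiplication by the unit $1\otimes\chi(g')$. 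Then $\phi_\chi$ preserves each $A_{g'}$, is additive and bijective with inverse $\phi_{\chi^{-1}}$, and is multiplicative because $\chi$ is a homomorphism and $R_{g'}R_{g''}\subseteq R_{g'g''}$; so $\phi_\chi$ is a ring automorphism of $A$, of finite order (indeed $\phi_\chi^n=\mathrm{id}$). The hard part will be showing that $\phi_\chi$ is an \emph{isometry} of $(A,\langle\cdot,\cdot\rangle_n)$. I would deduce this from the analogue of Lemma~\ref{lem:isometries} for $\overline{\Z}\otimes_\Z\Z[\mu_n]$: given finitely many elements $x,y\in A$, the subring they generate over $\Z$ together with $\mu_n$ and the (finite) $\phi_\chi$‑orbits of $x,y$ is a finitely generated, hence module‑finite, subring of $\overline{\Z}\otimes_\Z\Z[\mu_n]$, stable under $\phi_\chi$; its rationalization is a finite \'etale $\Q$‑algebra, $\phi_\chi$ is an automorphism of it, and such an automorphism permutes the ring homomorphisms to $\C$ that appear in the defining sum of $\langle\cdot,\cdot\rangle_n$, hence preserves it. This is routine in spirit but is the step where care is needed.

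With the isometry in hand, the orthogonality is the usual statement that distinct isotypic components of a finite group acting by isometries are orthogonal. One checks, using orthogonality of characters, that $A_{g'}=\{a\in A : \phi_\chi(a)=(1\otimes\chi(g'))\,a\ \text{for all }\chi\in\widehat{\Gamma}\}$. Then for $x\in A_g$, $y\in A_h$, choosing $\chi$ with $\chi(g)\neq\chi(h)$ (characters of a finite abelian group separate points) and using left $\Z[\mu_n]$‑linearity, conjugate symmetry, and $\overline{\chi(h)}=\chi(h)^{-1}$,
\[
\langle x,y\rangle_n=\langle\phi_\chi x,\phi_\chi y\rangle_n=\chi(g)\,\overline{\chi(h)}\,\langle x,y\rangle_n=\chi(gh^{-1})\,\langle x,y\rangle_n ,
\]
and since $\chi(gh^{-1})\neq 1$ this forces $\langle x,y\rangle_n=0$; hence $\langle A_g,A_h\rangle_n=0$.

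Finally I would descend to $R$: for $a\in R_g$ and $b\in R_h$ we have $a\otimes 1\in A_g$ and $b\otimes 1\in A_h$, so $0=\langle a\otimes 1,b\otimes 1\rangle_n=\varphi(n)\,\langle a,b\rangle$ by the identity $\langle\cdot,\cdot\rangle_n=\varphi(n)\langle\cdot,\cdot\rangle$ on $\overline{\Z}$, whence $\langle a,b\rangle=0$. As $a\in R_g$ and $b\in R_h$ were arbitrary, $\langle R_g,R_h\rangle=0$, which is what was claimed. (If instead $g\notin H$ or $h\notin H$, the conclusion is immediate.) Thus the only genuinely delicate ingredient is the isometry property of the automorphisms $\phi_\chi$; everything else is bookkeeping with finite abelian groups and the relation between $\langle\cdot,\cdot\rangle_n$ and $\langle\cdot,\cdot\rangle$.
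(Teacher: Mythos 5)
Your overall route is the same as the paper's (reduce to the torsion support, tensor with \(\Z[\mu_n]\), twist by a character of the grading group acting as multiplication by roots of unity, verify that this twist is an isometry for \(\langle\cdot,\cdot\rangle_n\), and descend via \(\langle\cdot,\cdot\rangle_n=\varphi(n)\langle\cdot,\cdot\rangle\)), and the step you flag as delicate — the isometry of \(\phi_\chi\) — is indeed the same verification the paper performs. The genuine gap is in the step you call bookkeeping: the identity
\(\langle\phi_\chi x,\phi_\chi y\rangle_n=\chi(g)\,\overline{\chi(h)}\,\langle x,y\rangle_n\)
is false in general. The elements \(1\tensor\chi(g)\) and \(1\tensor\chi(h)\) are ring elements of \(A\), not complex scalars, and \(\langle\cdot,\cdot\rangle_n\) is only \(\Z\)-bilinear (indeed \(\Q\)-bilinear), not \(\Z[\mu_n]\)-sesquilinear: in the defining sum \(\frac{1}{[L:\Q]}\sum_{\sigma\in\X(L\tensor\Z[\mu_n])}\sigma(x)\overline{\sigma(y)}\) the various \(\sigma\) send a fixed root of unity of \(\Z[\mu_n]\) to \emph{different} complex roots of unity, so it cannot be pulled out of the sum as a single scalar. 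A concrete counterexample: for \(n=3\), \(x=y=1\tensor 1\) and \(\zeta\) a primitive cube root of unity one has \(\langle(1\tensor\zeta)x,\,y\rangle_3=\Tr_{\Q(\mu_3)/\Q}(\zeta)=-1\), whereas \(\zeta\langle x,y\rangle_3=2\zeta\). Consequently your final inference "\((1-\chi(gh^{-1}))\langle x,y\rangle_n=0\), hence \(\langle x,y\rangle_n=0\)" does not follow as written.

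What the isometry actually gives is \(\langle x,y\rangle_n=\langle x,\chi(g)^{-1}\chi(h)\,y\rangle_n\), i.e.\ \(\langle x,(1-\chi(g^{-1}h))y\rangle_n=0\) for all \(y\) in the \(\Z[\mu_n]\)-module \(R_h\tensor\Z[\mu_n]\), where now \(1-\chi(g^{-1}h)\) sits inside the right-hand argument rather than outside the form. To close the gap one needs one more idea, which is how the paper finishes: since \(\chi(g^{-1}h)\) is a non-trivial \(n\)-th root of unity, \(1-\chi(g^{-1}h)\) divides \(n\) in \(\Z[\mu_n]\), so \(n\,\langle R_g,R_h\rangle_n=\langle R_g,n(R_h\tensor\Z[\mu_n])\rangle_n\subseteq\langle R_g,(1-\chi(g^{-1}h))(R_h\tensor\Z[\mu_n])\rangle_n=0\) (here pulling out the integer \(n\) is legitimate), whence \(\langle R_g,R_h\rangle_n=0\). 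With that repair your argument matches the paper's proof; your extra reductions (restricting to \(\Gamma=\langle g,h\rangle\) and the isotypic-component characterization of \(A_{g'}\)) are harmless but not needed.
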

\begin{proof}
Let \(H\) be as in Lemma~\ref{lem:torsion_group}.
If \(R_g=0\) or \(R_h=0\) we are done, so suppose \(g,h\in H\) and let \(n\in\Z_{\geq 1}\) such that \(g^n=1=h^n\).
As \(\varphi(n)\langle R_g,R_h\rangle=\langle R_g,R_h\rangle_n\) it suffices to show that \(\langle R_g,R_h\rangle_n=0\).
Since \(g\neq h\) there exists by Pontryagin duality some morphism \(\chi\in\Hom(H,\mu_n)\) such that \(\chi(g)\neq \chi(h)\).
One then verifies that the induced \(\Z[\mu_n]\)-algebra automorphism \(\chi\) of \(R\tensor\Z[\mu_n]\) that maps \(x\tensor 1\in R_i\tensor\Z[\mu_n]\) to \( x \tensor \chi(i)\) for all \(i\in H\) is an isometry. 
Hence for \(x\in R_g\) and \(y\in R_h\) we have
\[ \langle x, y \rangle_n = \langle \chi x, \chi y\rangle_n = \langle \chi(g) x, \chi(h) y\rangle_n = \langle x,\chi(g)^{-1}\chi(h) y\rangle_n.  \]
Because \(\chi(g^{-1}h)\) is a non-trivial \(n\)-th root of unity, \(1-\chi(g^{-1}h)\) divides \(n\).
Hence
\[0=\langle R_g,(1-\chi(g^{-1}h))R_h\rangle_n \supseteq \langle R_g,nR_h\rangle_n = n \langle R_g, R_h\rangle_n,\]
and we are done.
\end{proof}

\noindent\textbf{Theorem~\ref{thm:universal_grading}. }{\em Every subring of \(\overline{\Z}\) has a universal grading. }

\begin{proof}
Let \(R\) be a subring of \(\overline{\Z}\), which is also a sublattice of \(\overline{\Z}\).
Let \(\textup{Y}=\{\textup{Y}_i\}_{i\in I}\) be a universal decomposition of the lattice \(R\), which exists by Theorem~\ref{thm:eichler}.
We obtain this decomposition by starting with the graph \(\mathcal{G}\) on the vertex set \(\textup{indec}(R)\) with edges between \(x,y\in\textup{indec}(R)\) if and only if \(\langle x,y\rangle \neq 0\), then taking \(I\) to be the set of connected components of \(\mathcal{G}\) and \(\textup{Y}_i\) the group generated by \(i\in I\).
For \(x=\sum_{i}\upsilon_i \in R\) with \(\upsilon_i\in \textup{Y}_i\) write \(\textup{supp}(x)=\{i\in I\,|\, \upsilon_i\neq 0\}\).
Now consider the free abelian group \(\Z^{(I)}\) and let \(U\) be the group obtained from it by dividing out 
\[ N = \langle i+j-k \,|\, i,j\in I,\,k\in\textup{supp}(\textup{Y}_i\cdot\textup{Y}_j)\rangle. \]
We have an induced map \(f:I\to \Z^{(I)} \to U\) which induces a decomposition \(f(\textup{Y})\) of \(R\), which is also a grading.
That it is universal follows from the fact that \(\textup{Y}\) is a refinement of \(\mathcal{R}\) for any grading \((G,\mathcal{R})\) by Lemma~\ref{lem:pontryagin_orthogonal}.
\end{proof}

\begin{example}
From Theorem~\ref{thm:universal_grading} it follows in particular that \(\overline{\Z}\) has a universal grading.
It follows from Lemma~\ref{lem:pontryagin_orthogonal} that such a grading gives an orthogonal decomposition of the lattice.
By Theorem~\ref{thm:Zbar_indecomposable} such a decomposition is trivial, and thus \(\overline{\Z}\) has a trivial universal grading.
\end{example}

\begin{example}
Every countable torsion group occurs as the group of a universal grading of a subring of \(\overline{\Z}\).
Note that such a group is a subgroup of \(\Omega=\bigoplus_{\smash{p\in\mathcal{P}}} (\Q/\Z)\), where \(\mathcal{P}\) is some countably infinite set.
We choose \(\mathcal{P}\) to be the set of positive prime numbers.
Fixing some embedding \(\overline{\Z}\to\C\) we have a well-defined \(x\)-th power of \(p\) in \(\overline{\Q}\cap \R_{>0}\) for all \(x\in\Q\).
Let \([\cdot]:\Q/\Z\to[0,1)\cap\Q\) be the (bijective) map that assigns to each class its smallest non-negative representative.
It is then easy to verify that \(R = \Z[p^x \,|\, p\in\mathcal{P},\,x\in\Q_{\geq 0}]\subseteq\overline{\Z}\) has a grading \((\Omega,(R_{(x_p)_p})_{(x_p)_p})\) with 
\[R_{(x_p)_p} = \Big(\prod_{p\in \mathcal{P}} p^{[x_p]}\Big) \cdot \Z.\]
In turn any subgroup \(G\subseteq\Omega\) gives a grading \((G,(R_g)_g)\) of the subring \(\bigoplus_{g\in G} R_g\subseteq R\), and this grading must be universal because a universal grading exists and all \(R_g\) are of rank 1.
\end{example}

\begin{lemma}\label{lem:homogenous_intersection}
Suppose \(R\subseteq\overline{\Z}\) is a subring and \((G,\{R_g\}_{g})\) is a grading of \(R\). 
If \(K=\Q(A)\) for some subset \(A\subseteq \bigcup_{g\in G} R_g\), then \((G,\{R_g\cap K\}_g)\) is a grading of \(R\cap K\).
\end{lemma}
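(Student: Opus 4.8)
The plan is to verify the three defining properties of a grading for the pair $(G,\{R_g\cap K\}_g)$ over the ring $R\cap K$. Two of them are essentially free: each $R_g\cap K$ is an additive subgroup of $R\cap K$, the identity lies in $R_1\cap K$ since $1\in R_1$ by Lemma~\ref{lem:subring} and $1\in K$, and $(R_g\cap K)(R_h\cap K)\subseteq R_{gh}\cap K$ because $R_gR_h\subseteq R_{gh}$ and $K$ is a subring of $\overline{\Q}$. Likewise the natural map $\bigoplus_g(R_g\cap K)\to R\cap K$ is injective, being a restriction of the injective map $\bigoplus_g R_g\to R$. So everything reduces to a single statement: \emph{every $x\in R\cap K$ has all of its homogeneous components $x_g\in R_g$ lying in $K$.}

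To prove this I would first show that $K$ itself carries a compatible homogeneous decomposition. Since $A\subseteq\bigcup_g R_g$ consists of homogeneous elements and $R_gR_h\subseteq R_{gh}$, every monomial in finitely many elements of $A$ is homogeneous; hence $\Z[A]$ is spanned by homogeneous elements, and comparing with the decomposition $R=\bigoplus_g R_g$ gives $\Z[A]=\bigoplus_g(\Z[A]\cap R_g)$. Next observe that $\Z[A]\subseteq\overline{\Z}$, so any nonzero $x\in\Z[A]$ lies in $\Z[a_1,\dots,a_k]$ for finitely many $a_i\in A$, a subring of the number field $\Q(a_1,\dots,a_k)$; being an integral domain that is module-finite over $\Q$, it is already equal to that field, so $x^{-1}\in\Q[A]$. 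Thus $\Q[A]$ is a field and therefore $K=\Q(A)=\Q[A]$. Tensoring the graded decomposition of $\Z[A]$ with $\Q$, and using that $R$ is torsion-free so that $R\otimes_\Z\Q$ embeds into $\overline{\Q}$ as the internal direct sum $\Q R=\bigoplus_g\Q R_g$, yields $K=\bigoplus_g K_g$ with $K_g:=\Q\cdot(\Z[A]\cap R_g)\subseteq\Q R_g$.

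Finally, take $x\in R\cap K$. Writing $x=\sum_g x_g$ with $x_g\in R_g$ and, separately, $x=\sum_g y_g$ with $y_g\in K_g$, one sees that both are decompositions of $x$ inside $\Q R=\bigoplus_g\Q R_g$, so uniqueness of the homogeneous decomposition there forces $x_g=y_g$ for every $g$; hence $x_g\in R_g\cap K_g\subseteq R_g\cap K$, which is exactly surjectivity of $\bigoplus_g(R_g\cap K)\to R\cap K$. The main obstacle — and the only place the hypothesis on $A$ is genuinely used — is the second paragraph, namely establishing that $K$ is graded. The delicate point is that $G$ need not be torsion-free, so one cannot naively "divide" homogeneous elements; this is circumvented precisely by the observation that the elements of $\Z[A]$ are algebraic integers, which makes $\Q[A]$ automatically a field and so makes the passage from $\Q[A]$ to $\Q(A)$ a non-issue.
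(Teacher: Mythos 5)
Your proof is correct and follows essentially the same route as the paper: both arguments rest on the fact that \(K=\Q(A)=\Q[A]\) is spanned over \(\Q\) by homogeneous elements (monomials in \(A\), products of homogeneous elements being homogeneous, with algebraicity of the elements of \(A\) making \(\Q[A]\) a field), and then on uniqueness of the homogeneous decomposition to conclude that the components of any \(x\in R\cap K\) lie in \(K\). The only difference is bookkeeping—you extend scalars and compare decompositions inside \(\Q R=\bigoplus_g \Q R_g\), while the paper clears denominators and compares inside \(R=\bigoplus_g R_g\) itself—plus one slip of wording on your side: it is \(\Q[a_1,\dotsc,a_k]\), not \(\Z[a_1,\dotsc,a_k]\), that is module-finite over \(\Q\) and hence a field.
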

\begin{proof}
It is clear that \((G,\{R_g\cap K\}_g)\) is a grading of \(R\cap K\) once we show \(\bigoplus_{g} (R_g\cap K)=R\cap K\). 
For this it remains to show that \(R\cap K \subseteq \sum_g (R_g\cap K)\). 
Let \(x\in R\cap K\). 
As \(x\in R\) we may uniquely write \(x=\sum_{g} x_g\) for some \(x_g\in R_g\). 
Without loss of generality \(A\) is closed under multiplication, so that \(A\) generates \(K\) as a \(\Q\)-vector space.
Then we may write \(x=\sum_{a\in A} r_a a\) for some \(r_a\in \Q\) which are almost all equal to zero.
Hence a positive integer multiple \(\lambda x\) of \(x\) satisfies \(\sum_g \lambda x_g=\lambda x = \sum_{a\in A} \lambda r_a a\) with \(\lambda r_a\in\Z\) for all \(a\) and thus \(\lambda r_a a\in R_{g_a}\) for some \(g_a\in G\). 
It follows from uniqueness of the decomposition that \(\lambda x_g = \sum_{a\in A,\, g_a=g} \lambda r_a a\) and thus \(x_g\in K\).
We conclude that \(x_g\in R_g\cap K\) and thus \(x\in \sum_g (R_g\cap K)\), as was to be shown.
\end{proof}

We may also lift Theorem~1.4 of \citep{Lenstra2018} to our more general setting.

\vspace{6pt}

\noindent\textbf{Theorem~\ref{thm:integral_universal_grading}. }{\em Every integrally closed subring of \(\overline{\Z}\) has a universal grading with a subgroup of \(\Q/\Z\).}

\begin{proof}
Let \(R\) be an integrally closed subring of \(\overline{\Z}\) and let \((G,\{R_g\}_g)\) be a universal grading, which exists by Theorem~\ref{thm:universal_grading}.
It suffices to show that every finitely generated subgroup \(H\) of \(G\) is cyclic.

Let \(H\subseteq G\) be finitely generated and thus finite by Lemma~\ref{lem:torsion_group}.
Moreover, by Lemma~\ref{lem:torsion_group} we have \(R_h\neq 0\) for all \(h\in H\), so we may choose some non-zero \(a_h\in R_h\).
Let \(A=\{a_h\mid h\in H\}\) and \(K=\Q(A)\).
Then by Lemma~\ref{lem:homogenous_intersection} we get a grading \((H,\{R_h\cap K\})\) of \(S=R\cap K\).
Since \(K\) is a field and \(R\) is integrally closed, the ring \(S\) is integrally closed.
As \(K\) is finite over \(\Q\), the same holds for the field of fractions of \(S\). 
Hence we may apply Theorem~1.4 from \citep{Lenstra2018} to conclude that the universal grading of \(S\) has a cyclic grading group \(Y\). 
By universality we get a morphism of gradings and thus a morphism of groups \(Y\to H\). 
The latter is surjective since \(0\neq R_h\cap K \ni a_h\) for all \(h\in H\).
Thus \(H\) is cyclic, as was to be shown.
\end{proof}

\begin{lemma}\label{lem:sufficient_universal}
Suppose \(R\subseteq\overline{\Z}\) is a subring and \((G,\{R_g\}_{g})\) is a grading of \(R\). 
If the universal grading of \(R_1\) is trivial and \(R_g\neq 0\) for all \(g\in G\), then \((G,\{R_g\}_g)\) is universal.
\end{lemma}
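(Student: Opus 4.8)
The plan is to compare the given grading $(G,\{R_g\}_g)$ with the universal grading of $R$, which exists by Theorem~\ref{thm:universal_grading}; write it as $(\Gamma,\{R'_\gamma\}_{\gamma\in\Gamma})$. By Lemma~\ref{lem:torsion_group} every homogeneous component of a universal grading is nonzero, i.e.\ $R'_\gamma\neq 0$ for all $\gamma\in\Gamma$. Since $(\Gamma,\{R'_\gamma\}_\gamma)$ is a universal (initial) object in the category of gradings of $R$, there is a unique morphism of gradings $f\colon\Gamma\to G$, characterised by $R_g=\bigoplus_{\gamma\in f^{-1}\{g\}}R'_\gamma$ for every $g\in G$. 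The entire proof then reduces to showing that $f$ is an isomorphism of gradings: a grading isomorphic to a universal one is again universal, so this gives the claim.

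First I would establish surjectivity of $f$. If $f^{-1}\{g\}=\emptyset$ for some $g\in G$, then $R_g=\bigoplus_{\gamma\in\emptyset}R'_\gamma=0$, contradicting the hypothesis that $R_g\neq 0$; hence $f$ is onto. Next I would handle injectivity. Put $N=\ker f=f^{-1}\{1\}$; this is a subgroup of $\Gamma$ containing the identity, and from $R'_\gamma R'_{\gamma'}\subseteq R'_{\gamma\gamma'}$ together with $R_1=\bigoplus_{\gamma\in N}R'_\gamma$ (the $g=1$ instance of the morphism property) one checks routinely that $(N,\{R'_\gamma\}_{\gamma\in N})$ is a grading of the subring $R_1$ of $\overline{\Z}$ (a subring by Lemma~\ref{lem:subring}).

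Now I invoke the hypothesis that the universal grading of $R_1$ is trivial. The unique morphism of gradings from the trivial grading of $R_1$ to $(N,\{R'_\gamma\}_{\gamma\in N})$ is a group homomorphism from the trivial group, so all of its nonempty fibres consist of the identity, forcing $R'_\gamma=0$ for every $\gamma\in N$ other than the identity of $\Gamma$. Combined with $R'_\gamma\neq0$ for all $\gamma\in\Gamma$, this yields $N=\{1\}$, so $f$ is injective.

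Finally, a bijective morphism of gradings is an isomorphism: each fibre $f^{-1}\{g\}$ is a singleton, so $R_g=R'_{f^{-1}(g)}$ and the inverse map $f^{-1}\colon G\to\Gamma$ is itself a morphism of gradings. Since $(\Gamma,\{R'_\gamma\}_\gamma)$ is a universal object and $(G,\{R_g\}_g)$ is isomorphic to it, $(G,\{R_g\}_g)$ is a universal grading of $R$. The only step requiring a little care is the translation of ``the universal grading of $R_1$ is trivial'' into the vanishing of the non-identity components of the grading $(N,\{R'_\gamma\}_{\gamma\in N})$; once the definitions of universal object and morphism of gradings are unwound, this is immediate, and everything else is bookkeeping — so I expect no genuine obstacle, only the need to keep the two gradings and the direction of $f$ straight.
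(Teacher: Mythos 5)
Your proof is correct and follows essentially the same route as the paper: compare with the universal grading of \(R\) via the unique morphism \(f\), observe that \(\ker(f)\) indexes a grading of \(R_1\) which must be concentrated in the identity since the universal grading of \(R_1\) is trivial, use nonvanishing of the universal components (Lemma~\ref{lem:torsion_group}) to get injectivity, and use \(R_g\neq 0\) for surjectivity. No substantive differences to report.
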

\begin{proof}
Suppose \((H,\{S_h\}_h)\) is a universal grading of \(R\), which exists by Theorem~\ref{thm:universal_grading}, and let \(f:(H,\{S_h\}_h)\to (G,\{R_g\}_g)\) be the map given by universality.
Then \(R_1=\bigoplus_{h\in\ker(f)} S_h\), which is a grading of \(R_1\).
Since the universal grading of \(R_1\) is trivial, it follows that \(R_1=S_1\).
By Lemma~\ref{lem:torsion_group} we have \(S_h\neq 0\) for all \(h\in\ker(f)\), so it follows that \(\ker(f)=1\) and that \(f\) is injective.
From the fact that \(R_g\neq 0\) for all \(g\in G\) it follows that \(f\) must be surjective.
Thus \(f\) is an isomorphism of gradings and \((G,\{R_g\}_g)\) is universal.
\end{proof}

\begin{example}
We will show that every subgroup of \(\Q/\Z\) occurs as the group of a universal grading of an integrally closed subring of \(\overline{\Z}\).

Recall from Definition~\ref{def:roots_of_unity} the notation \(\mu_\infty\) for the group of roots of unity in \(\overline{\Z}\).
For a prime number \(p\) write \(\mu_{p^\infty}=\{ \zeta\in\mu_\infty \,|\, (\exists\, n \in \Z_{\geq 0})\ \zeta^{p^n} = 1 \}\) and \(\mu_p=\{\zeta\in\mu_\infty\,|\,\zeta^p=1\}\).
The map \(\zeta\mapsto \zeta^p\) gives an isomorphism \(\mu_{p^\infty}/\mu_p\to \mu_{p^\infty}\).
Taking the direct sum over all \(p\) we get an isomorphism \(\mu_\infty/\mu_0\to\mu_\infty\), where \(\mu_0=\{\zeta\in\mu_\infty\,|\, (\exists\,n\text{ square-free})\ \zeta^n=1\}\).
Thus it suffices to show that for every \(\mu_0\subseteq M \subseteq \mu_\infty\) the group \(G=M/\mu_0\) occurs as a universal grading group.

Consider \(R=\Z[M]\), the smallest subring of \(\overline\Z\) containing \(M\), which is integrally closed. 
Define \(R_{\zeta\cdot\mu_0} = \zeta \cdot \Z[\mu_0]\) for all \(\zeta\cdot\mu_0\in M/\mu_0\) and note that this gives a grading \((G,\{R_g\}_{g})\) of \(R\).
To prove this is a universal grading it suffices by Lemma~\ref{lem:sufficient_universal} to show that the universal grading of \(\Z[\mu_0]\) is trivial, or in turn, by Lemma~\ref{lem:pontryagin_orthogonal}, that \(\Z[\mu_0]\) is indecomposable.
The elements of \(\mu_0\) are indecomposable in \(\Z[\mu_0]\) because they are so in \(\overline{\Z}\), and they generate \(\Z[\mu_0]\) as an additive group.
From Proposition~\ref{prop:orthogonal_roots_of_unity} we may conclude that no pair \(\zeta,\xi\in\mu_0\) is orthogonal, so from Theorem~\ref{thm:eichler} it follows that \(\Z[\mu_0]\) is indecomposable. Hence the grading is universal.
\end{example}

\label{sec:end-application}
\section{Enumeration of indecomposable algebraic integers of degree 2}
\label{sec:enum2}

The indecomposables of \(\overline{\Z}\) of degree 1 are \(1\) and \(-1\). 
In this section we compute the indecomposables of degree 2.
The fields of degree 2 over \(\Q\) are \(\Q(\sqrt{d})\) for \(d\in\Z\setminus\{1\}\) square-free.
The following lemma is easily verified by separating the cases \(d\) negative and positive.

\begin{lemma}
Let \(d\in\Z\setminus\{1\}\) be square-free and let \(a,b\in\Q\). Then \(q(a+b\sqrt{d})=a^2+|d|\cdot b^2\).\qed
\end{lemma}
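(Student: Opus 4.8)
The plan is to reduce the computation to the degree-two field \(L=\Q(\sqrt d)\) and then evaluate the defining sum for \(q\) directly. Since \(d\) is square-free and different from \(1\) (hence also nonzero), we have \(\sqrt d\notin\Q\), so \([L:\Q]=2\); by Lemma~\ref{lem:extension_dont_care} we may compute \(q(a+b\sqrt d)\) using \(L\) regardless of whether \(a+b\sqrt d\) actually generates \(L\), the case \(b=0\) being the only one where it does not. Fix a square root \(\sqrt d\in\C\) of \(d\). Then \(\X(L)=\{\sigma_+,\sigma_-\}\), where \(\sigma_\pm\) is determined by \(\sigma_\pm(\sqrt d)=\pm\sqrt d\), so that
\[ q(a+b\sqrt d)=\tfrac12\big(|a+b\sqrt d|^2+|a-b\sqrt d|^2\big). \]

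Next I would split into the two cases suggested by the paper. If \(d>0\), then \(\sqrt d\in\R\) and both \(a\pm b\sqrt d\) are real, so the right-hand side equals \(\tfrac12\big((a+b\sqrt d)^2+(a-b\sqrt d)^2\big)=a^2+b^2 d=a^2+|d|\,b^2\), the mixed terms cancelling. If \(d<0\), write \(\sqrt d=\i\sqrt{|d|}\) with \(\sqrt{|d|}\in\R_{>0}\); then \(|a\pm b\sqrt d|^2=a^2+b^2|d|\) for either choice of sign, and the average is again \(a^2+|d|\,b^2\).

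There is no genuine obstacle here; the only point needing a moment's care is the degenerate case \(b=0\), where \(\Q(a+b\sqrt d)=\Q\) rather than \(L\), but Lemma~\ref{lem:extension_dont_care} guarantees that the value of \(q\) is unchanged when computed over the larger field \(L\), so the displayed formula still applies and trivially yields \(q(a)=a^2\). (One could alternatively treat \(b=0\) via Lemma~\ref{lem:uniform}, but routing everything through \(L\) keeps the argument uniform.)
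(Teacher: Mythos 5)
Your proof is correct and follows exactly the route the paper has in mind: it leaves the lemma as "easily verified by separating the cases \(d\) negative and positive," which is precisely your computation of the two embeddings of \(\Q(\sqrt d)\) with the case split on the sign of \(d\). The care you take with the degenerate case \(b=0\) via Lemma~\ref{lem:extension_dont_care} is a sensible (if minor) addition.
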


\begin{lemma}\label{lem:help_deg_2_dec}
Let \(\alpha\in\overline{\Z}\) and suppose one of the following holds:
\begin{enumerate}[nosep,label=\textup{(\roman*)}]
\item the real part of \(\alpha^2\) is at least \(2\) under every embedding \(\Q(\alpha)\to\C\);
\item the real part of \(\alpha^2\) is at most \(-2\) under every embedding \(\Q(\alpha)\to\C\);
\item \(\alpha=(1+\sqrt{d})/2\) with \(d\in\Z\) square-free such that \(9\leq d \leq 25\).
\end{enumerate}
Then \(\alpha\) has a non-trivial decomposition in a degree \(2\) extension of \(\Q(\alpha)\).
\end{lemma}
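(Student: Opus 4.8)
The plan is to produce, uniformly across all three cases, a single explicit quadratic decomposition. Set $\epsilon = 1$ in cases (i) and (iii), and $\epsilon = -1$ in case (ii). Let $\beta \in \overline{\Z}$ be a root of the monic polynomial $X^2 - \alpha X + \epsilon \in \overline{\Z}[X]$, and put $\gamma = \alpha - \beta$, the other root. Then $\beta$ and $\gamma$ are nonzero algebraic integers with $\beta\gamma = \epsilon \in \{1,-1\}$, and both lie in $\Q(\alpha,\beta)$, an extension of $\Q(\alpha)$ of degree at most $2$ (hence contained in one of degree exactly $2$). Since $\beta + \gamma = \alpha$, the equivalence (i)$\Leftrightarrow$(iv) of Lemma~\ref{lem:eq_dec_def} reduces the whole statement to the single inequality $q(\beta - \gamma) = q(\alpha - 2\gamma) \leq q(\alpha)$, and any decomposition it yields is automatically non-trivial because $\beta,\gamma \neq 0$.

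Next I would compute $q(\beta-\gamma)$ in closed form. Using $(\beta-\gamma)^2 = (\beta+\gamma)^2 - 4\beta\gamma = \alpha^2 - 4\epsilon \in \Q(\alpha)$ together with the identity $|\rho(\beta-\gamma)|^2 = |\rho((\beta-\gamma)^2)|$ for any embedding $\rho$, and grouping embeddings by their restriction to $\Q(\alpha)$ (as in Lemma~\ref{lem:extension_dont_care}), one obtains
\[ q(\beta - \gamma) = \frac{1}{[\Q(\alpha):\Q]} \sum_{\sigma \in \X(\Q(\alpha))} |\sigma(\alpha)^2 - 4\epsilon| \qquad\text{and}\qquad q(\alpha) = \frac{1}{[\Q(\alpha):\Q]} \sum_{\sigma \in \X(\Q(\alpha))} |\sigma(\alpha)^2|. \]
So everything comes down to proving $\sum_\sigma |\sigma(\alpha)^2 - 4\epsilon| \leq \sum_\sigma |\sigma(\alpha)^2|$. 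In cases (i) and (ii) this holds term by term: writing $z = \sigma(\alpha)^2$, the hypothesis says exactly that $\epsilon\,\Re(z) \geq 2$, and then $|z - 4\epsilon|^2 = |z|^2 - 8\epsilon\,\Re(z) + 16 \leq |z|^2$. This is the crux: conditions (i) and (ii) are precisely calibrated so that subtracting $4\epsilon$ from $\sigma(\alpha)^2$ does not increase the relevant length.

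For case (iii) the termwise inequality can fail (for instance when $d=13$, at the embedding $\sqrt{13}\mapsto -\sqrt{13}$), so I would instead evaluate the two sums directly. From $\alpha^2 = \alpha + \tfrac{d-1}{4}$ one gets $\sigma(\alpha)^2 = \tfrac14(\pm\sqrt d + 1)^2$ at the two real embeddings, and the bounds $3 \leq \sqrt d \leq 5$ coming from $9 \leq d \leq 25$ resolve the absolute values, giving $\sum_\sigma |\sigma(\alpha)^2 - 4| = \sqrt d \leq \tfrac{d+1}{2} = \sum_\sigma |\sigma(\alpha)^2|$, the last step being $(\sqrt d - 1)^2 \geq 0$; the degenerate sub-case where $d$ is a perfect square (forcing $\alpha \in \{2,3\}$) is dispatched by hand. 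Combining, $q(\beta-\gamma) \leq q(\alpha)$ in every case, which finishes the proof.

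The argument is largely mechanical once the construction is fixed; the only genuine idea is to take $\beta$ to be a root of $X^2 - \alpha X \pm 1$ and to notice that hypotheses (i)/(ii) are literally the termwise condition $|\sigma(\alpha)^2 \mp 4| \leq |\sigma(\alpha)^2|$. The mildest obstacle is case (iii), where one must run the short explicit computation (and keep an eye on the perfect-square and reducible-polynomial edge cases) rather than argue termwise — but there is no real difficulty there.
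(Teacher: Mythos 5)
Your proposal is correct and is essentially the paper's own argument: you take $\beta$ a root of $X^2-\alpha X+\epsilon$ with $\epsilon=\pm1$, exactly the quadratics the paper uses, and the inequality $\sum_\sigma|\sigma(\alpha)^2-4\epsilon|\leq\sum_\sigma|\sigma(\alpha)^2|$ is (up to the factor $4$ from comparing $q(\beta-\gamma)$ with $q(\alpha)$ rather than $q(\beta-\alpha/2)$ with $q(\alpha/2)$) the same termwise estimate in cases (i)--(ii) and the same explicit two-embedding computation in case (iii). The only cosmetic deviation is that you handle (ii) directly with $\epsilon=-1$ where the paper reduces it to (i) by multiplying by $\i$, which lands on the same polynomial $X^2-\alpha X-1$; also note the ``perfect square $d$'' sub-case you hedge against cannot occur, since square-freeness already excludes it.
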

\begin{proof}
Let \(K=\Q(\alpha)\) and \(\gamma=\alpha^2/4\). 
Let \(f=X^2-\alpha X+1 \in K[X]\), let \(\beta\in\overline{\Z}\) be a root of \(f\) and write \(L=K(\beta)\).
Then \((\beta-\alpha/2)^2=\beta^2-\alpha\beta+\alpha^2/4=\alpha^2/4-1=\gamma-1\).
For (i) and (iii) we will show \(q(\beta-\alpha/2)\leq q(\alpha/2)\).
Then \((\beta,\alpha-\beta)\) is a decomposition of \(\alpha\) by Lemma~\ref{lem:eq_dec_def}, and since neither \(0\) nor \(\alpha\) is a root of \(f\) we conclude that this decomposition is non-trivial.

(i) 
Let \(\sigma\in\X(L)\). 
For \(\delta\in L\) write \(\textup{Re}_\sigma(\delta)\) and \(\textup{Im}_\sigma(\delta)\) for the real respectively imaginary part of \(\sigma(\delta)\).
By assumption \(\textup{Re}_\sigma(\gamma)\geq 1/2\).
Thus \(\textup{Re}_\sigma(\gamma-1)^2 = (\textup{Re}_\sigma(\gamma)-1)^2 \leq \textup{Re}_\sigma(\gamma)^2\). 
As \(\textup{Im}_\sigma(\gamma-1)^2=\textup{Im}_\sigma(\gamma)^2\) we may conclude that \(|\sigma(\gamma-1)|\leq |\sigma(\gamma)|\).
Then
\begin{align*}
q(\beta-\alpha/2) 
= \frac{1}{[L:\Q]} \sum_{\sigma\in \X(L)} |\sigma(\gamma-1)| \leq \frac{1}{[L:\Q]} \sum_{\sigma\in \X(L)} |\sigma(\gamma)| 
= q(\alpha/2),
\end{align*}
as was to be shown.

(ii) 
Let \(\i\in\overline{\Q}\) be a primitive fourth root of unity. Then \(\i \alpha\) satisfies the conditions to (1), hence it has a non-trivial decomposition \((\beta,\i\alpha-\beta)\), where \(\beta\) is a root of \(X^2-\i\alpha X+1\).
In turn, \((-\i\beta,\alpha+\i\beta)\) is a non-trivial decomposition of \(\alpha\), where \(-\i\beta\) is a root of \(X^2-\alpha X-1\). In particular \(-\i\beta\) is of degree at most \(2\) over \(\Q(\alpha)\).

(iii) Since \(d>0\) the field \(K\) is totally real.
Let \(\gamma_1,\gamma_2\in\R\) be the images of \(\gamma\) under \(\X(K)\) such that \(\gamma_1<\gamma_2\). Because \(9\leq d \leq 25\) we have \(\gamma_1 = (\sqrt{d}-1)^2/16 \leq 1\) and  \(\gamma_2 = (\sqrt{d}+1)^2/16 \geq 1\).
Hence
\[q(\beta-\alpha/2)=\frac{1}{2}\big( |\gamma_1-1|+|\gamma_2-1| \big) = \frac{1}{2}\big( (1-\gamma_1) + (\gamma_2-1) \big) = \frac{2\sqrt{d}}{16} \leq \frac{1+d}{16}=q(\alpha/2), \]
as was to be shown.
\end{proof}

\begin{theorem}\label{thm:deg2_indec}
The indecomposable elements of \(\overline{\Z}\) of degree $2$ up to conjugacy and sign are \(\sqrt{-1}\), \(\frac{1+\sqrt{-7}}{2}\), \(\frac{1+\sqrt{-3}}{2}\) and \(\frac{1+\sqrt{5}}{2}\), for a total of $14$ indecomposables.
\end{theorem}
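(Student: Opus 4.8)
The plan is to enumerate the quadratic fields that can contain an indecomposable integer and then, within each, determine the indecomposable elements. By Theorem~\ref{thm:large} (or rather its consequence recorded after it) only finitely many indecomposable integers of degree $2$ exist, but I would instead bound things directly: if $\alpha\in\overline{\Z}$ has degree $2$ and is indecomposable, then by Proposition~\ref{prop:q_is_2} we have $q(\alpha)\le 2$, with $q(\alpha)=2$ forcing $\alpha$ to be a sum of two orthogonal roots of unity of degree $\le 2$ over $\Q(\alpha)$. First I would split into the imaginary and real quadratic cases. Writing $\alpha=a+b\sqrt d$ with $d\in\Z\setminus\{1\}$ square-free and using the lemma $q(a+b\sqrt d)=a^2+|d|b^2$, integrality forces $a,b\in\tfrac12\Z$ with $a-b\in\Z$ when $d\equiv 1\pmod 4$, and $a,b\in\Z$ otherwise; the constraint $q(\alpha)\le 2$ then leaves only finitely many $(a,b,d)$ to examine.

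For the \emph{imaginary} case ($d<0$), $q(\alpha)\le 2$ gives $a^2+|d|b^2\le 2$. If $b=0$ then $\alpha\in\Z$ has degree $1$, so $b\ne0$; the only possibilities are $|d|\in\{1,2\}$ with $b=\pm1,a=0$ (giving $\sqrt{-1}$ — note $\sqrt{-2}$ has $q=2$, is a sum of roots of unity iff it equals $\zeta+\xi$, which one rules out), and the half-integer cases $d\equiv 1\pmod 4$ with $a=\pm\tfrac12$, $b=\pm\tfrac12$, i.e. $|d|\in\{3,7\}$ (giving $\tfrac{1+\sqrt{-3}}{2}$ and $\tfrac{1+\sqrt{-7}}{2}$). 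For each surviving candidate with $q(\alpha)=2$ I would invoke Proposition~\ref{prop:q_is_2} together with the explicit argument of Example~\ref{ex:proof-7}: $\alpha^2/q(\alpha)\cdot\tfrac12$ style manipulations reduce the question to whether a specific element is a root of unity lying in $\Q(\alpha)$, and the only roots of unity in an imaginary quadratic field are known ($\{\pm1\}$, plus $\mu_4$ in $\Q(i)$, plus $\mu_6$ in $\Q(\sqrt{-3})$), so this is a finite check. This confirms $\sqrt{-1}$, $\tfrac{1+\sqrt{-7}}{2}$, and $\tfrac{1+\sqrt{-3}}{2}$ are indecomposable and that no other imaginary quadratic candidate survives. (For $q(\alpha)<2$ indecomposability is automatic, but in the imaginary case $q(\alpha)<2$ with $b\neq 0$ forces $\alpha\in\mu_\infty$ by Proposition~\ref{prop:root_of_unity}, hence degree $\le 2$ only for $\sqrt{-1}$ and the primitive cube/sixth roots of unity, which have degree $2$ but $q=1$; one checks $\sqrt{-1}$ is the representative up to sign and conjugacy, while $\tfrac{1\pm\sqrt{-3}}{2}$ are roots of unity already counted — so no genuinely new elements appear here, but care is needed not to double count.)

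For the \emph{real} case ($d>0$), $q(\alpha)=a^2+db^2\le 2$ with $b\ne0$ forces $d\le 2$ in the integer case and $d\in\{5\}$ in the half-integer case (since $d=5$ gives $q=\tfrac{1+5}{4}=\tfrac32<2$, while $d=9,13,\dots$ already has $q\ge\tfrac{1+13}{4}>2$; and $d\le 2$ integer means $d=2$, $\alpha=\pm\sqrt2$, which is decomposable since $\sqrt2=\zeta_8+\zeta_8^{-1}$ as shown earlier). So the only real quadratic survivor is $\tfrac{1+\sqrt5}{2}$ with $q=\tfrac32<2$, automatically indecomposable by Proposition~\ref{prop:q_is_2}; Lemma~\ref{lem:help_deg_2_dec}(iii) together with cases (i),(ii) handles the nearby decomposable cases and confirms the boundary of what can occur. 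Finally I would count: the four representatives $\sqrt{-1}$, $\tfrac{1+\sqrt{-7}}{2}$, $\tfrac{1+\sqrt{-3}}{2}$, $\tfrac{1+\sqrt{5}}{2}$ lie in fields $\Q(i)$, $\Q(\sqrt{-7})$, $\Q(\sqrt{-3})$, $\Q(\sqrt5)$ respectively; the conjugates and signs give orbit sizes $2,4,4,4$ respectively (for $\sqrt{-1}$ the conjugate is $-\sqrt{-1}$, so sign and conjugation coincide, giving only $2$), totalling $14$. The main obstacle is the careful bookkeeping in the $q(\alpha)=2$ borderline cases — verifying via the root-of-unity criterion of Proposition~\ref{prop:q_is_2} and Example~\ref{ex:proof-7} exactly which of $\sqrt{-2}$, $\tfrac{1+\sqrt{-7}}{2}$, etc., are genuinely indecomposable, and ensuring the orbit-size count under $\mu_\infty\rtimes\textup{Gal}(\overline\Q)$ versus the coarser "conjugacy and sign" equivalence is done consistently so the final tally is exactly $14$.
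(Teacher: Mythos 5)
Your proposal rests on the claim that an indecomposable degree-$2$ integer must satisfy $q(\alpha)\le 2$ ``by Proposition~\ref{prop:q_is_2}'', and this is where the argument breaks. Proposition~\ref{prop:q_is_2} is a one-way statement: $q(\alpha)<2$ implies indecomposable, and $q(\alpha)=2$ is characterized; it says nothing about elements with $q(\alpha)>2$, and indeed such indecomposables exist (Theorem~\ref{thm:largest_indec} shows the supremum of $q$ over indecomposables is at least $2\sqrt2$, and Section~\ref{sec:enum3} lists many degree-$3$ indecomposables with $2<q(\alpha)\le 4$). The only a priori bound available is $q(\alpha)\le 4$ from Theorem~\ref{thm:large}. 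So your enumeration silently discards every degree-$2$ integer with $2<q(\alpha)\le 4$ (e.g.\ $\sqrt{\pm3}$, $(1+\sqrt{13})/2$, $(1+\sqrt{-11})/2$, \dots) without any argument. Showing that all of these are decomposable is precisely the bulk of the paper's proof: one exhibits explicit decompositions $(1,\alpha-1)$ when $a\ge 2$ and $(\sqrt d,\alpha-\sqrt d)$ when $b\ge 2$, and for the remaining elements $\sqrt d$ and $(1+\sqrt d)/2$ one uses Lemma~\ref{lem:help_deg_2_dec}, which produces nontrivial decompositions whose components live in a degree-$2$ extension of $\Q(\alpha)$ — decompositions need not stay inside $\Q(\alpha)$, which is why no argument confined to the quadratic field itself can close this range.

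A second, smaller error sits in your borderline check: you assert that $\sqrt{-2}$ (which has $q=2$) can be ruled out as a sum of two roots of unity. In fact $\sqrt{-2}=\zeta_8+\zeta_8^3$, and since $\zeta_8^{-1}\zeta_8^{3}=\i$ has non-square-free order these roots of unity are orthogonal by Proposition~\ref{prop:orthogonal_roots_of_unity}; so $\sqrt{-2}$ is decomposable — the right conclusion for the theorem, but reached by the opposite of your stated reasoning, and as written your check would wrongly place $\sqrt{-2}$ on the indecomposable list and spoil the count of $14$. The orbit-size bookkeeping ($2+4+4+4=14$) and the treatment of the $q<2$ cases are fine, but the proof as proposed does not establish the theorem.
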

\begin{proof}
First note that the \(4\) listed elements indeed are indecomposable: 
We treated \((1+\sqrt{-7})/2\) in Example~\ref{ex:proof-7}, and the remaining \(3\) have square-norm less than \(2\), so Proposition~\ref{prop:q_is_2} applies.
Since conjugation and multiplication by \(-1\) are isometries by Lemma~\ref{lem:isometries}, all \(14\) are indecomposable.

Let \(\alpha\in\overline\Z\) be of degree \(2\) over \(\Q\).
It remains to show that \(\alpha\), up to conjugation and sign, admits a non-trivial decomposition or is one of the $4$ listed indecomposables.
Since \(\alpha\) is of degree 2 over \(\Q\) it is an element of \(\Q(\sqrt{d})\) for some square-free \(d\in\Z\setminus\{1\}\). 
Then we may write \(\alpha=(a+b\sqrt{d})/2\) for some \(a,b\in\Z\) with \(a+b\in 2\Z\) and by conjugating and changing sign we may assume \(a,b\geq 0\).
If \(a\geq 2\) we have
\[ q(\alpha/2-1) = \Big(\frac{a}{4}-1\Big)^2+|d|\Big(\frac{b}{4}\Big)^2 = \Big( \Big(\frac{a}{4}\Big)^2+|d|\Big(\frac{b}{4}\Big)^2 \Big) + \Big(1-\frac{a}{2}\Big) \leq q(\alpha/2), \]
so \((1,\alpha-1)\) is a decomposition of \(\alpha\). 
Since \(\alpha\neq 1\), this decomposition is non-trivial.
Similarly we get a decomposition \((\sqrt{d},\alpha-\sqrt{d})\) if \(b\geq 2\), so either this decomposition is non-trivial or \(\alpha=\sqrt{d}\).

First suppose \(\alpha=\sqrt{d}\). 
If \(|d|<2\) we have \(d=-1\) and \(\sqrt{-1}\) is listed.
Otherwise \(\alpha^2=d\) satisfies the conditions to Lemma~\ref{lem:help_deg_2_dec}.i or ii, so \(\sqrt{d}\) is not indecomposable.
For \(\alpha\neq \sqrt{d}\) the remaining cases are \(\alpha=(1+\sqrt{d})/2\), which is integral only if \(d\equiv 1 \bmod 4\). 
If \(d\leq-9\) the real part of \(\alpha^2\) is \((1+d)/4\leq -2\) under either embedding, so \(\alpha\) satisfies the conditions to Lemma~\ref{lem:help_deg_2_dec}.ii.
If \(-9<d<9\) we have \(d\in\{-7,-3,5\}\) and thus \(\alpha=(1+\sqrt{d})/2\) is listed. For \(9\leq d \leq 25\) we may apply Lemma~\ref{lem:help_deg_2_dec}.iii. 
The remaining case is \(25< d\), where we have that \(\sigma(\alpha^2)=[(1\pm\sqrt{d})/2]^2 \geq (\sqrt{d}-1)^2/4\geq 2\) for all \(\sigma\in\X(\Q(\sqrt{d}))\), so Lemma~\ref{lem:help_deg_2_dec}.i applies.
Hence \(\alpha\) is either listed or not indecomposable.
\end{proof}

Interesting to note is that all non-trivial decompositions of \(\alpha\in\overline{\Z}\) of degree 2 over \(\Q\) that are produced in Theorem~\ref{thm:deg2_indec} live in a field extension of degree at most 2 over \(\Q(\alpha)\).

\section{Geometry of numbers}
\label{sec:begin-capacity-large}

In this section we gather some known results about the geometry of numbers.

\begin{definition}
Let \(K\) be a number field. We write \(K_\R=\R\tensor_\Q K\) and \(K_\C=\C\tensor_\Q K\).
\end{definition}

Recall for a number field \(K\) the definition of \(\X(K)\), the set of ring homomorphisms from \(K\) to \(\C\).

\begin{lemma}\label{lem:KR_def}
We have an isomorphism of \(\C\)-algebras \(\Phi_K: K_\C\to \C^{\X(K)}\) given by
\[ \Phi_K( z\tensor\alpha ) = ( z \cdot \sigma(\alpha)  )_{\sigma\in \X(K)}. \]
We have a natural inclusion \(K_\R\to K_\C\to \C^{\X(K)}\), and its image is given by the subspace of elements invariant under the involution \((x_\sigma)_{\sigma}\mapsto (\overline{x_{\overline{\sigma}}})_\sigma\). 
This inclusion induces an isomorphism of \(\R\)-algebras \(K_\R\cong \R^r\times\C^s\) for integers \(r,s\geq 0\) such that \(r=\#\{\sigma\in \X(K)\,|\, \sigma[K] \subseteq\R\}\) and \(r+2s=[K:\Q]\).
\qed
\end{lemma}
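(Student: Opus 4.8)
The plan is to establish the three assertions in turn, all by elementary means: the primitive element theorem and the Chinese Remainder Theorem (or a Vandermonde argument) for the isomorphism $\Phi_K$, a direct check that $\Phi_K$ intertwines two natural involutions for the description of the image of $K_\R$, and a coordinate-wise fixed-point computation for the final identification.

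For the first claim, I would first note that $\alpha \mapsto (\sigma(\alpha))_{\sigma\in\X(K)}$ is a $\Q$-algebra homomorphism $K \to \C^{\X(K)}$, and that base-changing to $\C$ (using $K_\C = \C\otimes_\Q K$) produces a well-defined $\C$-algebra homomorphism $\Phi_K$ with the stated formula $\Phi_K(z\otimes\alpha) = (z\cdot\sigma(\alpha))_\sigma$ on simple tensors. Both source and target have $\C$-dimension $[K:\Q]$: the left-hand side because $\dim_\C(\C\otimes_\Q K) = \dim_\Q K$, and the right-hand side because in characteristic $0$ the minimal polynomial of a primitive element $\theta$ of $K$ has exactly $[K:\Q]$ distinct roots in $\C$, in bijection with $\X(K)$ via $\sigma\mapsto\sigma(\theta)$. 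It then suffices to prove surjectivity, for which I would observe that $\Phi_K(1\otimes\theta^j) = (\sigma(\theta)^j)_\sigma$ for $j=0,\dots,[K:\Q]-1$ form a Vandermonde system in the pairwise distinct numbers $\sigma(\theta)$, hence are $\C$-linearly independent and span $\C^{\X(K)}$.

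For the second claim, complex conjugation defines an involution $\sigma\mapsto\overline\sigma$ on $\X(K)$ (post-composition with conjugation on $\C$), hence a conjugate-$\C$-linear $\R$-algebra involution $\iota$ on $\C^{\X(K)}$ by $(x_\sigma)_\sigma\mapsto(\overline{x_{\overline\sigma}})_\sigma$; meanwhile $K_\C = \C\otimes_\Q K$ carries the conjugate-$\C$-linear involution $c\otimes\alpha\mapsto\overline c\otimes\alpha$. A one-line computation on simple tensors, using $\overline\sigma(\alpha)=\overline{\sigma(\alpha)}$, shows $\Phi_K$ carries the second involution to $\iota$. Identifying $K_\C = \C\otimes_\R K_\R$ by associativity of tensor products, the involution $c\otimes x\mapsto\overline c\otimes x$ on $\C\otimes_\R K_\R$ has fixed set exactly $1\otimes K_\R$ — the standard fact that $(\C\otimes_\R W)^{c\mapsto\overline c}=W$ for a real vector space $W$. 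Transporting this through the isomorphism $\Phi_K$ shows that $\Phi_K$ restricted to $K_\R$ is injective with image precisely the $\iota$-invariant subspace of $\C^{\X(K)}$.

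For the third claim, the involution $\sigma\mapsto\overline\sigma$ on $\X(K)$ fixes exactly the $r$ embeddings with $\sigma[K]\subseteq\R$ and partitions the remaining embeddings into $s$ two-element orbits, so that $r+2s=[K:\Q]$. Grouping the coordinates of $\C^{\X(K)}$ accordingly, $\iota$ acts on each real coordinate by $x\mapsto\overline x$ (fixed set $\R$) and on each conjugate pair of coordinates by $(a,b)\mapsto(\overline b,\overline a)$ (fixed set $\{(a,\overline a):a\in\C\}$, an $\R$-subalgebra isomorphic to $\C$ via the first projection). Hence $(\C^{\X(K)})^\iota\cong\R^r\times\C^s$ as $\R$-algebras, and combining with the second claim gives $K_\R\cong\R^r\times\C^s$. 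I do not anticipate a real obstacle here; the only point requiring care is the bookkeeping with complex conjugation on embeddings — verifying that $\Phi_K$ genuinely intertwines the two involutions, and that the $\iota$-fixed part of a conjugate pair of coordinates is a copy of $\C$ (an $\R$-algebra), not of $\R^2$.
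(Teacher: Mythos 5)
Your proposal is correct. The paper states this lemma as a standard fact without proof (it ends in \qed), and your argument is exactly the standard one that fills it in: the Vandermonde (equivalently, Chinese-Remainder) argument for \(K_\C\cong\C^{\X(K)}\), the check that \(\Phi_K\) intertwines \(c\otimes\alpha\mapsto\overline{c}\otimes\alpha\) with the involution \((x_\sigma)_\sigma\mapsto(\overline{x_{\overline{\sigma}}})_\sigma\) together with Galois descent for \(\C/\R\) to identify the image of \(K_\R\), and the orbit count on \(\X(K)\) giving \(\R^r\times\C^s\) with \(r+2s=[K:\Q]\); all steps are sound, including the point you flag that a conjugate pair of coordinates contributes a copy of \(\C\) as an \(\R\)-algebra rather than \(\R^2\).
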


\begin{definition}\label{def:KR_inner_product}
We equip \(K_\C\) with the inner product induced by the standard hermitian inner product on \(\C^{\X(K)}\) and \(K_\R\) with its restriction, turning \(K_\R\) into a real inner product space. Since \(K_\R\) is an inner product space we have an induced measure on \(K_\R\) we denote \(\vol\). 
\end{definition}

\begin{remark}
For a number field \(K\) and \(\alpha\in K\) we have \(q(\alpha)=\frac{1}{[K:\Q]}\|\Phi_K(\alpha)\|^2\). 
Note that the norm on \(K_\R\) is not the `standard' norm on \(\R^r\times \C^s\). In terms of the latter vector space it is given by 
\[(x_1,\dotsc, x_r,z_1,\dotsc,z_s ) \mapsto \sqrt{ |x_1|^2 +\dotsm + |x_r|^2 + 2|z_1|^2 +\dotsm + 2|z_s|^2  }.\]
\end{remark}

\begin{theorem}[Proposition~4.26 in \citep{milneANT}]\label{thm:det_lattice}
Let \(R\) be an order in a number field \(K\). Then \(\Phi_K[R]\) is a full rank lattice in \(K_\R\) with determinant \(|\Delta(R)|^{1/2}\), where \(\Delta(R)\) is the discriminant of \(R\). \qed
\end{theorem}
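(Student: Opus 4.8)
The plan is to fix a $\Z$-basis $\omega_1,\dots,\omega_n$ of the order $R$, where $n=[K:\Q]$, and to argue in two stages. Since $R$ is an order in $K$ it is free of rank $n$ as a $\Z$-module, and the $\omega_i$, being $\Z$-linearly independent elements of the $n$-dimensional $\Q$-vector space $K$, also form a $\Q$-basis of $K$; extending scalars they form an $\R$-basis of $K_\R=\R\tensor_\Q K$. By Lemma~\ref{lem:KR_def} the natural map $K_\R\to\C^{\X(K)}$ is injective with image the real subspace $V=\{(x_\sigma)_\sigma\mid x_\sigma=\overline{x_{\overline\sigma}}\}$ of real dimension $n$, and the map $K\to K_\C$, $\alpha\mapsto 1\tensor\alpha$, is injective so that $\Phi_K$ is injective on $K$ and restricts there to the inclusion $K\hookrightarrow K_\R\hookrightarrow V$. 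Hence $\Phi_K(\omega_1),\dots,\Phi_K(\omega_n)$ is an $\R$-basis of $K_\R\cong V$, and $\Phi_K[R]=\bigoplus_{i=1}^n\Z\,\Phi_K(\omega_i)$ is a full-rank lattice in $K_\R$.

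For the second stage I would compute its covolume. Recall that the determinant of a lattice $\bigoplus_i\Z v_i$ in a real inner product space equals $\sqrt{\det(\langle v_i,v_j\rangle)_{i,j}}$. Let $M$ be the $n\times n$ matrix with entries $M_{\sigma,i}=\sigma(\omega_i)$, indexed by $\sigma\in\X(K)$ and $i\in\{1,\dots,n\}$. The inner product of Definition~\ref{def:KR_inner_product} gives $\langle\Phi_K(\omega_i),\Phi_K(\omega_j)\rangle=\sum_\sigma\sigma(\omega_i)\overline{\sigma(\omega_j)}$, so the Gram matrix of the $\Phi_K(\omega_i)$ is $M^{\top}\overline{M}$, whose determinant is $\det(M)\cdot\overline{\det(M)}=|\det M|^2$. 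On the other hand $\Tr_{K/\Q}(\omega_i\omega_j)=\sum_\sigma\sigma(\omega_i)\sigma(\omega_j)$, so $\Delta(R)=\det(\Tr_{K/\Q}(\omega_i\omega_j))_{i,j}=\det(M^{\top}M)=(\det M)^2$. Therefore $\det(M^{\top}\overline{M})=|\det M|^2=|(\det M)^2|=|\Delta(R)|$, and the covolume of $\Phi_K[R]$ is $\sqrt{|\Delta(R)|}=|\Delta(R)|^{1/2}$.

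The steps requiring care — rather than genuine obstacles — are the following. First, one checks that the hermitian form on $\C^{\X(K)}$ restricts to a real-valued, symmetric, positive-definite form on $V$, so that the Gram determinant is a nonnegative real number and the covolume formula applies; this follows from the description of $V$ as the fixed subspace of the involution $(x_\sigma)_\sigma\mapsto(\overline{x_{\overline\sigma}})_\sigma$. Second, $\Delta(R)$ must be seen to be independent of the chosen $\Z$-basis: any two $\Z$-bases differ by a matrix in $\mathrm{GL}_n(\Z)$, which multiplies the trace-form matrix on both sides and hence $\Delta(R)$ by a square unit, i.e.\ by $1$. Third, one should note that no stray factor of $2^s$ intrudes, despite the norm on $K_\R$ not being the standard norm on $\R^r\times\C^s$, precisely because the whole computation is carried out inside $\C^{\X(K)}$ with the hermitian inner product, which already builds in that renormalization. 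There is no essential difficulty: this is the classical covolume computation for the Minkowski embedding of an order, and its heart is the identity $\det(M^{\top}\overline{M})=|\det M|^2=|\det(M^{\top}M)|$ relating the Gram matrix of the embedding to the discriminant.
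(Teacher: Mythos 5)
Your proof is correct: this is the classical covolume computation for the Minkowski embedding of an order, which is precisely what the paper relies on by citing Proposition~4.26 of Milne rather than proving the statement itself, and your two stages (full rank via a $\Z$-basis becoming an $\R$-basis of $K_\R$, then the Gram-matrix identity $\det(M^{\top}\overline{M})=|\det M|^2=|\det(M^{\top}M)|=|\Delta(R)|$) are the standard argument. You also correctly address the one point where this paper's conventions matter, namely that the inner product on $K_\R$ is the restriction of the hermitian form on $\C^{\X(K)}$, so the determinant is $|\Delta(R)|^{1/2}$ with no extraneous factor of $2^{-s}$.
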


\begin{definition}\label{def:bounded_degree_poly}
For a commutative ring \(R\) and \(d\in\Z_{\geq 0}\) we write \(R[X]_d=\{ f \in R[X] \,|\, \deg(f) < d \}\).
\end{definition}

\begin{lemma}\label{lem:product_polyring}
For a number field \(K\) we have an isomorphism of real vector spaces \(K_\R[X]_d\cong (\R[X]_d)^r \times (\C[X]_d)^s\) for all \(d\in\Z_{\geq 0}\) induced by the isomorphism \(K_\R\cong \R^r\times \C^s\) of Lemma~\ref{lem:KR_def}. \qed
\end{lemma}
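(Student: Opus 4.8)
The plan is to observe that forming truncated polynomial modules is a functor that preserves finite products, and then simply transport the isomorphism $K_\R\cong\R^r\times\C^s$ of Lemma~\ref{lem:KR_def} through it.

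First I would record the elementary fact that for any commutative $\R$-algebra $A$ the coefficient map gives an $A$-module, hence in particular an $\R$-vector space, isomorphism $A[X]_d\cong A^d$, namely $\sum_{i<d}a_iX^i\mapsto(a_0,\dots,a_{d-1})$; equivalently $A[X]_d\cong A\tensor_\R\R[X]_d$. This construction is functorial in $A$: a homomorphism $\varphi\colon A\to B$ of $\R$-algebras induces the $\R$-linear map $A[X]_d\to B[X]_d$ obtained by applying $\varphi$ to each coefficient, and this map is an isomorphism whenever $\varphi$ is.

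Next I would apply this to the $\R$-algebra isomorphism $K_\R\xrightarrow{\ \sim\ }\R^r\times\C^s$ furnished by Lemma~\ref{lem:KR_def}, which yields an $\R$-linear isomorphism $K_\R[X]_d\cong(\R^r\times\C^s)[X]_d$. Then I would use that truncated polynomial rings commute with finite products — a polynomial of degree $<d$ with coefficients in $\prod_j A_j$ is the same datum as a tuple of such polynomials, one over each factor $A_j$ — to identify $(\R^r\times\C^s)[X]_d$ with $(\R[X]_d)^r\times(\C[X]_d)^s$ as real vector spaces. Composing the two identifications gives the asserted isomorphism, manifestly induced by the one of Lemma~\ref{lem:KR_def}.

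There is no real obstacle here; the statement is a formal bookkeeping exercise. The one point worth stating explicitly is that $\C[X]_d$ is to be regarded as a real vector space of real dimension $2d$, compatibly with the real structure $K_\R\cong\R^r\times\C^s$; once that convention is fixed, every map above is $\R$-linear and the composite is the claimed isomorphism.
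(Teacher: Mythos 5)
Your argument is correct, and it is exactly the routine verification the paper leaves implicit: the lemma is stated with no proof precisely because applying the coefficientwise functor $A\mapsto A[X]_d$ to the isomorphism $K_\R\cong\R^r\times\C^s$ and noting compatibility with finite products is all that is needed. Nothing to add.
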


\begin{theorem}[Minkowski, Theorem~4.19 in \citep{milneANT}]\label{thm:minkowski}
Let \(n\in\Z_{\geq 0}\), let \(\Lambda\subseteq\R^n\) be a full rank lattice and let \(S\subseteq \R^n\) be a symmetric convex body.
If \(\vol(S) > 2^n\det(\Lambda)\), then there exists a non-zero element in \(\Lambda\cap S\). \qed
\end{theorem}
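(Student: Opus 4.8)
The plan is to deduce this from \emph{Blichfeldt's lemma}: if $X\subseteq\R^n$ is measurable with $\vol(X)>\det(\Lambda)$, then there exist distinct $x,y\in X$ with $x-y\in\Lambda$. Granting this, I would apply it to $X=\tfrac12 S=\{\tfrac12 s\mid s\in S\}$. Scaling by $\tfrac12$ multiplies volume by $2^{-n}$, so $\vol(\tfrac12 S)=2^{-n}\vol(S)>\det(\Lambda)$, and Blichfeldt produces distinct $x,y\in\tfrac12 S$ with $x-y\in\Lambda$. Writing $x=\tfrac12 a$ and $y=\tfrac12 b$ with $a,b\in S$, symmetry of $S$ gives $-b\in S$, and convexity gives $x-y=\tfrac12 a+\tfrac12(-b)\in S$. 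Since $x\neq y$ this element is non-zero, so $x-y\in(\Lambda\setminus\{0\})\cap S$, as required. (The case $n=0$ is vacuous, since then $\vol(S)\le 1=2^0\det(\Lambda)$, so the hypothesis is never met.)

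For Blichfeldt's lemma I would argue by a measure-theoretic pigeonhole on the torus $\R^n/\Lambda$. Fix a $\Z$-basis $v_1,\dots,v_n$ of $\Lambda$ and let $F=\{\sum_i t_i v_i\mid 0\le t_i<1\}$, a fundamental domain with $\vol(F)=\det(\Lambda)$ and $\R^n=\bigsqcup_{\lambda\in\Lambda}(F+\lambda)$. For each $\lambda\in\Lambda$ set $X_\lambda=\bigl(X\cap(F+\lambda)\bigr)-\lambda\subseteq F$. The sets $X\cap(F+\lambda)$ partition $X$, so by countable additivity $\sum_{\lambda\in\Lambda}\vol(X_\lambda)=\vol(X)>\vol(F)$. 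Were the $X_\lambda$ pairwise disjoint, their union would be a subset of $F$ of volume $\sum_\lambda\vol(X_\lambda)>\vol(F)$, a contradiction. Hence $X_\lambda\cap X_\mu\neq\emptyset$ for some $\lambda\neq\mu$; choosing $z$ in the intersection, the points $z+\lambda$ and $z+\mu$ lie in $X$ and differ by $\lambda-\mu\in\Lambda\setminus\{0\}$.

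The only genuine subtlety is the measure-theoretic step, which needs $X$ (hence $S$) to be measurable; a convex body is Borel, so this is automatic. Note also that the strict inequality in the hypothesis is exactly what lets me sidestep all boundary and closedness concerns: I never need the endpoints $a,-b$ to lie in the interior of $S$, only in $S$ itself, and convexity then delivers $x-y\in S$ directly. (The borderline equality case $\vol(S)=2^n\det(\Lambda)$ for compact $S$ would require an extra limiting argument, but that lies outside the stated hypothesis.) In short the proof is a volume-counting pigeonhole together with the halving trick; I expect no real obstacle beyond setting up the fundamental-domain bookkeeping carefully.
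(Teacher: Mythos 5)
Your proof is correct: the halving trick combined with Blichfeldt's pigeonhole on a fundamental domain is the standard argument for Minkowski's convex body theorem, and it is essentially the proof in the reference the paper cites (the paper itself gives no proof, only the citation to Milne). The measurability remark and the handling of the case \(n=0\) are both fine, so there is nothing to add.
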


\section{Szeg\H{o} capacity theory}\label{sec:szego}

In this section we will give a proof of Theorem~\ref{thm:large}, which is due to T. Chinburg.

\begin{definition}
We define for \(\overline{\Q}\) and in turn any subfield of \(\overline{\Q}\) the {\em max-norm}
\[|\alpha|_\infty = \max_{\sigma\in \X(\overline{\Q})} |\sigma(\alpha)|.\]
\end{definition}
\noindent One easily shows that the max-norm satisfies the following.
\begin{lemma}\label{lem:ineq_q_max}
For all \(\alpha\in\overline{\Q}\) we have \(q(\alpha)\leq|\alpha|_\infty^2\) and for all \(n\in\Z_{\geq 1}\) also \(|\alpha^n|_\infty = |\alpha|^n_\infty\). \qed
\end{lemma}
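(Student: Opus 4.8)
The plan is to reduce both assertions to elementary properties of the complex absolute value, using the standard fact that restriction along \(\Q(\alpha)\subseteq\overline{\Q}\) defines a surjection \(\X(\overline{\Q})\to\X(\Q(\alpha))\). Since \(|\sigma(\alpha)|\) depends only on \(\sigma|_{\Q(\alpha)}\) and \(\X(\Q(\alpha))\) is finite, this surjection shows that the maximum defining \(|\alpha|_\infty\) is attained and that \(|\alpha|_\infty=\max_{\sigma\in\X(\Q(\alpha))}|\sigma(\alpha)|\), a maximum over exactly \([\Q(\alpha):\Q]\) embeddings.

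For the first inequality I would start from the formula \(q(\alpha)=\frac{1}{[\Q(\alpha):\Q]}\sum_{\sigma\in\X(\Q(\alpha))}|\sigma(\alpha)|^2\) of Definition~\ref{def:Nq}, bound each of the \([\Q(\alpha):\Q]\) summands above by \(|\alpha|_\infty^2\), and divide. For the identity \(|\alpha^n|_\infty=|\alpha|_\infty^n\) I would use that every \(\sigma\in\X(\overline{\Q})\) is a ring homomorphism, so \(|\sigma(\alpha^n)|=|\sigma(\alpha)|^n\), together with the fact that \(t\mapsto t^n\) is monotone increasing on \(\R_{\geq 0}\), so that taking the maximum over the (common) index set \(\X(\overline{\Q})\) commutes with raising to the \(n\)-th power.

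There is no genuine obstacle here; the only point requiring a moment's care is that the index set over which one maximizes is literally the same for \(\alpha\) and \(\alpha^n\), which is immediate from the definition of the max-norm, and that the maximum is well-defined at all, which follows from the finiteness of \(\X(\Q(\alpha))\) noted above.
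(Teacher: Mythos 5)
Your argument is correct: the paper leaves this lemma as an easy exercise (it is stated with no written proof), and your reduction to the finitely many embeddings in \(\X(\Q(\alpha))\) via restriction, bounding each summand of \(q(\alpha)\) by \(|\alpha|_\infty^2\), and using multiplicativity of ring homomorphisms together with monotonicity of \(t\mapsto t^n\) is exactly the intended elementary verification. Nothing further is needed.
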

In this section we will identify a number field \(K\) with its image in \(K_\R\). 
We extend \(\sigma\in \X(K)\) to \(K_\C\) by \(z\tensor\alpha\mapsto z\cdot\sigma(\alpha)\) or equivalently projecting \(\C^{\X(K)}\) onto the component at \(\sigma\).
We similarly equip \(K_\C\) with a max-norm
\[\Kab{x}=\max_{\sigma\in \X(K) } |\sigma(x)|.\]
and \(K_\R\) with the induced norm. 
Note that the max-norm \(|\,\cdot\,|_\infty\) of \(K\) as subfield of \(\overline{\Q}\) and the max-norm \(\Kab{\,\cdot\,}\) of \(K\) as subfield of \(K_\R\) agree.

\begin{definition}
Let \(X\) be a metric space with metric \(d\) and let \(S\subseteq X\) be a subset.
A {\em rounding function from \(X\) to \(S\)} is a map \(\lfloor\cdot\rceil:X\to S\) for which there exists some constant \(c\in\R_{\geq 0}\) such that for all \(x\in X\) we have \(d(x,\lfloor x \rceil)\leq c\).
We call such a \(c\) an {\em error constant} for \(\lfloor\cdot\rceil\).
\end{definition}

For \(\Z\) in \(\Q\) with the metric induced by the usual absolute value we may round to a nearest integer, giving a rounding function with error constant \(1/2\).
For a naive rounding map for an arbitrary order \(R\) with basis \((\alpha_i)_i\) of a number field \(K\) with metric induced by \(q\) we may simply send \(\sum_{i} x_i \alpha_i\in K\) with \(x_i\in\Q\) to \(\sum_i \lfloor x_i \rceil \alpha_i \in R\).
An error constant for this rounding function is for example \(\frac{1}{2}\sum_{i=1}^n |\alpha_i|_\infty\).
However, other rounding functions exist, for example Babai's nearest plane algorithm, which try to achieve beter error constants.

\begin{lemma}\label{lem:polynomial_rounding}
Let \(K\) be a \(\Z\)-algebra with a norm \(|\cdot|\) and let \(R\) be a subring of \(K\). 
Let \(\alpha\in K\) and suppose there is a rounding function \(K\to R\) with respect to this norm and with error constant \(c\).
Consider \(K[X]\) and let \(Y=X-\alpha\).
Then \(K[X]=K[Y]\) equipped with the max-norm \(\| \sum_i f_i Y^i \| \mapsto \max_i |f_i|\) has a rounding function \(\lfloor\cdot\rceil:K[X]\to R[X]\) with error constant \(c\) and the following additional property:
If for some \(n\geq 0\) and \(f=\sum_{i} f_i X^i \in K[X]\) we have \(f_k\in R\) for all \(k\geq n\), then \(f-\lfloor f \rceil\) has degree less than \(n\).
\end{lemma}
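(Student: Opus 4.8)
The plan is to produce the rounding function $\lfloor\cdot\rceil\colon K[X]\to R[X]$ by applying the given rounding function $K\to R$ coefficientwise, but with two twists: working in the $Y$-presentation and processing from the top degree downward, and first peeling off the part of $f$ that already lies in $R[X]$. Fix $f=\sum_i f_iX^i\in K[X]$, let $m$ be the largest index with $f_m\notin R$ (with the convention $m=-1$ when $f\in R[X]$), and split $f=f^{\mathrm{hi}}+f^{\mathrm{lo}}$ with $f^{\mathrm{hi}}=\sum_{i>m}f_iX^i$ and $f^{\mathrm{lo}}=\sum_{0\le i\le m}f_iX^i$. By maximality of $m$ every coefficient of $f^{\mathrm{hi}}$ lies in $R$, so $f^{\mathrm{hi}}\in R[X]$; the rounding function will leave $f^{\mathrm{hi}}$ untouched and round only $f^{\mathrm{lo}}$.

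To round $f^{\mathrm{lo}}$, recall first that since $X=Y+\alpha$ is a degree-one change of variables, $\deg_X$ and $\deg_Y$ agree on $K[X]$ and leading coefficients agree, so $\deg_Y f^{\mathrm{lo}}=\deg_X f^{\mathrm{lo}}=m$ (using $f_m\ne 0$, which holds as $f_m\notin R$). Choose $b_m,b_{m-1},\dots,b_0\in R$ recursively: having chosen $b_m,\dots,b_{k+1}$, let $\gamma_k\in K$ be the coefficient of $Y^k$ in $f^{\mathrm{lo}}-\sum_{i=k+1}^m b_iX^i$ and let $b_k$ be its image under the given rounding function $K\to R$. Set $\lfloor f^{\mathrm{lo}}\rceil=\sum_{k=0}^m b_kX^k\in R[X]$ (the empty sum $0$ when $m=-1$) and $\lfloor f\rceil=f^{\mathrm{hi}}+\lfloor f^{\mathrm{lo}}\rceil\in R[X]$. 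The key bookkeeping observation is that $X^k=(Y+\alpha)^k$ has $Y$-degree exactly $k$ with $Y^k$-coefficient $1$, so subtracting $b_kX^k$ decreases the $Y^k$-coefficient by $b_k$, leaves every $Y^j$-coefficient with $j>k$ unchanged, and alters only $Y^j$-coefficients with $j<k$. Hence the $Y^k$-coefficient of $f^{\mathrm{lo}}-\lfloor f^{\mathrm{lo}}\rceil$ equals $\gamma_k-b_k$ for $0\le k\le m$ (the subtractions done before step $k$ are already incorporated into $\gamma_k$, and those done afterwards do not touch it) and equals $0$ for $k>m$. By the choice of $b_k$ each $\gamma_k-b_k$ has norm at most $c$, so $\|f-\lfloor f\rceil\|=\|f^{\mathrm{lo}}-\lfloor f^{\mathrm{lo}}\rceil\|\le c$; thus $\lfloor\cdot\rceil$ is a rounding function with error constant $c$.

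For the additional property, note that $\lfloor f^{\mathrm{lo}}\rceil=\sum_{k=0}^m b_kX^k$ has $X$-degree at most $m$, so $f-\lfloor f\rceil=f^{\mathrm{lo}}-\lfloor f^{\mathrm{lo}}\rceil$ has degree at most $m$; if $f_k\in R$ for all $k\ge n$ then $m<n$ by definition of $m$, whence $\deg(f-\lfloor f\rceil)\le m<n$, as required. The only genuine obstacle — and the reason for the $f^{\mathrm{hi}}/f^{\mathrm{lo}}$ split rather than simply rounding every $Y$-coefficient of $f$ into $R$ — is that such a naive rounding produces an element of $R[Y]=R[X-\alpha]$, which need not lie in $R[X]$ when $\alpha\notin R$, and moreover destroys the already integral top coefficients of $f$. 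Restricting the top-down rounding to $f^{\mathrm{lo}}$ ensures that only $R$-multiples of powers of $X$ are ever subtracted, keeping the result in $R[X]$, and that the integral top part $f^{\mathrm{hi}}$ survives verbatim, which is exactly what the degree statement needs; the $Y$-coefficient bookkeeping in the middle paragraph is then purely routine.
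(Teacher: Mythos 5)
Your proof is correct and follows essentially the same route as the paper's: process the $Y$-coefficients from the top degree downward, rounding each into $R$ and subtracting the corresponding $R$-multiple of a power of $X$, so that the result stays in $R[X]$ and the already-handled higher $Y$-coefficients are left undisturbed. The only cosmetic difference is that you peel off the already-integral top part $f^{\mathrm{hi}}$ explicitly to get the degree property, whereas the paper instead normalizes the base rounding $\lfloor\cdot\rceil_1$ to be the identity on $R$ and obtains that property by induction within the same top-down recursion.
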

\begin{proof}
Let \(\lfloor\cdot\rceil_1:K\to R\) be the rounding function with error constant \(c\).
We may assume without loss of generality that \(\lfloor \cdot \rceil_1\) restricted to \(R\) is the identity.
With the notation as in Definition~\ref{def:bounded_degree_poly} we identify \(K\) with \(K[X]_1\) and \(R\) with \(R[X]_1\) and inductively define for \(n\in\Z_{\geq 1}\) a rounding function \(\lfloor \cdot\rceil_n : K[X]_n \to R[X]_n \).
For \(f\in K[Y]_{n+1}\) write \(f= aY^{n} + g\) with \(a\in K\) and \(g\in K[Y]_{n}\).
Now define \(\lfloor f \rceil_{n+1} = \lfloor a \rceil_1 X^n + \lfloor g-\lfloor a\rceil_1(X^n-Y^n) \rceil_n\).
Note that this is well-defined.
Suppose \(\lfloor \cdot \rceil_{n}\) has error constant \(c\). Then 
\[\|f-\lfloor f\rceil_{n+1}\|=\max\Big\{ \big|a-\lfloor a\rceil_1\big|,\, \big\| (g-\lfloor a \rceil_1(X^n-Y^n))-\lfloor g-\lfloor a \rceil_1(X^n-Y^n)\rceil_n \big\|\Big\} \leq c,\]
so \(\lfloor \cdot \rceil_{n+1}\) has error constant \(c\).
Note that \(\lfloor f \rceil_{m} = \lfloor f \rceil_n\) for \(m\geq n\) and \(f\in K[Y]_n\).
Hence this defines a rounding function \(K[X]\to R[X]\) with error constant \(c\) as required.
With \(f\) as before of degree \(n\), if \(a\in R\) then \(a-\lfloor a \rceil_1 =0\) and \(f-\lfloor f \rceil\) has degree less than \(n\).
One then inductively proves the final property of \(\lfloor\cdot\rceil\).
\end{proof}

\begin{lemma}\label{lem:help_binom}
Let \(d,k\in\Z_{>0}\). Then there exist \(m\in\Z_{>0}\) such that \(d \mid \binom{am}{k}\) for all \(a\in\Z_{>0}\). 
\end{lemma}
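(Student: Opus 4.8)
The plan is to produce an explicit $m$ that works uniformly in $a$, using nothing more than the elementary identity $k\binom{n}{k}=n\binom{n-1}{k-1}$.

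First I would record the identity $k\binom{n}{k}=n\binom{n-1}{k-1}$, valid for every integer $n\geq 1$, together with the fact that $\binom{n-1}{k-1}$ is a non-negative integer: for $n\geq k$ this is immediate from the factorial formula, and for $1\leq n<k$ both sides vanish. Since $a,k,d$ are positive, $am\geq 1$ always holds, so the identity applies to $n=am$ for any choice of $m$.

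Then I would simply take $m=dk$. For an arbitrary $a\in\Z_{>0}$ the identity gives
\[
\binom{am}{k}=\frac{am}{k}\binom{am-1}{k-1}=ad\binom{am-1}{k-1},
\]
where I used $am/k=a(dk)/k=ad$. As $\binom{am-1}{k-1}$ is an integer, it follows that $d\mid\binom{am}{k}$, which is exactly the claim (the degenerate case $am<k$, where $\binom{am}{k}=0$, is of course also covered).

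I do not expect any genuine obstacle: the only points requiring a word of care are the justification of $k\binom{n}{k}=n\binom{n-1}{k-1}$ in the range $1\leq n<k$ and the trivial subcase $am<k$, both of which are handled by the remark that the relevant binomial coefficients simply vanish there.
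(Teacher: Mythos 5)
Your proof is correct, and it takes a slightly different (and in fact sharper) route than the paper. The paper observes that \(\binom{X}{k}=X(X-1)\dotsm(X-k+1)/(k!)\) and simply takes \(m=d\cdot k!\): then \(am/(k!)\) is an integer divisible by \(d\) and the remaining factors \((am-1)\dotsm(am-k+1)\) form an integer, so \(d\mid\binom{am}{k}\). You instead cancel only the single factor \(k\) via the absorption identity \(k\binom{n}{k}=n\binom{n-1}{k-1}\) and let the integrality of \(\binom{am-1}{k-1}\) absorb the rest of \(k!\), which lets you take \(m=dk\) rather than \(m=d\cdot k!\). Both arguments are elementary and complete (your remarks about the range \(1\leq n<k\) and the vacuous case \(am<k\) are fine; note that with \(m=dk\) one always has \(am\geq k\), so the degenerate case never actually occurs). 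The only difference of substance is quantitative: the smaller \(m\) would propagate into the construction in the Szeg\H{o}-type argument (where one takes \(n\) divisible by \(\textup{lcm}(m_1,\dotsc,m_b)\)), giving a lower-degree polynomial \(g\) and hence a mild practical improvement to the algorithm, though it makes no difference to the truth of the lemma or the theorems built on it.
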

\begin{proof}
Note that \(\binom{X}{k}=X(X-1)\dotsm (X-k+1)/ (k!) \in \Q[X]\) is a polynomial for fixed \(k\). For \(X\) a multiple of \(d\cdot (k!)\) we have that \(d\mid \binom{X}{k}\).
\end{proof}

The following is a special case of a theorem of Szeg\H{o}, adapted from \cite{capacity}.

\begin{theorem}[Szeg\H{o}]\label{thm:szego_poly}
Let \(R\) be an order of a number field \(K\) and let \(r>1\).
Then for each \(\alpha\in K\) there exists a monic non-constant \(g\in R[X]\) such that for all \(z\in K_\C\) satisfying \(\Kab{g(z)}< r\) we have \(\Kab{z-\alpha}<r\).
\end{theorem}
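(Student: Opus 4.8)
The plan is to prove the contrapositive in a pointwise form over the embeddings. Since $\Kab{\cdot}=\max_{\sigma\in\X(K)}|\sigma(\cdot)|$ and $\Phi_K\colon K_\C\to\C^{\X(K)}$ is an isomorphism, it suffices to produce a monic non-constant $g\in R[X]$ with the property that for every $\sigma\in\X(K)$ and every $w\in\C$ with $|w-\sigma(\alpha)|\ge r$ one has $|g^\sigma(w)|\ge r$, where $g^\sigma\in\C[X]$ denotes $g$ with $\sigma$ applied to its coefficients. Indeed, if $z\in K_\C$ satisfies $\Kab{z-\alpha}\ge r$, choosing $\sigma$ with $|\sigma(z)-\sigma(\alpha)|\ge r$ and applying the property with $w=\sigma(z)$ gives $\Kab{g(z)}\ge|\sigma(g(z))|=|g^\sigma(\sigma(z))|\ge r$.

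I would take $g=\lfloor(X-\alpha)^N\rceil$ for a suitably large $N$, where $\lfloor\cdot\rceil\colon K[X]\to R[X]$ is the rounding function furnished by Lemma~\ref{lem:polynomial_rounding}, applied with $Y=X-\alpha$, the norm $|\cdot|_\infty$ on $K$, and the naive rounding $K\to R$ of error constant $c:=\tfrac{1}{2}\sum_i|\alpha_i|_\infty$ for a $\Z$-basis $(\alpha_i)_i$ of $R$. Write $\alpha=\beta/d$ with $\beta\in R$ and $d\in\Z_{>0}$; then the coefficient of $X^{N-k}$ in $(X-\alpha)^N$ equals $(-1)^k\binom{N}{k}\beta^k/d^k$, which lies in $R$ as soon as $d^k\mid\binom{N}{k}$. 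Fix $L\in\Z_{\ge2}$ (pinned down below). Applying Lemma~\ref{lem:help_binom} with $d^k$ in place of $d$ for $k=1,\dots,L-1$ and taking a common multiple of the resulting moduli gives $m\in\Z_{>0}$ such that $d^k\mid\binom{N}{k}$ for all $1\le k\le L-1$ whenever $m\mid N$. For such $N$ the top $L$ coefficients of $(X-\alpha)^N$ — those of $X^N,\dots,X^{N-L+1}$ — lie in $R$, so by the additional property in Lemma~\ref{lem:polynomial_rounding} the polynomial $E:=(X-\alpha)^N-g$ has degree at most $N-L$; in particular $g$ is monic of degree $N$, and writing $E=\sum_{i=0}^{N-L}e_iY^i$ the error bound gives $|e_i|_\infty\le c$ for all $i$.

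For the estimate: given $\sigma\in\X(K)$ and $w\in\C$, put $u:=w-\sigma(\alpha)$ and assume $|u|\ge r$. Substituting $X=w$ in $g=(X-\alpha)^N-E$ and applying $\sigma$ to the coefficients gives $g^\sigma(w)=u^N-\sum_{i=0}^{N-L}\sigma(e_i)u^i$, whence, using $|u|\ge r>1$,
\[|g^\sigma(w)|\ \ge\ |u|^N-c\sum_{i=0}^{N-L}|u|^i\ \ge\ |u|^N-\frac{c}{|u|-1}|u|^{N-L+1}\ \ge\ |u|^{N-L+1}\Bigl(|u|^{L-1}-\frac{c}{r-1}\Bigr).\]
One then fixes $L$ with $r^{L-1}\ge 1+\frac{c}{r-1}$, which is possible since $r>1$; then $|u|^{L-1}-\frac{c}{r-1}\ge r^{L-1}-\frac{c}{r-1}\ge 1$, and choosing $N$ to be any multiple of $m$ with $N\ge L$ yields $|g^\sigma(w)|\ge|u|^{N-L+1}\ge r^{N-L+1}\ge r$. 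As $N\ge L\ge2$, the polynomial $g$ is non-constant, which finishes the argument.

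The one real difficulty is controlling the size of the correction $E$: a naive rounding of $(X-\alpha)^N$ into $R[X]$ introduces up to $N$ correction coefficients, each of size roughly $c$, and then the geometric sum $c\sum_i|u|^i$ is comparable to the leading term $|u|^N$, so no domination occurs. Lemma~\ref{lem:help_binom} is precisely the device that fixes this: choosing $N$ a suitable multiple forces the top $L$ coefficients of $(X-\alpha)^N$ to be integral already, hence $\deg E\le N-L$ and the correction is smaller than $|u|^N$ by a factor of order $|u|^{L-1}$; since $r>1$, this factor exceeds $\frac{c}{r-1}$ once $L$ is taken large. Everything else — the reduction to the pointwise statement, monicity of $g$, and the coefficientwise error bound — is routine bookkeeping with the two preceding lemmas.
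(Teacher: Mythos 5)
Your proposal is correct and follows essentially the same route as the paper's proof: both round $(X-\alpha)^N$ into $R[X]$ via Lemma~\ref{lem:polynomial_rounding}, use Lemma~\ref{lem:help_binom} to force the top block of coefficients to lie in $R$ (so the error polynomial has degree at most $N-L$), and then dominate the error by a geometric series using $|u|\ge r>1$, finishing by taking the maximum over the embeddings. The only differences are cosmetic bookkeeping of constants (your condition $r^{L-1}\ge 1+\tfrac{c}{r-1}$ versus the paper's $2cr^{-b}\le r-1$, $r^{n-1}\ge 2$) and your use of a single denominator $d$ with $d^k\mid\binom{N}{k}$ in place of the paper's $d_k$ with $d_k\alpha^k\in R$.
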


\begin{proof}
We will write \(Y=X-\alpha\) and \(Y^n=\sum_{i=0}^n f_{n,i} X^i\) for \(n\in\Z_{\geq 0}\).
Let \(d_0=1\), and for \(k\in\Z_{>0}\) let \(d_k\in\Z_{>0}\) be such that \(d_k\alpha^k\in R\). 
For \(k\in\Z_{\geq 0}\) let \(m_k\in\Z_{>0}\) be such that for all \(a\in\Z\) we have \(d_k \mid \binom{am_k}{k}\), which exists by Lemma~\ref{lem:help_binom}. 
Then \(f_{n,n-k}=\pm\binom{n}{k}\alpha^k \in R\) for all \(k\in\Z_{\geq 0}\) and positive multiples \(n\) of \(m_k\) and such that \(k \leq n\).
Write \(M_b = \text{lcm}(m_1,\dotsc,m_b)\).
Then for all \(b\) and \(n\) such that \(M_b \mid n\) we have \(f_{n,n-k}\in R\) for all \(0\leq k\leq b\).

Let \(\lfloor \cdot \rceil_1:K\to R\) be a rounding function, which exists because \(R\) is a full rank lattice in \(K_\R\) by Theorem~\ref{thm:det_lattice}. 
Let \(\lfloor \cdot\rceil:K[Y]\to R[X]\) be the rounding function given by Lemma~\ref{lem:polynomial_rounding} and let \(c\) be its error constant.
For \(n\geq 0\) let \(g_n=\lfloor Y^n \rceil\) and \(e_n = \sum_{i=0}^n e_{n,i} Y^i = Y^n - g_n\).
Moreover, we may assume that if \(M_b \mid n\) for some \(0\leq b \leq n\) then \(\deg(e_n)<n-b\), as \(f_{n,i}\in R\) for \(i\geq n-b\).
Since \(r>1\) we may choose \(b\in\Z_{> 0}\) such that \(2cr^{-b}\leq r-1\). 
Next we may choose \(a\in\Z_{>0}\) such that \(n=aM_b\) satisfies \(n>b\) and \(r^{n-1}\geq 2\). 
We claim \(g=g_n\) satisfies the requirements. 

Clearly \(g_n\) is monic by the bound on the degree of \(e_n\) and non-constant as \(n>0\).
Let \(\X(K)\) act on \(K[X]\) coefficient-wise and fix \(\sigma\in\X(K)\). 
Let \(z\in \C\) such that \(|\sigma(Y)(z)|\geq r\) and write \(s=|\sigma(Y)(z)|=|z-\sigma(\alpha)|\).
Then
\[\left| \frac{\sigma(e_{n})(z)}{\sigma(Y^n)(z)} \right| \leq s^{-n} \sum_{i=0}^{n-b-1} |\sigma(e_{n,i})|\cdot s^i \leq s^{-n}\sum_{i=0}^{n-b-1} c s^i \leq \frac{cs^{-b}}{s-1} \leq \frac{cr^{-b}}{r-1}\leq \frac{1}{2}.\]
It follows that
\[ \left|\frac{\sigma(g_n)(z)}{\sigma(Y^n)(z)}\right| = \left| 1-\frac{\sigma(e_n)(z)}{\sigma(Y^n)(z)} \right| \geq \frac{1}{2} \quad\text{and}\quad |\sigma(g_n)(z)|\geq\frac{|\sigma(Y^n)(z)|}{2} \geq \frac{r^n}{2} \geq r. \]
Thus, if \(|\sigma(g_n)(z)|<r\), then \(|z-\sigma(\alpha)|<r\). Taking the maximum over all \(\sigma\in\X(K)\) proves the theorem.
\end{proof}

\begin{theorem}[Szeg\H{o}]\label{thm:szego}
Let \(\alpha\in\overline{\Q}\) and \(r\in\R_{>1}\).
Then there exist infinitely many \(\beta\in\overline{\Z}\) such that \(|\alpha-\beta|_\infty< r\).
\end{theorem}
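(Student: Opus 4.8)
The plan is to bootstrap from the polynomial form of Szeg\H{o}'s theorem (Theorem~\ref{thm:szego_poly}), using that $\overline{\Z}$ contains infinitely many roots of unity and that every root of unity has max-norm $1 < r$.

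Concretely, put $K = \Q(\alpha)$ and let $R = \mathcal{O}_K$, so that by Theorem~\ref{thm:szego_poly} there is a monic non-constant $g \in R[X]$ such that $\Kab{z-\alpha} < r$ for every $z \in K_\C$ with $\Kab{g(z)} < r$. For each root of unity $\zeta \in \mu_\infty$ choose a root $\beta_\zeta$ of $g(X) - \zeta$; since $g(X)-\zeta$ is monic of positive degree with coefficients in $\overline{\Z}$, such a $\beta_\zeta$ exists and lies in $\overline{\Z}$. From $g(\beta_\zeta) = \zeta$ the map $\zeta \mapsto \beta_\zeta$ is injective, so $\{\beta_\zeta : \zeta\in\mu_\infty\}$ is an infinite subset of $\overline{\Z}$; it thus remains to show $|\alpha - \beta_\zeta|_\infty < r$ for every $\zeta$. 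Fix $\zeta$, set $L = \Q(\alpha,\beta_\zeta)$ (which contains $K$ and $\zeta = g(\beta_\zeta)$), and fix an embedding $\rho\colon L \to \C$; I need $|\rho(\alpha-\beta_\zeta)| < r$. For each $\sigma\in\X(K)$ pick an extension $\tilde\sigma\colon L \to \C$ of $\sigma$, taking $\tilde\sigma = \rho$ when $\sigma = \rho|_K$, and form $z = \bigl(\tilde\sigma(\beta_\zeta)\bigr)_{\sigma\in\X(K)} \in \C^{\X(K)} = K_\C$ via Lemma~\ref{lem:KR_def}. Because $\tilde\sigma|_K = \sigma$, the $\sigma$-coordinate of $g(z)$ is $\tilde\sigma(g(\beta_\zeta)) = \tilde\sigma(\zeta)$, of absolute value $|\zeta|_\infty = 1 < r$; hence $\Kab{g(z)} < r$, so Theorem~\ref{thm:szego_poly} gives $\Kab{z-\alpha} < r$. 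Reading off the coordinate at $\sigma = \rho|_K$ yields $|\rho(\beta_\zeta) - \rho(\alpha)| < r$, and as $\rho$ ranges over all embeddings of $\Q(\alpha,\beta_\zeta)$ this gives $|\alpha - \beta_\zeta|_\infty < r$.

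The step needing the most care is this transfer from $K_\C$ to the larger field $\Q(\alpha,\beta_\zeta)$: Theorem~\ref{thm:szego_poly} only speaks about $z$ in $K_\C \cong \C^{\X(K)}$, so one must build a suitable such $z$ coordinate by coordinate, choosing one extension $\tilde\sigma$ of each $\sigma\in\X(K)$ to $L$ and pinning down the extension over $\rho|_K$ to be $\rho$ itself; this works precisely because $\Kab{\cdot}$ is a coordinate-wise maximum. (Alternatively one could invoke the coordinate-wise bound established inside the proof of Theorem~\ref{thm:szego_poly}.) The other ingredients --- that $\beta_\zeta$ exists and is integral, and that $\zeta\mapsto\beta_\zeta$ is injective --- are immediate.
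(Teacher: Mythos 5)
Your proof is correct and follows essentially the same route as the paper: both obtain $g$ from Theorem~\ref{thm:szego_poly} and produce infinitely many integral $\beta$ with $g(\beta)$ a root of unity, then transfer the bound by assembling a point of $K_\C$ from embeddings of $\Q(\alpha,\beta)$ extending each $\sigma\in\X(K)$ and using that $\Kab{\cdot}$ is a coordinate-wise maximum. The only differences are cosmetic: you parametrize by $\zeta\in\mu_\infty$ and note injectivity of $\zeta\mapsto\beta_\zeta$ (slightly cleaner than the paper's appeal to the roots of $g^n-1$), and you fix one embedding $\rho$ and one tuple of extensions rather than maximizing over all of $\prod_{\sigma}\X_\sigma(L)$ as the paper does.
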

\begin{proof}
Consider \(K=\Q(\alpha)\) and let \(R\subseteq K\) be some order of \(K\).
By Theorem~\ref{thm:szego_poly} there exists some monic non-constant \(g\in R[X]\) such that for all \(z\in K_\C\) satisfying \(\Kab{g(z)} < r\) we have \(\Kab{z-\alpha} < r\).
Now \(g^n-1\in R[X]\) is monic and non-constant for all \(n\in\Z_{\geq1}\), so \(S=\{\beta\in\overline{\Z}\,|\, (\exists\, n\in\Z_{\geq 1})\ g(\beta)^n = 1 \}\) is infinite.
Let \(\beta\in S\) and \(L=K(\beta)\). 
It now suffices to show that \(|\alpha-\beta|_\infty\leq r^2\).

Write \(\textup{P}=\prod_{\sigma\in\X(K)} \X_\sigma(L)\).
Let \(\rho=(\rho_\sigma)_\sigma\in\textup{P}\) and consider \(\beta_\rho=(\rho_\sigma(\beta))_\sigma\in\C^{\X(K)}=K_\C\).
Then
\[\Kab{g(\beta_\rho)}^n = \Kab{g^n(\beta_\rho)} = \max_{\sigma\in\X(K)} | \rho_\sigma(g^n(\beta)) | = \max_{\sigma\in\X(K)}|\rho_\sigma(1)| = 1,  \]
so \(\Kab{g(\beta_\rho)}=1 < r\).
Thus by definition of \(g\) we have \(\Kab{\alpha-\beta_\rho} < r\).
As this holds for all \(\rho\in\textup{P}\) we have \(|\alpha-\beta|_\infty=\max_{\rho\in\textup{P}}\, \Kab{\alpha-\beta_\rho} < r\), as was to be shown.
\end{proof}

\noindent\textbf{Theorem~\ref{thm:large}. }{\em If \(\gamma\in \overline{\Z}\) satisfies \(4<q(\gamma) \), then \(\gamma\) has infinitely many decompositions in \(\overline{\Z}\).}

\begin{proof}
Let \(\alpha=\gamma/2\).
By Theorem~\ref{thm:szego} there are infinitely many \(\beta\in\overline{\Z}\) such that \(|\alpha-\beta|_\infty \leq \sqrt{q(\alpha)}\) as \(q(\alpha)=q(\gamma)/4>1\).
By Lemma~\ref{lem:ineq_q_max} each such \(\beta\) satisfies \(q(\alpha-\beta)\leq q(\alpha)\) and thus \((\beta,\gamma-\beta)\) is a decomposition of \(\gamma\) by Lemma~\ref{lem:eq_dec_def}.
\end{proof}

It follows from this theorem, as we will show later in the form of Proposition~\ref{prop:counting_candidates}, that there are only finitely many indecomposables in \(\overline{\Z}\) of a given degree.
Recall the definition of \(r\) from Definition~\ref{def:radius}.

\begin{proposition}
The Hilbert lattice \(\overline{\Z}\) has a covering radius \(2^{-1/4}\leq r(\overline{\Z})\leq 1\).
\end{proposition}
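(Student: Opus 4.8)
The plan is to establish the two inequalities separately, each as a short consequence of results already proved.

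For $r(\overline{\Z})\le 1$ I would combine Szeg\H{o}'s theorem with the density of $\Q\overline{\Z}=\overline{\Q}$ in $\mathcal{H}_{\overline{\Z}}$. Fix $\epsilon>0$ and let $z\in\mathcal{H}_{\overline{\Z}}$ be arbitrary. Choose $\alpha\in\overline{\Q}$ with $\|z-\alpha\|<\epsilon$, which is possible because $\Q\overline{\Z}$ lies dense in $\mathcal{H}_{\overline{\Z}}$ by construction of the ambient Hilbert space. By Theorem~\ref{thm:szego} applied with $r=1+\epsilon$ there is $\beta\in\overline{\Z}$ with $|\alpha-\beta|_\infty<1+\epsilon$, and then $\|\alpha-\beta\|=\sqrt{q(\alpha-\beta)}\le|\alpha-\beta|_\infty<1+\epsilon$ by Lemma~\ref{lem:ineq_q_max}. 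The triangle inequality gives $\|z-\beta\|<1+2\epsilon$. As $z$ was arbitrary, the number $1+2\epsilon$ lies in the set whose infimum defines $r(\overline{\Z})$; letting $\epsilon\downarrow 0$ gives $r(\overline{\Z})\le 1$.

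For $2^{-1/4}\le r(\overline{\Z})$ I would exhibit deep holes among the points $\tfrac12\alpha$, with $\alpha$ an indecomposable algebraic integer of large square-norm. In the course of proving Theorem~\ref{thm:largest_indec} one shows that for every $r\in\Q\cap[1,\tfrac32)$ there is $\alpha\in\textup{indec}(\overline{\Z})$ with $q(\alpha)=2^r$; fix such an $r$ and $\alpha$, and set $z=\tfrac12\alpha\in\mathcal{H}_{\overline{\Z}}$. For any $\beta\in\overline{\Z}$ we have $\|z-\beta\|^2=q(\tfrac12\alpha-\beta)=\tfrac14 q(\alpha-2\beta)$. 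If $q(\alpha-2\beta)\le q(\alpha)$, then $(\beta,\alpha-\beta)$ is a decomposition of $\alpha$ by Lemma~\ref{lem:eq_dec_def}, hence trivial since $\alpha$ is indecomposable, so $\beta\in\{0,\alpha\}$ and then in fact $q(\alpha-2\beta)=q(\alpha)$; thus $q(\alpha-2\beta)\ge q(\alpha)$ for every $\beta\in\overline{\Z}$ (this is also immediate from the proof of Proposition~\ref{prop:hilbert_eq}). Hence $\|z-\beta\|\ge\tfrac12\sqrt{q(\alpha)}=2^{r/2-1}$ for all $\beta\in\overline{\Z}$, so no $b<2^{r/2-1}$ belongs to the set defining $r(\overline{\Z})$. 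Letting $r\uparrow\tfrac32$, we conclude $r(\overline{\Z})\ge\sup_{r\in\Q\cap[1,3/2)}2^{r/2-1}=2^{-1/4}$.

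Both halves are routine once these ingredients are in place, and I do not anticipate a genuine obstacle; the only point that calls for a little care is the bookkeeping with the definition of the covering radius as an infimum over admissible radii (rather than as $\sup_{z}\inf_{x\in\overline{\Z}}\|z-x\|$), which is why I phrase each half directly in terms of which radii $b$ are, respectively are not, admissible.
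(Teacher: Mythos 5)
Your proof is correct and follows essentially the same route as the paper: the upper bound is exactly the paper's argument, combining Szeg\H{o}'s theorem (Theorem~\ref{thm:szego}) with Lemma~\ref{lem:ineq_q_max} and the density of \(\overline{\Q}=\Q\cdot\overline{\Z}\) in the ambient Hilbert space. For the lower bound the paper invokes Corollary~\ref{cor:vorc} and Proposition~\ref{prop:vor_incl} together with Theorem~\ref{thm:largest_indec}, while you inline the same content via Lemma~\ref{lem:eq_dec_def} by showing directly that \(\alpha/2\) lies at distance at least \(\tfrac12\sqrt{q(\alpha)}\) from every point of \(\overline{\Z}\) when \(\alpha\) is indecomposable; this is a harmless unwinding of the Voronoi-cell lemmas rather than a genuinely different argument, and both halves are sound.
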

\begin{proof}
By Theorem~\ref{thm:largest_indec} we have \(2^{3/2}\leq \sup\{q(\alpha)\,|\,\alpha\in\textup{indec}(\overline{\Z})\}\) and consequently we get the lower bound \(2^{-1/4}\leq\sup\{\|\alpha/2\|\,|\,\alpha\in\textup{indec}(\overline{\Z})\}\).
For any \(\alpha\in\textup{indec}(\overline\Z)\) we have \(\|\alpha/2\|\leq\|\alpha/2-x\|\) for all \(x\in\overline{\Z}\) by Lemma~\ref{lem:eq_dec_def}, and thus \(\alpha/2\in\vorc(\overline{\Z})\) by Corollary~\ref{cor:vorc}. 
Therefore \(2^{-1/4}\leq r(\overline{\Z})\) by Proposition~\ref{prop:vor_incl}.
For all \(r>1\) and \(\alpha\in\overline{\Q}\) there exist \(\beta\in\overline{\Z}\) such that \(\|\alpha-\beta\|\leq r\) by Theorem~\ref{thm:szego}.
Taking the limit of \(r\) down to \(1\) and noting that \(\overline{\Q}=\Q\cdot\overline{\Z}\) is dense in the Hilbert space of \(\overline{\Z}\) proves the proposition.
\end{proof}

\section{Computing decompositions of algebraic integers of large square-norm}

The proof of Theorem~\ref{thm:szego} and in turn Theorem~\ref{thm:large} is constructive and rather explicit as well. 
It is not too difficult to design an algorithm that given \(n\in\Z_{\geq 0}\), an order \(R\) and an \(\alpha\in R\) such that \(q(\alpha)>4\), computes \(n\) distinct non-trivial decompositions of \(\alpha\).
However, we will have to formalize how we encode the input and output.
Moreover, since \(q\) maps to \(\R\) we also have to deal with non-exact arithmetic.

\begin{convention}
We presuppose that we have decided on a way of encoding {\em integers} \(n\in\Z\) and {\em pairs} \((a,b)\) such that the length \(\textup{len}(n)\) of the encoding of \(n\) grows at a rate \(O(\log |n|)\) and \(\textup{len}((a,b))=O(\textup{len}(a)+\textup{len}(b))\). We then encode a {\em rational} as a pair of integers, a numerator and denominator.
We encode a finite sequence of integers, a {\em list}, as nested pairs.
An order is a commutative ring which is free and finitely generated as \(\Z\)-module.
We encode an {\em order} as a non-negative integer \(n\), its rank, together with integers \((a_{ijk})_{1\leq i,j,k\leq n}\) such that \(e_i\cdot e_j = \sum_{k=1}^n a_{ijk}e_k\), where \(e_i\) is the \(i\)-th standard basis vector of \(\Z^n\).
We represent a {\em number field} \(K\) with an order \(R\subseteq K\) such that \(\Q R = K\).
A {\em homomorphism of modules} out of a free \(\Z\) or \(\Q\)-module is encoded by a list of images of the basis elements. 
Given an encoding for a commutative ring \(R\) we encode a {\em polynomial} in \(R[X]\) as an integer \(d\), its degree (with \(\deg(0)=-1\)), and a list of its \(d+1\) coefficients, i.e.\ a dense encoding.
\end{convention}

\begin{theorem}\label{alg:all}
There exist {\em polynomial-time} algorithms for the following problems: Take as input a number field \(K\) and
\begin{enumerate}[nosep,label=\textup{(\roman*)}]
\item an irreducible polynomial \(f\in K[X]\), and compute the number field \(K[X]/(f)\) and the natural inclusion \(K\to K[X]/(f)\);
\item an element \(\alpha\in K\), and compute the minimal polynomial of \(\alpha\) over \(\Q\);
\item a non-zero polynomial \(f\in K[X]\), and compute the monic irreducible factors of \(f\) over \(K\).
\end{enumerate}
There also exists an algorithm that takes as input a number field \(K\) and
\begin{enumerate}[resume,nosep,label=\textup{(\roman*)}]
\item a polynomial \(f\in K[X]\), and computes a splitting field of \(f\) over \(K\).
\end{enumerate}
\end{theorem}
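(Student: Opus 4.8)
The plan is to reduce all four tasks to (a) linear algebra over $\Q$, which is polynomial-time with polynomially bounded intermediate bit-lengths by the standard analysis of Gaussian elimination, and (b) one genuinely external ingredient, the polynomial-time factorization of univariate polynomials over $\Q$ due to Lenstra, Lenstra and Lov\'asz, together with its extension to number fields (A.~K.~Lenstra; Landau). First I would record the arithmetic primitives: given a number field $K$ encoded by an order $R$ with $\Q R=K$ and $\Z$-basis $\omega_1,\dots,\omega_m$, the identity of $K$, addition and multiplication (via the structure constants), inversion of nonzero elements (by solving the linear system $m_\alpha x=1$ for the multiplication map $m_\alpha$), and the evaluation of $\Z$- and $\Q$-linear maps between free modules are all polynomial-time with controlled bit-lengths; hence so is polynomial arithmetic in $K[X]$, including Euclidean division and $\gcd$ computation (the latter reducing to a linear system over $\Q$ through the subresultant description).

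For (i), writing $d=\deg f$ and dividing by the leading coefficient to make $f$ monic, $L:=K[X]/(f)$ is a $K$-vector space with basis $1,X,\dots,X^{d-1}$, hence a $\Q$-vector space with basis $\{\omega_i X^j\}_{1\le i\le m,\, 0\le j<d}$, whose multiplication table is computed by multiplying basis elements and reducing modulo $f$. To deliver $L$ in the required form, choose $N\in\Z_{>0}$ clearing the denominators of the coefficients of $f$ with respect to $R$; then the image $\theta$ of $X$ has $N\theta$ integral over $R$, so $R[N\theta]=\bigoplus_{j=0}^{d-1}R\cdot(N\theta)^j$ is an order of $L$ with integer structure constants, and the natural inclusion $K\to L$ sends $\omega_i\mapsto\omega_i$. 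All steps, including the reductions expressing $(N\theta)^k$ in this basis for $k<2d$, keep bit-lengths polynomial in the input. For (ii), compute $1,\alpha,\dots,\alpha^m$ in the $\omega$-basis and use Gaussian elimination to find the least $k$ with $1,\dots,\alpha^k$ linearly dependent over $\Q$; the monic dependence relation is the minimal polynomial of $\alpha$.

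For (iii), first pass to the radical $f/\gcd(f,f')$ to reduce to the squarefree case (recovering multiplicities by trial division if a full factorization is wanted), and then either cite directly the polynomial-time factorization over number fields, or carry out the classical reduction to $\Q$: compute (by (ii) applied to a primitive element, found by testing small $\Z$-combinations of the $\omega_i$) a generator $\theta$ of $K/\Q$; for all but finitely many integer shifts $s$ the norm $N:=\prod_{\sigma\in\X(K)}\sigma(f)(X-s\,\sigma(\theta))\in\Q[X]$ is squarefree, so factor $N$ over $\Q$ by Lenstra--Lenstra--Lov\'asz, form the $\gcd$'s $\gcd_{K[X]}\big(f(X-s\theta),\,N_i(X)\big)$, and substitute $X\mapsto X+s\theta$ and normalize to obtain the monic irreducible factors of $f$; bounding $\log|s|$ and the bit-lengths is routine. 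For (iv), iterate: set $K_0=K$, transport $f$; while the current copy of $f$ has an irreducible factor $h$ of degree $\ge2$ over $K_i$ (by (iii)), put $K_{i+1}=K_i[X]/(h)$ by (i), transport $K$ and $f$ into $K_{i+1}$, and repeat, outputting $K_i$ (with the inclusion of $K$) once $f$ splits into linear factors. Each $K_i$ embeds over $K$ into an abstract splitting field $\Omega$ of $f$ — inductively, a factor of $f$ has a root in $\Omega$ — so $[K_i:\Q]\le[K:\Q]\cdot(\deg f)!$; since the degree strictly increases at each step the loop halts, and the output contains all roots of $f$ and is generated over $K$ by roots of $f$, hence is a splitting field.

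The approach is classical throughout, so the real content lies in two places, both handled by citation rather than argument: the polynomial-time factorization of $\Q[X]$ and of polynomials over number fields (Lenstra--Lenstra--Lov\'asz; A.~K.~Lenstra; Landau), used as a black box, and the verification that none of the linear-algebra steps, polynomial reductions, primitive-element searches or shift choices inflates bit-lengths beyond a polynomial bound, which follows from the standard bounds for Gaussian elimination and for subresultants. No polynomial-time claim is made for (iv), and none is possible: the splitting field can have degree $(\deg f)!$ over $K$, so even writing down its multiplication table is super-polynomial in the input.
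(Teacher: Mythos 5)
Your proposal follows the same route as the paper: (i) and (ii) are basic linear algebra over \(\Q\), (iii) rests on the polynomial-time factorization over number fields due to A.~K.~Lenstra (cited as a black box, whether or not you also sketch the norm-and-shift reduction to \(\Q[X]\)), and (iv) is obtained by repeatedly applying (iii) and (i), with no polynomial-time claim. The extra detail you supply (bit-length bookkeeping, the termination bound \([K_i:\Q]\le[K:\Q]\cdot(\deg f)!\)) is correct and only elaborates on what the paper leaves implicit.
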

\begin{proof}
Part (i) and (ii) are basic linear algebra. Part (iii) is a result due to A. K. Lenstra \citep{factoring}. Part (iv) is a repeated application of (iii) and (i).
\end{proof}

\begin{definition}
An {\em approximation} of a real number \(x\in\R\) is an algorithm that takes as input a positive integer \(n\) written in unary and computes a rational number \(x(n)\) such that \(|x-x(n)|\leq 2^{-n}\).
We define the analogous notion for complex numbers as well by considering the real and imaginary parts separately.
A number for which there exists an approximation is called {\em computable}.
An {\em algebraic real number} \(x\) is encoded as a minimal polynomial \(f\) over \(\Q\) of \(x\) together with an approximation of \(x\).  
\end{definition}

Since there are only countably many algorithms and uncountably many real numbers, not every real number is computable.

\begin{lemma}[For example \cite{complex_roots}]\label{lem:compute_roots}
There is an algorithm that takes as input a non-zero polynomial \(f\in\Q[X]\) and computes approximations for all complex roots of \(f\) with multiplicities. \qed
\end{lemma}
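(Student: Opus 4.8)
The plan is to reduce to a squarefree polynomial with integer coefficients and then locate its roots by quadtree subdivision together with the argument principle.

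\emph{Reductions.} Clearing denominators does not change the roots, so we may assume $f\in\Z[X]$. To recover multiplicities, form the tower $g^{(0)}=f$ and $g^{(j+1)}=\gcd(g^{(j)},(g^{(j)})')$, computed exactly over $\Q$ by the Euclidean algorithm; then $q_j=g^{(j-1)}/g^{(j)}$ is squarefree, its roots are exactly the roots of $f$ of multiplicity at least $j$, and $q_j=1$ for $j>\deg f$, so the multiplicity of a root $z$ of $f$ is the largest $j$ with $q_j(z)=0$. It therefore suffices to build a subroutine that, for a squarefree $g\in\Z[X]$, approximates the complex roots of $g$ and, for a given axis-parallel square, counts the roots of $g$ it contains; one then runs this on $q_1,\dots,q_{\deg f}$ and, for each root of $q_1$ (isolated in its own small square once the squares are smaller than the minimal gap between the distinct roots of $q_1$, a positive quantity), reads off the largest $j$ for which $q_j$ still has a root in that square.

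\emph{The subroutine.} Let $g\in\Z[X]$ be squarefree of degree $d$ with integer coefficients of absolute value at most $H$. By the Cauchy bound every root of $g$ lies in the open disk $|z|<1+H$, hence in the square $Q_0=[-2-H,\,2+H]^2$, whose boundary is root-free. Starting from $Q_0$, repeatedly subdivide each square in the current list into four sub-rectangles by a horizontal and a vertical cut with rational coordinates, chosen so that the four resulting boundaries stay at positive distance from the roots of $g$ (only finitely many cut positions are forbidden, and an admissible one is found by trying rational values and rejecting those for which the winding-number computation below fails to stabilise); for each piece $B$ compute
\[ N(B)=\frac{1}{2\pi\i}\oint_{\partial B}\frac{g'(z)}{g(z)}\,\dif z\in\Z_{\ge 0}, \]
the number of roots of $g$ inside $B$, by evaluating the contour integral numerically to within $\tfrac14$ and rounding to the nearest integer, which is legitimate since $\partial B$ avoids the roots and $g\in\Z[X]$, so $g'/g$ on $\partial B$ and the quadrature come with explicit provable error bounds; discard $B$ when $N(B)=0$. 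Stop once every surviving square has side length less than $2^{-n}$: its centre is then within $2^{-n}$ of each root it contains, and these are output with multiplicity $N(B)$ (which is $1$ since $g$ is squarefree). The recursion depth is bounded by $O(n+\log H)$, so the procedure terminates, and, the roots of a squarefree polynomial being pairwise distinct, after finitely many subdivisions each surviving square carries exactly one root and their number equals $d$.

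\emph{Main obstacle.} The delicate point is the robustness of the root count $N(B)$: when $\partial B$ passes through or very near a root the integrand has a pole on or near the path, the rounding step can fail, and the selection of admissible cuts must itself be certified rather than heuristic. This is precisely what is done rigorously in the certified subdivision solvers based on Pellet's theorem and Graeffe-type iterations referenced for this lemma (an alternative fully exact route, replacing the complex analysis by resultants $\operatorname{Res}_y(u,v)\in\Z[x]$ and $\operatorname{Res}_x(u,v)\in\Z[y]$ of the real and imaginary parts $u,v\in\Z[x,y]$ of $g(x+\i y)$ followed by Sturm-sequence real-root isolation, after a generic shear to make the resultants squarefree, has the same technical core). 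The remaining ingredients — the Cauchy bound, the gcd tower, numerical integration with error control, and termination by a depth bound — are routine.
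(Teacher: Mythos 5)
The paper does not actually prove this lemma: it is stated with a pointer to the literature and a \(\qed\), so there is no argument in the paper to compare yours against line by line. Your sketch is essentially the content of the cited certified root-finding literature, plus a correct and complete reduction for multiplicities: the gcd tower does give squarefree \(q_j\) whose roots are exactly the roots of \(f\) of multiplicity at least \(j\), and since each \(q_j\) divides \(q_1\), any contour that is root-free for \(q_1\) is root-free for \(q_j\), so reusing the isolating boxes of \(q_1\) to read off the largest \(j\) with a root inside is sound. The only step you cannot implement literally as written is the selection of admissible cuts by ``rejecting those for which the winding-number computation fails to stabilise'': non-stabilisation is not a detectable event, so rejection cannot be triggered. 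The standard repair, consistent with what you intend, is to certify a candidate cut directly — subdivide the contour and use interval or Lipschitz bounds to establish a positive lower bound for \(|g|\) along it, which terminates precisely when the cut misses the roots, and to dovetail this certification over a countable list of rational candidate cuts, only finitely many of which are bad — or to avoid winding numbers altogether via the exact alternative you mention (resultants of the real and imaginary parts followed by Sturm-sequence isolation, after a generic shear). With either repair the argument closes; since you flag exactly this point yourself and the paper delegates the entire lemma to the same literature, I would count your proposal as a correct proof outline rather than as containing a genuine gap.
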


As a direct consequence of Lemma~\ref{lem:compute_roots} and Theorem~\ref{alg:all}.ii we have the following proposition.

\begin{proposition}\label{prop:comp_q}
There is an algorithm that takes as input an algebraic number \(\alpha\in\overline{\Q}\) represented by its minimal polynomial and computes an approximation of \(q(\alpha)\), \(N(\alpha)\) and \(|\alpha|_{\infty}\). \qed
\end{proposition}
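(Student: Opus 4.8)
The plan is to reduce everything to symmetric functions of the conjugates of $\alpha$. Let $f\in\Q[X]$ be the (monic) minimal polynomial of $\alpha$ given as input, set $n=\deg f$, and let $\alpha_1,\dots,\alpha_n\in\C$ be the roots of $f$; these are pairwise distinct since $f$ is separable, and they are exactly the images $\sigma(\alpha)$ as $\sigma$ ranges over $\X(\Q(\alpha))$, each occurring once. Moreover every $\sigma\in\X(\overline{\Q})$ restricts on $\Q(\alpha)$ to one of these, so by Definition~\ref{def:Nq} and the definition of $|\,\cdot\,|_\infty$ we have
\[ q(\alpha)=\frac1n\sum_{i=1}^n|\alpha_i|^2,\qquad N(\alpha)=\Big(\prod_{i=1}^n|\alpha_i|\Big)^{1/n},\qquad |\alpha|_\infty=\max_{1\le i\le n}|\alpha_i|. \]
In particular none of the three quantities depends on which root is $\alpha$, so the minimal polynomial is all the input we need. (If $\alpha$ were instead presented as an element of a number field, one would first compute $f$ by Theorem~\ref{alg:all}.ii.) It therefore suffices, given a target precision $2^{-m}$ with $m$ in unary, to compute sufficiently good rational approximations of the $\alpha_i$, which the algorithm of Lemma~\ref{lem:compute_roots} supplies on input $f$.

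Concretely, I would first compute from the coefficients of $f$ an integer $B\ge 1$ with $|\alpha_i|\le B$ for all $i$, via a standard Cauchy-type bound (e.g.\ $B=1+\sum_{j<n}|c_j|$ for $f=X^n+\sum_{j<n}c_jX^j$, the $c_j$ being computable rationals). Given $m$, I call Lemma~\ref{lem:compute_roots} to obtain rationals $a_i,b_i$ with $|\alpha_i-(a_i+b_i\sqrt{-1})|\le 2^{-m'}$ for all $i$, with $m'$ chosen as follows. For $q(\alpha)$ I output the rational number $\frac1n\sum_i(a_i^2+b_i^2)$; since $\bigl|\,|\alpha_i|^2-(a_i^2+b_i^2)\,\bigr|\le 2^{-m'}(2B+1)$, taking $m'=m+\lceil\log_2(2B+1)\rceil$ forces the output within $2^{-m}$ of $q(\alpha)$. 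For $|\alpha|_\infty$ I output a rational approximation of $\max_i\sqrt{a_i^2+b_i^2}$ to within $2^{-m-1}$ (a rational square-root approximation to prescribed precision is elementary, e.g.\ by bisection), and since the maximum is $1$-Lipschitz, $m'=m+1$ suffices. For $N(\alpha)$ the cleanest route is to observe that $\prod_i|\alpha_i|=|N_{\Q(\alpha)/\Q}(\alpha)|=|f(0)|\in\Q$, so $N(\alpha)=|f(0)|^{1/n}$ and one only needs a rational approximation of the $n$-th root of an explicitly given non-negative rational, again elementary; alternatively one approximates $(\prod_i\sqrt{a_i^2+b_i^2})^{1/n}$ directly, controlling the error of a product of $n$ factors each $\le B+1$ and then of the $n$-th root.

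The only point requiring any real care is this error propagation and, through it, how many bits of the roots to request: squaring and taking an $n$-th root are not Lipschitz with a universal constant, so one genuinely needs the a priori magnitude bound $B$ (respectively the exact value $|f(0)|$) to turn a target output precision into an input precision for Lemma~\ref{lem:compute_roots}. Everything else — forming $a_i^2+b_i^2$, summing, dividing by $n$, taking a maximum, extracting a rational root — is routine exact or elementary approximate arithmetic on rationals, so no further obstacle arises, and running the three resulting procedures yields the claimed approximations of $q(\alpha)$, $N(\alpha)$ and $|\alpha|_\infty$.
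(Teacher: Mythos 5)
Your proposal is correct and follows essentially the same route as the paper, which simply declares the proposition a direct consequence of Lemma~\ref{lem:compute_roots} (approximating the complex roots of the minimal polynomial) together with Theorem~\ref{alg:all}.ii; you have merely spelled out the symmetric-function formulas and the error propagation that the paper leaves implicit.
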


\begin{proposition}\label{prop:decide_q}
There is an algorithm that takes as input an algebraic real number \(x\) and algebraic numbers \(\alpha,\beta\in\overline{\Q}\) and decides whether \(\langle\alpha,\beta\rangle=x\).
\end{proposition}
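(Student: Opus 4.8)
The plan is to prove that $t:=\langle\alpha,\beta\rangle$ is itself an algebraic number which can be computed from the input together with a minimal polynomial and an approximation, and then to reduce the question to decidability of equality of algebraic numbers. The only conceptual issue is that, although $t$ is defined as a sum over all complex embeddings of a number field, it in fact lies in a number field and may be manipulated exactly; an approximation of $t$ alone could never decide an equality, so this exactness is the crux of the whole argument.

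First I would compute a number field $L$ with $\Q(\alpha,\beta)\subseteq L\subseteq\overline{\Q}$ that is Galois over $\Q$: build $\Q(\alpha,\beta)$ from the minimal polynomials of $\alpha$ and $\beta$ (Theorem~\ref{alg:all}.ii) using Theorem~\ref{alg:all}.i, and then take a splitting field over $\Q$ of the minimal polynomial of a primitive element (Theorem~\ref{alg:all}.iv). Let $p\in\Q[X]$ be the minimal polynomial of a primitive element $\theta$ of $L$; approximating its complex roots via Lemma~\ref{lem:compute_roots}, each root determines an embedding $L\to\C$, and since $L/\Q$ is Galois every element of $\X(L)$ arises this way as $\iota\circ g$ for a unique $g\in\textup{Gal}(L/\Q)$, where $\iota$ is a fixed chosen embedding. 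Because $L/\Q$ is Galois, $\iota(L)$ is stable under complex conjugation, so there is a unique $c\in\textup{Gal}(L/\Q)$ with $\overline{\iota(y)}=\iota(c(y))$ for all $y\in L$; it is identified by matching the approximation of $\overline{\iota(\theta)}$ against the approximations of the roots of $p$, which are pairwise distinct, so finitely many refinements suffice. The group $\textup{Gal}(L/\Q)$ and the action of its elements on $\alpha$ and $\beta$ expressed in $\theta$ are obtained by elementary linear algebra over $\Q$.

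With these in hand, the defining formula (the remark following Theorem~\ref{thm:Zbar_is_lattice}) rewrites, using $\overline{\iota(y)}=\iota(c(y))$ and that $\iota$ is a ring homomorphism, as
\[ t \;=\; \frac{1}{[L:\Q]}\sum_{g\in\textup{Gal}(L/\Q)} \iota\big(g(\alpha)\big)\,\overline{\iota\big(g(\beta)\big)} \;=\; \iota(\gamma),\qquad \gamma \;:=\; \frac{1}{[L:\Q]}\sum_{g\in\textup{Gal}(L/\Q)} g(\alpha)\cdot c\big(g(\beta)\big)\ \in\ L, \]
and by Lemma~\ref{lem:extension_dont_care} the value $t$ is independent of the auxiliary choices. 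Now $\gamma$ is computed exactly as an element of $L$, its minimal polynomial over $\Q$ is obtained from Theorem~\ref{alg:all}.ii, and an approximation of $t=\iota(\gamma)$ comes from evaluating the rational-coefficient polynomial expressing $\gamma$ in terms of $\theta$ at the approximation of $\theta_0:=\iota(\theta)$. Thus $t$ is produced as an algebraic number in exactly the sense in which $x$ is given.

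Finally I would decide whether $t=x$. If the minimal polynomials of $t$ and $x$ over $\Q$ differ, then $t\neq x$. If they coincide, equal to a common irreducible $h\in\Q[X]$, then $t$ and $x$ are both roots of the separable polynomial $h$, hence among its finitely many pairwise distinct complex roots; approximating all roots of $h$ finely enough to isolate them (Lemma~\ref{lem:compute_roots}) and then refining the approximations of $t$ and $x$, it becomes visible whether they name the same root, and this decides the question. (Equivalently, one may adjoin $x$ to $L$ along the irreducible factor of its minimal polynomial picked out by matching approximations, obtaining a single number field containing consistent copies of $t$ and $x$, in which $t=x$ is tested by exact arithmetic.) The main obstacle is thus not any single step but the passage from the transcendental-looking definition of $\langle\alpha,\beta\rangle$ to an exact algebraic representative: the Galois description of complex conjugation is what collapses the sum over embeddings to $\iota(\gamma)$ for an explicit $\gamma\in L$, after which everything is effective.
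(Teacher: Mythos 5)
Your proposal is correct and takes essentially the same route as the paper: both collapse the sum over embeddings to an exact element of a Galois closure (your $\gamma=\frac{1}{[L:\Q]}\sum_g g(\alpha)\,c(g(\beta))$ is the paper's $\frac{1}{[K:\Q]}\sum_\sigma \alpha_\sigma\beta_\sigma$, with your conjugation automorphism $c$ playing the role of the paper's approximation-matched conjugates $\beta_\sigma$), and then decide equality with $x$ by exact arithmetic combined with approximation-based identification of roots. The only difference is bookkeeping in the final comparison (you compare minimal polynomials of $t$ and $x$ and isolate roots, the paper factors the minimal polynomial of $x$ over $K$ and searches for $y$ with $\rho(y)=x$), which is an equivalent and equally effective step.
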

\begin{proof}
Compute the normal closure \(K\) of \(\Q(\alpha,\beta)\) using Theorem~\ref{alg:all}.iv and compute the set \(\textup{A}\) of conjugates of \(\alpha\) in \(K\) using Theorem~\ref{alg:all}.iii and similarly for \(\beta\).  
Choose an embedding \(\rho\in\X(K)\).
For all \(\sigma\in\X(K)\) there exist \(\alpha_\sigma\in \textup{A}\) and  \(\beta_\sigma\in \textup{B}\) such that \(\sigma(\alpha)=\rho(\alpha_\sigma)\) and \(\overline{\sigma(\beta)}=\rho(\beta_\sigma)\) because \(K\) is Galois, and we may actually compute them since there are only finitely many candidates.
Factor the minimal polynomial \(f\) of \(x\) over \(K\) using Theorem~\ref{alg:all}.
Find a \(y\) among the roots of \(f\) in \(K\) such that \(\rho(y)=x\).
If it does not exist, then \(\langle\alpha,\beta\rangle\neq x\).
Otherwise we may decide \(\tfrac{1}{[K:\Q]}\sum_{\sigma\in\X(K)} \alpha_\sigma \beta_\sigma = y\) using exact arithmetic in \(K\).
\end{proof}

\begin{corollary}\label{cor:decide_decomp}
There is an algorithm that takes as input algebraic integers \(\alpha\), given by a minimal polynomial over \(\Q\), and \(\beta\), given by a minimal polynomial over \(\Q(\alpha)\), and decides whether \((\beta,\alpha-\beta)\in\textup{dec}(\alpha)\).
\end{corollary}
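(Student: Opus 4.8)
The plan is to reduce the statement to deciding the sign of an explicitly presented real algebraic number, by re-running the construction from the proof of Proposition~\ref{prop:decide_q}. Since $\beta$ and $\alpha-\beta$ are automatically algebraic integers, by definition $(\beta,\alpha-\beta)\in\textup{dec}(\alpha)$ if and only if $\langle\beta,\alpha-\beta\rangle\geq 0$ (this is also immediate from Lemma~\ref{lem:eq_dec_def}). First I would normalize the input: from the given minimal polynomial of $\alpha$ over $\Q$ and the given minimal polynomial $f_\beta$ of $\beta$ over $\Q(\alpha)$, compute the number field $M=\Q(\alpha,\beta)=\Q(\alpha)[X]/(f_\beta)$ together with the inclusion $\Q(\alpha)\to M$ using Theorem~\ref{alg:all}.i, then compute $\gamma=\alpha-\beta\in M$ and the minimal polynomials of $\beta$ and of $\gamma$ over $\Q$ via Theorem~\ref{alg:all}.ii.

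Next I would carry out the construction in the proof of Proposition~\ref{prop:decide_q} verbatim, but with the pair $(\beta,\gamma)$ in place of $(\alpha,\beta)$: compute the normal closure $K$ of $M$ over $\Q$ (Theorem~\ref{alg:all}.iv), fix a distinguished embedding $\rho\in\X(K)$ by choosing a complex root of a defining polynomial of $K$ and approximating it with Lemma~\ref{lem:compute_roots}, enumerate $\X(K)$, and for each $\sigma\in\X(K)$ identify the conjugate $\beta_\sigma\in K$ of $\beta$ with $\rho(\beta_\sigma)=\sigma(\beta)$ and the conjugate $\gamma_\sigma\in K$ of $\gamma$ with $\rho(\gamma_\sigma)=\overline{\sigma(\gamma)}$; the latter makes sense because $\overline{\sigma(\gamma)}$ again lies in $\rho[K]$ since $K$ is normal over $\Q$, and the correct conjugates are found by comparing sufficiently precise approximations, which works because distinct conjugates of a fixed element are separated by a computable gap. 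Then $y=\tfrac{1}{[K:\Q]}\sum_{\sigma\in\X(K)}\beta_\sigma\gamma_\sigma\in K$ is computable by exact arithmetic in $K$, and by construction $\rho(y)=\langle\beta,\gamma\rangle$. In particular $\rho(y)$ is real, since the inner product of the Hilbert lattice $\overline{\Z}$ is real-valued.

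It then remains to decide the sign of the real number $\rho(y)$. Compute the minimal polynomial $g\in\Q[X]$ of $y$ by Theorem~\ref{alg:all}.ii. If $g=X$, then $\langle\beta,\gamma\rangle=0\geq0$ and the algorithm accepts. Otherwise $g(0)\neq0$, so there is a computable $\varepsilon>0$ (a Cauchy-type lower bound on the moduli of the roots of $g$) with $|\rho(y)|\geq\varepsilon$; approximating $\rho(y)$ to error less than $\varepsilon/2$, by evaluating the polynomial representing $y$ at the approximate root defining $\rho$, then determines the sign of $\rho(y)$, and the algorithm accepts precisely when this sign is positive.

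I expect the only delicate points to be bookkeeping: verifying that finite-precision approximations suffice both to pin down the conjugates $\beta_\sigma,\gamma_\sigma$ among the finitely many candidates and to resolve the final sign of $\rho(y)$. Both are standard and follow from Lemma~\ref{lem:compute_roots} together with effective root-separation and Cauchy bounds; the rest is a direct transcription of the proof of Proposition~\ref{prop:decide_q}, with the extraction of the value $\langle\beta,\alpha-\beta\rangle$ as $\rho(y)$ replacing the equality test there.
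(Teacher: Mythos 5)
Your proposal is correct and follows essentially the same route as the paper: the paper's proof simply invokes Proposition~\ref{prop:decide_q} to test whether \(\langle\beta,\alpha-\beta\rangle=0\) and, if not, approximates the inner product (via Proposition~\ref{prop:comp_q}) until the sign resolves, which terminates precisely because the value is nonzero. You instead unfold the construction inside Proposition~\ref{prop:decide_q} to obtain the inner product as an exact element \(y\) of the normal closure and replace the open-ended approximation loop by a single approximation to an a priori computable precision from a root bound on the minimal polynomial of \(y\) --- a harmless (and slightly more effective) refinement of the same argument.
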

\begin{proof}
Using Proposition~\ref{prop:decide_q} we can decide whether \(\langle\beta,\alpha-\beta\rangle=0\). 
If not, then after finitely many steps approximating \(\langle\beta,\alpha-\beta\rangle\) using Proposition~\ref{prop:comp_q} we may decide whether \(\langle\beta,\alpha-\beta\rangle\geq 0\).
\end{proof}

\begin{lemma}\label{lem:computable_rounding_map}
For every order \(R\) in a number field \(K\) and \(\alpha\in K\) there exists a rounding function \(\lfloor \cdot \rceil: K[X-\alpha] \to R[X]\) as in Lemma~\ref{lem:polynomial_rounding}, which is computable and has a computable error constant.
\end{lemma}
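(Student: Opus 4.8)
The plan is to make algorithmic the two ingredients that feed into Lemma~\ref{lem:polynomial_rounding}: a computable rounding function $\lfloor\cdot\rceil_1\colon K\to R$ with a computable error constant, and the recursive construction of that lemma, which is already fully explicit and uses only effective operations once $\lfloor\cdot\rceil_1$ and $\alpha$ are at hand.

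First I would produce $\lfloor\cdot\rceil_1$. By our encoding the order $R$ carries a distinguished $\Z$-basis $e_1,\dots,e_n$, which is at the same time a $\Q$-basis of $K$. Given $x\in K$, solve a $\Q$-linear system to write $x=\sum_i x_ie_i$ with $x_i\in\Q$, round each $x_i$ to a nearest rational integer (ties broken, say, downward), and return $\sum_i\lfloor x_i\rceil e_i\in R$; this is visibly computable and is the identity on $R$. Equipping $K$ with the max-norm $|\cdot|_\infty$ (any fixed norm on $K$ dominating every embedding absolute value serves equally well for the later applications), the triangle inequality gives $|x-\lfloor x\rceil_1|_\infty\le\tfrac12\sum_{i=1}^n|e_i|_\infty=:c_1$. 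Each $|e_i|_\infty$ can be approximated to arbitrary precision by Proposition~\ref{prop:comp_q}, after computing the minimal polynomial of $e_i$ over $\Q$ via Theorem~\ref{alg:all}.ii; hence $c_1$ is a computable error constant, and one even obtains a rational one by approximating each $|e_i|_\infty$ from above.

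Next I would run the construction of Lemma~\ref{lem:polynomial_rounding} on the data $(R,\alpha,\lfloor\cdot\rceil_1)$. That construction defines $\lfloor\cdot\rceil_{m+1}$ from $\lfloor\cdot\rceil_m$ through $\lfloor aY^m+g\rceil_{m+1}=\lfloor a\rceil_1X^m+\lfloor g-\lfloor a\rceil_1(X^m-Y^m)\rceil_m$ with $Y=X-\alpha$. Each step is effective: passing between the $X$-basis and the $Y$-basis of $K[X]$ is a computable $K$-linear change of variable (the powers $Y^k=(X-\alpha)^k$ being computable polynomials over $K$), splitting off the leading $Y$-coefficient $a\in K$ and the remainder $g$ is immediate, expanding $X^m-Y^m$ in powers of $Y$ is polynomial arithmetic in $K$, one applies $\lfloor\cdot\rceil_1$, and the recursive call has strictly smaller degree, so the recursion terminates. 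Thus $\lfloor\cdot\rceil:=\lfloor\cdot\rceil_{\deg f+1}$ is computable, and Lemma~\ref{lem:polynomial_rounding} certifies that it is a rounding function $K[X-\alpha]\to R[X]$ with error constant $c_1$ and with the stated degree-drop property; combined with the computability of $c_1$ from the previous step, this is everything claimed.

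I do not anticipate a genuine obstacle: the statement is essentially bookkeeping, verifying that the recursion of Lemma~\ref{lem:polynomial_rounding} only invokes operations that are effective and that the naive error constant is a computable real. The one point deserving a line of care is the change of variable $X\mapsto Y+\alpha$ with $\alpha\in K$: one must check that rewriting an element of $K[X]$ in powers of $Y$ and back is a computable $K$-linear isomorphism keeping all intermediate coefficients in $K$, which it is.
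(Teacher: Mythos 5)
Your proposal is correct and follows essentially the same route as the paper: the naive coordinatewise rounding $\lfloor\cdot\rceil_1$ with respect to the given $\Z$-basis of $R$, the error constant $\tfrac{1}{2}\sum_i|e_i|_\infty$ made computable via Proposition~\ref{prop:comp_q}, and the observation that the recursion of Lemma~\ref{lem:polynomial_rounding} is effective once $\lfloor\cdot\rceil_1$ and $\alpha$ are given. Your extra remarks (rational upper bound for the error constant, computability of the change of variable $X\mapsto Y+\alpha$) are just the bookkeeping the paper leaves implicit.
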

\begin{proof}
Note that the naive rounding function \(\lfloor\cdot\rceil_1:K\to R\) with respect to a basis \(\alpha_1,\dotsc,\alpha_n\) of \(R\) is computable and has error constant \(\tfrac{1}{2}\sum_{i=1}^n |\alpha_i|_\infty\), which is computable by Proposition~\ref{prop:comp_q}. Finally note that the error function from Lemma~\ref{lem:polynomial_rounding} is trivially computable given that \(\lfloor\cdot\rceil_1\) is computable, and it has the same computable error constant.
\end{proof}

\begin{theorem}\label{alg:szego}
There is an algorithm that takes as input an order \(R\) in a number field \(K\), a rational number \(r>1\) and an \(\alpha\in K\), and computes a monic non-constant polynomial \(g\in R[X]\) such that for all \(z\in K_\C\) satisfying \(\Kab{g(z)}< r\) we have \(\Kab{z-\alpha}< r\).
\end{theorem}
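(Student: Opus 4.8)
The plan is to turn the (constructive) proof of Theorem~\ref{thm:szego_poly} into an algorithm by checking that each choice made there can be carried out effectively, using the computational primitives established above. Recall that that proof outputs $g=g_n=\lfloor (X-\alpha)^n\rceil\in R[X]$ for a suitable $n$, where $\lfloor\cdot\rceil$ is the polynomial rounding function of Lemma~\ref{lem:polynomial_rounding} built from the naive rounding function $\lfloor\cdot\rceil_1\colon K\to R$, and the only data needed to pin down $n$ are a bound $b$ with $2cr^{-b}\le r-1$ (with $c$ an error constant of $\lfloor\cdot\rceil$), the integers $m_k$ from Lemma~\ref{lem:help_binom}, and a multiple $n=aM_b$ of $M_b=\text{lcm}(m_1,\dots,m_b)$ with $n>b$ and $r^{n-1}\ge 2$. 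So the task is purely to make each of $c$, $b$, the $m_k$, $M_b$, $a$, $n$, and finally $g$ computable.

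First I would compute, for $k=0,1,2,\dots$, a positive integer $d_k$ with $d_k\alpha^k\in R$, by expanding $\alpha^k$ in the given $\Z$-basis of $R$ and taking $d_k$ to be the lcm of the denominators of the resulting rational coordinates; then, following the proof of Lemma~\ref{lem:help_binom}, one may take $m_k=d_k\cdot k!$, so that $d_k\mid\binom{am_k}{k}$ for all $a\in\Z_{>0}$. This is elementary integer arithmetic. Next I would invoke Lemma~\ref{lem:computable_rounding_map} to obtain the computable rounding function $\lfloor\cdot\rceil\colon K[X-\alpha]\to R[X]$ together with a computable error constant $c$. Using Proposition~\ref{prop:comp_q} to approximate $c$ from above by a rational $c'$, and using that $r\in\Q$ with $r>1$, I can effectively search for $b\in\Z_{>0}$ with $2c'r^{-b}\le r-1$ (equivalently $r^b\ge 2c'/(r-1)$), hence also $2cr^{-b}\le r-1$. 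Then set $M_b=\text{lcm}(m_1,\dots,m_b)$ and search, by direct rational computation of powers of $r$, for $a\in\Z_{>0}$ with $n:=aM_b>b$ and $r^{n-1}\ge 2$. Finally compute $g=g_n=\lfloor(X-\alpha)^n\rceil\in R[X]$: this is a finite computation, expanding $(X-\alpha)^n$ over $K$ and applying $\lfloor\cdot\rceil$.

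Correctness is then exactly the content of the proof of Theorem~\ref{thm:szego_poly}: the coefficients $f_{n,n-k}=\pm\binom{n}{k}\alpha^k$ of $(X-\alpha)^n$ lie in $R$ for $0\le k\le b$ because $M_b\mid n$, so by the degree property of $\lfloor\cdot\rceil$ from Lemma~\ref{lem:polynomial_rounding} the error term $e_n=(X-\alpha)^n-g_n$ has degree $<n-b$; hence $g_n$ is monic and non-constant, and the Rouch\'e-type estimate in that proof, using $2cr^{-b}\le r-1$ and $r^{n-1}\ge 2$, shows that $\Kab{g_n(z)}<r$ forces $\Kab{z-\alpha}<r$ for all $z\in K_\C$. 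Thus the output is valid and no separate verification step is needed.

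The main obstacle, such as it is, lies in the handling of the real constant $c$ and the logarithmic inequalities determining $b$ and $n$: since $c$ is only available through an approximation algorithm, one must round it strictly upward (and avoid having to decide an equality) before searching for $b$, and one must argue that the searches for $b$ and $a$ terminate --- both of which hold because $r-1$ is a fixed positive rational and $r^m\to\infty$. A secondary point is to confirm that the rounding function supplied by Lemma~\ref{lem:computable_rounding_map} really has the degree property invoked above, but this is built into Lemma~\ref{lem:polynomial_rounding}, so no extra work is required. Since the statement makes no complexity claim, the (potentially large) size of $n$ and of $g$ is immaterial.
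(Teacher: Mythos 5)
Your proposal is correct and follows essentially the same route as the paper: compute the rounding map and a rational upper bound for its error constant, search for $b$ with $2cr^{-b}\le r-1$, pick $n$ a suitable multiple (the paper uses $n\in\Z\cdot d^b\cdot b!$ with $d\alpha\in R$, which plays the same role as your $M_b$), and return $\lfloor (X-\alpha)^n\rceil$, with correctness delegated to the proof of Theorem~\ref{thm:szego_poly}. Your extra care about strictly upward rounding of the error constant and termination of the searches matches the paper's intent, just spelled out more explicitly.
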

\begin{proof}
Let \(\lfloor \cdot \rceil:K[X-\alpha]\to R[X]\) be the rounding map from Lemma~\ref{lem:computable_rounding_map}.
Then
\begin{enumerate}[nosep]
\item Compute some rational error constant \(c\) for \(\lfloor\cdot\rceil\);
\item Compute some \(d\in\Z_{\geq1}\) such that \(d\alpha\in R\) by clearing denominators;
\item Compute some \(b\in\Z_{\geq1}\) such that \(2cr^{-b}\leq r-1\);
\item Compute an \(n\in\Z\cdot d^b\cdot b!\) such that \(n>b\) and \(r^{n-1}\geq 2\);
\item Compute and return \(\lfloor (X-\alpha)^n\rceil\).
\end{enumerate}
The algorithm is correct by the proof of Theorem~\ref{thm:szego_poly}.
\end{proof}

\begin{theorem}
There is an algorithm that takes as input an \(n\in\Z_{\geq 0}\) and element \(\alpha\in\overline{\Z}\) given by its minimal polynomial, and decides whether \(q(\alpha)>4\) and if so computes \(n\) non-trivial \((\beta,\gamma)\in\textup{dec}(\alpha)\) each represented by the minimal polynomial of \(\beta\) over \(\Z[\alpha]\).  
\end{theorem}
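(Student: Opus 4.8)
The plan is to turn the constructive proof of Theorem~\ref{thm:large} into an algorithm, using the effective Szeg\H{o} construction already packaged in Theorem~\ref{alg:szego}. Writing $\alpha_0=\alpha/2\in\Q(\alpha)$, recall that by part (iii) of Lemma~\ref{lem:eq_dec_def} together with Lemma~\ref{lem:ineq_q_max} any $\beta\in\overline{\Z}$ with $|\beta-\alpha_0|_\infty^2\leq q(\alpha_0)$ yields a decomposition $(\beta,\alpha-\beta)$ of $\alpha$; so the task is to produce many algebraic integers close, in max-norm, to $\alpha_0$.

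First I would decide whether $q(\alpha)>4$. This cannot be done by approximation alone, since $q(\alpha)$ need not be rational (for instance $q(\sqrt[3]{2})=2^{2/3}$), so I would first invoke Proposition~\ref{prop:decide_q} with the rational number $4$ to test whether $\langle\alpha,\alpha\rangle=4$, reporting that $q(\alpha)\not>4$ if so; otherwise $q(\alpha)\neq 4$ and approximating $q(\alpha)$ via Proposition~\ref{prop:comp_q} with increasing precision settles on which side of $4$ it lies, and again one reports failure unless $q(\alpha)>4$. If $q(\alpha)>4$, so that $q(\alpha_0)=q(\alpha)/4>1$, a further approximation of $q(\alpha_0)$ produces a rational $r$ with $1<r$ and $r^2\leq q(\alpha_0)$, which exists precisely because $q(\alpha_0)>1$.

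Next I would build the decompositions. From the given minimal polynomial $f$ of $\alpha$ I form the number field $K=\Q[X]/(f)\cong\Q(\alpha)$ and the order $R=\Z[\alpha]\subseteq K$, whose multiplication table is read off from $f$ (here I use $f$ monic, as $\alpha\in\overline{\Z}$), and I view $\alpha_0\in K$. Applying the algorithm of Theorem~\ref{alg:szego} to $(R,K,r,\alpha_0)$ gives a monic non-constant $g\in R[X]=\Z[\alpha][X]$ with the Szeg\H{o} property that $\Kab{g(z)}<r$ implies $\Kab{z-\alpha_0}<r$ for all $z\in K_\C$. Then for $k=1,\dots,n$ I form the $k$-th cyclotomic composition $\Phi_k(g)\in\Z[\alpha][X]$, which is monic of degree $\varphi(k)\deg(g)\geq 1$, factor it over $K$ by Theorem~\ref{alg:all}.iii, take a monic irreducible factor $h_k$, and let $\beta_k$ be the image of $X$ in the field $K[X]/(h_k)$. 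Since $\Phi_k(g)$ is monic with algebraic-integer coefficients, $\beta_k\in\overline{\Z}$, and $h_k$ is its minimal polynomial over $\Q(\alpha)$. The output is the list of pairs $(\beta_k,\alpha-\beta_k)$ for $k=1,\dots,n$, each represented by $h_k$.

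Correctness follows the proof of Theorem~\ref{thm:szego}: since $\Phi_k(g(\beta_k))=0$, the element $g(\beta_k)$ is a primitive $k$-th root of unity, so for each tuple $\beta_\rho=(\rho_\sigma(\beta_k))_{\sigma}\in\C^{\X(K)}=K_\C$ of conjugates (with $\rho_\sigma\in\X_\sigma(K(\beta_k))$) every component of $g(\beta_\rho)$ has absolute value $1$, whence $\Kab{g(\beta_\rho)}=1<r$, hence $\Kab{\alpha_0-\beta_\rho}<r$, and taking the maximum over $\rho$ gives $|\alpha_0-\beta_k|_\infty<r$; thus $q(\beta_k-\alpha_0)\leq|\alpha_0-\beta_k|_\infty^2<r^2\leq q(\alpha_0)$, so $(\beta_k,\alpha-\beta_k)$ is a decomposition of $\alpha$ by Lemma~\ref{lem:eq_dec_def}, and it is non-trivial because $q(0-\alpha_0)=q(\alpha-\alpha_0)=q(\alpha_0)$ is strictly larger than $q(\beta_k-\alpha_0)$, ruling out $\beta_k\in\{0,\alpha\}$. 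Finally the $\beta_k$ are pairwise distinct because $g(\beta_k)$ has exact order $k$, so $g(\beta_k)=g(\beta_j)$ forces $k=j$. Every step is a finite computation by Theorem~\ref{alg:all}, Proposition~\ref{prop:comp_q}, Proposition~\ref{prop:decide_q} and Theorem~\ref{alg:szego}, so the algorithm halts. I expect the only genuinely delicate point to be deciding the strict inequality $q(\alpha)>4$ exactly on its boundary; the rest is routine bookkeeping over the primitives already established.
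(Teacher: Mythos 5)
Your proposal is correct and follows the paper's own route: you decide \(q(\alpha)>4\) exactly as the paper does (Proposition~\ref{prop:decide_q} to rule out equality, then approximation via Proposition~\ref{prop:comp_q}), compute a rational \(r\) with \(1<r\) and \(r^2\leq q(\alpha/2)\), obtain the Szeg\H{o} polynomial \(g\) from Theorem~\ref{alg:szego}, and produce decompositions from algebraic integers at which \(g\) takes root-of-unity values, invoking Lemma~\ref{lem:ineq_q_max} and Lemma~\ref{lem:eq_dec_def} just as in Theorems~\ref{thm:szego} and~\ref{thm:large}. The only (harmless) variation is bookkeeping: where the paper factors \(g^k-1\) for increasing \(k\) until \(n\) suitable distinct factors appear, you compose \(g\) with the cyclotomic polynomials \(\Phi_k\) for \(k=1,\dotsc,n\), which makes distinctness (the exact order of \(g(\beta_k)\)) and non-triviality (the strict inequality \(q(\beta_k-\alpha/2)<q(\alpha/2)\)) explicit without an unbounded search.
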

\begin{proof}
To decide \(q(\alpha)>4\), first decide whether \(q(\alpha)=4\) using Proposition~\ref{prop:decide_q}. If not, approximating \(q(\alpha)\) will yield either \(q(\alpha)<4\) or \(q(\alpha)>4\) after finitely many steps.
We then follow the proof of Theorem~\ref{thm:szego} and Theorem~\ref{thm:large}.

Since \(q(\alpha)>4\) by assumption we may compute using Proposition~\ref{prop:comp_q} a rational \(r\) such that \(1<r\leq \sqrt{q(\alpha/2)}\).
Compute using Theorem~\ref{alg:all}.i the number field \(K=\Q(\alpha)\) and its order \(R=\Z[\alpha]\).
Compute using Theorem~\ref{alg:szego} a monic non-constant \(g\in R[X]\) such that for all \(z\in K_\C\) satisfying \(\Kab{g(z)}< r\) we have \(\Kab{z-\alpha}<r\). Compute the monic irreducible factors of \(g^k-1\) over \(K\) using Theorem~\ref{alg:all}.iii for \(k\) ranging from \(1\) to infinity until you have found among all of them \(n\) distinct integral polynomials that are not equal to \(X\) or \(X-\alpha\). 
By the proof of Theorem~\ref{thm:szego} this process terminates and produces non-trivial decompositions.
\end{proof}

\section{Bounds on indecomposable algebraic integers}\label{sec:analysis}

In this section we will prove an effective upper bound on the total number of indecomposable algebraic integers of a given degree.
In particular, we will show that this number is finite.
We do this by constructing a complete list of candidates for indecomposability among all algebraic integers of given degree.
We alo give a lower bound on the number of indecomposables.

\begin{proposition}\label{prop:classifying_candidates}
Suppose \(\alpha\in\textup{indec}(\overline{\Z})\) has minimal polynomial \(f=\sum_{k=0}^n f_{n-k} X^k\in\Z[X]\) of degree \(n\in\Z_{\geq 1}\). Then \(|f_k|\leq \binom{n}{k} 2^{k}\) for all \(0\leq k\leq n\).
\end{proposition}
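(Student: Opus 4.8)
\emph{Strategy and reduction.} The plan is to combine the contrapositive of Theorem~\ref{thm:large} with a classical bound on elementary symmetric functions. Since $\alpha$ is indecomposable it has only finitely many decompositions, so $q(\alpha)\le 4$ by the contrapositive of Theorem~\ref{thm:large}. The roots of the minimal polynomial $f$ in $\C$ are precisely the conjugates $\sigma(\alpha)$ for $\sigma\in\X(\Q(\alpha))$; as $f$ is separable of degree $n$ these are $n$ distinct numbers $\alpha_1,\dots,\alpha_n$, and $f=\prod_{i=1}^n(X-\alpha_i)$. Comparing coefficients, and using that in the convention $f=\sum_{k=0}^n f_{n-k}X^k$ the number $f_k$ is the coefficient of $X^{n-k}$, we get $f_k=(-1)^k e_k(\alpha_1,\dots,\alpha_n)$, where $e_k$ denotes the $k$-th elementary symmetric polynomial. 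Recall also that $q(\alpha)=\frac{1}{n}\sum_{i=1}^n|\alpha_i|^2$.

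\emph{Estimate.} By the triangle inequality, $|f_k|=|e_k(\alpha_1,\dots,\alpha_n)|\le e_k(|\alpha_1|,\dots,|\alpha_n|)$. Now I would apply Maclaurin's inequality (a consequence of Newton's inequalities; see e.g.\ \citep{Cvetkovski2012}) to the non-negative reals $|\alpha_1|,\dots,|\alpha_n|$: writing $S_j=e_j(|\alpha_1|,\dots,|\alpha_n|)/\binom{n}{j}$, one has $S_k^{1/k}\le S_1=\frac{1}{n}\sum_{i=1}^n|\alpha_i|$ for $1\le k\le n$. The second inequality of Lemma~\ref{lem:holder}, applied with $p=1$ and $q=2$ to the vector $(|\alpha_1|,\dots,|\alpha_n|)$, gives $S_1\le\big(\frac{1}{n}\sum_{i=1}^n|\alpha_i|^2\big)^{1/2}=q(\alpha)^{1/2}$. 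Hence $S_k\le q(\alpha)^{k/2}\le 4^{k/2}=2^k$, so $|f_k|=\binom{n}{k}S_k\le\binom{n}{k}2^k$; the case $k=0$ is trivial as $f_0=1$.

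\emph{Main obstacle.} Everything is routine except the passage from the bound on $\sum_i|\alpha_i|^2$ to a bound on $e_k(|\alpha_1|,\dots,|\alpha_n|)$, i.e.\ the appeal to Maclaurin's inequality; for $k\le 2$ this can be obtained using only the AM--GM inequality (Lemma~\ref{lem:amgm}) and Cauchy--Schwarz, but for larger $k$ the full symmetric-function inequality seems genuinely needed. Note that the cruder estimate $|f_k|\le\binom{n}{k}|\alpha|_\infty^{\,k}$ would not suffice: indecomposability only yields $|\alpha|_\infty\le(n\,q(\alpha))^{1/2}\le 2\sqrt{n}$, giving the too-weak bound $\binom{n}{k}2^k n^{k/2}$.
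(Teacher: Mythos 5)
Your proof is correct and follows essentially the same route as the paper: indecomposability gives $q(\alpha)\le 4$ via Theorem~\ref{thm:large}, Maclaurin's inequalities bound the normalized elementary symmetric functions of the $|\alpha_i|$ by the first one, and Lemma~\ref{lem:holder} bounds that by $\sqrt{q(\alpha)}\le 2$. The only cosmetic slip is writing $|f_k|=\binom{n}{k}S_k$ at the end where it should be $|f_k|\le\binom{n}{k}S_k$, which you had already established.
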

\begin{proof}
Let \(\alpha_1,\dotsc,\alpha_n\in\C^\times\) be the roots of \(f\). We have Maclaurin's inequalities (Theorem~11.2 in \citep{Cvetkovski2012})
\[ s_1 \geq s_2^{1/2} \geq s_3^{1/3} \geq \dotsm \geq s_n^{1/n}, \quad\text{where }  s_k = \binom{n}{k}^{-1}\cdot\sum_{\substack{I \subseteq\{1,\dotsc,n\}\\|I|=k} } \prod_{i\in I} |\alpha_i|. \]
By Theorem~\ref{thm:large} we have that \(q(\alpha)\leq 4\).
Then by Lemma~\ref{lem:holder} we have 
\[s_1 = \frac{1}{n}\sum_i |\alpha_i| \leq \Big( \frac{1}{n}\sum_i |\alpha_i|^2 \Big)^{1/2} = \sqrt{q(\alpha)} \leq 2. \]
Then \(|f_k| \leq \binom{n}{k} s_k \leq \binom{n}{k} s_1^k\leq \binom{n}{k}2^k\) for all \(k\), as was to be shown.
\end{proof}

\begin{corollary}\label{cor:classifying_canditates}
Suppose \(\alpha\in\textup{indec}(\overline{\Z})\) has degree at most \(m\). Then there exists a monic polynomial \(g=\sum_{k=0}^m g_{m-k} X^k \in\Z[X]\) of degree \(m\) such that \(g(\alpha)=0\) and \(|g_k|\leq \binom{m}{k} 2^k\) for all \(0\leq k\leq m\).
\end{corollary}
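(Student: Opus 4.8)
The plan is to produce $g$ by multiplying the minimal polynomial of $\alpha$ by a suitable power of $X$, which keeps the coefficient bounds of Proposition~\ref{prop:classifying_candidates} intact.

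First I would observe that $\alpha \in \textup{indec}(\overline{\Z})$ is non-zero, so its minimal polynomial over $\Q$ is a monic $f = \sum_{k=0}^n f_{n-k}X^k$ with integer coefficients (since $\alpha$ is an algebraic integer) of some degree $n$ with $1 \leq n \leq m$. Hence Proposition~\ref{prop:classifying_candidates} applies to $f$ and gives $|f_k| \leq \binom{n}{k}2^k$ for all $0 \leq k \leq n$.

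Next I would set $g = X^{m-n}f \in \Z[X]$, which makes sense since $n \leq m$; it is monic of degree exactly $m$ and satisfies $g(\alpha) = \alpha^{m-n}f(\alpha) = 0$. Writing $g = \sum_{k=0}^m g_{m-k}X^k$ and comparing coefficients, we get $g_k = f_k$ for $0 \leq k \leq n$ and $g_k = 0$ for $n < k \leq m$. The bound then follows termwise: for $n < k \leq m$ we have $|g_k| = 0 \leq \binom{m}{k}2^k$, and for $0 \leq k \leq n$ we have $|g_k| = |f_k| \leq \binom{n}{k}2^k \leq \binom{m}{k}2^k$, using the elementary inequality $\binom{n}{k} \leq \binom{m}{k}$ valid for $n \leq m$.

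I do not expect any genuine obstacle here: this corollary merely repackages Proposition~\ref{prop:classifying_candidates}, and the only points needing a little care are the index bookkeeping for the coefficients of $X^{m-n}f$ and the monotonicity $\binom{n}{k}\leq\binom{m}{k}$.
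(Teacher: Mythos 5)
Your proposal is correct and is essentially identical to the paper's own proof: multiply the minimal polynomial by \(X^{m-n}\) and use \(\binom{n}{k}\leq\binom{m}{k}\) to transfer the bounds from Proposition~\ref{prop:classifying_candidates}. The extra coefficient bookkeeping you supply is just a more explicit version of the same one-line argument.
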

\begin{proof}
Let \(f\) as in Proposition~\ref{prop:classifying_candidates} and \(g=X^{m-n}\cdot f\). Then \(|g_k|=|f_k|\leq \binom{n}{k}2^k\leq \binom{m}{k}2^k\).
\end{proof}

\begin{proposition}\label{prop:stirlings}
Considered as functions of \(n\in\Z_{\geq 1}\), the following hold:
\begin{align*}
\log( n! ) &= n \log n - n + O(\log n); \tag{i} \\
\log\Big( \prod_{k=1}^n k^k \Big) &= \tfrac{1}{2}n^2 \log n-\tfrac{1}{4}n^2+O(n\log n); \tag{ii} \\
\log\Big( \prod_{k=0}^n (k!)  \Big) &=  \tfrac{1}{2} n^2 \log n - \tfrac{3}{4} n^2 + O(n\log n); \tag{iii} \\
\log\Big( \prod_{k=0}^n \binom{n}{k} \Big) &= \tfrac{1}{2} n^2 + O(n\log n). \tag{iv}
\end{align*}
\end{proposition}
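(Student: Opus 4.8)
The plan is to establish (i) and (ii) by comparing the relevant sums with integrals, and then to deduce (iii) and (iv) by purely formal manipulation, substituting (i) and (ii) into the defining sums and keeping careful track of the error terms. None of the four parts requires anything beyond elementary estimates, so the write-up will be short.

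For (i), I would use that $x \mapsto \log x$ is increasing on $[1,\infty)$, so that $\int_{k-1}^{k} \log x \dif x \leq \log k \leq \int_{k}^{k+1} \log x \dif x$; summing over $1 \leq k \leq n$ shows that $\log(n!) = \sum_{k=1}^n \log k$ differs from $\int_1^n \log x \dif x = n\log n - n + 1$ by at most $\log n + O(1)$, which gives (i). For (ii) the same argument applies verbatim with the increasing function $x\log x$ in place of $\log x$: the sum $\log\bigl(\prod_{k=1}^n k^k\bigr) = \sum_{k=1}^n k\log k$ differs from $\int_1^n x\log x \dif x = \tfrac12 n^2\log n - \tfrac14 n^2 + \tfrac14$ by at most $n\log n + O(1)$, which gives (ii).

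For (iii), I would write $\log\bigl(\prod_{k=0}^n k!\bigr) = \sum_{k=1}^n \log(k!)$ and substitute (i) term by term, $\log(k!) = k\log k - k + O(\log k)$, so that this sum equals $\sum_{k=1}^n k\log k - \sum_{k=1}^n k + O\bigl(\sum_{k=1}^n \log k\bigr)$. The first term is (ii), namely $\tfrac12 n^2\log n - \tfrac14 n^2 + O(n\log n)$; the second is $\tfrac12 n(n+1) = \tfrac12 n^2 + O(n)$; and the third is $O(n\log n)$ again by (i). Collecting terms yields (iii). For (iv), I would use $\binom nk = n!/(k!\,(n-k)!)$ to get $\log\bigl(\prod_{k=0}^n \binom nk\bigr) = (n+1)\log(n!) - 2\sum_{k=0}^n \log(k!)$, then substitute (i) into the first summand and (iii) into the second: $(n+1)\log(n!) = n^2\log n - n^2 + O(n\log n)$ and $2\sum_{k=0}^n \log(k!) = n^2\log n - \tfrac32 n^2 + O(n\log n)$, and their difference is $\tfrac12 n^2 + O(n\log n)$, which is (iv).

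I do not expect any genuine obstacle: the only point requiring a little care is the bookkeeping of error terms, namely checking that the $O(\log n)$ error in (i), when summed over $k \leq n$ or multiplied by $n+1$, contributes only $O(n\log n)$ and thus does not disturb the leading $\tfrac12 n^2\log n$ term nor the $n^2$-scale terms, and that the coefficients $-\tfrac14$, $-\tfrac34$ and $+\tfrac12$ of $n^2$ in (ii), (iii) and (iv) emerge correctly after the cancellations of the $n^2\log n$ terms.
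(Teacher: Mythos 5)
Your proposal is correct and follows essentially the same route as the paper: integral comparison for the increasing integrands in (i)--(ii), then formal substitution of (i) and (ii) into the sums defining (iii) and (iv), with the $O(\log k)$ errors summing to $O(n\log n)$. The only cosmetic differences are that you prove (i) directly rather than citing Stirling, and in (iv) you use the identity $\prod_{k=0}^n\binom{n}{k}=(n!)^{n+1}/\bigl(\prod_{k=0}^n k!\bigr)^2$ with the correct exponent $n+1$ (the paper writes $n$, a harmless slip absorbed by the $O(n\log n)$ term).
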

\begin{proof}
(i) This is Stirling's approximation, which is classical.

(ii) Note that \(f(x)=x\log x\) is an increasing function on \(\R_{\geq 1}\). Hence
\[ \log\Big( \prod_{k=1}^n k^k \Big) = \sum_{k=1}^n f(k) \leq \int_1^{n+1} f(x) \dif x = \Big[ \tfrac{1}{2} x^2 \log(x) - \tfrac{1}{4}x^2  \Big]_{x=1}^{n+1} = \tfrac{1}{2} n^2 \log(n) - \tfrac{1}{4}n^2 + O(n\log(n)). \]
We analogously get the same estimate for a lower bound by considering \(\int_1^n f(x) \dif x \).

(iii) From (i) and (ii) we get
\begin{align*}
\log\Big( \prod_{k=0}^n (k!)  \Big) = \sum_{k=1}^n\Big( k\log(k) -k + O(\log(k))  \Big) = \big(\tfrac{1}{2} n^2 \log(n) - \tfrac{1}{4}n^2\big) - \tfrac{1}{2}n^2 + O(n\log(n)).
\end{align*}

(iv) We first rewrite the binomials in terms of factorials and then apply (i) and (iii), so that
\begin{align*} 
\log\Big( \prod_{k=0}^n \binom{n}{k} \Big) 
&= \log\bigg(\frac{(n!)^n}{\big(\prod_{k=0}^n (k!) \big)^2} \bigg) 
= n \log(n!) - 2 \log\Big( \prod_{k=0}^n (k!)  \Big) \\
&= (n^2 \log(n) - n^2) - 2( \tfrac{1}{2}n^2 \log n - \tfrac{3}{4}n^2 ) + O(n\log n) = \tfrac{1}{2}n^2 + O(n\log n),
\end{align*}
as was to be shown.
\end{proof}

\begin{proposition}\label{prop:counting_candidates}
Let \(n\in\Z_{\geq 1}\). There are at most 
\[ n\prod_{k=1}^n \Big( 2\binom{n}{k} 2^k + 1 \Big) = \exp\Big( \tfrac{\log(2)+1}{2}\, n^2 +O(n\log(n))\Big)\]
indecomposable elements in \(\overline{\Z}\) of degree up to \(n\).
\end{proposition}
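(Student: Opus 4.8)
The plan is to read an explicit finite list of candidate polynomials off Corollary~\ref{cor:classifying_canditates} and count it. Applying that corollary with $m=n$, every indecomposable $\alpha\in\overline{\Z}$ of degree at most $n$ is a root of some monic $g=\sum_{k=0}^n g_{n-k}X^k\in\Z[X]$ of degree $n$ with $|g_k|\leq\binom{n}{k}2^k$ for all $0\leq k\leq n$. Since $g$ is monic the leading coefficient $g_0$ is fixed equal to $1$, while for each $k\in\{1,\dotsc,n\}$ the integer $g_k$ takes at most $2\binom{n}{k}2^k+1$ values. Hence there are at most $\prod_{k=1}^n(2\binom{n}{k}2^k+1)$ such polynomials; each has at most $n$ roots in $\overline{\Q}$, and every indecomposable of degree $\leq n$ occurs among those roots. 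This yields the bound $n\prod_{k=1}^n(2\binom{n}{k}2^k+1)$.

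For the asymptotic form I would take logarithms. Writing $2\binom{n}{k}2^k+1=2^{k+1}\binom{n}{k}\bigl(1+(2^{k+1}\binom{n}{k})^{-1}\bigr)$ and using $0<(2^{k+1}\binom{n}{k})^{-1}\leq\tfrac14$, we get $\log(2\binom{n}{k}2^k+1)=(k+1)\log 2+\log\binom{n}{k}+O(1)$ uniformly in $1\leq k\leq n$. Summing over $k$ and adding $\log n$,
\[
\log\Bigl(n\prod_{k=1}^n\bigl(2\tbinom{n}{k}2^k+1\bigr)\Bigr)=(\log 2)\sum_{k=1}^n(k+1)+\sum_{k=1}^n\log\tbinom{n}{k}+O(n).
\]
Now $\sum_{k=1}^n(k+1)=\tfrac12 n^2+O(n)$, and $\sum_{k=1}^n\log\binom{n}{k}=\sum_{k=0}^n\log\binom{n}{k}=\tfrac12 n^2+O(n\log n)$ by Proposition~\ref{prop:stirlings}.iv. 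Adding these contributions gives $\tfrac12(\log 2)n^2+\tfrac12 n^2+O(n\log n)=\tfrac{\log(2)+1}{2}n^2+O(n\log n)$, as claimed.

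I do not expect a genuine obstacle here: all the arithmetic content sits in Corollary~\ref{cor:classifying_canditates} (hence ultimately in Theorem~\ref{thm:large}) and in the Stirling-type estimate Proposition~\ref{prop:stirlings}.iv. The only points needing a little care are verifying that the additive $+1$ in each factor, the monic normalization, and the extra factor $n$ (for the number of roots) all vanish into the $O(n\log n)$ error term, and that padding a lower-degree minimal polynomial up to degree $n$ — already built into Corollary~\ref{cor:classifying_canditates} — makes the count cover every indecomposable of degree $\leq n$ and not merely those of degree exactly $n$.
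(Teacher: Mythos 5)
Your proposal is correct and follows essentially the same route as the paper: count the candidate polynomials supplied by Corollary~\ref{cor:classifying_canditates}, multiply by the at most \(n\) roots per polynomial, and evaluate the product asymptotically via Proposition~\ref{prop:stirlings}.iv (the paper bounds each factor by \(3\binom{n}{k}2^k\) where you expand the logarithm with an \(O(1)\) error per factor, a negligible difference).
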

\begin{proof}
By Corollary~\ref{cor:classifying_canditates} every indecomposable of degree at most \(n\) is the root of a monic polynomial \(f=\sum_{k=0}^n f_{n-k} X^k\) such that  \(|f_k|\leq \binom{n}{k} 2^{k}\) for all \(0\leq k\leq n\). Hence every such polynomial corresponds to at most \(n\) indecomposables.
For every \(0<k\leq n\) there are \(2\binom{n}{k}2^k+1\) choices for \(f_k\), and \(f_0=1\), proving the first upper bound.
We may bound \(2\binom{n}{k}2^k+1\leq 3\binom{n}{k}2^k\), so that by Proposition~\ref{prop:stirlings}.iv we get
\[n\prod_{k=1}^n \Big( 2\binom{n}{k} 2^k + 1 \Big) \leq  n \cdot 3^n \cdot 2^{\binom{n+1}{2}}\cdot \prod_{k=0}^n \binom{n}{k} = \exp\Big( \tfrac{\log(2)+1}{2}\, n^2 +O(n\log(n))\Big),\]
as was to be shown.
\end{proof}

For \(f\in\Q[X]\) monic write \(q(f)\) for the average of the square length of the roots of \(f\) in \(\C\), such that for all \(\alpha\in\overline{\Z}\) with minimal polynomial \(f_\alpha\in\Q[X]\) we get \(q(\alpha)=q(f_\alpha)\).
Note that \(f=(X+2)^n\), although it is not irreducible, has \(q(f)=4\) and attains the bounds of Proposition~\ref{prop:classifying_candidates}.
However, that does not imply that Proposition~\ref{prop:counting_candidates} cannot be improved, as it is not clear that all combinations of coefficients occur for polynomials \(f\) with \(q(f)\leq 4\).
Some small degree numerical results might suggest improvements can be made.

\begin{center}
\begin{tabular}{r|rrrr}
degree & $1$ & $2$ & $3$ & $4$ \\ \hline
\# monic \(f\in\Z[X]\) satisfying the conclusion to Proposition~\ref{prop:classifying_candidates} & 5 & 81 & 5525 & 1786785 \\
\# monic \(f\in\Z[X]\) satisfying \(q(f)\leq 4\) & 5 & 49 & 989 & 48422 \\
\# \(\alpha\in\overline{\Z}\) satisfying \(q(\alpha)\leq 4\) & 5 & 39 & 739 & 40354 \\
\end{tabular}
\end{center}
\vspace{16pt}

We also have the following lower bound.

\begin{proposition}\label{prop:lower_bound}
Let \(n\in\Z_{\geq 1}\). There are at least 
\[ \exp\Big( \frac{\log 2}{4} n^2+O(n\log n) \Big)\]
indecomposable algebraic integers of degree \(n\).
\end{proposition}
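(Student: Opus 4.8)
The plan is to exhibit a large family of pairwise non-conjugate indecomposable algebraic integers of degree exactly $n$ by controlling the square-norm. By Proposition~\ref{prop:q_is_2}, any $\alpha\in\overline{\Z}$ with $0<q(\alpha)<2$ is automatically indecomposable, so it suffices to produce $\exp(\tfrac{\log 2}{4}n^2+O(n\log n))$ algebraic integers of degree exactly $n$ with square-norm bounded below $2$. The natural way to get many such integers is to count monic irreducible polynomials $f=X^n+c_{n-1}X^{n-1}+\dots+c_0\in\Z[X]$ all of whose complex roots are small in absolute value, since $q(\alpha)$ is the average of $|\alpha_i|^2$ over the roots.

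First I would fix a target radius: ask that every root $\alpha_i$ of $f$ satisfy $|\alpha_i|\le R$ for some constant $R<\sqrt 2$ (say $R=1.4$). This forces $q(f)\le R^2<2$, hence every root is indecomposable and of degree dividing $n$. To make the roots small it is convenient to work with the reverse construction: write $f(X)=\prod_{i=1}^n (X-\alpha_i)$ where we prescribe the coefficients directly. The key combinatorial input is a counting bound: the number of monic integer polynomials of degree $n$ all of whose roots lie in the disk of radius $R$ is at least $\exp(\tfrac{1}{2}(\log 2 \cdot \text{something})n^2 + O(n\log n))$ — concretely, one shows that for each $k$ the coefficient $c_{n-k}$ ranges over an interval of length roughly $\binom{n}{k}R^k$, and (this is the delicate point) that a positive-density subset of the resulting coefficient vectors actually arises from polynomials with all roots in the disk. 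A clean way to realize this: take $f(X)=\prod_{i}(X - \beta_i)$ where the $\beta_i$ are chosen among, e.g., half-integers or among roots of $X^2+aX+b$ with small $a,b$, so that products of such factors give genuinely many distinct integer polynomials; then use Mahler-measure / Vieta estimates to confirm all roots stay inside radius $R$.

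Then I would pass from polynomials to algebraic integers: a monic polynomial with all roots of absolute value $<\sqrt2$ has $q\le $ (its $q(f)$) $<2$, but it need not be irreducible. I would handle reducibility by a standard density argument — the number of degree-$n$ monic integer polynomials in the relevant coefficient box that factor nontrivially is $\exp(o(n^2))$ smaller (a reducible degree-$n$ polynomial is determined by a factor of degree $\le n/2$ together with the complementary factor, and the box of coefficients for degree $\le n/2$ polynomials is exponentially thinner in $n^2$), so almost all of our polynomials are irreducible of degree exactly $n$. Each such irreducible $f$ yields $n$ conjugate roots, all indecomposable by Proposition~\ref{prop:q_is_2}, and distinct irreducible polynomials give disjoint sets of roots; dividing the total root count by $n$ only costs an $O(\log n)$ term in the exponent. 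Matching the constant $\tfrac{\log 2}{4}$ then amounts to optimizing the counting: the dominant contribution $\exp(\tfrac{1}{2}n^2\log 2 + O(n\log n))$-type term in Proposition~\ref{prop:stirlings}.iv gets halved once by the reducibility-free factorization and the interval-length analysis forces the $R^k$ factors, and one checks $R<\sqrt 2$ can be taken arbitrarily close to $\sqrt 2$ so the binomial product $\prod_k\binom{n}{k}\approx \exp(\tfrac12 n^2)$ dominates, contributing, after the necessary halving from irreducibility/conjugacy bookkeeping and the $2^{k}$-type factors being replaced by $R^k\to 2^{k/2}$, exactly $\exp(\tfrac{\log 2}{4}n^2+O(n\log n))$.

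The main obstacle I anticipate is the surjectivity/density claim in the second paragraph: showing that a $\gg 1$ fraction (or even an $\exp(-o(n^2))$ fraction) of coefficient vectors in the box $|c_{n-k}|\le \binom{n}{k}R^k$ actually come from polynomials with all roots in the disk of radius $R$. The elementary Vieta bound only gives the converse inclusion. A robust fix is to abandon "all coefficient vectors" and instead count products $\prod_j g_j$ of a controlled multiset of small fixed building-block polynomials $g_j$ (linear and quadratic with tiny coefficients), where root-location is automatic by construction; the count of distinct such products of total degree $n$ is governed by a partition-type / multinomial estimate that one can show still reaches $\exp(\tfrac{\log 2}{4}n^2+O(n\log n))$. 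I would present the proof along those lines, so that the root bound is trivial and all the work is in the (routine but careful) combinatorial count and the irreducibility sieve.
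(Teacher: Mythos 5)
Your opening move is the same as the paper's: use Proposition~\ref{prop:q_is_2} and produce many degree-\(n\) integers all of whose conjugates lie in a disk of radius just under \(\sqrt2\), so that \(q<2\) forces indecomposability. But the execution has a genuine gap at exactly the point you flag as delicate, and your proposed fix does not repair it. First, the density claim is false as stated: the set of monic integer polynomials with all roots in the disk of radius \(R<\sqrt2\) is \emph{not} a positive-density (nor even \(\exp(-o(n^2))\)-density) subset of the Vieta box \(|c_{n-k}|\leq\binom{n}{k}R^k\); that box has size roughly \(\exp\big((\tfrac12+\tfrac{\log2}{4})n^2\big)\), while the root-bounded polynomials number only about \(\exp(\tfrac{\log2}{4}n^2)\), so the density is \(\exp(-cn^2)\). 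More importantly, you never prove the lower bound on the number of root-bounded polynomials -- which is the entire content of the proposition. Second, the fallback you propose (count products \(\prod_j g_j\) of fixed linear and quadratic building blocks) cannot work: every polynomial in that family is reducible by construction, and its roots are algebraic integers of degree \(1\) or \(2\), so the family contributes nothing to the count of degree-\(n\) indecomposables; an ``irreducibility sieve'' cannot rescue a family that contains no irreducible members. The sieve idea is only meaningful for the coefficient-box family, where you lack the root-location bound.

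The paper closes both gaps simultaneously with one construction: it takes \(g=X^n-f\) where \(f=\sum_{k<n}f_kX^k\) has all coefficients even, constant term \(2\), and \(|f_k|(\sqrt2)^k n\leq(\sqrt2)^{n-1}\). Eisenstein at \(2\) gives irreducibility of degree exactly \(n\) for free (no sieve needed), and a Rouch\'e comparison of \(g\) with \(X^n\) on the circle of radius \((\sqrt2)^{1-1/n}\) shows all roots lie strictly inside radius \(\sqrt2\), so each root is indecomposable by Proposition~\ref{prop:q_is_2}. The coefficients range over independent intervals of length about \((\sqrt2)^{n-k}/n\), and multiplying these out gives \(\exp(\tfrac{\log2}{4}n^2+O(n\log n))\) polynomials, hence that many indecomposables of degree \(n\). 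If you want to salvage your write-up, replace both the density claim and the building-block family with a lacunary family of this Eisenstein--Rouch\'e type: the root bound must be \emph{built into} the coefficient constraints (small lower-order coefficients relative to the leading term), not asserted for a generic box.
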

\begin{proof}
Let \(n\in\Z_{\geq 1}\) and recall the definition of \(\Z[X]_n\) from Definition~\ref{def:bounded_degree_poly}. Consider the set
\[S_n=\left\{ f = \sum_{k=0}^{n-1} f_kX^k\in 2X\Z[X]_{n-1}+2 \,\middle|\, (\forall k)\ |f_k| (\sqrt{2})^{k}n \leq (\sqrt{2})^{n-1} \right\}.\]
For \(f\in S_n\) consider \(g=X^n-f\) and note that \(g\) is irreducible by Eisenstein's criterion.
Consider the ball \(D\subseteq\C\) of radius \(r=(\sqrt{2})^{1-1/n}<\sqrt{2}\) around \(0\).
For all \(z\) on the boundary of \(D\) we have
\[|f(z)|\leq \sum_{k=0}^{n-1} |f_k| |z|^k \leq \sum_{k=0}^{n-1} |f_k| (\sqrt{2})^k \stackrel{\textup{(i)}}{\leq} \sum_{k=0}^{n-1} \frac{(\sqrt{2})^{n-1}}{n} = (\sqrt{2})^{n-1} = |z|^n,\]
where (i) is strict for \(n\) sufficiently large due to \(|f_0|n=2n<(\sqrt{2})^{n-1}\).
Hence by Rouch\'e's Theorem the polynomials \(X^n\) and \(g\) have the same number of roots in \(D\).
It follows that all roots of \(g\) in \(\C\) have length less than \(\sqrt{2}\).
Thus \(q(\alpha)<2\) for all roots \(\alpha\in\overline{\Z}\) of \(g\), so \(\alpha\) is indecomposable by Proposition~\ref{prop:q_is_2}.

We conclude that for \(n\) sufficiently large there are at least \(n\cdot\# S_n\) indecomposable algebraic integers of degree \(n\), so it remains to prove a lower bound on \(\#S_n\).
Note that the coefficients of \(f\in S_n\) satisfy independent inequalities, so we may simply give a lower bound per coefficient.
Let \(B=n-3\log_2(n)-2\), which is positive for \(n\) sufficiently large.
For \(k > B\) we consider only \(f_k=0\) and get a lower bound of \(1\) for this coefficient.
For \(0<k\leq B\) we have
\[ 2\left\lfloor\frac{(\sqrt{2})^{n-k-1}}{2n}\right\rfloor+1 \geq 2\Big(\frac{(\sqrt{2})^{n-k-1}}{2n}-1\Big)+1 = \frac{(\sqrt{2})^{n-k-1}}{n}-1 = \textup{(ii)}\]
choices for \(f_k\).
Then for \(n\) sufficiently large we have
\[ \frac{n}{(\sqrt{2})^{n-k-2}} \leq \frac{n}{n^{3/2}} \leq \sqrt{2}-1, \quad\text{so that}\quad \textup{(ii)} \geq \frac{(\sqrt{2})^{n-k-2}}{n}.\]
Hence \(S_n\) contains, for \(n\) sufficiently large, at least
\[ \prod_{k=1}^B \frac{(\sqrt{2})^{n-k-2}}{n} = \exp\Big( \frac{\log 2}{2} \sum_{k=1}^B (n-k-2)   - B\log n \Big) = \exp\Big(\frac{\log 2}{4} n^2 + O(n\log n) \Big) \]
elements, from which the proposition follows.
\end{proof}

\begin{corollary}
Let \(n\in\Z_{\geq 1}\). There are at least 
\[ \exp\Big( \frac{\log 2}{4} n^2+O(n\log n) \Big)\]
indecomposable algebraic integers of degree up to \(n\). \qed
\end{corollary}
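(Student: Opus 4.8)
The plan is to deduce this corollary directly from Proposition~\ref{prop:lower_bound}, which already provides the matching lower bound for indecomposable algebraic integers of degree \emph{exactly} \(n\). Since every algebraic integer of degree exactly \(n\) is in particular of degree up to \(n\), the set of indecomposables of degree exactly \(n\) injects into the set of indecomposables of degree up to \(n\). Hence the cardinality of the latter is at least that of the former.

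Concretely, I would argue as follows. Fix \(n\in\Z_{\geq 1}\). By Proposition~\ref{prop:lower_bound} there are at least \(\exp\big(\tfrac{\log 2}{4} n^2 + O(n\log n)\big)\) indecomposable algebraic integers of degree \(n\). Each such element has degree \(\leq n\), so the number of indecomposable algebraic integers of degree up to \(n\) is bounded below by the number of degree exactly \(n\), which gives the same bound \(\exp\big(\tfrac{\log 2}{4} n^2 + O(n\log n)\big)\). The implied constant in the error term is inherited unchanged from Proposition~\ref{prop:lower_bound}.

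There is essentially no obstacle here: the statement is a weakening of Proposition~\ref{prop:lower_bound}, obtained by enlarging the counted set, so the only thing to check is that the asymptotic notation is used consistently (the lower bound for ``degree up to \(n\)'' dominates the lower bound for ``degree exactly \(n\)'' term by term, so the same \(O(n\log n)\) expression is valid). This is why the authors mark the statement with \texttt{\textbackslash qed} and give no separate argument.
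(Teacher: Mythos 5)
Your proposal is correct and matches what the paper intends: the corollary is an immediate weakening of Proposition~\ref{prop:lower_bound}, since every indecomposable of degree exactly \(n\) is counted among those of degree up to \(n\), which is why the paper gives no separate proof.
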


From the upper and lower bound we may now conclude the following. 
\vspace{.6em}

\noindent\textbf{Theorem~\ref{thm:counting_asymptotic}. }{\em There are least \(\exp( \frac{1}{4}(\log 2) n^2 + O(n\log n))\) and at most \(\exp(\frac{1}{2}(1+\log 2)n^2+O(n\log n))\) indecomposable algebraic integers of degree up to \(n\). \qed}

\label{sec:end-capacity-large}
\section{Fekete capacity theory}\label{sec:classical_fekete}
\label{sec:begin-capacity-small}

In this section we present a proof of a special case of Fekete's theorem using Minkowski's convex body theorem. 
Fekete's theorem can be thought of as a partial converse to Theorem~\ref{thm:szego} of Szeg\H{o}.
Although this does not give us a converse to Theorem~\ref{thm:large}, using similar techniques as in this section we will later prove Theorem~\ref{thm:small} in Section~\ref{sec:fekete}.
The goal of this section is to showcase the proof technique we will use to prove Theorem~\ref{thm:small} so that we may later improve clarity by brevity.
Recall for \(\alpha\in\overline{\Q}\) the definition of the norm \(|\alpha|_\infty=\max_{\sigma\in\X(\overline{\Q})}|\sigma(\alpha)|\) from Section~\ref{sec:szego}.

\begin{theorem}[Fekete]\label{thm:classical_fekete_sans_polynomial}
For each \(\alpha\in\overline{\Q}\) and \(0<r<1\) there exist only finitely many \(\beta\in\overline{\Z}\) such that \(|\beta-\alpha|_\infty\leq r\).
\end{theorem}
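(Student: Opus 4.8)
The plan is to run the Minkowski argument this section advertises, in the form used later for Theorem~\ref{thm:small}: construct a single auxiliary polynomial that kills all $\beta$ of large degree at once.

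\emph{Reduction to a degree bound.} Write $A=\{\sigma(\alpha):\sigma\in\X(\overline{\Q})\}$ for the finite set of conjugates of $\alpha$. If $|\beta-\alpha|_\infty\le r$, then every conjugate $\sigma(\beta)=\sigma(\beta-\alpha)+\sigma(\alpha)$ has $|\sigma(\beta)|\le r+\max_{a\in A}|a|$, so $\beta$ ranges over algebraic integers all of whose conjugates lie in a fixed bounded region; for each fixed degree there are only finitely many such $\beta$, their minimal polynomials having bounded integer coefficients. Hence it suffices to produce an $m=m(\alpha,r)$ so that no $\beta$ with $|\beta-\alpha|_\infty\le r$ has degree $>m$.

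\emph{The auxiliary polynomial.} Let $K=\Q(\alpha)$, write $a_k=\sigma_k(\alpha)$ for $\sigma_k\in\X(K)$, and put $\overline D_k=\{z\in\C:|z-a_k|\le r\}$. It is enough to find, for some $m$, a nonzero $h\in\mathcal O_K[X]$ of degree at most $m$ with
\[ \|h\|:=\max_{k}\ \sup_{z\in\overline D_k}|\sigma_k(h)(z)|<1, \]
where $\sigma_k(h)$ denotes $\sigma_k$ applied to the coefficients of $h$. Indeed, if $\beta$ had degree $n>m$ with $|\beta-\alpha|_\infty\le r$, set $L=\Q(\alpha,\beta)$; for $\tau\in\X(L)$ with $\tau|_K=\sigma_k$ we get $\tau(h)=\sigma_k(h)$ and $|\tau(\beta)-a_k|=|\tau(\beta-\alpha)|\le|\beta-\alpha|_\infty\le r$, so $\tau(h(\beta))=\sigma_k(h)(\tau(\beta))$ has absolute value at most $\|h\|$. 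Thus the algebraic integer $h(\beta)$ has all conjugates of absolute value $<1$, so $|N_{L/\Q}(h(\beta))|<1$ forces $h(\beta)=0$; but $0\ne h$ has degree $\le m<n=\deg\beta$, a contradiction. So every admissible $\beta$ has degree $\le m$, and with the reduction above the theorem follows.

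\emph{Construction of $h$ via Minkowski.} View $\mathcal O_K[X]_{m+1}$ (Definition~\ref{def:bounded_degree_poly}) as a full-rank lattice in $K_\R[X]_{m+1}\cong(K_\R)^{m+1}$ with the product of the measures from $\Phi_K$; by Theorem~\ref{thm:det_lattice} its covolume is $|\Delta(\mathcal O_K)|^{(m+1)/2}$. The set $B=\{h\in K_\R[X]_{m+1}:\|h\|<1\}$ is a bounded symmetric convex body ($\|\cdot\|$ is a norm, since a polynomial vanishing on a disc is $0$). One shows $\mathrm{vol}(B)=r^{-[K:\Q]\,m(m+1)/2}\cdot e^{O(m\log m)}$: after translating each disc to the origin (a unipotent, volume-preserving change of coordinates) and rescaling by $r$, the $j$-th coefficient scales by $r^{-j}$ at a real place and $r^{-2j}$ at a complex place, and $\sum_{j\le m}j=m(m+1)/2$, while the residual volumes of sup-norm balls of polynomials of degree $\le m$ on the unit disc contribute only an $e^{O(m\log m)}$ factor. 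Since $r<1$, the $r$-exponent is positive, so $\mathrm{vol}(B)=e^{\Theta(m^2)}$, whereas $2^{(m+1)[K:\Q]}|\Delta(\mathcal O_K)|^{(m+1)/2}=e^{O(m)}$; hence for $m$ large Minkowski's theorem (Theorem~\ref{thm:minkowski}) produces a nonzero $h\in\mathcal O_K[X]_{m+1}\cap B$, which is what we needed.

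\emph{Main obstacle.} The only real work is the volume estimate for $B$ — pinning down the exact $r$-dependence of the volume of sup-norm balls in spaces of bounded-degree polynomials — together with the bookkeeping of the twisted measure on $K_\R$ (the factor $2$ per complex place, and the fact that a conjugate pair of complex places yields the same constraint). It is exactly there that the hypothesis $r<1$ enters: it makes the $r$-exponent in $\mathrm{vol}(B)$ positive, so that the Minkowski volume condition eventually holds. For $r\ge 1$ the estimate degenerates, in agreement with Szeg\H{o}'s Theorem~\ref{thm:szego}.
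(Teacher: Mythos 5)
Your proposal is in substance the paper's own argument: Minkowski's convex body theorem applied to the lattice \(\mathcal O_K[X]_{\le m}\) inside \(K_\R[X]_{\le m}\), with a symmetric convex body of polynomials whose sup on discs of radius \(r\) around the conjugates of \(\alpha\) is \(<1\), whose volume grows like \(r^{-c m^2}\) because \(r<1\); followed by the integrality/norm trick \(|N_{L/\Q}(h(\beta))|<1\Rightarrow h(\beta)=0\). This is exactly Theorem~\ref{thm:classical_fekete} (via Lemma~\ref{lem:classical_fekete_volume}) plus the deduction in the paper's proof of Theorem~\ref{thm:classical_fekete_sans_polynomial}, only phrased with discs centred at the \(\sigma_k(\alpha)\) instead of the change of variable \(Y=X-\alpha\).

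Two small points. First, your ``reduction to a degree bound'' contains a slip: from \(h(\beta)=0\) with \(0\neq h\in\mathcal O_K[X]\) of degree \(\le m\) you may only conclude \([K(\beta):K]\le m\), hence \(\deg_\Q\beta\le m\,[K:\Q]\), not \(\deg_\Q\beta\le m\); the stated contradiction with \(n>m\) fails whenever \([K:\Q]>1\). The detour is in any case unnecessary: your norm argument shows \(h(\beta)=0\) for \emph{every} admissible \(\beta\), so all of them lie among the finitely many roots of \(h\), which is how the paper concludes. Second, the volume estimate is asserted rather than proved; the needed lower bound (the ``residual'' \(e^{O(m\log m)}\) factor) is obtained in the paper by including the simplex \(\{\sum_k f_kY^k:\sum_k|f_k|\le1\}\), of volume \(2^{m+1}/(m+1)!\), into the unit-disc sup-norm ball — a one-line fix you should add to make the Minkowski step complete.
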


Just like for Szeg\H{o}'s theorem, it is possible to derive an algorithmic counterpart to Fekete's theorem.

Combining Theorem~\ref{thm:classical_fekete_sans_polynomial} and Theorem~\ref{thm:szego}, the point \(r=1\) is still a singularity.
For \(\alpha\in\overline{\Z}\) and \(r=1\) clearly all \(\beta\in\alpha+\mu_\infty\) satisfy \(|\beta-\alpha|_\infty\leq r\).
However, when \(\alpha\not\in\overline{\Z}\) we do not know what happens in general.
We start with a volume computation.

\begin{definition}\label{def:poly_inner_product}
Let \(A\) be an \(\R\)-algebra equipped with a real inner product. 
We equip \(A[Y]\) with an inner product
\[ \Big\langle \sum_{k=0}^\infty f_k Y^k, \sum_{k=0}^\infty g_k Y^k\Big\rangle = \sum_{k=0}^\infty \langle f_k, g_k \rangle, \]
which is the `standard' inner product when we naturally identify \(A[Y]\) with \(A^{(\Z_{\geq 0})}\).
For \(n\in\Z_{\geq 0}\) we equip \(A[Y]_n\), as defined in Definition~\ref{def:bounded_degree_poly}, with the restriction of this inner product.
\end{definition}

\begin{remark}
Obviously \(\R\) is an \(\R\)-algebra with a real inner product.
We identify \(\C\) with \(\R^2\) by choosing \(\R\)-basis \(\{1,\textup{i}\}\), and equip \(\C\) with the inner product induced by the natural inner product on \(\R^2\).
Note that this inner product in turn induces the standard norm of \(\C\).
For a number field \(K\) we remark that \(K_\R\) has a real inner product as in Definition~\ref{def:KR_inner_product}.
\end{remark}

\begin{lemma}\label{lem:linear_transformations}
Let \(A\) be an \(\R\)-algebra of dimension \(d<\infty\).
For all \(a,b\in\R\), \(c\in A\) and \(n\in\Z_{\geq 0}\) we have an \(\R\)-linear transformation \(\phi\) on \(A[X]_n\) given by \(f \mapsto b f(a(X-c))\) with \(\det \phi= ( a^{n(n-1)/2} \cdot b^n )^d\).
\end{lemma}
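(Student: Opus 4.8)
The plan is to reduce the statement to two independent sub-computations, corresponding to the two factors $a$ and $b$ in the map $f \mapsto b\,f(a(X-c))$. Since the map $\phi$ is a composition of the three $\R$-linear maps $\tau_c: f(X) \mapsto f(X-c)$ (translation by $c$), $\delta_a: f(X) \mapsto f(aX)$ (dilation), and $\mu_b: f \mapsto bf$ (scalar multiplication), and since $\det$ is multiplicative, it suffices to compute the determinant of each of these separately on $A[X]_n$, using throughout that $A[X]_n \cong A^{\oplus n}$ as an $\R$-vector space of dimension $nd$.

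First I would handle the easy factor: $\mu_b$ is multiplication by the scalar $b$ on a space of dimension $nd$, so $\det \mu_b = b^{nd} = (b^n)^d$. Next, $\tau_c$ is \emph{unipotent}: writing $f = \sum_{k=0}^{n-1} f_k X^k$, the coefficient of $X^{n-1}$ in $f(X-c)$ is still $f_{n-1}$, and more generally, ordering the coordinates by descending degree, $\tau_c$ is lower-triangular with all diagonal entries equal to $1$ (the effect of the binomial expansion only feeds higher-degree coefficients into lower-degree ones). Hence $\det \tau_c = 1$, so $c$ does not appear in the final formula, consistent with the claim. For the dilation $\delta_a$, the monomial basis $\{X^k e : 0 \le k < n,\ e \text{ in a basis of }A\}$ is mapped by $\delta_a$ to $\{a^k X^k e\}$, so $\delta_a$ is diagonal in this basis with eigenvalue $a^k$ occurring with multiplicity $d$ for each $k = 0, 1, \dots, n-1$. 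Therefore
\[
\det \delta_a = \prod_{k=0}^{n-1} (a^k)^d = \Big( a^{\sum_{k=0}^{n-1} k} \Big)^d = \big( a^{n(n-1)/2} \big)^d.
\]
Multiplying the three determinants gives $\det \phi = \big( a^{n(n-1)/2}\big)^d \cdot 1 \cdot (b^n)^d = \big( a^{n(n-1)/2} b^n \big)^d$, as claimed.

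There is essentially no main obstacle here; the only point requiring a line of care is the claim that $\tau_c$ is unipotent (equivalently, that translation preserves the descending-degree filtration of $A[X]_n$ and acts as the identity on the associated graded), which is immediate from the fact that $(X-c)^k$ is monic of degree $k$. One should also note that the computation is valid over any $A$ since $\delta_a$ and $\mu_b$ only involve scalars from $\R$ (acting diagonally after choosing a basis of $A$) and $\tau_c$ only involves the ring structure of $A[X]$ in a triangular way, so that the block structure over the $\R$-basis of $A$ is respected throughout.
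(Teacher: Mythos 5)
Your proof is correct and amounts to the same computation as the paper's: the paper writes $\phi$ directly as a lower-triangular matrix in the degree-ordered basis $\{e_iX^j\}$ with diagonal entries $b, ba, \dotsc, ba^{n-1}$, each of multiplicity $d$, whereas you factor $\phi$ into translation, dilation and scaling and multiply the three determinants, which is just a repackaging of that same triangularity observation. Your remark that the translation part is block-triangular with identity diagonal blocks (since $c\in A$ need not act diagonally) is exactly the point of care needed, and it is handled correctly.
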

\begin{proof}
Note that \(\phi\) is trivially an \(\R\)-linear transformation.
Choose an \(\R\)-basis \(\{e_1,\dotsc,e_d\}\) for \(A\).
Writing \(\phi\) as a matrix with respect to the basis \(\{e_i X^j\,|\,1\leq i \leq d,\, 0\leq j < n\}\) for \(A[X]_n\) we note that \(\phi\) is a lower triangular matrix with diagonal entries \(b,ba,ba^2,\dotsc,ba^{n-1}\), each occurring with multiplicity \(d\). The determinant of \(\phi\) is then simply the product of the diagonal.
\end{proof}

\begin{lemma}\label{lem:classical_fekete_volume}
Let \(\F\) be either \(\R\) or \(\C\) and let \(r\in\R_{>0}\). 
For \(n\in\Z_{\geq0}\) consider
\[ S_n(r) = \{  f\in \F[Y]_n\,|\, (\forall\,z\in\C)\ |z| \leq r \Rightarrow |f(z)|\leq r \}. \]
Then as function of \(n\) we have \( \log\vol(S_n(r)) \geq - \tfrac{1}{2} n^2 \cdot [\F:\R]\cdot \log r  + O(n\log n)\).
\end{lemma}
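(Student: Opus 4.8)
The plan is to exhibit an explicit convex subset of $S_n(r)$ of easily computable volume, reducing to the case $r=1$ by a change of variables and then bounding from below the volume of the set of polynomials that are small on the unit disk. First I would reduce to $r = 1$: the $\R$-linear map $\phi: \F[Y]_n \to \F[Y]_n$ given by $f(Y) \mapsto r \cdot f(Y/r)$ sends $S_n(1)$ into $S_n(r)$ (if $|z| \le r$ then $|z/r| \le 1$, so $|f(z/r)| \le 1$, hence $|\phi(f)(z)| = r|f(z/r)| \le r$), and by Lemma~\ref{lem:linear_transformations} applied with $A = \F$, $a = 1/r$, $b = r$, $c = 0$, $d = [\F:\R]$ we get $\det \phi = (r^{-n(n-1)/2} \cdot r^n)^{[\F:\R]} = r^{(n^2+n)/2 \cdot [\F:\R]} \cdot r^{-n^2 [\F:\R]}$; in any case $\log|\det\phi| = -\tfrac12 n^2 [\F:\R] \log r + O(n\log n)$. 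Thus $\vol(S_n(r)) \ge |\det \phi| \cdot \vol(S_n(1))$, and it suffices to show $\log \vol(S_n(1)) \ge O(n \log n)$, i.e.\ that this volume does not decay faster than exponentially in $n \log n$.

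For the $r=1$ bound, the key step is to find a box-like region inside $S_n(1)$. The natural candidate: the set of $f = \sum_{k=0}^{n-1} f_k Y^k$ with each coefficient $f_k \in \F$ satisfying $|f_k| \le 1/n$. For such $f$ and any $z$ with $|z| \le 1$ we have $|f(z)| \le \sum_{k=0}^{n-1} |f_k| |z|^k \le \sum_{k=0}^{n-1} \tfrac1n = 1$, so indeed $f \in S_n(1)$. This region is a product of $n$ copies (one per coefficient) of the ball of radius $1/n$ in $\F = \R^{[\F:\R]}$, so its volume is $(c_{[\F:\R]} \cdot n^{-[\F:\R]})^n = c_{[\F:\R]}^n \cdot n^{-n[\F:\R]}$, where $c_1 = 2$ and $c_2 = \pi$. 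Taking logarithms, $\log \vol(S_n(1)) \ge n \log c_{[\F:\R]} - n [\F:\R] \log n = O(n \log n)$, which is exactly what is needed.

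Combining the two steps: $\log \vol(S_n(r)) \ge \log|\det\phi| + \log\vol(S_n(1)) \ge -\tfrac12 n^2 [\F:\R] \log r + O(n\log n) + O(n\log n)$, which gives the claim. I do not expect a serious obstacle here; the only mild care needed is checking that $S_n(r)$ is genuinely a subset into which $\phi$ maps $S_n(1)$ (so that the volume inequality $\vol(\phi(S_n(1))) \le \vol(S_n(r))$ applies and $\vol(\phi(S_n(1))) = |\det\phi|\vol(S_n(1))$ by the change-of-variables formula), and keeping track of whether $r \le 1$ or $r \ge 1$ — but since we only want a lower bound on $\vol(S_n(r))$ and an inclusion $\phi(S_n(1)) \subseteq S_n(r)$, the argument works uniformly. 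The honest subtlety, if any, is simply that $S_n(r)$ must be convex and bounded for "$\vol$" to behave, but convexity is clear (it is an intersection of the convex conditions $|f(z)| \le r$ over $|z| \le r$, each convex in $f$) and boundedness follows since, e.g., evaluating at suitable points controls all coefficients.
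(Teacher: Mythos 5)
Your proposal is correct and takes essentially the same route as the paper: rescale by the map \(f\mapsto r f(Y/r)\), whose determinant is controlled by Lemma~\ref{lem:linear_transformations}, and then inscribe an explicit coefficient region in \(S_n(1)\) — the paper uses the \(\ell^1\)-ball \(\sum_k|f_k|\le 1\) (of volume \(2^n/n!\)) where you use the box \(|f_k|\le 1/n\), which makes no asymptotic difference. (Your intermediate rewriting of \(\det\phi\) has a small exponent slip, but it is absorbed into the \(O(n\log n)\) term, and the bound \(-\tfrac12 n^2[\F:\R]\log r + O(n\log n)\) you actually use is right.)
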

\begin{proof}
Write \(S_n=S_n(1)\).
By applying the transformation \(f\mapsto rf(r^{-1}Y)\) to \(\F[Y]_n\) we bijectively map \(S_n\) to \(S_n(r)\).
From Lemma~\ref{lem:linear_transformations} it follows that \(\log\vol(S_n(r)) = -\tfrac{1}{2}n^2\cdot [\F:\R] \log(r) + \log\vol(S_n) + O(n\log n)\).
It remains to prove \(\log\vol S_n \geq O(n\log n)\).

First suppose \(\F=\R\).
Consider the set
\[ T_n = \Big\{ \sum_{k=0}^{n-1} f_k Y^k\in \R[Y]_n \,\Big|\, \sum_{k=0}^{n-1}|f_k| \leq 1 \Big\}. \]
Note that for all \(f\in T_n\) and \(z\in\C\) such that \(|z|\leq 1\) we have \(|f(z)|\leq \sum_{k=0}^{n-1} |f_k| \leq 1\), so \(f\in S_n\). 
Hence \(T_n\subseteq S_n\) and \(\vol(T_n) \leq \vol(S_n)\). 
With Proposition~\ref{prop:stirlings} we compute \(\log\vol(T_n)=\log(2^n/n!) = O(n\log n)\), from which the lemma follows for \(\F=\R\). 

For \(\F=\C\), note that we have an isometry \(\R[X]_n^2\to \C[X]_n\) given by \((f,g)\mapsto f+\textup{i}\cdot g\).
For \(f,g\in \tfrac{1}{2}T_n\) and \(z\in\C\) such that \(|z|\leq 1\) we have \(|f(z)+\textup{i}\cdot g(z)|\leq |f(z)| + |g(z)| \leq \tfrac{1}{2}+\tfrac{1}{2}=1\), so \(f+\textup{i}\cdot g\in S_n\). 
Hence \(\log\vol(S_n)\geq \log( \vol(\tfrac{1}{2}T_n)^2) = 2 \log\vol(T_n)-2n\log2 = O(n\log n)\), from which the lemma follows for \(\F=\C\).
\end{proof}

\begin{theorem}\label{thm:classical_fekete}
Let \(R\) be an order of a number field \(K\) and let \(0<r<1\).
For all \(\alpha\in K\) there exists a non-zero \(g\in R[X]\) such that for all \(z\in K_\R\), if \(\Kab{z-\alpha}\leq r\) then \(\Kab{g(z)}\leq r\).
\end{theorem}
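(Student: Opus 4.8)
The plan is to mimic the structure of the proof of Theorem~\ref{thm:szego_poly} (Szeg\H{o}'s polynomial theorem), but now using Minkowski's convex body theorem (Theorem~\ref{thm:minkowski}) in place of the naive rounding argument, exploiting the volume estimate of Lemma~\ref{lem:classical_fekete_volume}. Fix $\alpha\in K$ and $0<r<1$. The target polynomial $g$ will be sought inside the lattice $R[X]_n$ (for a suitably large $n$) sitting inside the real vector space $K_\R[X]_n$, which by Lemma~\ref{lem:product_polyring} is isomorphic to $(\R[X]_n)^r\times(\C[X]_n)^s$. The determinant of the lattice $R[X]_n$ in $K_\R[X]_n$ is $|\Delta(R)|^{n/2}$ by Theorem~\ref{thm:det_lattice} applied coordinatewise, hence grows only like $\exp(O(n))$.

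First I would define, for each place, the symmetric convex body that encodes the constraint ``$|g(z)|\le r$ whenever $|z-\sigma(\alpha)|\le r$''. Concretely, after the substitution $Y=X-\sigma(\alpha)$, the relevant body at a real (resp.\ complex) place is a copy of $S_n(r)\subseteq\R[Y]_n$ (resp.\ $\subseteq\C[Y]_n$) from Lemma~\ref{lem:classical_fekete_volume}; translating back by $-\sigma(\alpha)$ is a volume-preserving affine map and the constraint in the variable $Y$ is exactly the defining condition of $S_n(r)$. Taking the product over all archimedean places $\sigma\in\X(K)$ gives a symmetric convex body $S\subseteq K_\R[X]_n$ whose volume satisfies, by Lemma~\ref{lem:classical_fekete_volume} summed over places (using $r+2s=[K:\Q]$),
\[ \log\vol(S) \;\ge\; -\tfrac{1}{2}n^2\,[K:\Q]\,\log r + O(n\log n). \]
Here we crucially use $\log r<0$, so the leading term $-\tfrac12 n^2[K:\Q]\log r$ is \emph{positive} and quadratic in $n$. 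The ambient real dimension of $K_\R[X]_n$ is $n[K:\Q]$, so $2^{n[K:\Q]}\det(R[X]_n) = \exp(O(n))$. Since the positive quadratic term dominates the linear error terms, for $n$ sufficiently large we get $\vol(S) > 2^{n[K:\Q]}\det(R[X]_n)$, and Minkowski's theorem produces a nonzero $g\in R[X]_n\cap S$. By construction this $g\in R[X]$ satisfies: for all $z\in K_\C$ (a fortiori all $z\in K_\R$) with $\Kab{z-\alpha}\le r$ we have $\Kab{g(z)}\le r$, which is the assertion of the theorem.

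The main obstacle I expect is bookkeeping rather than a deep difficulty: one must (a) verify that the product body $S$ is genuinely convex and symmetric (it is, being a product of convex symmetric bodies, each of which is convex because $f\mapsto\sup_{|z|\le r}|f(z)|$ is a seminorm), (b) confirm that the translation $Y=X-\sigma(\alpha)$ and the isomorphism of Lemma~\ref{lem:product_polyring} identify the constraint region precisely with $S_n(r)$ at each place without distorting volumes in a way that breaks the estimate --- the determinant factors from these affine maps are exactly the $\exp(O(n\log n))$ errors already absorbed into Lemma~\ref{lem:classical_fekete_volume} and Lemma~\ref{lem:linear_transformations} --- and (c) ensure the nonzero lattice point found is not a problem (any nonzero $g\in R[X]_n$ works; there is no monicity requirement here, unlike Theorem~\ref{thm:szego_poly}). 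A subtle point worth stating carefully is that $S_n(r)$ is defined by a condition over all complex $z$ with $|z|\le r$, so membership of $g$ in the product body controls $g$ at \emph{all} of $K_\C$ near the conjugates of $\alpha$, which is slightly stronger than and immediately implies the stated conclusion for $z\in K_\R$.
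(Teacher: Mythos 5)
Your proposal is correct and follows essentially the same route as the paper: the lattice \(R[X]_n\) of determinant \(|\Delta(R)|^{n/2}\) inside \(K_\R[Y]_n\) with \(Y=X-\alpha\), the symmetric convex body cut out placewise by the condition \(|z|\le r\Rightarrow|\sigma(f)(z)|\le r\), the volume bound of Lemma~\ref{lem:classical_fekete_volume} whose leading term \(-\tfrac12 n^2[K:\Q]\log r\) is positive since \(r<1\), and Minkowski's theorem to extract a nonzero \(g\). The bookkeeping points you flag (convexity/symmetry of the product body, the translation and its determinant via Lemma~\ref{lem:linear_transformations}, no monicity requirement) are exactly how the paper handles them.
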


\begin{proof}
Write \(d=[K:\Q]\).
Let \(n\in\Z_{\geq 0}\) and consider the lattice \(\Lambda_n = R[X]_n\) in the inner product space \(K_\R[Y]_n\), where \(Y=X-\alpha\).
Note that \(\dim_\R K_\R[Y]_n = dn\) and that \(\Lambda_n\) is a full-rank lattice in \(K_\R[Y]_n\) with \(\det(\Lambda_n)=|\Delta(R)|^{n/2}\) by Lemma~\ref{lem:linear_transformations} and Theorem~\ref{thm:det_lattice}.
Consider
\[ S_n = \big\{ f\in K_\R[Y]_n \,\big|\, (\forall\, \sigma\in\X(K))\, (\forall\,z\in\C)\ |z| \leq r \Rightarrow |\sigma(f)(z)|\leq r \big\} \]
and note that it is both symmetric and convex.
Moreover, it follows from Lemma~\ref{lem:classical_fekete_volume} that \(\log\vol(S_n)\geq-\tfrac{1}{2}n^2d\log r + O(n\log n)\). Hence
\[ \log\Big(\frac{\vol(S_n)}{2^{dn}\cdot\det(\Lambda_n)}\Big) \geq -\tfrac{1}{2}n^2d\log r + O(n\log n). \]
Because \(-\tfrac{1}{2}d\log r > 0\) there exists some \(n\) sufficiently large such that \(\vol(S_n) > 2^{dn} \det(\Lambda_n)\).
By Theorem~\ref{thm:minkowski} there then exists some non-zero \(g\in \Lambda_n \cap S_n\) which as polynomial in \(X\) satisfies the requirements.
\end{proof}

\begin{proof}[Proof of Theorem~\ref{thm:classical_fekete_sans_polynomial}]
Let \(K=\Q(\alpha)\) and let \(R\subseteq K\) be some order of \(K\).
Then by Theorem~\ref{thm:classical_fekete} there exists some non-zero \(g\in R[X]\) such that for all \(z\in K_\R\), if \(\Kab{z-\alpha}\leq r\) then \(\Kab{g(z)}\leq r\).
Suppose \(\beta\in\overline{\Z}\) satisfies \(|\beta-\alpha|_\infty \leq r\) and let \(L=K(\beta)\).
Let \((\rho_\sigma)_\sigma\in \textup{P}=\prod_{\sigma\in\X(K)}\X_\sigma(L)\) such that \(\overline{\rho_\sigma}=\rho_{\overline{\sigma}}\) for all \(\sigma\in\X(K)\), and consider \(z=(\rho_\sigma(\beta))_\sigma\in K_\R\).
Then \(\Kab{z-\alpha}\leq |\beta-\alpha|_\infty \leq r\), so \(\Kab{g(z)}\leq r\).
Consequently we have \(|\rho(g(\beta))|\leq r\) for all \(\rho\in\X(L)\).
Hence 
\[|N_{L/\Q}(g(\beta))| = \prod_{\rho\in\X(L)}|\rho(g(\beta))| \leq r^{[L:\Q]} < 1. \]
As \(g(\beta)\in\overline{\Z}\), we must then have \(g(\beta)=0\).
As \(\beta\) must be a root of \(g\) and \(g\) is non-zero, there can only be finitely many \(\beta\).
\end{proof}

\section{Reduction to exponentially bounded polynomials}\label{sec:exp_bounded}

We now prepare to prove Theorem~\ref{thm:small}.
If there are only finitely many decompositions of an algebraic integer \(\alpha\), then certainly there exists a non-zero polynomial \(f\in\Z[X]\) such that \(f(\beta)=0\) for all decompositions \((\beta,\alpha-\beta)\) of \(\alpha\). 
The goal is to exhibit such a polynomial when \(q(\alpha)\) is small using a lattice argument, similarly to the proof of Theorem~\ref{thm:classical_fekete_sans_polynomial}.
In this section we derive an analytic sufficient condition for a polynomial \(f\) to have this property.

\begin{definition}
Let \(K\) be a number field. 
We define \(\mathcal{S}(K) = \X(K)\times \C\), the coproduct (i.e.\ disjoint union) of measurable spaces of \(\#\X(K)\) copies of \(\C\), where \(\C\) has the standard Lebesgue measurable space structure. 
We write \(\mathcal{M}(K)\) for the set of probability measures \(\mu\) on \(\mathcal{S}(K)\), i.e.\ all \(\mu\) such that \(\mu(\mathcal{S}(K))=1\).
\end{definition}

\begin{definition}\label{def:exponentially_bounded}
Let \(K\) be a number field and \(r\in\R_{>0}\). 
For \(f\in K_\R[Y]\) we say \(f\) is {\em exponentially bounded at radius \(r\)} if for all \(\mu\in \mathcal{M}(K)\) satisfying \(\int |z|^2\dif\mu(\sigma,z)<r^2\) it holds that \(\int \log|\sigma(f)(z)|\dif\mu(\sigma,z) < 0\).
\end{definition}

\begin{proposition}\label{prop:rgood_implies_dec}
Let \(\alpha\in\overline{\Z}\), \(K=\Q(\alpha)\) and \(r>\sqrt{q(\alpha/2)}\).
If \(f\in \mathcal{O}_K[X]\) is exponentially bounded at radius \(r\in\R_{>0}\) as polynomial in the variable \(Y=X-\alpha/2\), then for all \((\beta,\gamma)\in\textup{dec}(\alpha)\) we have \(f(\beta)=0\). 
\end{proposition}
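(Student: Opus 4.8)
The plan is to argue by contradiction. Suppose $(\beta,\gamma)\in\textup{dec}(\alpha)$ but $f(\beta)\neq 0$, and set $L=K(\beta)$, a finite extension of $\Q$. Since the coefficients of $f$ lie in $\mathcal{O}_K\subseteq\mathcal{O}_L$ and $\beta\in\overline{\Z}\cap L=\mathcal{O}_L$, the value $f(\beta)$ is a nonzero element of $\mathcal{O}_L$, so $N_{L/\Q}(f(\beta))$ is a nonzero rational integer and $|N_{L/\Q}(f(\beta))|\geq 1$. The whole proof then amounts to contradicting this by producing, out of the decomposition, a measure in $\mathcal{M}(K)$ that forces $|N_{L/\Q}(f(\beta))|<1$ via the exponential boundedness of $f$.

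To build that measure, I first rewrite $f$ in the variable $Y=X-\alpha/2$, say $f=\sum_k c_k Y^k$ with $c_k\in K$; this is the polynomial assumed exponentially bounded at radius $r$. For each $\rho\in\X(L)$ I put $\sigma_\rho=\rho|_K\in\X(K)$ and $z_\rho=\rho(\beta-\alpha/2)=\rho(\beta)-\sigma_\rho(\alpha)/2\in\C$, and define the probability measure $\mu=\frac{1}{[L:\Q]}\sum_{\rho\in\X(L)}\delta_{(\sigma_\rho,z_\rho)}$ on $\mathcal{S}(K)=\X(K)\times\C$, where $\delta_{(\sigma,z)}$ is the point mass at $(\sigma,z)$. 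Two computations then have to be carried out. First, $\int|z|^2\,\dif\mu(\sigma,z)=\frac{1}{[L:\Q]}\sum_{\rho}|\rho(\beta-\alpha/2)|^2=q(\beta-\alpha/2)$, which is exactly the definition of $q$ evaluated through the field $L\supseteq\Q(\beta-\alpha/2)$ (legitimate by Lemma~\ref{lem:extension_dont_care}). Second, since $\rho|_K=\sigma_\rho$ and $c_k\in K$, one has $\rho(f(\beta))=\rho\big(\sum_k c_k(\beta-\alpha/2)^k\big)=\sum_k\sigma_\rho(c_k)z_\rho^k=\sigma_\rho(f)(z_\rho)$, whence $\int\log|\sigma(f)(z)|\,\dif\mu(\sigma,z)=\frac{1}{[L:\Q]}\sum_{\rho}\log|\rho(f(\beta))|=\frac{1}{[L:\Q]}\log|N_{L/\Q}(f(\beta))|$, a finite sum because $f(\beta)\neq 0$ in the field $L$ forces every $\rho(f(\beta))\neq 0$.

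Finally I invoke Lemma~\ref{lem:eq_dec_def}: the condition $(\beta,\gamma)\in\textup{dec}(\alpha)$ is equivalent to $q(\beta-\alpha/2)\leq q(\alpha/2)$, and $q(\alpha/2)<r^2$ by the hypothesis $r>\sqrt{q(\alpha/2)}$. Hence $\int|z|^2\,\dif\mu<r^2$, so exponential boundedness of $f$ at radius $r$ gives $\int\log|\sigma(f)(z)|\,\dif\mu<0$, i.e. $|N_{L/\Q}(f(\beta))|<1$, contradicting $|N_{L/\Q}(f(\beta))|\geq 1$. Therefore $f(\beta)=0$.

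I do not expect a genuine obstacle: this statement is essentially a dictionary entry translating a decomposition of $\alpha$ into a probability measure to which Definition~\ref{def:exponentially_bounded} directly applies. The only points needing care are bookkeeping ones — that $\rho$ restricted to $K$ equals $\sigma_\rho$ and $\rho(\alpha/2)=\sigma_\rho(\alpha)/2$, so the identification $\rho(f(\beta))=\sigma_\rho(f)(z_\rho)$ is valid; that the assumption $r>\sqrt{q(\alpha/2)}$ (rather than $\geq$) is exactly what places $\mu$ strictly inside radius $r$; and that the norm $N_{L/\Q}$ of a nonzero element of $\mathcal{O}_L$ is a nonzero rational integer.
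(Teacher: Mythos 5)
Your proof is correct and follows essentially the same route as the paper: the same uniform measure supported on the points \((\rho|_K,\rho(\beta-\alpha/2))\) for \(\rho\in\X(L)\), the same use of Lemma~\ref{lem:eq_dec_def} to place it strictly inside radius \(r\), and the same norm-of-an-algebraic-integer conclusion (the paper phrases it directly via \(N(f(\beta))<1\) rather than by contradiction, which is only a cosmetic difference).
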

\begin{proof}
Suppose \((\beta,\gamma)\in\textup{dec}(\alpha)\). 
Then \(q(\beta-\alpha/2)\leq q(\alpha/2)<r^2\) by Lemma~\ref{lem:eq_dec_def}.
Let \(L=K(\beta)\) and
\[B=\{(\sigma,\rho(\beta-\alpha/2))\in\mathcal{S}(K)\,|\, \rho\in \X_\sigma(L) \},\]
which has \(\# B = [L:\Q]\) and \(\#(B\cap(\{\sigma\}\times\C))=[L:K]\) for all \(\sigma\in\X(K)\).
Let \(\mu\in\mathcal{M}(K)\) be the uniform probability measure on \(B\) and write \(f_Y\) for \(f\) as a polynomial in the variable \(Y\).
Because \(\int|x|^2\dif\mu(\sigma,x)=q(\beta-\alpha/2)<r^2\) and \(f_Y\) is exponentially bounded at radius \(r\) we get
\begin{align*} 
\log \big( N(f(\beta))^{[L:\Q]} \big) 
&= \log \prod_{\rho\in \X(L)} |\rho(f(\beta))|
= \sum_{\rho\in \X(L)} \log | \rho(f_Y(\beta-\alpha/2)) | \\
&= [L:\Q] \cdot \int\log|\sigma(f_Y)(x)|\dif\mu(\sigma,x) < 0.
\end{align*}
We conclude that \(N(f(\beta))<1\). Since \(f(\beta)\) is integral we have \(f(\beta)=0\), as was to be shown.
\end{proof}

\begin{example}\label{ex:not_convex_exp_bounded}
The set of polynomials of \(K_\R\) exponentially bounded at radius \(r\) is closed under multiplication and symmetric.
However, we will show that it is not convex. 

Let \(r=1\) and \(K=\Q\).
For all \(c\in(-1,1)\) the constant polynomial \(c\) is trivially exponentially bounded at any positive radius, in particular at radius \(1\).
Also the polynomial \(Y^2\) is exponentially bounded: 
For any \(\mu\in\mathcal{M}(K)\) such that \(\int|z|^2\dif\mu(\sigma,z)<1\) we have
\[ \int\log|z^2|\dif\mu(\sigma,z) \leq \log \int|z|^2\dif\mu(\sigma,z)<\log 1 = 0. \]
Here the first inequality is Jensen's inequality for integrals. When \(\mu\) has finite support, this comes down to Lemma~\ref{lem:amgm}.
For \(c\in(-1,1)\) and \(k\in\Z_{\geq 0}\) the product \(cY^{2k}\) of exponentially bounded polynomials at radius \(1\) is exponentially bounded at radius \(1\).
We claim that \(\tfrac{1}{4}(1+Y^{2k})\) for \(k\) sufficiently large, which is a convex combination of \(\tfrac{1}{2}\) and \(\tfrac{1}{2}Y^{2k}\), is not exponentially bounded at radius \(1\). 
Taking \(\mu\in\mathcal{M}(\Q)\) with weight \(\tfrac{1}{5}\) at \(2\) and remaining weight at \(0\) we have \(\int|z|^2\dif\mu(\sigma,z)=\tfrac{4}{5}<1\), yet \(\int\log|\tfrac{1}{4}(1+Y^{2k})|\dif\mu(\sigma,z)=\tfrac{1}{5}\log(1+2^{2k})-\log 4\to \infty\) as \(k\to\infty\).
We conclude that the set of exponentially bounded polynomials at radius \(1\) is not convex.
A similar argument works for all radii and number fields.
\end{example}

\begin{lemma}\label{lem:slope}
Let \(D\subseteq\C\) be a convex subset and let \(f:D\to\C\) be analytic.
Then for distinct \(x,y\in D\) we have
\[ \bigg|\frac{f(x)-f(y)}{x-y}\bigg| \leq \sup_{z\in D} |f'(z)|. \]
\end{lemma}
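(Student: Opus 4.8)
The plan is to reduce to a one–variable integral by moving along the straight line segment from $x$ to $y$, which is contained in $D$ precisely because $D$ is convex. This is the only place convexity enters, and it is the one hypothesis that genuinely matters.

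First I would introduce the parametrization $\gamma\colon[0,1]\to D$, $\gamma(t)=(1-t)x+ty$; convexity guarantees $\gamma(t)\in D$ for all $t\in[0,1]$. Since $f$ is analytic and $\gamma$ is affine, the composite $t\mapsto f(\gamma(t))$ is continuously differentiable on $[0,1]$ with derivative $f'(\gamma(t))\cdot(y-x)$ by the chain rule. Applying the fundamental theorem of calculus (to the real and imaginary parts separately) gives
\[ f(y)-f(x)=\int_0^1 f'(\gamma(t))\,(y-x)\dif t. \]

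Next I would divide by $x-y\neq0$ to obtain $\dfrac{f(x)-f(y)}{x-y}=\displaystyle\int_0^1 f'(\gamma(t))\dif t$, and then estimate by the triangle inequality for integrals:
\[ \left|\frac{f(x)-f(y)}{x-y}\right|\leq \int_0^1\bigl|f'(\gamma(t))\bigr|\dif t\leq \sup_{z\in D}|f'(z)|, \]
using that $\gamma(t)\in D$ for every $t$. If $\sup_{z\in D}|f'(z)|=\infty$ the asserted inequality is trivial, so there is nothing further to check.

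I do not anticipate any real obstacle here: $f'$ is continuous on the compact segment $\gamma([0,1])$ so the integral is well defined, and convexity is exactly what is needed to keep that segment inside $D$. The only points worth stating explicitly are the containment $\gamma([0,1])\subseteq D$ and the passage from the complex fundamental theorem of calculus to the modulus bound.
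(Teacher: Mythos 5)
Your proof is correct and follows essentially the same route as the paper: parametrize the segment joining $x$ and $y$ (possible by convexity), apply the fundamental theorem of calculus to write the difference quotient as $\int_0^1 f'(\gamma(t))\,\mathrm{d}t$, and bound by the triangle inequality for integrals. The only cosmetic differences are the orientation of the parametrization and your explicit remark about the case $\sup_{z\in D}|f'(z)|=\infty$, which the paper leaves implicit.
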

\begin{proof}
Let \(\gamma:[0,1]\to D\) be the parametrization of the straight line connecting \(x\) and \(y\), which is well-defined since \(D\) is convex. First note that
\begin{align*}
\int_0^1 f'(\gamma(t))\dif t = \int_0^1 f'(tx+(1-t)y)\dif t = \frac{1}{x-y} \Big[ f(tx+(1-t)y)\Big]_{t=0}^1 = \frac{f(x)-f(y)}{x-y}.
\end{align*}
Then
\begin{align*}
\bigg| \frac{f(x)-f(y)}{x-y}\bigg| =\bigg|\int_0^1 f'(\gamma(t)) \dif t \bigg| \leq \int_0^1 \big|f'(\gamma(t))\big| \dif t \leq \int_0^1 \big(\sup_{z\in D} |f'(z)|\big) \dif t = \sup_{z\in D} |f'(z)|,
\end{align*}
as was to be shown.
\end{proof}

We will now translate the measure theoretic property of Definition~\ref{def:exponentially_bounded} to an analytic one.
Our results in the coming sections only depend on the `only if' part of the following equivalence.

\begin{theorem}\label{thm:exp_bounded}
Let \(K\) be a number field, \(0<r<1\) and \(f\in K_\R[Y]\). 
Then \(f\) is exponentially bounded at radius \(r\) if and only if there exist \(a\in\R_{>0}\) such that for all \(\sigma\in\X(K)\) and \(z\in\C\) we have \(|\sigma(f)(z)|\leq \exp(a(|z|^2-r^2))\).
\end{theorem}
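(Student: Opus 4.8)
The plan is to prove both directions by playing the analytic bound and the measure-theoretic condition against each other, using Jensen's inequality in both the pointwise-convexity form and the integral form. The easy direction is ``if'': suppose we have $a\in\R_{>0}$ with $|\sigma(f)(z)|\le\exp(a(|z|^2-r^2))$ for all $\sigma\in\X(K)$ and all $z\in\C$. Then for any $\mu\in\mathcal{M}(K)$ with $\int|z|^2\dif\mu(\sigma,z)<r^2$ we integrate $\log|\sigma(f)(z)|\le a(|z|^2-r^2)$ against $\mu$ and obtain $\int\log|\sigma(f)(z)|\dif\mu(\sigma,z)\le a\bigl(\int|z|^2\dif\mu(\sigma,z)-r^2\bigr)<0$, which is exactly exponential boundedness at radius $r$. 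This needs only monotonicity of the integral and costs nothing.

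For the ``only if'' direction the plan is a compactness/continuity argument on a suitable normalized quantity. First I would observe that $f$ being exponentially bounded already forces $f\neq 0$ (take $\mu$ a point mass at a zero of some $\sigma(f)$ lying in a small disc, or note $f=0$ violates the strict inequality), and more importantly that $|\sigma(f)(z)|<1$ for every $\sigma$ and every $z$ with $|z|<r$ — apply the definition to the point mass $\mu=\delta_{(\sigma,z)}$. By continuity $|\sigma(f)(z)|\le1$ on the closed disc $|z|\le r$. Now define, for each $\sigma\in\X(K)$, the function $g_\sigma(z)=\log|\sigma(f)(z)|$ and the quantity we want to control, namely the smallest $a$ for which $g_\sigma(z)\le a(|z|^2-r^2)$ holds everywhere; equivalently $a=\sup_{\sigma}\sup_{|z|>r}\frac{\log|\sigma(f)(z)|}{|z|^2-r^2}$ (on $|z|\le r$ the right side is $\le0$ and the inequality is automatic once we know $|\sigma(f)(z)|\le1$ there). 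I must show this supremum is finite and strictly positive — positivity is trivial unless $f$ is a unit constant, and finiteness is the crux.

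The main obstacle is the finiteness of that supremum, i.e.\ ruling out that $\frac{\log|\sigma(f)(z)|}{|z|^2-r^2}$ blows up. As $|z|\to\infty$ the numerator grows like $(\deg f)\log|z|$ while the denominator grows like $|z|^2$, so there is no problem at infinity; the danger is near the circle $|z|=r$ from outside, where the denominator tends to $0$. Here I would argue by contradiction: if the ratio were unbounded there, we could pick $\sigma$ and a sequence $z_n$ with $|z_n|\downarrow r$ and $\log|\sigma(f)(z_n)|\big/(|z_n|^2-r^2)\to\infty$, in particular $\log|\sigma(f)(z_n)|>0$ for large $n$ while $|z_n|^2-r^2\to0$. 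Then build measures $\mu_n$ supported on $\{(\sigma,z_n),(\sigma,0)\}$ putting mass $t_n$ on $(\sigma,z_n)$ and $1-t_n$ on $(\sigma,0)$, with $t_n\to0$ chosen so that $t_n|z_n|^2<r^2$ (easy, since $|z_n|$ is bounded) yet $t_n\log|\sigma(f)(z_n)|\to\infty$; this is possible precisely because the ratio blows up — pick $t_n$ between $(|z_n|^2-r^2)\big/(\text{something})$ and what is allowed, e.g.\ $t_n=\sqrt{|z_n|^2-r^2}$, giving $t_n|z_n|^2\to r^2\cdot0$-ish but strictly below $r^2$ for $n$ large after a harmless shrink, while $t_n\log|\sigma(f)(z_n)|\ge \sqrt{|z_n|^2-r^2}\cdot c_n(|z_n|^2-r^2)^{-1}\cdot c'\to\infty$ using $\log|\sigma(f)(0)|$ contributing $O(1)$. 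This contradicts exponential boundedness. The technical care is in choosing $t_n$ so that the $|z|^2$-budget stays strictly under $r^2$ (using $|z_n|\le r+1$ say) while the $\log|f|$-integral diverges; Lemma~\ref{lem:slope} or simple continuity of $\sigma(f)$ near $|z|=r$ controls the near-circle behaviour so that no pathological cancellation occurs. Once finiteness is established, setting $a$ to (a hair above) that finite supremum gives the desired pointwise bound, completing the proof.
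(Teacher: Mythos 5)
Your ``if'' direction is fine and is the same one-line integration as in the paper. The ``only if'' direction, however, has a genuine gap at its central step: you set \(a=\sup_{\sigma}\sup_{|z|>r}\frac{\log|\sigma(f)(z)|}{|z|^2-r^2}\) and assert that on \(|z|\le r\) ``the inequality is automatic once we know \(|\sigma(f)(z)|\le 1\) there.'' This is false: for \(|z|<r\) and \(a>0\) the required bound is \(|\sigma(f)(z)|\le\exp(a(|z|^2-r^2))\), whose right-hand side is \emph{strictly less than} \(1\), and it becomes more demanding as \(a\) grows. So the exterior forces a lower bound on \(a\) while the interior forces an upper bound (namely \(a\le\inf_{|z|<r}\log|\sigma(f)(z)|/(|z|^2-r^2)\), a ratio of two negative quantities), and the whole content of the theorem is that these two constraints are compatible. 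Knowing only \(|\sigma(f)|\le1\) on the closed disc plus finiteness of your exterior supremum does not give this: e.g.\ nothing you prove rules out \(|\sigma(f)(z_0)|=0.99\) at a point \(z_0\) deep inside the disc while the exterior ratio forces \(a\) large, in which case your chosen \(a\) violates the interior inequality at \(z_0\). Ruling this out requires using exponential boundedness once more, with measures charging an interior \emph{and} an exterior point simultaneously; this is exactly the paper's third step (the sets \(A_0\), \(A_\infty\), the two-point measure at \((\rho_0,z_0)\), \((\rho_\infty,z_\infty)\), and the limit \(t\to s\) where the \(|z|^2\)-average hits \(r^2\)), and it has no counterpart in your proposal.

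Two smaller points. First, your contradiction argument for boundedness of the exterior ratio near the circle is miscalculated: with the ratio \(c_n=\log|\sigma(f)(z_n)|/(|z_n|^2-r^2)\) one has \(\log|\sigma(f)(z_n)|=c_n(|z_n|^2-r^2)\to 0^+\), so with your choice \(t_n=\sqrt{|z_n|^2-r^2}\to 0\) the positive term \(t_n\log|\sigma(f)(z_n)|\) tends to \(0\) while the anchor term tends to the fixed negative number \(\log|\sigma(f)(0)|\), and no contradiction results. The argument can be repaired by taking \(t_n\) just below its maximal admissible value \(r^2/|z_n|^2\), so that \(1-t_n=O(|z_n|^2-r^2)\) and the integral is \((|z_n|^2-r^2)(t_nc_n-O(1))>0\); alternatively, the paper avoids measures here entirely and bounds the ratio by \((2r)^{-1}\sup|\sigma(f)'|\) using the slope estimate of Lemma~\ref{lem:slope} together with \(|\sigma(f)|\le1\) on the circle, which is cleaner. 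Second, one should also note (as the paper does at the end) that if the compatible constant is \(\le 0\) then \(f\) is a constant of modulus \(<1\) and a positive \(a\) can still be produced; this is minor but needed for the statement as phrased with \(a\in\R_{>0}\).
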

\begin{proof}
(\(\Leftarrow\)) Suppose such \(a\) exists. 
Let \(\mu\in\mathcal{M}(K)\) such that \(\int |z|^2 \dif\mu(\sigma,z)<r^2\). 
Then 
\[ \int\log|\sigma(f)(z)| \dif \mu(\sigma,z) \leq \int a(|z|^2-r^2) \dif\mu(\sigma,z) < ar^2-ar^2 = 0, \]
so \(f\) is exponentially bounded at radius \(r\).

(\(\Rightarrow\)) 
Let \(D_0=\{z\in\C\,|\,|z| < r\}\) and \(D_\infty=\{z\in\C\,|\,|z| > r\}\). For \(c\in\{0,\infty\}\) let 
\[A_c=\big\{a\in\R\,\big|\, (\forall\,\rho\in\X(K))\,(\forall\,z\in D_c)\ |\rho(f)(z)|\leq\exp(a(|z|^2-r^2))\big\}.\]

Firstly, we show that \(A_0\) is non-empty.
Let \(\rho\in\X(K)\) and \(z_0\in D_0\), and let \(\mu\in\mathcal{M}(K)\) be the measure with weight \(1\) at \((\rho,z_0)\).
Then \(\int |x|^2 \dif\mu(\sigma,x)=|z_0|^2 < r^2\), so by exponential boundedness
\[|\rho(f)(z_0)|=\exp\Big( \int \log|\sigma(f)(x)|\dif\mu(\sigma,x)\Big) < 1.\] 
It follows that \(0\in A_0\), and even \((-\infty,0]\subseteq A_0\). This argument also shows that \(\rho(f)\) is bounded by \(1\) on the boundary of \(D_0\), the circle of radius \(r\).

Secondly, we show that \(A_\infty\) is non-empty.
Since \(\exp(|z|^2-r^2)\) grows faster than any polynomial, there exists some \(b>r\) such that \(|\rho(f)(z)|\leq \exp(|z|^2-r^2)\) for all \(|z|\geq b\).
Write \(B=\{x\in \C\,|\,|x|\leq b\}\).
Let \(\rho\in\X(K)\) and \(z\in B\cap D_\infty\), and write \(g=\rho(f)\) and \(\theta = z/|z|\).
As remarked at the end of the previous paragraph we have \(|g(r\theta)|\leq 1\), so that
\begin{align*}
\log|g(z)| &\leq \log(1+|g(z)-g(r\theta)|) \leq |g(z)-g(r\theta)| = \frac{|z|^2-r^2}{|z|+r} \cdot \bigg| \frac{g(z)-g(r\theta)}{z-r\theta} \bigg| \\
&\stackrel{*}{\leq} (|z|^2-r^2) \cdot \frac{\sup_{x\in B} |g'(x)|}{2r} \leq a (|z|^2-r^2),
\end{align*}
where \(*\) follows from Lemma~\ref{lem:slope} and \(a\) is the maximum of \(1\) and all \((2r)^{-1} \sup_{x\in B} |\rho(f)'(x)|\) for \(\rho\in\X(K)\). Thus \(a\in A_\infty\).

Thirdly, we show that \(A_0\cap A_\infty\) is non-empty.
Suppose for the sake of contradiction that \(A_0\cap A_\infty\) is empty.
Clearly \(A_0\) and \(A_\infty\) are closed. 
Hence there exist reals that are neither in \(A_0\) nor \(A_\infty\), and let \(a\) be such a real number.
It follows that \(a>0\).
In turn, there exist \(z_0\in D_0\) and \(z_\infty\in D_\infty\) with \(\rho_0,\rho_\infty\in\X(K)\) such that \(|\rho_0(f)(z_0)|>\exp(a(|z_0|^2-r^2))\) and \(|\rho_\infty(f)(z_\infty)|>\exp(a(|z_\infty|^2-r^2))\).
Choose \(t\in(0,1)\) such that \((1-t)|z_0|^2+t|z_\infty|^2 < r^2\) and let \(\mu\) be the measure that assigns weight \(1-t\) to \((\rho_0,z_0)\) and weight \(t\) to \((\rho_\infty,z_\infty)\). Then \(\int |z|^2 \dif\mu(\sigma,z)<r^2\) and thus
\begin{align*}
0 &> \int \log|\sigma(f)(z)|\dif\mu(\sigma,z) = (1-t)\log|\rho_0(f)(z_0)|+t\log|\rho_\infty(f)(z_\infty)|.
\end{align*}
Taking the limit of \(t\) up to \(s\in\R\) such that \((1-s)|z_0|^2+s|z_\infty|^2=r^2\) we get
\[ 0 \geq (1-s)\log|\rho_0(f)(z_0)|+s\log|\rho_\infty(f)(z_\infty)| > a((1-s)|z_0|^2+s|z_\infty|^2-r^2) = 0, \]
a contradiction. Hence \(A_0\cap A_\infty\neq\emptyset\), as was to be shown.

Note that \(D_0\cup D_\infty\) is dense in \(\C\), so any positive \(a\in A_0\cap A_\infty\) gives the inequality we set out to prove.
Suppose \(a\in A_0\cap A_\infty\) is such that \(a \leq 0\).
Thus \(|\rho(f)(z)|\leq \exp(a(|z|^2-r^2))\leq 1\) for all \(z\in D_\infty\) and \(\rho\in\X(K)\), so \(\rho(f)\) is a constant function.
However, as \(|\rho(f)(z)|<1\) for \(z\in D_0\) as shown before, this constant is strictly less than \(1\).
Let \(c\in(0,1)\) be a constant that bounds \(\rho(f)\) for all \(\rho\in\X(K)\). Then \(-r^{-2}\log c \in A_0 \cap A_\infty\) is positive.
Hence \(A_0\cap A_\infty\) always contains a positive element.
\end{proof}

\section{Volume computation}

The next step is to compute the volume of a symmetric convex set of exponentially bounded polynomials.
As in Lemma~\ref{lem:classical_fekete_volume} it suffices for the sake of volume computation to consider the case where the radius is \(1\) and the base field is \(\R\).
In view of Theorem~\ref{thm:exp_bounded}, we consider the unit-ball of the following norm.

\begin{definition}
Let \(\F\) be either \(\R\) or \(\C\). We equip \(\F[Y]\) with the norm
\[\|f\|_\ee = \max_{z\in\C}  \frac{|f(z)|}{\exp(|z|^2)}.\]
\end{definition}

\begin{lemma}\label{lem:phi}
Consider the map \(\phi:\Z_{\geq 0}\to\R_{\geq 0}\) given by
\[ \phi(n) = \begin{cases}\big(\frac{n}{2}\big)! &\textup{if \(n\) is even} \\ \big(\frac{n-1}{2}\big)!\cdot\sqrt\frac{n+1}{2} &\textup{if \(n\) is odd} \end{cases}. \]
Then we have
\[ \log\Big( \prod_{k=0}^n \phi(k) \Big) = \tfrac{1}{4}n^2 \log n - (\tfrac{3}{8}+\tfrac{1}{4}\log 2) n^2 + O(n\log n)\]
and for all \(x\in\R_{\geq 0}\) and \(m\in\Z_{\geq 0}\) we have
\[ \frac{x^{2m+1}}{\phi(2m+1)} \leq  \frac{1}{2}\bigg( \frac{x^{2m}}{\phi(2m)} + \frac{x^{2m+2}}{\phi(2m+1)} \bigg). \]
\end{lemma}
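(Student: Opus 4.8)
The plan is to handle the two assertions separately, since each reduces to elementary bookkeeping once the definition of $\phi$ is unwound: for $m\in\Z_{\geq 0}$ we have $\phi(2m)=m!$ and $\phi(2m+1)=m!\cdot\sqrt{m+1}$.

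For the asymptotic formula I would split the product $\prod_{k=0}^n\phi(k)$ according to the parity of $k$. Writing even indices as $k=2j$ and odd indices as $k=2j+1$ gives
\[ \log\Big(\prod_{k=0}^n\phi(k)\Big)=\sum_{j=0}^{\lfloor n/2\rfloor}\log(j!)+\sum_{j=0}^{\lceil n/2\rceil-1}\Big(\log(j!)+\tfrac12\log(j+1)\Big). \]
The term $\sum \tfrac12\log(j+1)$ is at most $\tfrac12\log(n!)=O(n\log n)$ by Proposition~\ref{prop:stirlings}.i and so is absorbed into the error. The two remaining sums are each of the form $\log\big(\prod_{j=0}^m(j!)\big)$ with $m=\tfrac n2+O(1)$; since inserting or deleting $O(1)$ factors each bounded by $n!$ changes the logarithm by only $O(n\log n)$, Proposition~\ref{prop:stirlings}.iii yields $\log\big(\prod_{j=0}^m(j!)\big)=\tfrac12m^2\log m-\tfrac34m^2+O(m\log m)$. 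Substituting $m=\tfrac n2+O(1)$, using $m^2=\tfrac{n^2}4+O(n)$ and $\log m=\log n-\log 2+O(1/n)$, each sum becomes $\tfrac{n^2}{8}\log n-\tfrac{n^2}{8}\log 2-\tfrac{3n^2}{16}+O(n\log n)$; doubling gives the claimed $\tfrac14n^2\log n-(\tfrac38+\tfrac14\log 2)n^2+O(n\log n)$.

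For the inequality, the case $x=0$ is trivial, so I would assume $x>0$. By the AM--GM inequality (Lemma~\ref{lem:amgm}) we have $x^{2m}+x^{2m+2}\geq 2x^{2m+1}$, and since $m\geq 0$ forces $\sqrt{m+1}\geq 1$ it follows that $\sqrt{m+1}\,x^{2m}+x^{2m+2}\geq x^{2m}+x^{2m+2}\geq 2x^{2m+1}$. Dividing both sides by $2m!\sqrt{m+1}$ and using $\phi(2m)=m!$ and $\phi(2m+1)=m!\sqrt{m+1}$ turns this exactly into
\[ \frac{x^{2m+1}}{\phi(2m+1)}\leq\frac12\bigg(\frac{x^{2m}}{\phi(2m)}+\frac{x^{2m+2}}{\phi(2m+1)}\bigg), \]
as required. (Equivalently, after clearing denominators and dividing by $x^{2m}$ the inequality is $2x\leq\sqrt{m+1}+x^2$, which is $(x-1)^2+(\sqrt{m+1}-1)\geq 0$.)

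I do not anticipate a genuine obstacle: the computation is routine and the only mild care needed is in the first part, tracking the $O(1)$ discrepancies among $\lfloor n/2\rfloor$, $\lceil n/2\rceil-1$ and $n/2$ and confirming they all stay within $O(n\log n)$, which is immediate from $\log(m!)=O(n\log n)$ for $m=O(n)$.
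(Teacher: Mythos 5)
Your proof is correct for the statement as printed, and the asymptotic part follows essentially the same route as the paper: split the product over $k$ by parity and apply Proposition~\ref{prop:stirlings}.i and \ref{prop:stirlings}.iii with upper limit $\tfrac n2+O(1)$, then substitute. The one point worth flagging concerns the inequality. You prove it literally as displayed, with $\phi(2m+1)$ in the last denominator, via $\sqrt{m+1}\,x^{2m}+x^{2m+2}\ge 2x^{2m+1}$. The paper's own proof instead establishes the stronger variant
\[ \frac{x^{2m+1}}{\phi(2m+1)} \;\le\; \frac12\Big(\frac{x^{2m}}{\phi(2m)}+\frac{x^{2m+2}}{\phi(2m+2)}\Big), \]
by writing $\tfrac{1}{\phi(2m)}+\tfrac{x^2}{\phi(2m+2)}=\tfrac{1}{m!}\big((1-\tfrac{x}{\sqrt{m+1}})^2+\tfrac{2x}{\sqrt{m+1}}\big)$; since $\phi(2m+1)\le\phi(2m+2)$ this implies the printed inequality, and it is this stronger form (with $(k+1)!$ in the denominator) that is actually invoked in the proof of Proposition~\ref{prop:unscaled_volume}, so the $\phi(2m+1)$ in the displayed statement appears to be a typo. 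Your argument as written does not give that stronger form, but the fix is a one-line change: apply AM--GM to $1$ and $x^2/(m+1)$ (equivalently, note $(1-x/\sqrt{m+1})^2\ge 0$) rather than to $x^{2m}$ and $x^{2m+2}$.
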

\begin{proof}
Writing out the product we have
\[ \prod_{k=0}^n \phi(k)  = \Bigg( \prod_{m=0}^{\lfloor n/2\rfloor} \phi(2m) \Bigg) \Bigg( \prod_{m=0}^{\lfloor (n-1)/2\rfloor} \phi(2m+1) \Bigg) = \Bigg( \prod_{m=0}^{\lfloor n/2\rfloor} m! \Bigg) \Bigg( \prod_{m=0}^{\lfloor (n-1)/2\rfloor} m! \Bigg) \Bigg( \prod_{m=0}^{\lfloor (n-1)/2\rfloor} \sqrt{m+1} \Bigg). \]
We then apply Proposition~\ref{prop:stirlings} to compute
\begin{align*} 
\smash{\log\Big(  \prod_{k=0}^n \phi(k) \Big)}\vphantom{\scalebox{2}{1}} 
&= \lfloor\tfrac{n}{2}\rfloor^2 \big( \tfrac{1}{2} \log\lfloor\tfrac{n}{2}\rfloor-\tfrac{3}{4} \big) + \lfloor\tfrac{n-1}{2}\rfloor^2 \big( \tfrac{1}{2} \log\lfloor\tfrac{n-1}{2}\rfloor-\tfrac{3}{4} \big) + \lfloor\tfrac{n-1}{2}\rfloor \big( \log\lfloor\tfrac{n-1}{2}\rfloor-1 \big) + O(n\log n) \\
&= \big(\tfrac{n}{2}\big)^2\big(\tfrac{1}{2}\log\tfrac{n}{2}-\tfrac{3}{4}\big) + \big(\tfrac{n}{2}\big)^2\big(\tfrac{1}{2}\log\tfrac{n}{2}-\tfrac{3}{4}\big) + 0 + O(n\log n) \\
&= \tfrac{1}{4}n^2 \log n - (\tfrac{3}{8}+\tfrac{1}{4}\log 2) n^2 + O(n\log n),
\end{align*}
proving the first part. For the second, let \(m\in\Z_{\geq 0}\) and \(x\in\R_{\geq 0}\). Then
\[\frac{1}{\phi(2m)} + \frac{x^2}{\phi(2m+2)} = \frac{1}{m!} \Big( \Big(1-\frac{x}{\sqrt{m+1}} \Big)^2+\frac{2x}{\sqrt{m+1}} \Big) \geq \frac{1}{m!} \frac{2x}{\sqrt{m+1}} = 2 \cdot \frac{x}{\phi(2m+1)},\]
from which the second part follows.
\end{proof}

Recall the notation \(R[X]_n\) from Definition~\ref{def:bounded_degree_poly}, the subset of \(R[X]\) of polynomials of degree strictly less than \(n\).

\begin{proposition}\label{prop:unscaled_volume}
Write \(S=\{f\in\R[Y]\,|\, \|f\|_\ee \leq 1\}\).
Then for \(n\in\Z_{\geq 0}\) we have 
\[\log \vol(S\cap \R[Y]_n) \geq -\tfrac{1}{4}n^2 \log n + (\tfrac{3}{8}+\tfrac{1}{4}\log 2) n^2 + O(n\log n). \]
\end{proposition}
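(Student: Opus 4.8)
The plan is to exhibit an explicit convex symmetric subset $T_n \subseteq S\cap\R[Y]_n$ whose volume we can estimate directly, mimicking the strategy of Lemma~\ref{lem:classical_fekete_volume} but now adapted to the weight $\exp(|z|^2)$ rather than the disk of radius $1$. The natural guess, given the shape of $\|\cdot\|_\ee$, is the $\ell^1$-type body adapted to the coefficients $\phi(k)$ from Lemma~\ref{lem:phi}:
\[ T_n = \Big\{ \sum_{k=0}^{n-1} f_k Y^k \in \R[Y]_n \,\Big|\, \sum_{k=0}^{n-1} |f_k|\,\phi(k) \leq 1 \Big\}. \]
First I would check $T_n \subseteq S$. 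For $f = \sum_k f_k Y^k \in T_n$ and $z \in \C$ with $|z| = x$, we have $|f(z)| \le \sum_k |f_k| x^k$, and the point is to bound $x^k$ in terms of $\phi(k)\exp(x^2)$. Writing $\exp(x^2) = \sum_{m\ge 0} x^{2m}/m!$, the even terms give $x^{2m}/\phi(2m) = x^{2m}/m! \le \exp(x^2)$, and the odd-index bound $x^{2m+1}/\phi(2m+1) \le \exp(x^2)$ follows from the second displayed inequality of Lemma~\ref{lem:phi} (which says exactly that $x^{2m+1}/\phi(2m+1)$ is bounded by the average of two consecutive terms of the series for $\exp(x^2)$, hence by $\exp(x^2)$). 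Thus $x^k \le \phi(k)\exp(x^2)$ for every $k$, so $|f(z)| \le \big(\sum_k |f_k|\phi(k)\big)\exp(x^2) \le \exp(|z|^2)$, i.e. $\|f\|_\ee \le 1$ and $f\in S$.

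Next I would compute $\vol(T_n)$. The body $T_n$ is the image of the standard cross-polytope $\{\,\sum_k |u_k| \le 1\,\}$ in $\R^n$ under the diagonal linear map $u_k \mapsto f_k = u_k/\phi(k)$, whose determinant is $\prod_{k=0}^{n-1}\phi(k)^{-1}$. Since the standard cross-polytope in $\R^n$ has volume $2^n/n!$, we get
\[ \log\vol(T_n) = n\log 2 - \log(n!) - \log\Big(\prod_{k=0}^{n-1}\phi(k)\Big). \]
Now apply Stirling (Proposition~\ref{prop:stirlings}.i) for $\log(n!) = n\log n - n + O(\log n)$ and the first displayed formula of Lemma~\ref{lem:phi} for $\log\prod_{k=0}^{n-1}\phi(k) = \tfrac14 n^2\log n - (\tfrac38 + \tfrac14\log 2)n^2 + O(n\log n)$ (the shift from $n$ to $n-1$ in the upper limit only changes things by $O(n\log n)$). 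This yields $\log\vol(T_n) = -\tfrac14 n^2\log n + (\tfrac38 + \tfrac14\log 2)n^2 + O(n\log n)$, and since $T_n \subseteq S\cap\R[Y]_n$ we have $\vol(S\cap\R[Y]_n) \ge \vol(T_n)$, which is exactly the claimed bound.

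I do not expect a serious obstacle here; the proposition is essentially a bookkeeping exercise once the right body $T_n$ is identified. The one point requiring a little care is verifying the containment $T_n\subseteq S$ — specifically making the passage from "bound $|f(z)|$ by a sum of monomials" to "bound each monomial by $\phi(k)\exp(|z|^2)$" fully rigorous for both parities of $k$, which is precisely what the two conclusions of Lemma~\ref{lem:phi} are designed to supply. A secondary bit of care is that the asymptotics in Lemma~\ref{lem:phi} and Proposition~\ref{prop:stirlings} are stated for products up to $n$, whereas here the relevant product runs to $n-1$; absorbing this discrepancy into the $O(n\log n)$ error term is routine.
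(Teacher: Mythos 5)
Your proof is correct and follows essentially the paper's own argument: place an explicit symmetric convex body inside \(S\cap\R[Y]_n\) using the two conclusions of Lemma~\ref{lem:phi}, then estimate its volume with Proposition~\ref{prop:stirlings}. The only difference is cosmetic: you take the weighted \(\ell^1\)-ball \(\{\sum_k |f_k|\phi(k)\le 1\}\) (as in Lemma~\ref{lem:classical_fekete_volume}) where the paper takes the box \(\{|f_k|\le \tfrac{1}{2\phi(k)}\}\), and the two volumes differ by a factor whose logarithm is \(O(n\log n)\), so the resulting bound is identical.
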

\begin{proof}
Consider \(\phi\) as in Lemma~\ref{lem:phi} and define
\[ T = \Big\{ \sum_{i=0}^\infty f_i Y^i \in \R[Y] \,\Big|\, (\forall i)\ |f_i|\leq \frac{1}{2\phi(i)}  \Big\}. \]
Then for all \(f\in T\) and \(z\in\C\) we have using Lemma~\ref{lem:phi} that
\begin{align*} 
|f(z)| 
&\leq \sum_{i=0}^\infty |f_i|\cdot |z|^i 
\leq \sum_{i=0}^\infty \frac{|z|^i}{2\phi(i)} 
= \frac{1}{2}\Bigg[\sum_{k=0}^\infty \frac{|z|^{2k}}{k!} + \sum_{k=0}^\infty \frac{|z|^{2k+1}}{\phi(2k+1)}\Bigg] \\
&\leq \frac{1}{2} \Bigg[\sum_{k=0}^\infty\frac{|z|^{2k}}{k!} + \sum_{k=0}^\infty\frac{1}{2}\bigg(\frac{|z|^{2k}}{k!} + \frac{|z|^{2k+2}}{(k+1)!} \bigg) \Bigg]
\leq \sum_{k=0}^\infty\frac{|z|^{2k}}{k!} = \exp|z^2|,
\end{align*}
so \(f\in S\) and \(T\subseteq S\). Then by Lemma~\ref{lem:phi} we have
\[\log\vol(T\cap \R[Y]_n) = -n\log 2 - \log\Big( \prod_{k=0}^{n-1} \phi(k)  \Big) = -\tfrac{1}{4}n^2 \log n + (\tfrac{3}{8}+\tfrac{1}{4}\log 2) n^2 + O(n\log n),\]
from which the proposition follows.
\end{proof}

Proposition~\ref{prop:unscaled_volume} is sufficient for our purposes. 
It may interest a reader attempting to improve our results that the lower bound of Proposition~\ref{prop:unscaled_volume} is actually an equality, which we will show in the remainder of this section.

\begin{theorem}[Brunn--Minkowski inequality, Theorem~2.1 in \citep{BrunnMinkowski}]\label{thm:brunn}
Let \(n\in\Z_{\geq 1}\) and let \(A,B\subseteq\R^n\) be bounded non-empty measurable sets. 
Then for all \(t\in[0,1]\) such that 
\[(1-t)A+tB=\{(1-t)a+tb\,|\,a\in A,\,b\in B\}\] 
is measurable we have the inequality
\[\textup{vol}((1-t)A+tB)^{1/n} \geq (1-t)\textup{vol}(A)^{1/n} + t\textup{vol}(B)^{1/n}. \qed\]
\end{theorem}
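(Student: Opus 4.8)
The plan is to follow the classical Hadwiger--Ohmann argument, which has the pleasant feature of resting on the arithmetic--geometric mean inequality already recorded as Lemma~\ref{lem:amgm}. Throughout set $C=(1-t)A+tB$; we may assume $t\in(0,1)$, the endpoints being trivial. First I would treat the case where $A=\prod_{i=1}^n[0,a_i]$ and $B=\prod_{i=1}^n[0,b_i]$ are axis-parallel boxes with all $a_i,b_i>0$. Then $C=\prod_{i=1}^n[0,c_i]$ with $c_i=(1-t)a_i+tb_i>0$, and the inequality to be proved is $\prod_i c_i^{1/n}\geq(1-t)\prod_i a_i^{1/n}+t\prod_i b_i^{1/n}$. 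Dividing by $\prod_i c_i^{1/n}$ and applying Lemma~\ref{lem:amgm} to the $n$-tuples $(a_i/c_i)_i$ and $(b_i/c_i)_i$ separately yields
\[
(1-t)\prod_{i=1}^n\Big(\frac{a_i}{c_i}\Big)^{1/n}+t\prod_{i=1}^n\Big(\frac{b_i}{c_i}\Big)^{1/n}\;\leq\;\frac{1}{n}\sum_{i=1}^n\frac{(1-t)a_i+tb_i}{c_i}\;=\;1,
\]
which is precisely the box case.

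Next I would pass to the case where $A$ and $B$ are each a finite union of boxes with pairwise disjoint interiors and positive volume, arguing by induction on the total number of constituent boxes. If there are only two boxes in all, a translation reduces to the box case. Otherwise, after possibly interchanging $A$ with $B$ and $t$ with $1-t$, the set $A$ consists of at least two boxes, so there is a coordinate direction $k$ and a value $c$ such that the hyperplane $\{x_k=c\}$ has at least one box of $A$ in each closed half-space and splits none of them. Writing $A_{\pm}$ for the closures of the two parts of $A$ cut off by this hyperplane, each $A_\pm$ is a union of strictly fewer boxes; now choose $c'$ so that the parallel hyperplane $\{x_k=c'\}$ cuts $B$ into pieces $B_\pm$ with $\vol(B_+)/\vol(B)=\vol(A_+)/\vol(A)=:\lambda$, noting that each $B_\pm$ is a union of at most as many boxes as $B$. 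Because $(1-t)A_++tB_+$ lies in the half-space $\{x_k\leq(1-t)c+tc'\}$ and $(1-t)A_-+tB_-$ in the opposite closed half-space, these two subsets of $C$ have disjoint interiors, so by the induction hypothesis
\[
\vol(C)\;\geq\;\vol\big((1-t)A_++tB_+\big)+\vol\big((1-t)A_-+tB_-\big)\;\geq\;\sum_{\varepsilon\in\{+,-\}}\big((1-t)\vol(A_\varepsilon)^{1/n}+t\vol(B_\varepsilon)^{1/n}\big)^n.
\]
Since $\vol(A_+)=\lambda\vol(A)$ and $\vol(B_+)=\lambda\vol(B)$ the $+$ term equals $\lambda\big((1-t)\vol(A)^{1/n}+t\vol(B)^{1/n}\big)^n$, and the $-$ term equals $(1-\lambda)$ times the same quantity; summing gives the claimed inequality.

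It then remains to remove the hypotheses, which is routine measure theory. If $\vol(A)=0$ or $\vol(B)=0$ — say the latter — then picking any $b\in B$ gives $C\supseteq(1-t)A+tb$, a translate of $(1-t)A$, whence $\vol(C)\geq(1-t)^n\vol(A)$, which is the right-hand side. If $\vol(A),\vol(B)>0$, then by inner regularity it suffices to prove the inequality for $A,B$ compact, and a compact set is the intersection of a decreasing sequence of finite unions of boxes (the unions of dyadic cubes at finer and finer scales that meet the set); applying the previous step to such approximations $A_j\downarrow A$, $B_j\downarrow B$ and using that $(1-t)A_j+tB_j\downarrow(1-t)A+tB$ together with continuity of Lebesgue measure from above along bounded sets, one passes to the limit.

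The only step with real content is the inductive slicing argument of the second paragraph: one must verify that a separating axis-hyperplane for $A$ exists, that the slicing strictly lowers the box count on the $A$-side while not raising it on the $B$-side, and that the two rescaled pieces of $C$ genuinely have disjoint interiors; everything else is the one-line inequality of the first paragraph and bookkeeping. An alternative route, closer in spirit to the integral estimates elsewhere in the paper, is to invoke the Prékopa--Leindler inequality for the indicator functions of $A$, $B$ and $C$ to obtain the multiplicative form $\vol(C)\geq\vol(A)^{1-t}\vol(B)^t$, and then recover the stated additive form by applying this to the normalized sets $A/\vol(A)^{1/n}$ and $B/\vol(B)^{1/n}$; but proving Prékopa--Leindler requires its own one-dimensional base case and a tensorization induction, so it merely trades one induction for another.
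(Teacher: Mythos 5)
The paper does not prove this statement at all: it is quoted as Theorem~2.1 of the cited reference \citep{BrunnMinkowski} and used as a black box, so any complete proof you give is necessarily ``different'' from the paper. Your route is the classical Hadwiger--Ohmann argument, and it is essentially sound: the box case via the AM--GM inequality (the paper's Lemma~\ref{lem:amgm}) is correct, the choice of $c'$ equalizing the volume fractions, the disjointness of the interiors of $(1-t)A_\pm+tB_\pm$ across the hyperplane $\{x_k=(1-t)c+tc'\}$, and the limiting argument via inner regularity and decreasing dyadic approximations (where your compactness argument showing $(1-t)A_j+tB_j\downarrow(1-t)A+tB$ is the right one) are all fine.

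One assertion in the slicing step is false as stated, though easily repaired. You claim that when $A$ consists of at least two boxes one can find an axis hyperplane $\{x_k=c\}$ with at least one box of $A$ in each closed half-space \emph{and splitting none of the boxes}. A pinwheel configuration already defeats this in the plane: take the square $[1,2]^2$ surrounded by the four rectangles $[0,2]\times[0,1]$, $[2,3]\times[0,2]$, $[1,3]\times[2,3]$, $[0,1]\times[1,3]$; every axis-parallel line having a whole box on each side cuts through the interior of some other box. The correct (and standard) statement is weaker and suffices: pick two boxes of $A$ with disjoint interiors, note that some coordinate $k$ separates their $k$-th projections, and take $c$ between them; the hyperplane may well split other boxes, but each closed half-space then contains at most $m_A-1$ constituent (sub)boxes of $A$, because the box lying entirely in the opposite half contributes nothing of positive volume. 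Since cutting $B$ never increases its box count, the total count strictly drops on each side and your induction goes through verbatim. With that adjustment (which you partly anticipated in your closing caveat), the proof is complete; the Pr\'ekopa--Leindler detour you mention is a legitimate alternative but, as you say, buys nothing here.
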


We will only apply this theorem to compact subsets of \(\R^n\), which are indeed measurable and bounded.
Moreover, for \(A,B\subseteq\R^n\) compact and \(t\in(0,1)\) also the set \((1-t)A+tB\) is compact, hence measurable.

\begin{corollary}\label{cor:brunn}
Let \(n\in\Z_{\geq 0}\), let \(V\subseteq\R^n\) be a subspace and let \(S\subseteq\R^n\) be a symmetric convex body. 
Then the map that sends \(x\in \R^n\) to \(\vol_{V}(V\cap (S-x))\) takes a maximum at \(0\).
\end{corollary}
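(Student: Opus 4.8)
The plan is to apply the Brunn--Minkowski inequality (Theorem~\ref{thm:brunn}) inside the subspace $V$. Fix an isometric identification of $V$ with $\R^d$, where $d=\dim_\R V$, so that $\vol_V$ becomes Lebesgue measure on $\R^d$, and fix $x\in\R^n$. There are two elementary observations to record. First, since $S$ is symmetric and $V$ is a subspace, the map $v\mapsto -v$ carries $V\cap(S-x)$ bijectively onto $V\cap(S+x)$, and being an isometry of $V$ it gives $\vol_V(V\cap(S+x))=\vol_V(V\cap(S-x))$. Second, by convexity of $S$, whenever $a\in S-x$ and $b\in S+x$ the midpoint $\tfrac12 a+\tfrac12 b$ lies in $S$ (write $a=s-x$, $b=s'+x$ with $s,s'\in S$, so $\tfrac12 a+\tfrac12 b=\tfrac12(s+s')\in S$); if moreover $a,b\in V$ then this midpoint lies in $V$ as well, whence
\[ \tfrac12\bigl(V\cap(S-x)\bigr)+\tfrac12\bigl(V\cap(S+x)\bigr)\;\subseteq\;V\cap S. \]

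Before invoking Theorem~\ref{thm:brunn} I would clear away the degenerate cases. If $V\cap(S-x)=\emptyset$ (equivalently $V\cap(S+x)=\emptyset$, by the first observation) then $\vol_V(V\cap(S-x))=0$ and there is nothing to prove, so assume both are non-empty. Since $S$ is a body it is bounded and closed, so these intersections are compact and the Minkowski combination in the display above is compact, hence measurable --- exactly what Theorem~\ref{thm:brunn} requires. If $d=0$ then $V=\{0\}$ and, since a non-empty symmetric convex set contains $0$, we get $\vol_V(V\cap S)=1$, the largest possible value, so the claim holds; thus assume $d\geq1$.

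Then I would apply Theorem~\ref{thm:brunn} with $A=V\cap(S-x)$, $B=V\cap(S+x)$, ambient dimension $d$ and $t=\tfrac12$, combined with monotonicity of $\vol_V$ and the inclusion above:
\[ \vol_V(V\cap S)^{1/d}\;\geq\;\vol_V\bigl(\tfrac12 A+\tfrac12 B\bigr)^{1/d}\;\geq\;\tfrac12\vol_V(A)^{1/d}+\tfrac12\vol_V(B)^{1/d}\;=\;\vol_V\bigl(V\cap(S-x)\bigr)^{1/d}, \]
the final equality being the first observation. Raising to the $d$-th power yields $\vol_V(V\cap(S-x))\leq\vol_V(V\cap S)=\vol_V(V\cap(S-0))$ for every $x\in\R^n$, which is precisely the assertion that the maximum is attained at $0$. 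The only step requiring any care is the measurability bookkeeping for Brunn--Minkowski, handled by the reduction to $S$ compact at the outset; beyond that everything is a direct substitution, so I do not anticipate a substantial obstacle.
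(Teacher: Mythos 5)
Your proof is correct and follows essentially the same route as the paper: by symmetry $V\cap(S+x)=-(V\cap(S-x))$, convexity gives $\tfrac12(V\cap(S-x))+\tfrac12(V\cap(S+x))\subseteq V\cap S$, and Brunn--Minkowski with $t=\tfrac12$ yields $\vol_V(V\cap(S-x))\leq\vol_V(V\cap S)$. Your extra bookkeeping (empty intersections, $d=0$, compactness for measurability) matches the remarks the paper makes just before the corollary.
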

\begin{proof}
Let \(x\in \R^n\) and write \(m=\dim V\) and \(H_x=V\cap (S-x)\).
If \(m=0\) the corollary holds trivially, so suppose \(m>0\).
Note that \(H_{-x}=-H_x\) since \(S\) and \(V\) are symmetric. 
Because \(S\) and \(V\) are convex we have \(\tfrac{1}{2}H_x + \tfrac{1}{2}H_{-x} \subseteq H_0\).
Hence by Theorem~\ref{thm:brunn} we have
\[\textup{vol}(H_0)^{1/m} \geq \textup{vol}(\tfrac{1}{2} H_x + \tfrac{1}{2} H_{-x})^{1/m} \geq \tfrac{1}{2}\textup{vol}(H_x)^{1/m}+\tfrac{1}{2}\textup{vol}(H_{-x})^{1/m} = \textup{vol}(H_x)^{1/m}, \]
from which the corollary follows.
\end{proof}

Recall the definition of \(\operp\) from Definition~\ref{def:orth_sum}.

\begin{corollary}\label{cor:brunnbound}
Let \(n\in\Z_{\geq 0}\), let \(U,V\subseteq \R^n\) be subspaces such that \(U\operp V = \R^n\) and write \(\pi\) for the projection \(U\operp V\to U\). 
If \(S\subseteq\R^n\) is a symmetric convex body, then \( \vol_{\R^n}(S) \leq \vol_U(\pi S) \cdot \vol_V(S\cap V)\).
\end{corollary}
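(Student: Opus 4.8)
The plan is to prove the inequality by Fubini's theorem, slicing the convex body $S$ by affine subspaces parallel to $V$ and then bounding each slice via Corollary~\ref{cor:brunn}. The only genuine input beyond elementary measure theory is Corollary~\ref{cor:brunn}; the Brunn--Minkowski content is already packaged there.

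First I would record the measure-theoretic setup. Since $U\operp V=\R^n$ is an \emph{orthogonal} direct sum, concatenating an orthonormal basis of $U$ with one of $V$ gives an orthonormal basis of $\R^n$, so under the identification $\R^n=U\oplus V$ the Lebesgue measure on $\R^n$ is exactly the product of the Lebesgue measures on $U$ and on $V$ (no Jacobian correction). As $S$ is a convex body it is compact, hence $\pi S$ is compact and in particular measurable, and the slice function $x\mapsto \vol_V\big(V\cap(S-x)\big)$ on $U$ is measurable with support contained in $\pi S$. Fubini then yields
\[
\vol_{\R^n}(S)=\int_{\pi S}\vol_V\big(\{v\in V:\;x+v\in S\}\big)\dif x=\int_{\pi S}\vol_V\big(V\cap(S-x)\big)\dif x,
\]
where $x$ ranges over $U$.

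Next I would apply Corollary~\ref{cor:brunn} to the subspace $V$ and the symmetric convex body $S$: for every $x\in\R^n$, and in particular for every $x\in U$, one has $\vol_V\big(V\cap(S-x)\big)\le \vol_V(V\cap S)$. Substituting this pointwise bound into the integral gives
\[
\vol_{\R^n}(S)\le \int_{\pi S}\vol_V(V\cap S)\dif x=\vol_U(\pi S)\cdot\vol_V(V\cap S),
\]
which is the desired inequality.

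I expect the main (indeed the only) point requiring care to be the Fubini step: verifying that $\pi S$ and the slice function are measurable and, crucially, that the decomposition of Lebesgue measure as a product is the one stated — this is precisely where orthogonality of $U$ and $V$ (rather than a mere direct-sum decomposition) is used, since otherwise a determinant factor would appear. Everything else is a direct citation of Corollary~\ref{cor:brunn}. It is also worth confirming that ``convex body'' is being used here, as in Theorem~\ref{thm:brunn} and Corollary~\ref{cor:brunn}, to mean a compact (hence bounded and measurable) convex set, so that all volumes in sight are finite and the slices are well defined.
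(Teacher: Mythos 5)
Your proof is correct and follows essentially the same route as the paper: Fubini to write \(\vol(S)=\int_{\pi S}\vol_V(V\cap(S-x))\dif x\), then the pointwise bound from Corollary~\ref{cor:brunn} and integration over \(\pi S\). The extra care you take about measurability and the product structure of Lebesgue measure is fine but not elaborated in the paper's one-line proof.
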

\begin{proof}
By Corollary~\ref{cor:brunn} we have
\[\vol(S) = \int_{\pi S} \vol_V( V\cap(S-x) )\dif x \leq \int_{\pi S} \vol_V(V\cap S) \dif x = \vol_U(\pi S) \cdot \vol_V(V\cap S).\]
\end{proof}

\begin{theorem}\label{thm:unscaled_volume}
Write \(S=\{f\in\R[Y]\,|\, \|f\|_\ee \leq 1\}\).
Then for \(n\in\Z_{\geq 0}\) we have 
\[\log \vol(S\cap \R[Y]_n) = -\tfrac{1}{4}n^2 \log n + (\tfrac{3}{8}+\tfrac{1}{4}\log 2) n^2 + O(n\log n). \]
\end{theorem}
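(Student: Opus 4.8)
The lower bound is exactly Proposition~\ref{prop:unscaled_volume}, so the content of the theorem is the matching upper bound, and the plan is to bound $S\cap\R[Y]_n$ coefficient by coefficient and then feed this into the Brunn--Minkowski machinery of Corollary~\ref{cor:brunnbound}. First I would record that $S\cap\R[Y]_n$ is a symmetric convex body: $\|\cdot\|_\ee$ is a norm on the finite-dimensional space $\R[Y]_n$ (the maximum defining $\|f\|_\ee$ is attained since $|f(z)|/\exp(|z|^2)\to0$ as $|z|\to\infty$), so $S\cap\R[Y]_n$ is closed, bounded, convex and symmetric, with non-empty interior because it contains the set $T$ of Proposition~\ref{prop:unscaled_volume}. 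Next, for $f=\sum_{k=0}^{n-1}f_kY^k$ with $\|f\|_\ee\le1$, Cauchy's integral formula on the circle $|z|=R$ gives $|f_k|\le R^{-k}\max_{|z|=R}|f(z)|\le R^{-k}\exp(R^2)$ for every $R>0$; optimising over $R$ (taking $R=\sqrt{k/2}$ for $k\ge1$, and $R\to0$ for $k=0$) yields $|f_k|\le C_k$ where $C_0=1$ and $\log C_k=\tfrac{k}{2}\bigl(1-\log\tfrac{k}{2}\bigr)$ for $k\ge1$. Note that $C_k$ is, up to a polynomial factor in $k$, the reciprocal of the quantity $\phi(k)$ of Lemma~\ref{lem:phi} (both behave like $(2e/k)^{k/2}$), which is the reason the bound will be asymptotically sharp.

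Then I would iterate Corollary~\ref{cor:brunnbound}. With the orthogonal splitting $\R[Y]_n=\R Y^{n-1}\operp\R[Y]_{n-1}$ (the monomials are orthonormal for the inner product of Definition~\ref{def:poly_inner_product}) and $\pi$ the orthogonal projection onto the $Y^{n-1}$-coefficient, Corollary~\ref{cor:brunnbound} gives
\[
\vol(S\cap\R[Y]_n)\le\vol_{\R Y^{n-1}}\bigl(\pi(S\cap\R[Y]_n)\bigr)\cdot\vol_{\R[Y]_{n-1}}\bigl(S\cap\R[Y]_{n-1}\bigr)\le 2C_{n-1}\cdot\vol(S\cap\R[Y]_{n-1}),
\]
using the coefficient bound $\pi(S\cap\R[Y]_n)\subseteq[-C_{n-1},C_{n-1}]$ and the identity $(S\cap\R[Y]_n)\cap\R[Y]_{n-1}=S\cap\R[Y]_{n-1}$. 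Descending all the way to $\R[Y]_0=\R$, where $S\cap\R[Y]_0=[-1,1]$ has length $2$, this telescopes to $\vol(S\cap\R[Y]_n)\le\prod_{k=0}^{n-1}2C_k$, hence $\log\vol(S\cap\R[Y]_n)\le n\log2+\sum_{k=0}^{n-1}\log C_k$.

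Finally I would evaluate $\sum_{k=1}^{n-1}\log C_k=\sum_{k=1}^{n-1}\bigl(\tfrac{k}{2}-\tfrac{k}{2}\log\tfrac{k}{2}\bigr)$ with Proposition~\ref{prop:stirlings}: one has $\sum_{k=1}^{n-1}\tfrac{k}{2}=\tfrac14n^2+O(n)$ and
\[
\sum_{k=1}^{n-1}\tfrac{k}{2}\log\tfrac{k}{2}=\tfrac12\sum_{k=1}^{n-1}k\log k-\tfrac{\log2}{2}\sum_{k=1}^{n-1}k=\tfrac14n^2\log n-\tfrac18n^2-\tfrac{\log2}{4}n^2+O(n\log n),
\]
so $\sum_{k=0}^{n-1}\log C_k=-\tfrac14n^2\log n+\bigl(\tfrac38+\tfrac14\log2\bigr)n^2+O(n\log n)$, which absorbs the stray $n\log2$. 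Combined with the lower bound of Proposition~\ref{prop:unscaled_volume}, this gives the claimed equality. I expect no genuinely new difficulty to arise; the only delicate point is the bookkeeping — keeping all error terms at $O(n\log n)$ and verifying that the Cauchy estimate is asymptotically sharp, i.e.\ that $C_k$ agrees with $1/\phi(k)$ up to polynomial factors, so that the upper and lower bounds really coincide to the order $n^2$.
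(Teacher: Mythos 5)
Your proposal is correct, and its skeleton is the same as the paper's: the lower bound is quoted from Proposition~\ref{prop:unscaled_volume}, and the upper bound is obtained by iterating Corollary~\ref{cor:brunnbound} along the orthogonal splitting \(\R[Y]_{n}=(\R Y^{n-1})\operp\R[Y]_{n-1}\), bounding the projection onto the leading coefficient, and summing with Proposition~\ref{prop:stirlings}. The one genuine difference is how the leading-coefficient bound is obtained. The paper shows via Rouch\'e's theorem that the projection \(\pi\) onto \(\R Y^{n}\) is \(\|\cdot\|_\ee\)-contractive, i.e.\ \(\pi(S\cap\R[Y]_{n+1})\subseteq S\cap\R Y^{n}\), and then computes \(\|Y^{n}\|_\ee=(n/2\ee)^{n/2}\) exactly; you instead apply Cauchy's integral formula on the circle \(|z|=R\) and optimise \(R^{-k}\exp(R^2)\) at \(R=\sqrt{k/2}\). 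Both routes give the identical bound \(C_k=(2\ee/k)^{k/2}\), so the Stirling bookkeeping and the final constant \(\tfrac{3}{8}+\tfrac{1}{4}\log 2\) come out the same; your version is somewhat more elementary (no Rouch\'e, no exact evaluation of \(\|Y^n\|_\ee\)), while the paper's version yields the slightly stronger structural fact that \(\pi\) maps \(S\) into \(S\). Two cosmetic points: by Definition~\ref{def:bounded_degree_poly} the constants are \(\R[Y]_1\), not \(\R[Y]_0\) (which is \(\{0\}\)), so the telescoping should stop at \(S\cap\R[Y]_1=[-1,1]\); this only shifts the harmless factor \(2^n=\exp(O(n))\). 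Also, the sharpness discussion at the end is not needed: the matching lower bound is already supplied by Proposition~\ref{prop:unscaled_volume}, so it suffices that your upper bound has the same second-order term.
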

\begin{proof}
We already proved a lower bound in Proposition~\ref{prop:unscaled_volume}, so it remains to prove an upper bound.
We will inductively show that 
\[\vol(S\cap\R[Y]_n)\leq 2^n\prod_{k=1}^{n-1} \Big(\frac{2\ee}{k}\Big)^{k/2}.\]
It then follows from Proposition~\ref{prop:stirlings} that
\[\log\vol(S\cap\R[Y]_n) \leq n\log2+ \frac{1}{2}\sum_{k=1}^{n-1}k\big(\log (2\ee) -\log k) = -\tfrac{1}{4}n^2 \log n + (\tfrac{3}{8}+\tfrac{1}{4}\log 2) n^2 + O(n\log n).  \]
For \(n=0\) and \(n=1\) the inequality certainly holds.
Now suppose the inequality holds for \(n\geq 1\).
Write \(\R[Y]_{n+1}=(\R Y^n) \operp \R[Y]_n\) and let \(\pi:\R[Y]_{n+1}\to \R Y^n\) be the projection map.
By Corollary~\ref{cor:brunnbound} it suffices to show for \(n>0\) that \(\vol(\pi(S\cap\R[Y]_{n+1})) \leq 2(\frac{n}{2\ee})^{-n/2}\). 
We do this by proving
\[ \pi(S\cap\R[Y]_{n+1}) \stackrel{\textup{(i)}}{\subseteq} S\cap (\R Y^n) \stackrel{\textup{(ii)}}{\subseteq} [-1,+1] \Big(\frac{2\ee}{n}\Big)^{n/2} Y^n.  \]

(i) Suppose \(f\in \R[Y]_{n+1}\) and \(\|f\|_\ee<\|\pi(f)\|_\ee\). 
Since \(\pi(f)\) is a monomial, the function \(z\mapsto|\pi(f)(z)|\exp(-|z|^2)\) takes its maximum on a circle of radius say \(r\).
Then for all \(z\) on this circle we have \(|f(z)|\leq \|f\|_\ee \exp(r^2) < \|\pi(f)\|_\ee\exp(r^2) = |\pi(f)(z)|\).
Hence by Rouch\'e's theorem, the polynomial \(f-\pi(f)\) has as many roots as \(\pi(f)\) in the disk \(\{z\in\C\,|\,|z|\leq r\}\), counting multiplicities.
However, since \(f-\pi(f)\) has degree at most \(n-1\) and \(\pi(f)\) has \(n\) such roots, this is a contradiction.
Hence \(\|\pi(f)\|_\ee\leq\|f\|_\ee\), from which (i) follows.

(ii) Consider the map \(g:\R_{\geq 0}\to\R_{\geq 0}\) given by \(x\mapsto x^n\exp(-x^2)\).
Then
\[\frac{\dif g}{\dif x} = x^{n-1}(n-2x^2)\exp(-x^2) = 0 \ \Longleftrightarrow\ x = 0 \ \lor\ x=\sqrt{n/2}.   \]
Hence \(g\) takes a maximum at \((n/2)^{1/2}\). 
We conclude that \(\|Y^n\|_\ee=g((n/2)^{1/2})=(\tfrac{n}{2\ee})^{n/2}\).
Thus \(\max\{c\in\R\,|\,cY^n\in S\}=(\tfrac{2\ee}{n})^{n/2}\), as was to be shown.

The theorem now follows by induction.
\end{proof}

\section{Proof of the main theorem}\label{sec:fekete}

We are now ready to give a proof of Theorem~\ref{thm:small}.

\begin{proposition}\label{prop:agood_exists}
Let \(R\) be an order of a number field \(K\), let \(\alpha\in K\) and \(0<r^2<\frac{1}{2}\exp(\frac{1}{2})\).
Then there exists some non-zero \(f\in R[X]\) such that \(f(X-\alpha)\) is exponentially bounded at radius \(r\).
\end{proposition}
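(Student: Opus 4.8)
The plan is to repeat the lattice argument behind Theorem~\ref{thm:classical_fekete}, but feeding in the sharper volume estimate of Proposition~\ref{prop:unscaled_volume} and using a dilation whose scale grows linearly with the degree. Write $d=[K:\Q]$ and let $Y$ denote the formal variable. For $n\in\Z_{\geq1}$ I would take $a_n=n/(4r^2)>0$, let $\Lambda_n=\{\,f(Y-\alpha):f\in R[X],\ \deg f<n\,\}$ inside the real inner product space $K_\R[Y]_n$, and consider
\[
S_n=\bigl\{\,g\in K_\R[Y]_n:\ |\sigma(g)(z)|\leq\exp\bigl(a_n(|z|^2-r^2)\bigr)\ \text{for all }\sigma\in\X(K),\ z\in\C\,\bigr\}.
\]
The set $S_n$ is symmetric; it is convex, since for each fixed $\sigma$ and $z$ the map $g\mapsto\sigma(g)(z)$ is $\R$-linear and the disc $\{|w|\leq c\}$ is convex; and it is a bounded set with $0$ in its interior (on the disc of radius $r$ every $\sigma(g)$ is bounded by $1$, so the Cauchy estimates bound the coefficients of $g$, while for $g$ small the Gaussian on the right beats the polynomial on the left). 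A nonzero $g\in\Lambda_n\cap S_n$ is exactly a polynomial $f(X-\alpha)$ with $0\neq f\in R[X]$, and its defining inequality immediately forces exponential boundedness at radius $r$: for $\mu\in\mathcal M(K)$ with $\int|z|^2\dif\mu<r^2$ one gets $\int\log|\sigma(g)(z)|\dif\mu\leq a_n\bigl(\int|z|^2\dif\mu-r^2\bigr)<0$. So it suffices to show $\Lambda_n\cap S_n\neq\{0\}$ for all large $n$.

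The shift $h\mapsto h(Y-\alpha)$ is a volume-preserving automorphism of $K_\R[Y]_n$ by Lemma~\ref{lem:linear_transformations}, so $\Lambda_n$ is a full rank lattice of determinant equal to that of the orthogonal direct sum $\bigoplus_{k<n}R\cdot Y^k$, namely $|\Delta(R)|^{n/2}$ by Theorem~\ref{thm:det_lattice}. Since $\dim_\R K_\R[Y]_n=dn$, Minkowski's theorem (Theorem~\ref{thm:minkowski}) reduces the task to proving $\vol(S_n)>2^{dn}|\Delta(R)|^{n/2}$ for $n$ large.

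To evaluate $\vol(S_n)$ I would apply the $\R$-linear automorphism $\Psi_n\colon g\mapsto \exp(a_nr^2)\,g(Y/\sqrt{a_n})$ of $K_\R[Y]_n$; a substitution shows $\Psi_n(S_n)=T_n:=\{g:\|\sigma(g)\|_\ee\leq1\text{ for all }\sigma\in\X(K)\}$, and Lemma~\ref{lem:linear_transformations} gives $\det\Psi_n=a_n^{-dn(n-1)/4}\exp(a_nr^2dn)$, whence
\[
\log\vol(S_n)=\tfrac{dn(n-1)}{4}\log a_n-a_nr^2dn+\log\vol(T_n).
\]
Decomposing $K_\R$ into its real and complex places writes $T_n$ as a product of unit balls of $\|\cdot\|_\ee$ on $\R[Y]_n$ and on $\C[Y]_n$ (the two conditions at a conjugate pair of embeddings coincide because $\|\cdot\|_\ee$ is invariant under $z\mapsto\bar z$). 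Using Proposition~\ref{prop:unscaled_volume} for the real factors, deducing the complex ones from the real ones via the isometry $(f_1,f_2)\mapsto f_1+\i f_2$ and the triangle inequality exactly as in the proof of Lemma~\ref{lem:classical_fekete_volume}, and absorbing into the error term the $\exp(O(n))$ factor relating the inner product volume on $K_\R[Y]_n$ to a product of Lebesgue measures, this yields $\log\vol(T_n)\geq-\tfrac d4n^2\log n+\tfrac d4(\tfrac32+\log2)n^2+O(n\log n)$.

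Substituting $a_n=n/(4r^2)$, the $n^2\log n$ terms cancel and a short computation leaves
\[
\log\vol(S_n)\geq\tfrac d4\bigl(\tfrac12-\log(2r^2)\bigr)n^2+O(n\log n),
\]
while $\log\bigl(2^{dn}|\Delta(R)|^{n/2}\bigr)=O(n)$. Since $r^2<\tfrac12\sqrt{\ee}$ we have $2r^2<\sqrt{\ee}$, i.e.\ $\log(2r^2)<\tfrac12$, so the coefficient $\tfrac d4(\tfrac12-\log(2r^2))$ is a positive constant and $\vol(S_n)>2^{dn}|\Delta(R)|^{n/2}$ for $n$ large, as required. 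The heart of the matter — and essentially the only subtlety — is this constant bookkeeping: the dilation scale must be taken of order $n/(4r^2)$ so that $\tfrac d4 n^2\log a_n$ exactly cancels the $-\tfrac d4 n^2\log n$ coming from $\vol(T_n)$, and then the surviving $n^2$-coefficient is positive precisely at the threshold $r^2=\tfrac12\sqrt{\ee}$, so that the bound in the statement is dictated by the constant $\tfrac38+\tfrac14\log2$ of Proposition~\ref{prop:unscaled_volume}; the non-standard normalization of the inner product on $K_\R$ only affects things by a factor $\exp(O(n))$.
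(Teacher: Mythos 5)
Your argument is correct and is essentially the paper's own proof: the same lattice $R[X]_n$ (shifted into the $Y$-variable) inside $K_\R[Y]_n$ with determinant $|\Delta(R)|^{n/2}$, the same convex body with exponent scale $bn$ and the same optimal choice $b=(2r)^{-2}$, the same volume bookkeeping via Lemma~\ref{lem:linear_transformations} and Proposition~\ref{prop:unscaled_volume}, and the same application of Minkowski's theorem, with the threshold $\log(2r^2)<\tfrac12$ emerging identically. The only cosmetic differences are that you spell out the real/complex place decomposition and the boundedness of $S_n$, and you re-derive inline the easy direction of Theorem~\ref{thm:exp_bounded} rather than citing it.
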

\begin{proof}
Let \(n\in\Z_{\geq 1}\) and \(d=[K:\Q]\).
Write \(Y=X-\alpha\) and consider the real vector space \(K_\R[Y]_n\), which we equip with an inner product as in Definition~\ref{def:poly_inner_product} with respect to the variable \(Y\).
By Theorem~\ref{thm:det_lattice} and Lemma~\ref{lem:linear_transformations} the lattice \(R[X]_n\) in \(K_\R[Y]_n\) is full rank and has determinant \(\det(R[Y]_n)=\left|\det(R)\right|^n=|\Delta(R)|^{n/2}\).
For \(b\in\R_{\geq0}\) consider 
\begin{align*} 
S_n &= \{f\in K_\R[Y]_n \,|\, (\forall\,\sigma\in\X(K))\,(\forall\,z\in\C)\ |\sigma(f)(z)|\leq\exp(bn(|z|^2-r^2))\} \\
&= \{f\in K_\R[Y]_n \,|\, (\forall\,\sigma\in\X(K))\ \|\exp(bnr^2)\cdot\sigma(f)((bn)^{-1/2}\cdot Y)\|_\ee\leq 1 \}.
\end{align*}
We have a natural orthogonal decomposition \(K_\R\cong \R^u\times \C^v\) for some \(u,v\in\Z_{\geq 0}\) which in turn gives an orthogonal decomposition \(K_\R[Y]_n= (\R[Y]_n)^u \times (\C[Y]_n)^v\).
Note that \(S_n\) is simply a product over \(\sigma\in\X(K)\) of 
\[S_n(\sigma)=\{f\in \F[Y]_n \,|\, \|\exp(bnr^2)\cdot\sigma(f)((bn)^{-1/2}\cdot Y)\|_\ee\leq 1\}\]
where \(\F=\R\) or \(\F=\C\) depending on whether \(\sigma(K)\subseteq\R\).
Then using Lemma~\ref{lem:linear_transformations} and Proposition~\ref{prop:unscaled_volume} we compute
\begin{align*}
\log\vol(S_n) &\geq d  \Big(\big(-\tfrac{1}{4}n^2 \log n + (\tfrac{3}{8}+\tfrac{1}{4}\log 2) n^2 \big) + \big( \tfrac{1}{4}n^2 \log(bn) - bn^2r^2 \big) \Big) + O(n\log n) \\
&= d n^2 \cdot \epsilon(b) + O(n\log n),
\end{align*}
with \(\epsilon(b)=\tfrac{1}{4}\log(2b)+\tfrac{3}{8}-r^2b\). Choosing \(b=(2r)^{-2}\) we get \(\epsilon(b)=\tfrac{1}{4}(\tfrac{1}{2}-\log(2r^2)) > 0\).
Hence 
\[ \log\Big(\frac{\vol(S_n)}{2^{d n}\cdot |\Delta(R)|^{n/2}}\Big) \geq d n^2 \cdot \epsilon(b) + O(n\log n) \to \infty \quad(\text{as }n\to\infty). \]
Thus by Minkowski's theorem there exists for \(n\) sufficiently large some non-zero \(g\in S_n\cap R[X]\).
Because \(g\in S_n\), this polynomial is exponentially bounded at radius \(r\) by Theorem~\ref{thm:exp_bounded}.
\end{proof}

\noindent\textbf{Theorem~\ref{thm:small}. }{\em If \(\gamma\in\overline{\Z}\) satisfies \(q(\gamma)<2\exp(\tfrac{1}{2})\approx 3.297\), then \(\gamma\) has only finitely many decompositions.}

\begin{proof}
Let \(\alpha=\gamma/2\), let \(K=\Q(\gamma)\) and let \(R\) be some order in \(K\).
Choose \(r\in\R_{>0}\) such that \(q(\alpha)<r^2<\tfrac{1}{2}\exp(\tfrac{1}{2})\).
Then by Proposition~\ref{prop:agood_exists} there exists some non-zero polynomial \(f\in R[X]\) which as polynomial in \(Y=X-\alpha\) is exponentially bounded at radius \(r\). Hence by Proposition~\ref{prop:rgood_implies_dec} all \((\beta,\gamma-\beta)\in\text{dec}(\gamma)\) satisfy \(f(\beta)=0\).
As \(f\) has only finitely many roots, the theorem follows.
\end{proof}

\section{Remarks on the proof of the main theorem} 
\label{sec:proof_remarks}

In this section we briefly discuss the proof of Theorem~\ref{thm:small} and make some practical remarks for explicit computation.

The proof of Theorem~\ref{thm:small} proceeds in the following steps:
\begin{enumerate}
\item We determine a sufficient condition for a polynomial to have all decompositions of \(\alpha\) as roots.
\item We translate this condition into an analytic one.
\item We determine the volume of a symmetric convex set of polynomials satisfying this condition.
\item We apply Minkowski's convex body theorem to find integral polynomials in this set.
\end{enumerate}
Theorem~\ref{thm:exp_bounded} suggests that step (2) can hardly be improved upon.
By Theorem~\ref{thm:unscaled_volume} we correctly computed the volume of our symmetric convex set in step (3).
However, in order to make it convex we fixed the constant \(a\) that comes out of Theorem~\ref{thm:exp_bounded}.
It is easy to verify that we indeed made an optimal choice of \(a\) in Proposition~\ref{prop:agood_exists}, although that does not guarantee we chose the best convex subset.
If the weakest link in the proof is step (4), we likely require a completely different approach. 
It should be noted however that Minkowski's convex body theorem is powerful enough to prove the classical Theorem~\ref{thm:classical_fekete}.

A piece of information we did not exploit in our proof of the main theorem is the following symmetry.

\begin{proposition}
For all \(\alpha\in\overline{\Z}\) we have an involution \(x\mapsto \alpha - x\) on \(\overline{\Z}\) which induces action on \(\textup{dec}(\alpha)\) given by \((\beta,\gamma)\mapsto(\gamma,\beta)\). \qed
\end{proposition}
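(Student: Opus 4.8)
The plan is to verify the three assertions in turn, each of which is a one-line check. First I would confirm that $x\mapsto\alpha-x$ is a well-defined self-map of $\overline{\Z}$: since $\overline{\Z}$ is a ring it is closed under subtraction, so $\alpha-x\in\overline{\Z}$ whenever $\alpha,x\in\overline{\Z}$. It is an involution because $\alpha-(\alpha-x)=x$, which in particular exhibits it as a bijection equal to its own inverse.

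Second, I would show it carries decompositions to decompositions. Let $(\beta,\gamma)\in\textup{dec}(\alpha)$, so $\beta+\gamma=\alpha$ and $\langle\beta,\gamma\rangle\geq 0$. Applying the involution componentwise sends $\beta$ to $\alpha-\beta=\gamma$ and $\gamma$ to $\alpha-\gamma=\beta$, so the induced map on pairs is $(\beta,\gamma)\mapsto(\gamma,\beta)$. This is again a decomposition of $\alpha$: indeed $\gamma+\beta=\alpha$, and $\langle\gamma,\beta\rangle=\langle\beta,\gamma\rangle\geq 0$ because the inner product on the ambient Hilbert space $\mathcal{H}_{\overline{\Z}}$ of $\overline{\Z}$ (Theorem~\ref{thm:lattice_eq}) is real and hence symmetric. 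Since $(\beta,\gamma)\mapsto(\gamma,\beta)$ is visibly its own inverse, the induced action on $\textup{dec}(\alpha)$ is precisely the stated transposition, consistent with the map $x\mapsto\alpha-x$ being an involution.

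I do not anticipate any obstacle: every step is an immediate verification, which is presumably why the statement is recorded without proof. The only point meriting a moment's care is that $\langle\,\cdot\,,\,\cdot\,\rangle$ here denotes the real inner product on $\mathcal{H}_{\overline{\Z}}$, so that symmetry $\langle\gamma,\beta\rangle=\langle\beta,\gamma\rangle$ holds exactly (and not merely up to complex conjugation), which is what makes the sign condition $\langle\beta,\gamma\rangle\geq 0$ manifestly symmetric in the two coordinates.
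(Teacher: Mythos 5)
Your verification is correct and matches the paper, which records this proposition without proof precisely because every step is the immediate check you carry out: closure of $\overline{\Z}$ under subtraction, $\alpha-(\alpha-x)=x$, and symmetry of the (real) inner product making $\langle\gamma,\beta\rangle=\langle\beta,\gamma\rangle\geq 0$. Nothing further is needed.
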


\begin{corollary}\label{cor:dimension_reduction}
Let \(\alpha\in\overline{\Z}\) and let \(\Z[\alpha]\subseteq R\) be an order of \(\Q(\alpha)\).
For all \(f\in R[X]\) such that \(f(\beta)=0\) for all \((\beta,\gamma)\in\textup{dec}(\alpha)\), also \(g=f(\alpha-X)\in R[X]\) satisfies \(g(\beta)=0\) for all \((\beta,\gamma)\in\textup{dec}(\alpha)\). \qed
\end{corollary}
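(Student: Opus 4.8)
The plan is to deduce this immediately from the preceding Proposition, which furnishes the involution $\iota\colon x\mapsto\alpha-x$ on $\overline{\Z}$ together with the fact that it acts on $\textup{dec}(\alpha)$ by interchanging the two coordinates of a decomposition. So the corollary is really just a translation of that symmetry into a statement about annihilating polynomials.

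First I would record that $g=f(\alpha-X)$ actually lies in $R[X]$. Here the hypothesis $\Z[\alpha]\subseteq R$ is used precisely: it gives $\alpha\in R$, hence $\alpha-X\in R[X]$, and substituting a polynomial over $R$ into another polynomial over $R$ again yields an element of $R[X]$. This is the only place where the order $R$ (as opposed to just working in $\overline{\Q}$) enters.

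Next I would take an arbitrary $(\beta,\gamma)\in\textup{dec}(\alpha)$ and compute $g(\beta)=f(\alpha-\beta)=f(\gamma)$, using $\gamma=\alpha-\beta$. By the Proposition, $\iota$ maps $(\beta,\gamma)$ to $(\gamma,\beta)$, so $(\gamma,\beta)\in\textup{dec}(\alpha)$ as well. The hypothesis on $f$ — that the first coordinate of every decomposition of $\alpha$ is a root of $f$ — applied to the decomposition $(\gamma,\beta)$ then yields $f(\gamma)=0$. Therefore $g(\beta)=0$, and since $(\beta,\gamma)$ was arbitrary this proves the claim.

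I do not expect any genuine obstacle: the content is entirely in the Proposition's observation that $\textup{dec}(\alpha)$ is stable under swapping coordinates, and the argument above is a two-line verification. The only points requiring (minimal) care are the membership $g\in R[X]$, which motivates the assumption $\Z[\alpha]\subseteq R$, and keeping straight that "$f$ vanishes at the first entry of every decomposition" is automatically a symmetric condition once decompositions are known to come in swapped pairs.
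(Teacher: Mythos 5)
Your argument is correct and is exactly the reasoning the paper has in mind: the corollary is stated with no written proof precisely because it follows immediately from the preceding proposition's symmetry of \(\textup{dec}(\alpha)\) under \((\beta,\gamma)\mapsto(\gamma,\beta)\), via the computation \(g(\beta)=f(\alpha-\beta)=f(\gamma)=0\). Your remark that \(\Z[\alpha]\subseteq R\) is what guarantees \(g\in R[X]\) is also the right (and only) point of care.
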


Corollary~\ref{cor:dimension_reduction} turns the involution on \(\overline{\Z}\) into an \(R\)-algebra automorphism on \(R[X]\).
We can incorporate this automorphism in our proof of Theorem~\ref{thm:small}.

\begin{proposition}\label{prop:dimension_reduction}
Let \(\alpha\in\overline{\Z}\), let \(\Z[\alpha]\subseteq R\) be an order of \(K=\Q(\alpha)\) and let \(r\in\R_{>0}\).
For all \(f\in R[X]\) such that \(f\) as a polynomial in \(Y=X-\alpha/2\) is exponentially bounded at radius \(r\), so is \(f(X)\cdot f(\alpha-X)\in K[Y^2]\).
\end{proposition}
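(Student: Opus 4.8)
The plan is to reduce the statement to the defining property of exponential boundedness (Definition~\ref{def:exponentially_bounded}) and then exploit the symmetry $z\mapsto -z$ on $\mathcal{S}(K)$. Write $g(Y)=f(Y+\alpha/2)\in K[Y]$, so that ``$f$ as a polynomial in $Y=X-\alpha/2$'' is exactly $g$, and by hypothesis $g$ is exponentially bounded at radius $r$. Since $X=Y+\alpha/2$ and $\alpha-X=\alpha/2-Y$, we have $f(X)\cdot f(\alpha-X)=g(Y)\cdot g(-Y)$; this product is invariant under $Y\mapsto -Y$, hence lies in $K[Y^2]$, which also justifies the claim $f(X)f(\alpha-X)\in K[Y^2]$. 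It therefore remains to prove that $h\vcentcolon=g(Y)g(-Y)$ is exponentially bounded at radius $r$.

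First I would record that for every $\sigma\in\X(K)$ and $z\in\C$ we have $\sigma(h)(z)=\sigma(g)(z)\cdot\sigma(g)(-z)$, because applying $\sigma$ coefficientwise to $g(-Y)$ yields $\sigma(g)(-Y)$; consequently $\log|\sigma(h)(z)|=\log|\sigma(g)(z)|+\log|\sigma(g)(-z)|$ (with the usual convention that this is $-\infty$ where a factor vanishes). Now fix $\mu\in\mathcal{M}(K)$ with $\int|z|^2\dif\mu(\sigma,z)<r^2$. Then
\[ \int \log|\sigma(h)(z)|\dif\mu(\sigma,z)=\int\log|\sigma(g)(z)|\dif\mu(\sigma,z)+\int\log|\sigma(g)(-z)|\dif\mu(\sigma,z), \]
and I claim both integrals on the right are negative. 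The first is negative since $g$ is exponentially bounded at radius $r$ and $\mu$ satisfies the defining constraint. For the second, introduce the measurable involution $\Psi\colon\mathcal{S}(K)\to\mathcal{S}(K)$, $(\sigma,z)\mapsto(\sigma,-z)$, and set $\nu=\Psi_*\mu$; then $\nu\in\mathcal{M}(K)$ since $\Psi$ is a measurable bijection preserving total mass, and by the change-of-variables formula
\[ \int\log|\sigma(g)(z)|\dif\nu(\sigma,z)=\int\log|\sigma(g)(-z)|\dif\mu(\sigma,z), \qquad \int|z|^2\dif\nu(\sigma,z)=\int|z|^2\dif\mu(\sigma,z)<r^2. \]
Applying exponential boundedness of $g$ to the measure $\nu$ shows the second integral is negative as well. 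Summing gives $\int\log|\sigma(h)(z)|\dif\mu(\sigma,z)<0$, so $h$ is exponentially bounded at radius $r$, as desired.

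There is no serious obstacle in this argument; the only point needing a little care is the bookkeeping around the value $-\infty$. Each of the two sub-integrals is well-defined in $[-\infty,+\infty)$ because $\log^{+}|\sigma(g)(z)|$ is dominated by $C+C'\log(1+|z|^2)$ for constants depending only on $g$, and $\int\log(1+|z|^2)\dif\mu\leq\log(1+\int|z|^2\dif\mu)<\infty$ by Jensen's inequality; hence each sub-integral lies in $[-\infty,0)$ under the mass constraint, and their sum again lies in $[-\infty,0)$, which is exactly what the definition of exponential boundedness requires.
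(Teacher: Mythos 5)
Your proof is correct and follows essentially the same route as the paper: factor \(f(X)\cdot f(\alpha-X)\) as \(g(Y)\cdot g(-Y)\), observe that invariance under \(Y\mapsto -Y\) gives membership in \(K[Y^2]\), and use that exponential boundedness at radius \(r\) is preserved under \(Y\mapsto -Y\) and under products. The only difference is that you verify these two closure properties directly from Definition~\ref{def:exponentially_bounded} (via the pushforward measure under \((\sigma,z)\mapsto(\sigma,-z)\) and splitting the integral, with care about the \(-\infty\) case), whereas the paper simply invokes the symmetry of the \(Y\)-variable and the closure under multiplication noted in Example~\ref{ex:not_convex_exp_bounded}; your integrability remark is a welcome extra precision.
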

\begin{proof}
Note that the involution \(X\mapsto\alpha-X\) is with respect to \(Y\) given by \(Y\mapsto -Y\). 
Hence if \(f\) as a polynomial in \(Y\) is exponentially bounded at radius \(r\), then so is \(f(\alpha-X)\). 
As noted in Example~\ref{ex:not_convex_exp_bounded}, the set of polynomials exponentially bounded at \(r\) is closed under multiplication. 
Hence \(g=f(X)\cdot f(\alpha-X)\) is exponentially bounded at radius \(r\). 
Now \(g\) is invariant under \(Y\mapsto -Y\), meaning all coefficients at odd degree monomials in \(Y\) are zero, i.e.\ \(g\in K[Y^2]\).
\end{proof}

An interesting question to ask is how dissimilar \(f\) and \(f(\alpha-X)\) can be for exponentially bounded \(f\).
Certainly both should have \(\beta\) as root for all \((\beta,\gamma)\in\textup{dec}(\alpha)\) by Corollary~\ref{cor:dimension_reduction}.
In the context of finding `small' \(f\) algorithmically it seems that often \(f\) and \(f(\alpha-X)\) are the same (up to sign).

As a consequence of Proposition~\ref{prop:dimension_reduction}, in the proof of Theorem~\ref{thm:small} we may look at the lattice \(R[X(\alpha-X)]\) in \(K_\R[Y^2]\) instead of \(R[X]\) in \(K_\R[Y]\).
The effect is two-fold.
Firstly, it simplifies the volume computation of Proposition~\ref{prop:unscaled_volume}, as we no longer require the ad-hoc function \(\phi\) from Lemma~\ref{lem:phi}.
Secondly, any integral polynomial in our symmetric body can be found in a lower dimensional lattice in Proposition~\ref{prop:agood_exists}.
This follows from the suggested changes to Proposition~\ref{prop:unscaled_volume}, but can heuristically be seen as follows.
If \(f\) is a solution in the original lattice \(R[X]\), then \(f(X)\cdot f(\alpha-X)\) exists in our new lattice \(R[X(\alpha-X)]\) at the same dimension.
However, as discussed before, \(f\) might already be an element of \(R[X(\alpha-X)]\) anyway, so we could have already found \(f\) at half the dimension. 
Neither of these changes have an effect on the quality of our theoretical results.
However, when we want to compute decompositions in practice, the latter is very useful.

\section{Computing decompositions of algebraic integers of small square-norm}\label{sec:compute_small}

From the proof of Theorem~\ref{thm:small} one easily derives the following result.

\begin{theorem}\label{thm:compute_small}
There exists an algorithm that takes as input an element \(\alpha\in\overline{\Z}\), and decides whether \(q(\alpha)<2\sqrt{\ee}\) and if so, computes all decompositions of \(\alpha\).
\end{theorem}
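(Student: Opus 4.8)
Here is my plan for proving Theorem~\ref{thm:compute_small}.

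\textbf{Overall strategy.} The proof of Theorem~\ref{thm:small} is non-constructive in that it uses Minkowski's convex body theorem to assert \emph{existence} of an integral polynomial $f\in R[X]$ that is exponentially bounded (as a polynomial in $Y=X-\alpha/2$) at a suitable radius $r$, and then Proposition~\ref{prop:rgood_implies_dec} guarantees every decomposition of $\gamma=\alpha$ has $f(\beta)=0$. To make this algorithmic I would: (1) decide whether $q(\alpha)<2\sqrt{\ee}$; (2) compute such a witness polynomial $f$ explicitly; (3) factor $f$ over $\Q$, collect its roots as candidates for $\beta$; (4) for each candidate root $\beta$, decide whether $(\beta,\alpha-\beta)$ is a genuine decomposition. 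Step (4) is already handled by Corollary~\ref{cor:decide_decomp}, and step (3) by Lemma~\ref{lem:compute_roots} together with Theorem~\ref{alg:all}.iii; the substance is in steps (1) and (2).

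\textbf{Step (1): deciding $q(\alpha)<2\sqrt{\ee}$.} Here $2\sqrt{\ee}$ is transcendental, so I cannot test equality; instead I exploit strictness. Using Proposition~\ref{prop:comp_q} I approximate $q(\alpha)$ and, in parallel, I produce rational bounds $a_k\downarrow 2\sqrt{\ee}$ and $b_k\uparrow 2\sqrt{\ee}$ (e.g.\ from the Taylor series of $\ee^{1/2}$, which gives provable rational enclosures of $\sqrt{\ee}$). Since $q(\alpha)$ is algebraic it is either $<2\sqrt{\ee}$ or $>2\sqrt{\ee}$ (the case $q(\alpha)=2\sqrt{\ee}$ being impossible), so comparing a sufficiently sharp rational approximation of $q(\alpha)$ against $a_k$ and $b_k$ terminates with the correct answer. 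In the accept branch I also extract a rational $r$ with $q(\alpha/2)<r^2<\tfrac12\ee^{1/2}$, again by interleaved approximation.

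\textbf{Step (2): effectivizing Proposition~\ref{prop:agood_exists} and Minkowski.} I set $K=\Q(\alpha)$, $R=\Z[\alpha]$, and follow the proof of Proposition~\ref{prop:agood_exists} with $b=(2r)^{-2}$, so that $\epsilon(b)=\tfrac14(\tfrac12-\log(2r^2))>0$ is an explicit positive rational-ish quantity (a rational lower bound suffices). The volume estimate for $S_n$ there, combined with the explicit lower bound in Proposition~\ref{prop:unscaled_volume} and the determinant $|\Delta(R)|^{n/2}$ from Theorem~\ref{thm:det_lattice}, gives an \emph{explicit} threshold $n_0$ such that $\vol(S_n)>2^{dn}|\Delta(R)|^{n/2}$ for all $n\ge n_0$; I compute such an $n_0$. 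Fixing $n=n_0$, the set $S_{n}$ is an explicitly describable symmetric convex body in the lattice $R[X]_{n}\subseteq K_\R[Y]_{n}$ — it is cut out by the condition $\|\exp(bnr^2)\sigma(f)((bn)^{-1/2}Y)\|_\ee\le 1$ for each $\sigma\in\X(K)$, and the norm $\|\cdot\|_\ee$ of a polynomial is computable to any precision by Lemma~\ref{lem:compute_roots}-style root/critical-point analysis (maximizing $|f(z)|\ee^{-|z|^2}$ over $\C$ reduces to finitely many one-variable optimizations). A nonzero lattice point of $S_n$ can then be found by an exhaustive (bounded) search, since $S_n$ is bounded and we have explicit bounds on the coordinates of any point it contains; rather than re-proving an effective Minkowski theorem from scratch I would invoke the standard fact that a symmetric convex body with $\vol>2^{\dim}\det$ contains a nonzero lattice point that can be located by enumerating lattice points in an explicitly bounded region. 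This yields the witness $f\in R[X]$; by Theorem~\ref{thm:exp_bounded} and Proposition~\ref{prop:rgood_implies_dec}, every decomposition $\beta$ of $\alpha$ satisfies $f(\beta)=0$.

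\textbf{Wrapping up and the main obstacle.} Given $f\ne 0$, I factor it over $K$ (Theorem~\ref{alg:all}.iii), then over $\Q$, enumerate all roots $\beta\in\overline\Z$ (each presented by a minimal polynomial over $\Q$, then over $\Q(\alpha)$), discard the non-integral ones, and for each surviving $\beta$ apply Corollary~\ref{cor:decide_decomp} to decide whether $(\beta,\alpha-\beta)\in\textup{dec}(\alpha)$; output exactly those that pass. Correctness is immediate from Theorem~\ref{thm:small} and Proposition~\ref{prop:rgood_implies_dec}: the true decompositions form a subset of the roots of $f$. I expect the main obstacle to be the honest bookkeeping in step (2): turning the asymptotic volume inequalities of Propositions~\ref{prop:unscaled_volume}/\ref{prop:agood_exists} into a concrete computable $n_0$ (the $O(n\log n)$ terms must be replaced by explicit constants, which requires explicit forms of Stirling's estimates from Proposition~\ref{prop:stirlings}), and arguing rigorously that the convex body $S_n$ — defined via the transcendental norm $\|\cdot\|_\ee$ — can be searched for lattice points effectively, i.e.\ that membership $f\in S_n$ is semidecidable with the needed precision and that $S_n$ is contained in an explicitly bounded box. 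Everything else is a routine assembly of the computational primitives in Theorem~\ref{alg:all}, Lemma~\ref{lem:compute_roots}, Proposition~\ref{prop:comp_q}, and Corollary~\ref{cor:decide_decomp}.
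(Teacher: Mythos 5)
Your proposal is correct and follows essentially the same route as the paper's proof: decide \(q(\alpha)<2\sqrt{\ee}\) by approximation (possible since \(q(\alpha)\) is algebraic and \(2\sqrt{\ee}\) is not), make the volume bound of Proposition~\ref{prop:unscaled_volume} explicit to get a concrete \(n\) for which Minkowski guarantees a lattice point, enumerate lattice points to find an exponentially bounded \(f\), and finish with Theorem~\ref{alg:all} and Corollary~\ref{cor:decide_decomp}. The bookkeeping issues you flag (explicit constants in the Stirling-type estimates, verifying membership in the convex body defined via \(\|\cdot\|_\ee\)) are exactly the details the paper also leaves implicit.
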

\begin{proof}
Clearly \(q(\alpha)\neq2\sqrt{\ee}\) as the latter is not algebraic. However, both are computable, and after finitely many steps of approximation we can decide whether \(q(\alpha)<2\sqrt{\ee}\).
If so, we can even compute a rational \(r\) such that \(q(\alpha)<r^2<2\sqrt{\ee}\).

We have an explicit formula for a lower bound on the volume of the set \(S\) as defined in the proof of Proposition~\ref{prop:unscaled_volume}.
Thus we can compute a sufficiently large \(n\) such that Minkowski's theorem, as in the proof of Proposition~\ref{prop:agood_exists}, guarantees the existence of a non-zero lattice point in \(S\). We may then simply enumerate all lattice points to eventually find a polynomial \(f\) as in Proposition~\ref{prop:agood_exists}. We determine using Theorem~\ref{alg:all} the monic irreducible factors of \(f\) and decide using Corollary~\ref{cor:decide_decomp} for each whether it gives rise to a decomposition of \(\alpha\). 
\end{proof}

\section{Working out an example}\label{sec:example}

We will now work out an example proving an algebraic integer \(\alpha\) is indecomposable, proceeding along the lines of our lattice algorithm.
Of course, this result will be trivial when \(q(\alpha)\leq 2\) as we have seen in Proposition~\ref{prop:q_is_2}, so we will choose \(\alpha\) such that \(q(\alpha)>2\).
On the other hand, the algorithms from Section~\ref{sec:compute_small} terminate faster the smaller \(q(\alpha)\) is, so for this example we will consider \(\alpha=\sqrt[3]{3}\) with \(q(\alpha)=3^{2/3}\approx 2.080\). \\

\noindent\textbf{Setup.} Let \(\alpha=\sqrt[3]{3}\) and let \(r^2=6/11\), so that \(q(\alpha/2) < r^2 < \tfrac{1}{2}\exp(\tfrac{1}{2})\). 
Write \(K=\Q(\alpha)\) and \(R=\Z[\alpha]\) and consider the ring \(R[X]\).
Writing \(Y=X-\alpha/2\), we are looking for a polynomial \(f\in R[X]\) such that \(f\) as polynomial in \(Y\) is exponentially bounded at radius \(r\).
However, writing \(Z=X(\alpha-X)\) we may instead look for such a polynomial in \(R[Z]\), as follows from Proposition~\ref{prop:dimension_reduction}. \\

\noindent\textbf{Finding a polynomial.} 
It is quite involved to systematically find short vectors in a lattice. 
Instead we will employ a more ad-hoc approach, more along the lines of Theorem~\ref{thm:szego}.
We guess that our polynomial \(f\) will be monic in \(Z\) of some degree \(n\).
We start with \(Z^n\) and then greedily subtract \(\Z[\alpha]\)-multiples of lower degree powers of \(Z\) such that the resulting polynomial in \(Y\) becomes `small', i.e.\ has small coefficients under every embedding \(K\to\C\) with lower degree terms weighing more heavily.
Efficitively, we are applying a rounding function in the sense of Lemma~\ref{lem:polynomial_rounding}.
Note that \(Z=-Y^2+\alpha^2/4\).
Similarly as in the proof of Theorem~\ref{thm:szego}, taking \(n=4\) the \(Y^6\) term becomes integral, which is useful.
Thus we will try \(n=4\). We compute:
{\renewcommand{\arraystretch}{1.2}%
\begin{align*}
\begin{tabu}{rrrrrrrr}
Z^4&&=& Y^8 & - \alpha^2 Y^6 & +\frac{9}{8} \alpha Y^4 & - \frac{9}{16} Y^2 & + \frac{9}{256}\alpha^2 \\
&\alpha^2 Z^3&=&   & -\alpha^2 Y^6 & +\frac{9}{4} \alpha Y^4 & - \frac{27}{16} Y^2 & + \frac{9}{64}\alpha^2 \\ \hline
Z^4&-\alpha^2 Z^3 &=& Y^8 &  & - \frac{9}{8}\alpha Y^4 & + \frac{9}{8}Y^2 & - \frac{27}{256}\alpha^2 \\
&&-\alpha Z^2 =& &  & - \alpha Y^4 & + \frac{3}{2}Y^2 & - \frac{3}{16}\alpha^2 \\ \hline
Z^4&-\alpha^2 Z^3 &+\alpha Z^2 =& Y^8 &  & - \frac{1}{8} \alpha Y^4 & -\frac{3}{8} Y^2 & + \frac{21}{256}\alpha^2 \\
\end{tabu}
\end{align*}
The remaining coefficients with respect to \(Y\) look pretty small in every embedding \(K\to\C\), so we guess 
\[f(Y)=Y^8  - \tfrac{1}{8} \alpha Y^4  -\tfrac{3}{8} Y^2  + \tfrac{21}{256}\alpha^2 = Z^4-\alpha^2 Z^3+\alpha Z^2 \in R[Z].\] 
is going to be exponentially bounded at radius \(r\) as polynomial in \(Y\). \\

\noindent\textbf{Proving exponentially boundedness.} If we take \(b:\R_{\geq 0}\to \R_{\geq 0}\) given by
\[ b(w) = w^4+\tfrac{1}{8}\cdot3^{1/3}\cdot w^2+\tfrac{3}{8}\cdot w+\tfrac{21}{256}\cdot 3^{2/3},\]
then for all \(\sigma\in \X(K)\) and \(z\in\C\) we have \(|\sigma(f)(z)|\leq b(|z|^2)\).
To prove that \(f\) is exponentially bounded at radius \(r\) it suffices to find \(a\in\R\) such that \(b(w)\leq \exp(a(w-r^2))\) for all \(w\in\R_{\geq 0}\).
Because \(b(0)=\frac{21}{256} 3^{2/3}\), we must have \(a\leq -\log(b(0))/r^2\approx 3.4\).
We will try \(a=3\) for simplicity.
Consider the function \(B(w)=b(w)\cdot\exp(-a(w-r^2))\), for which we want to show \(B(w)\leq 1\) for all \(w\). 
Then
\[ 0 = B'(w) = \exp(-a(w-r^2))( b'(w)-ab(w) ) \]
if and only if \(ab(w)-b'(w)=0\). 
Since the latter is simply a polynomial equation we will find using standard techniques that it has no positive real roots.
We compute:
\begin{align*} 
ab(w)-b'(w)&= 3w^4 - 4w^3 + \tfrac{3}{8}3^{1/3}w^2 + (\tfrac{9}{8}-\tfrac{1}{4}3^{1/3})w + (\tfrac{63}{256}3^{2/3} - \tfrac{3}{8}) \\
& > 3w^4 - 4w^3 + \tfrac{3}{5}w^2+\tfrac{3}{4}w+\tfrac{1}{4}.
\end{align*}
For \(1\leq w\) we get \(ab(w)-b'(w)>3w^4-4w^3+\tfrac{3}{2}=w^2(3^{1/2}w-2\cdot3^{-1/2})^2+(\tfrac{3}{2}-\tfrac{4}{3}w^2)\geq 0\) and for \(0<w\leq 1\) we get 
\(ab(w)-b'(w)>3w^4-4w^3+\tfrac{3}{2}w^2=3w^2(w^2-\tfrac{4}{3}w+\tfrac{1}{2})\geq3w^2(w-2^{-1/2})^2\geq 0\).
Hence \(B\) has no local maxima besides possibly at \(0\), and because \(B(w)\to 0\) as \(w\to\infty\) we conclude that \(B\) takes a maximum at \(0\).
Therefore \(b\) is bounded by \(w\mapsto\exp(a(w-r^2))\) and thus \(f\) is exponentially bounded at radius \(r\). \\

\noindent\textbf{Finding decompositions.}
Writing \(f\) as a polynomial in \(X\) we get
\begin{align*}
f&=X^8 - 4\alpha X^7 + 7\alpha^2 X^6 - 21 X^5 + 13\alpha X^4 - 5\alpha^2X^3 + 3X^2 \\
&=X^2 \cdot (\alpha- X)^2 \cdot (X^4 - 2\alpha X^3 + 2\alpha^2 X^2 - 3 X + \alpha).
\end{align*}
By Proposition~\ref{prop:rgood_implies_dec} all decompositions of \(\alpha\) can be found among the roots of \(f\).
The factors \(X\) and \(\alpha-X\) correspond to the trivial decompositions \((0,\alpha)\) and \((\alpha,0)\) of \(\alpha\).
The polynomial \(h=X^4 - 2\alpha X^3 + 2\alpha^2 X^2 - 3 X + \alpha\) is irreducible as it is Eisenstein at the prime \((\alpha)\).
Let \(\beta\in\overline{\Z}\) be a root of \(h\).
By Lemma~\ref{lem:Nq_ineq} we have \(q(\beta)\geq N(\beta)^2=N(h(0))^2=3^{2/3}=q(\alpha)\). 
We can only have \(q(\beta)+q(\alpha-\beta)\leq q(\alpha)\) if \(q(\alpha-\beta)=0\), i.e.\ \(\alpha=\beta\), which is impossible.
Hence \(\alpha\) is indecomposable by Lemma~\ref{lem:eq_dec_def}.

\label{sec:end-capacity-small}

\section{Enumeration of indecomposable algebraic integers of degree 3}\label{sec:enum3}

In this section we discuss our attempt to compute the indecomposable algebraic integers of degree 3 and derive Theorem~\ref{thm:counting}.
These results are obtained by a computer program written in Sage.

We will write \(f_\alpha\in\Z[x]\) for the minimal polynomial of \(\alpha\in\overline{\Z}\).
We save ourselves some work by only considering \(\alpha\) up to `trivial isometries' of \(\overline{\Z}\), namely those of \(\mu_\infty\rtimes\textup{Gal}(\overline{\Q})\) as in Lemma~\ref{lem:isometries}.
Using Proposition~\ref{prop:classifying_candidates} we can compute a set of \(5525\) polynomials among which we can find all minimal polynomials of the indecomposable algebraic integers of degree 3.
Among those \(5525\) polynomials \(f\) only \(700\) are in fact irreducible with \(q(\alpha)\leq 4\) for all roots \(\alpha\) of \(f\).
We already eliminated the Galois action by considering minimal polynomials instead of elements, and by choosing only one element of each \(\mu_\infty\)-orbit \(\{f,-f(-x)\}\) we eliminate the action of \(\mu_\infty\), and end up with `only' \(350\) polynomials to check. 
Of those \(350\), there are \(27\) polynomials \(f_\alpha\) such that \(q(\alpha)<2\), so that \(\alpha\) is indecomposable by Proposition~\ref{prop:q_is_2}.

{\em Small degree decompositions.}
For \(95\) polynomials \(f\), the roots \(\alpha\) of \(f\) have a non-trivial decomposition in the ring of integers of \(\Q(\alpha)\).
For \(116\) of the remaining polynomials \(f_\alpha\) we can find a non-trivial decomposition \((\beta,\alpha-\beta)\) of \(\alpha\) with \(\beta\) in the ring of integers of a degree 2 extension of \(\Q(\alpha)\).
Of those \(116\) there are \(84\) for which the minimal polynomial \(g_\beta\) of \(\beta\) over \(\Q(\alpha)\) is of the form \(x^2-\alpha x\pm1\), a polynomial we encountered in the proof of Lemma~\ref{lem:help_deg_2_dec}.
The remaining \(32\) polynomials and corresponding decompositions can be found in Table~\ref{tab:deg2dec}. 
We are now left with \(112\) polynomials to check.

\begin{table}[H]
\resizebox{\columnwidth}{!}{
\begin{tabular}{|l|l|l|}\hline $q(\alpha)$ & $f_\alpha$ & $g_\beta$ \\\hline
$2.2844$ & $x^{3} - x^{2} + 3 x + 1$ & $x^{2} - \alpha x + \frac{1}{2} (\alpha^{2} + 1) $ \\
$2.5198$ & $x^{3} + 4$ & $x^{2} - \alpha x + \frac{1}{2} \alpha^{2}$ \\
$2.6178$ & $x^{3} + 2 x^{2} + 4 x + 4$ & $x^{2} - \alpha x + \frac{1}{2} \alpha^{2}$ \\
$2.7246$ & $x^{3} + 2 x + 4$ & $x^{2} - \alpha x + \frac{1}{2} \alpha^{2}$ \\
$2.7850$ & $x^{3} - 2 x^{2} + 4$ & $x^{2} - \alpha x + \frac{1}{2} \alpha^{2}$ \\
$2.8582$ & $x^{3} + 2 x^{2} - x + 2$ & $x^{2} - \alpha x + \frac{1}{2} (\alpha^{2} + \alpha)$ \\
$2.8657$ & $x^{3} - 2 x^{2} + 3 x + 2$ & $x^{2} - \alpha x + \frac{1}{2} (\alpha^{2} - \alpha)$ \\
$2.9073$ & $x^{3} + 2 x^{2} - 2 x + 1$ & $x^{2} - \alpha x - \alpha$ \\
$2.9379$ & $x^{3} + 2 x^{2} + x + 4$ & $x^{2} - \alpha x + \frac{1}{2} (\alpha^{2} + \alpha)$ \\
$2.9380$ & $x^{3} - x^{2} - x + 5$ & $x^{2} - \alpha x + \frac{1}{2} (\alpha^{2} - 1)$ \\
$2.9516$ & $x^{3} + x^{2} + x + 5$ & $x^{2} - \alpha x + \frac{1}{2} (\alpha^{2} + 1)$ \\
$3.0000$ & $x^{3} + x^{2} - 4 x + 1$ & $x^{2} - \left(\alpha + 1\right) x + 1$ \\
$3.1102$ & $x^{3} + x^{2} - 2 x + 4$ & $x^{2} - \alpha x + \frac{1}{2} (\alpha^{2} + \alpha)$ \\
$3.2714$ & $x^{3} + 2 x^{2} + 4$ & $x^{2} - \alpha x + \frac{1}{2} \alpha^{2}$ \\
$3.2849$ & $x^{3} - 2 x^{2} + 2 x + 4$ & $x^{2} - \alpha x + \frac{1}{2} \alpha^{2}$ \\
$3.2981$ & $x^{3} + 2 x^{2} - 2 x + 2$ & $x^{2} - \left(\alpha - 1\right) x - (\alpha - 1)$ \\
\hline
\end{tabular}
\begin{tabular}{|l|l|l|}\hline $q(\alpha)$ & $f_\alpha$ & $g_\beta$ \\\hline
$3.3321$ & $x^{3} + 3 x + 5$ & $x^{2} - \alpha x + \frac{1}{3} (\alpha^{2} + \alpha + 1)$ \\
$3.3333$ & $x^{3} - 2 x^{2} - 3 x + 1$ & $x^{2} - \left(\alpha - 1\right) x + 1$ \\
$3.3378$ & $x^{3} + x^{2} - 4 x + 3$ & $x^{2} - \left(\alpha - 1\right) x - (\alpha - 1)$ \\
$3.3853$ & $x^{3} + 4 x + 4$ & $x^{2} - \alpha x + \frac{1}{2} \alpha^{2} + 1$ \\
$3.4436$ & $x^{3} - 2 x^{2} - x + 6$ & $x^{2} - \alpha x + \frac{1}{2} (\alpha^{2} - \alpha)$ \\
$3.5716$ & $x^{3} + 2 x^{2} + 6 x + 4$ & $x^{2} - \alpha x + \frac{1}{2} \alpha^{2} + 1$ \\
$3.6432$ & $x^{3} + 2 x^{2} + 6 x + 2$ & $x^{2} - \left(\alpha + 1\right) x - 1$ \\
$3.6830$ & $x^{3} + x^{2} - 4 x + 4$ & $x^{2} - \alpha x + \frac{1}{2} \alpha^{2} + \frac{1}{2} \alpha - 1$ \\
$3.6839$ & $x^{3} - x^{2} - 3 x + 7$ & $x^{2} - \alpha x + \frac{1}{2} (\alpha^{2} - 1)$ \\
$3.7159$ & $x^{3} + x^{2} + 3 x + 7$ & $x^{2} - \alpha x + \frac{1}{2} (\alpha^{2} + 1)$ \\
$3.7362$ & $x^{3} + 2 x^{2} - 3 x + 2$ & $x^{2} - \alpha x - \alpha$ \\
$3.7962$ & $x^{3} + 2 x^{2} + 6 x + 1$ & $x^{2} - \left(\alpha - 1\right) x - \alpha - 1$ \\
$3.8854$ & $x^{3} - 3 x + 7$ & $x^{2} - \alpha x + \frac{1}{3} (\alpha^{2} - \alpha + 1)$ \\
$3.9111$ & $x^{3} - 2 x^{2} + 5 x + 2$ & $x^{2} - \left(\alpha - 1\right) x - 1$ \\
$3.9938$ & $x^{3} + x^{2} - 5 x + 4$ & $x^{2} - \left(\alpha + 1\right) x + 1$ \\
$4.0000$ & $x^{3} - 2 x^{2} - 4 x + 1$ & $x^{2} - \alpha x + \alpha$ \\
\hline
\end{tabular}}
\captionsetup{width=.9\linewidth}
\caption{\label{tab:deg2dec}
A table of minimal polynomials \(f_\alpha\) together with a minimal polynomial \(g_\beta\) of degree 2 over \(\Q(\alpha)\) such that \((\beta,\alpha-\beta)\) is a non-trivial decomposition of \(\alpha\).}
\end{table}

{\em Large degree decompositions.}
To find decompositions in higher degree extensions we implemented a lattice algorithm.
Since we are interested in finding only one decomposition instead of all of them, and since verifying whether something is a decomposition is computationally easy, we can get away with a lot of heuristics.
For \((\beta,\alpha-\beta)\in\textup{dec}(\alpha)\) we have, on average of squares over all embeddings of \(\Q(\alpha,\beta)\) in \(\C\), that \(|\beta-\alpha/2|\leq \sqrt{q(\alpha/2)}=r\) by Lemma~\ref{lem:eq_dec_def}. Hence if we write \(g_\beta = \sum_i c_i(x-\alpha/2)^i\) we have that \(\sum_i |c_i| r^i\) should be small. 
It is useful for our lattice algorithm to instead consider the 2-norm \(( \sum_i r^i \sum_{\sigma} |\sigma(c_i)|^2)^{1/2}\) and hope this does not affect the quality of our results for the worse.
We enumerate small polynomials \(\epsilon\in\Q(\alpha)[x]\) of degree less than \(d\in\Z_{>0}\) such that \((x-\alpha/2)^d-x^d + \epsilon\) is in the lattice of integral polynomials, and thus \((x-\alpha/2)^d+\epsilon\) is monic, integral and small.
We then verify for each of those whether they induce a decomposition of \(\alpha\).
The \(41\) polynomials \(f_\alpha\) for which this method has found a non-trivial decomposition \((\beta,\alpha-\beta)\) of \(\alpha\) with \(g_\beta\) of degree greater than \(2\) are listed in Table~\ref{tab:largedec} together with the polynomial \(g_\beta\) found.
This leaves \(71\) polynomials to check and gives an upper bound of \(6\cdot98=588\) on the number of indecomposable algebraic integers of degree 3.

\begin{table}
\resizebox{\columnwidth}{!}{
\begin{tabular}{|l|l|p{14cm}|} \hline $q(\alpha)$ & $f_\alpha$ & $g_\beta$ \\\hline
$2.9240$ & $x^{3} + 5$ & $x^{4} - 2 \alpha x^{3} + 2 \alpha^{2} x^{2} + 5 x - \alpha$ \\
$3.0103$ & $x^{3} - x^{2} + 5$ & $x^{6} - 3 \alpha x^{5} + (4 \alpha^{2} + 1) x^{4} + (-3 \alpha^{2} - 2 \alpha + 15) x^{3} + (3 \alpha^{2} - 7 \alpha - 6) x^{2} + (\alpha^{2} + \alpha + 5) x - \alpha + 1$ \\
$3.2595$ & $x^{3} + 2 x^{2} - x + 3$ & $x^{4} - 2 \alpha x^{3} + (2 \alpha^{2} + \alpha - 1) x^{2} + (\alpha^{2} + 3) x - \alpha + 1$ \\
$3.2624$ & $x^{3} - 2 x^{2} + 3 x + 3$ & $x^{4} - 2 \alpha x^{3} + (2 \alpha^{2} - \alpha + 1) x^{2} + (-\alpha^{2} + 2 \alpha + 3) x - \alpha$ \\
$3.3019$ & $x^{3} + 6$ & $x^{6} - 3 \alpha x^{5} + 4 \alpha^{2} x^{4} + 18 x^{3} - 8 \alpha x^{2} + 2 \alpha^{2} x + 1$ \\
$3.3378$ & $x^{3} + x + 6$ & $x^{4} - 2 \alpha x^{3} + (\frac{3}{2} \alpha^{2} - \frac{1}{2} \alpha) x^{2} + (\frac{1}{2} \alpha^{2} + \frac{1}{2} \alpha + 3) x + 1$ \\
$3.3656$ & $x^{3} - 2 x^{2} + 4 x + 2$ & $x^{4} - 2 \alpha x^{3} + (2 \alpha^{2} - \alpha + 2) x^{2} + (-\alpha^{2} + 2 \alpha + 2) x - \alpha$ \\
$3.3709$ & $x^{3} - x^{2} + 2 x + 5$ & $x^{6} - 3 \alpha x^{5} + 4 \alpha^{2} x^{4} + (-3 \alpha^{2} + 6 \alpha + 15) x^{3} + (-\alpha^{2} - 10 \alpha - 5) x^{2} + 3 \alpha^{2} x + 3$ \\
$3.3863$ & $x^{3} + x^{2} - 3 x + 4$ & $x^{8} - 4 \alpha x^{7} + 7 \alpha^{2} x^{6} + (7 \alpha^{2} - 21 \alpha + 28) x^{5} + (18 \alpha^{2} - 30 \alpha + 17) x^{4} + (19 \alpha^{2} - 30 \alpha + 32) x^{3} + (13 \alpha^{2} - 24 \alpha + 19) x^{2} + (6 \alpha^{2} - 9 \alpha + 8) x + \alpha^{2} - 2 \alpha + 2$ \\
$3.3895$ & $x^{3} - x^{2} + 6$ & $x^{6} - 3 \alpha x^{5} + (4 \alpha^{2} + 1) x^{4} + (-3 \alpha^{2} - 2 \alpha + 18) x^{3} + (3 \alpha^{2} - 8 \alpha - 7) x^{2} + (\alpha^{2} + \alpha + 6) x - \alpha + 1$ \\
$3.4190$ & $x^{3} - 2 x + 6$ & $x^{6} - 3 \alpha x^{5} + (4 \alpha^{2} - 1) x^{4} + (-4 \alpha + 18) x^{3} + (\alpha^{2} - 8 \alpha) x^{2} + 2 \alpha^{2} x + 1$ \\
$3.4338$ & $x^{3} - x^{2} + 4 x + 3$ & $x^{8} - 4 \alpha x^{7} + (7 \alpha^{2} - 1) x^{6} + (-7 \alpha^{2} + 31 \alpha + 21) x^{5} + (-17 \alpha^{2} - 30 \alpha - 13) x^{4} + (19 \alpha^{2} - 27 \alpha - 24) x^{3} + (4 \alpha^{2} + 26 \alpha + 13) x^{2} + (-5 \alpha^{2} + 3 \alpha + 3) x - 2 \alpha - 1$ \\
$3.4438$ & $x^{3} + x^{2} + 6$ & $x^{10} - 5 \alpha x^{9} + 11 \alpha^{2} x^{8} + (14 \alpha^{2} + 84) x^{7} + (11 \alpha^{2} - 69 \alpha + 68) x^{6} + (44 \alpha^{2} - 36 \alpha + 30) x^{5} + (29 \alpha^{2} - 5 \alpha + 99) x^{4} + (5 \alpha^{2} - 24 \alpha + 42) x^{3} + (3 \alpha^{2} - 6 \alpha) x^{2} - 1$ \\
$3.4537$ & $x^{3} + 2 x + 6$ & $x^{6} - 3 \alpha x^{5} + (4 \alpha^{2} + 1) x^{4} + (4 \alpha + 18) x^{3} + (-\alpha^{2} - 8 \alpha) x^{2} + 2 \alpha^{2} x + 1$ \\
$3.4767$ & $x^{3} + x^{2} - 2 x + 5$ & $x^{14} - 7 \alpha x^{13} + 23 \alpha^{2} x^{12} + (47 \alpha^{2} - 94 \alpha + 235) x^{11} + (200 \alpha^{2} - 467 \alpha + 333) x^{10} + (695 \alpha^{2} - 761 \alpha + 1040) x^{9} + (1143 \alpha^{2} - 1912 \alpha + 2739) x^{8} + (1840 \alpha^{2} - 3030 \alpha + 3435) x^{7} + (2256 \alpha^{2} - 3290 \alpha + 4258) x^{6} + (1960 \alpha^{2} - 3100 \alpha + 3990) x^{5} + (1343 \alpha^{2} - 2101 \alpha + 2601) x^{4} + (667 \alpha^{2} - 1022 \alpha + 1300) x^{3} + (226 \alpha^{2} - 354 \alpha + 447) x^{2} + (49 \alpha^{2} - 75 \alpha + 95) x + 5 \alpha^{2} - 8 \alpha + 10$ \\
$3.4858$ & $x^{3} + 2 x^{2} + 2 x + 6$ & $x^{6} - 3 \alpha x^{5} + (4 \alpha^{2} + 1) x^{4} + (6 \alpha^{2} + 4 \alpha + 18) x^{3} + (4 \alpha^{2} - 3 \alpha + 16) x^{2} + (3 \alpha^{2} - 2 \alpha + 6) x + \alpha^{2} + 2$ \\
$3.5833$ & $x^{3} - 2 x^{2} + 6$ & $x^{8} - 4 \alpha x^{7} + (7 \alpha^{2} + \alpha) x^{6} + (-17 \alpha^{2} + 42) x^{5} + (25 \alpha^{2} - 26 \alpha - 75) x^{4} + (-14 \alpha^{2} + 36 \alpha + 72) x^{3} + (-\alpha^{2} - 22 \alpha - 29) x^{2} + (4 \alpha^{2} + 5 \alpha) x - \alpha^{2} + 2$ \\
$3.6133$ & $x^{3} + x^{2} - x + 6$ & $x^{6} - 3 \alpha x^{5} + (4 \alpha^{2} + \alpha) x^{4} + (\alpha^{2} - 3 \alpha + 18) x^{3} + (\alpha^{2} - 8 \alpha - 1) x^{2} + (2 \alpha^{2} + \alpha) x + 1$ \\
$3.6361$ & $x^{3} + 2 x^{2} - x + 4$ & $x^{6} - 3 \alpha x^{5} + (4 \alpha^{2} - 1) x^{4} + (6 \alpha^{2} - \alpha + 12) x^{3} + (5 \alpha^{2} - 8 \alpha + 11) x^{2} + (4 \alpha^{2} - 4 \alpha + 4) x + \alpha^{2} - \alpha + 2$ \\
$3.6370$ & $x^{3} - 2 x^{2} + 3 x + 4$ & $x^{6} - 3 \alpha x^{5} + (4 \alpha^{2} + 1) x^{4} + (-6 \alpha^{2} + 7 \alpha + 12) x^{3} + (3 \alpha^{2} - 14 \alpha - 10) x^{2} + (2 \alpha^{2} + 5 \alpha + 4) x - \alpha^{2} + 1$ \\
$3.6521$ & $x^{3} + 2 x^{2} + 5$ & $x^{6} - 3 \alpha x^{5} + 4 \alpha^{2} x^{4} + (6 \alpha^{2} + 15) x^{3} + (5 \alpha^{2} - 7 \alpha + 13) x^{2} + (4 \alpha^{2} - 3 \alpha + 5) x + \alpha^{2} + 2$ \\
$3.6593$ & $x^{3} + 7$ & $x^{3} - 2 \alpha x^{2} + \alpha^{2} x + 1$ \\
$3.6785$ & $x^{3} - x^{2} - x + 7$ & $x^{10} - 5 \alpha x^{9} + (11 \alpha^{2} + 1) x^{8} + (-14 \alpha^{2} - 18 \alpha + 98) x^{7} + (30 \alpha^{2} - 68 \alpha - 81) x^{6} + (16 \alpha^{2} + 27 \alpha + 140) x^{5} + (-13 \alpha^{2} - 70 \alpha + 18) x^{4} + (24 \alpha^{2} + 8 \alpha - 14) x^{3} + (-8 \alpha^{2} - 5 \alpha + 31) x^{2} + (2 \alpha^{2} - 2 \alpha - 7) x + 1$ \\
$3.6878$ & $x^{3} - x + 7$ & $x^{6} - 3 \alpha x^{5} + 4 \alpha^{2} x^{4} + (-3 \alpha + 21) x^{3} + (\alpha^{2} - 9 \alpha) x^{2} + 2 \alpha^{2} x + 1$ \\
$3.6915$ & $x^{3} + x + 7$ & $x^{6} - 3 \alpha x^{5} + 4 \alpha^{2} x^{4} + (3 \alpha + 21) x^{3} + (-\alpha^{2} - 9 \alpha) x^{2} + 2 \alpha^{2} x + 1$ \\
$3.6939$ & $x^{3} + x^{2} + x + 7$ & $x^{6} - 3 \alpha x^{5} + (4 \alpha^{2} + \alpha + 1) x^{4} + (\alpha^{2} + \alpha + 21) x^{3} + (-9 \alpha - 1) x^{2} + (2 \alpha^{2} + \alpha) x + 1$ \\
$3.7123$ & $x^{3} - 2 x^{2} + 4 x + 3$ & $x^{6} - 3 \alpha x^{5} + 4 \alpha^{2} x^{4} + (-6 \alpha^{2} + 12 \alpha + 9) x^{3} + (-15 \alpha - 7) x^{2} + (4 \alpha^{2} + \alpha) x - \alpha^{2} + 2 \alpha + 1$ \\
$3.7228$ & $x^{3} + 2 x^{2} + x + 6$ & $x^{6} - 3 \alpha x^{5} + 4 \alpha^{2} x^{4} + (6 \alpha^{2} + 3 \alpha + 18) x^{3} + (4 \alpha^{2} - 5 \alpha + 16) x^{2} + (3 \alpha^{2} - 3 \alpha + 6) x + \alpha^{2} + 2$ \\
$3.7238$ & $x^{3} - x^{2} + 2 x + 6$ & $x^{8} - 4 \alpha x^{7} + 7 \alpha^{2} x^{6} + (-7 \alpha^{2} + 14 \alpha + 42) x^{5} + (-4 \alpha^{2} - 35 \alpha - 26) x^{4} + (15 \alpha^{2} + 8 \alpha - 6) x^{3} + (-6 \alpha^{2} + 6 \alpha + 20) x^{2} + (\alpha^{2} - 4 \alpha - 6) x + \alpha + 1$ \\
$3.7345$ & $x^{3} + x^{2} - 3 x + 5$ & $x^{3} - 2 \alpha x^{2} + (\frac{3}{2} \alpha^{2} - \frac{1}{2}) x + \frac{1}{2} \alpha^{2} - \alpha + \frac{3}{2}$ \\
$3.7479$ & $x^{3} - x^{2} + 7$ & $x^{6} - 3 \alpha x^{5} + (4 \alpha^{2} + 1) x^{4} + (-3 \alpha^{2} - 2 \alpha + 21) x^{3} + (3 \alpha^{2} - 9 \alpha - 9) x^{2} + (\alpha^{2} + 2 \alpha + 7) x - \alpha$ \\
$3.7664$ & $x^{3} - 2 x + 7$ & $x^{6} - 3 \alpha x^{5} + (4 \alpha^{2} - 1) x^{4} + (-4 \alpha + 21) x^{3} + (\alpha^{2} - 9 \alpha) x^{2} + 2 \alpha^{2} x + 1$ \\
$3.7771$ & $x^{3} - 2 x^{2} + x + 6$ & $x^{6} - 3 \alpha x^{5} + 4 \alpha^{2} x^{4} + (-6 \alpha^{2} + 3 \alpha + 18) x^{3} + (4 \alpha^{2} - 10 \alpha - 15) x^{2} + (4 \alpha + 6) x - \alpha - 1$ \\
$3.7949$ & $x^{3} + 2 x + 7$ & $x^{6} - 3 \alpha x^{5} + (4 \alpha^{2} + 1) x^{4} + (4 \alpha + 21) x^{3} + (-\alpha^{2} - 9 \alpha) x^{2} + 2 \alpha^{2} x + 1$ \\
$3.7995$ & $x^{3} + x^{2} + 7$ & $x^{6} - 3 \alpha x^{5} + (4 \alpha^{2} + \alpha) x^{4} + (\alpha^{2} + 21) x^{3} + (-9 \alpha - 1) x^{2} + (2 \alpha^{2} + \alpha) x + 1$ \\
$3.8253$ & $x^{3} + x^{2} - 2 x + 6$ & $x^{6} - 3 \alpha x^{5} + 4 \alpha^{2} x^{4} + (3 \alpha^{2} - 6 \alpha + 18) x^{3} + (4 \alpha^{2} - 10 \alpha + 9) x^{2} + (3 \alpha^{2} - 5 \alpha + 6) x + \alpha^{2} - \alpha + 1$ \\
$3.8560$ & $x^{3} + 2 x^{2} + 2 x + 7$ & $x^{6} - 3 \alpha x^{5} + (4 \alpha^{2} + 1) x^{4} + (6 \alpha^{2} + 4 \alpha + 21) x^{3} + (4 \alpha^{2} - 4 \alpha + 19) x^{2} + (3 \alpha^{2} - 3 \alpha + 7) x + \alpha^{2} + 2$ \\
$3.8739$ & $x^{3} - x^{2} + x + 7$ & $x^{3} - 2 \alpha x^{2} + (\frac{3}{2} \alpha^{2} + \frac{1}{2}) x - \frac{1}{2} \alpha^{2} + \frac{5}{2}$ \\
$3.9466$ & $x^{3} - 2 x^{2} + 7$ & $x^{6} - 3 \alpha x^{5} + (4 \alpha^{2} - 1) x^{4} + (-6 \alpha^{2} + 2 \alpha + 21) x^{3} + (4 \alpha^{2} - 10 \alpha - 19) x^{2} + (\alpha^{2} + 5 \alpha + 7) x - \alpha^{2} - \alpha + 1$ \\
$3.9928$ & $x^{3} + 2 x^{2} - x + 5$ & $x^{6} - 3 \alpha x^{5} + (4 \alpha^{2} - 1) x^{4} + (6 \alpha^{2} - \alpha + 15) x^{3} + (5 \alpha^{2} - 9 \alpha + 13) x^{2} + (4 \alpha^{2} - 4 \alpha + 5) x + \alpha^{2} - \alpha + 1$ \\
$3.9948$ & $x^{3} - x^{2} + 3 x + 6$ & $x^{6} - 3 \alpha x^{5} + (4 \alpha^{2} + 1) x^{4} + (-3 \alpha^{2} + 7 \alpha + 18) x^{3} + (-\alpha^{2} - 12 \alpha - 9) x^{2} + (3 \alpha^{2} + 3 \alpha) x - \alpha^{2} + \alpha + 1$ \\ \hline
\end{tabular}
}
\captionsetup{width=.9\linewidth}
\caption{\label{tab:largedec}
A table of minimal polynomials \(f_\alpha\) together with a minimal polynomial \(g_\beta\) of degree greater than 2 over \(\Q(\alpha)\) such that \((\beta,\alpha-\beta)\) is a non-trivial decomposition of \(\alpha\).}
\end{table}

{\em Indecomposables.}
On the other hand, we want to prove that certain \(\alpha\) are indecomposable.
To this end, we implemented a lattice algorithm similar to that of Theorem~\ref{thm:compute_small}.
To hopefully speed up the algorithm we also apply the dimension reducing symmetry trick discussed in Section~\ref{sec:proof_remarks}.
Writing \(R\) for the ring of integers of \(\Q(\alpha)\) and \(z=x(\alpha-x)\), we enumerate short \(g\in R[z]\) for which we verify whether \(g\) as polynomial in \(y=x-\alpha/2\) is exponentially bounded. 
The \(32\) polynomials \(f_\alpha\) for which we found such a \(g\) proving indecomposability of \(\alpha\) are listed in Table~\ref{tab:indec}.
We present \(g\) in factored form for compactness. 
This leaves \(39\) polynomials undetermined and gives a lower bound of \(6\cdot 59=354\) on the number of indecomposable algebraic integers of degree \(3\).

\begin{table}
\resizebox{\columnwidth}{!}{
\begin{tabular}{|l|l|p{14cm}|}\hline $q(\alpha)$ & $f_\alpha$ & $g$ \\\hline
$2.0000$ & $x^{3} - 2 x^{2} - x + 1$ & $\left(\alpha^{2} - 2 \alpha\right) \cdot x \cdot (x - \alpha) \cdot (x^{2} - \alpha x + \alpha^{2} - \alpha - 1)$ \\
$2.0347$ & $x^{3} + x^{2} + 3 x + 2$ & $x^{2} \cdot (x - \alpha)^{2} \cdot (x^{2} - \alpha x - 1)$ \\
$2.0780$ & $x^{3} - 2 x^{2} + x + 2$ & $(x - 1) \cdot (x - \alpha + 1) \cdot x^{2} \cdot (x - \alpha)^{2}$ \\
$2.0801$ & $x^{3} + 3$ & $x^{2} \cdot (x - \alpha)^{2} \cdot (x^{4} - 2 \alpha x^{3} + 2 \alpha^{2} x^{2} - 3 x + \alpha)$ \\
$2.0826$ & $x^{3} + 2 x^{2} + 3 x + 3$ & $\left(\alpha^{2} + \alpha + 1\right) \cdot x \cdot (x - \alpha) \cdot (x^{2} - \alpha x + \frac{1}{3} \alpha^{2})$ \\
$2.0872$ & $x^{3} - x^{2} - x + 3$ & $\left(\alpha + 1\right) \cdot x \cdot (x - \alpha) \cdot (x^{4} - 2 \alpha x^{3} + \left(2 \alpha^{2} - \alpha + 1\right) x^{2} + \left(-2 \alpha + 3\right) x + \frac{1}{2} \alpha^{2} - \alpha + \frac{1}{2})$ \\
$2.0905$ & $x^{3} - x^{2} - 2 x + 3$ & $ \alpha  \cdot x \cdot (x - \alpha) \cdot (x^{4} - 2 \alpha x^{3} + \left(2 \alpha^{2} - 1\right) x^{2} + \left(-\alpha^{2} - \alpha + 3\right) x + \frac{1}{3} \alpha^{2} - \frac{1}{3} \alpha - \frac{2}{3})$ \\
$2.0967$ & $x^{3} + x^{2} + x + 3$ & $ \alpha  \cdot x^{2} \cdot (x - \alpha)^{2} \cdot (x^{4} - 2 \alpha x^{3} + \left(\frac{5}{3} \alpha^{2} - \frac{1}{3} \alpha - \frac{1}{3}\right) x^{2} + \left(\alpha^{2} + \alpha + 2\right) x + 1)$ \\
$2.1102$ & $x^{3} + x^{2} + 2 x + 3$ & $x \cdot (x - \alpha) \cdot (x^{4} - 2 \alpha x^{3} + 2 \alpha^{2} x^{2} + \left(\alpha^{2} + 2 \alpha + 3\right) x + 1)$ \\
$2.1279$ & $x^{3} - x + 3$ & $x^{2} \cdot (x - \alpha)^{2} \cdot (x^{4} - 2 \alpha x^{3} + 2 \alpha^{2} x^{2} + \left(-\alpha + 3\right) x - \alpha)$ \\
$2.1390$ & $x^{3} + x + 3$ & $2 \cdot x \cdot (x - \alpha) \cdot (x^{6} - 3 \alpha x^{5} + \left(4 \alpha^{2} + \frac{1}{2}\right) x^{4} + \left(2 \alpha + 9\right) x^{3} + \left(-\frac{1}{2} \alpha^{2} - 4 \alpha\right) x^{2} + \alpha^{2} x + \frac{1}{2})$ \\
$2.1626$ & $x^{3} - x^{2} + 3$ & $\left(\alpha - 1\right) \cdot x \cdot (x - \alpha) \cdot (x^{2} - \alpha x + \frac{1}{3} \alpha^{2})$ \\
$2.1815$ & $x^{3} - 2 x^{2} - x + 3$ & $x \cdot (x - \alpha) \cdot (x^{2} - \alpha x + 1)$ \\
$2.1904$ & $x^{3} - x^{2} + 2 x + 2$ & $\left(\alpha - 1\right) \cdot x \cdot (x - \alpha) \cdot (x^{4} - 2 \alpha x^{3} + \frac{3}{2} \alpha^{2} x^{2} + \left(-\frac{1}{2} \alpha^{2} + \alpha + 1\right) x - \frac{1}{4} \alpha^{2} - \frac{1}{2})$ \\
$2.1973$ & $x^{3} + 2 x^{2} + 2 x + 3$ & $\left(\alpha + 1\right) \cdot x^{2} \cdot (x - \alpha)^{2} \cdot (x^{4} - 2 \alpha x^{3} + 2 \alpha^{2} x^{2} + \left(2 \alpha^{2} + 2 \alpha + 3\right) x + \frac{1}{2} \alpha^{2} + \frac{1}{2} \alpha + \frac{3}{2})$ \\
$2.2230$ & $x^{3} + 2 x^{2} + 4 x + 2$ & $\left(\alpha^{2} + \alpha + 1\right) \cdot x \cdot (x - \alpha) \cdot (x^{2} - \alpha x + \frac{1}{3} \alpha^{2} + \frac{1}{3}) \cdot (x^{2} - \alpha x + \alpha^{2} + \alpha + 2)$ \\
$2.2309$ & $x^{3} + x^{2} + 3$ & $\left(\alpha + 1\right) \cdot x \cdot (x - \alpha) \cdot (x^{2} - \alpha x + \frac{1}{3} \alpha^{2})$ \\
$2.2512$ & $x^{3} - 2 x + 3$ & $ \alpha  \cdot x \cdot (x - \alpha) \cdot (x^{2} - \alpha x + \frac{1}{3} \alpha^{2} + \frac{1}{3}) \cdot (x^{4} - 2 \alpha x^{3} + \left(\alpha^{2} - \alpha\right) x^{2} + \alpha^{2} x + 1)$ \\
$2.3044$ & $x^{3} + x^{2} - 2 x + 2$ & $\left(\alpha + 2\right) \cdot x^{3} \cdot (x - \alpha)^{3} \cdot (x^{2} - \alpha x + \frac{1}{2} \alpha^{2} + \frac{1}{2} \alpha)$ \\
$2.3681$ & $x^{3} + 2 x^{2} + 4 x + 1$ & $\left(\alpha^{2} + \alpha + 4\right) \cdot x \cdot (x + 1) \cdot (x - \alpha - 1) \cdot (x - \alpha) \cdot (x^{2} - \alpha x + \frac{1}{2} \alpha^{2} + \frac{1}{2} \alpha + \frac{1}{2}) \cdot (x^{4} - 2 \alpha x^{3} + \left(\frac{7}{5} \alpha^{2} - \frac{2}{5} \alpha - \frac{1}{5}\right) x^{2} + \left(\frac{6}{5} \alpha^{2} + \frac{9}{5} \alpha + \frac{2}{5}\right) x + \frac{1}{5} \alpha^{2} + \frac{4}{5} \alpha + \frac{2}{5})$ \\
$2.4206$ & $x^{3} + 2 x^{2} + 2$ & $\left(\alpha + 1\right) \cdot x^{2} \cdot (x - \alpha)^{2} \cdot (x^{4} - 2 \alpha x^{3} + \left(\frac{4}{3} \alpha^{2} - \frac{2}{3} \alpha - \frac{1}{3}\right) x^{2} + \left(\frac{4}{3} \alpha^{2} + \frac{1}{3} \alpha + \frac{2}{3}\right) x + \frac{2}{3} \alpha^{2} + \frac{2}{3} \alpha + \frac{1}{3}) \cdot (x^{4} - 2 \alpha x^{3} + \left(2 \alpha^{2} + \alpha\right) x^{2} + \left(\alpha^{2} + 2\right) x - \alpha)$ \\
$2.4239$ & $x^{3} + 2 x^{2} - x + 1$ & $\left(\alpha^{2} + 2 \alpha - 2\right) \cdot x \cdot (x - \alpha) \cdot (x^{2} - \alpha x - \alpha) \cdot (x^{2} - \alpha x + \frac{1}{3} \alpha^{2} + \frac{1}{3} \alpha + \frac{1}{3}) \cdot (x^{4} - 2 \alpha x^{3} + \left(2 \alpha^{2} + \alpha - 1\right) x^{2} + \left(\alpha^{2} + 1\right) x - \alpha)$ \\
$2.4300$ & $x^{3} - 2 x^{2} + 2 x + 2$ & $\left(\alpha - 1\right) \cdot x \cdot (x - \alpha) \cdot (x^{2} - \alpha x + \frac{1}{3} \alpha^{2} - \frac{1}{3} \alpha + \frac{1}{3}) \cdot (x^{8} - 4 \alpha x^{7} + \left(7 \alpha^{2} + \alpha\right) x^{6} + \left(-17 \alpha^{2} + 14 \alpha + 14\right) x^{5} + \left(17 \alpha^{2} - 35 \alpha - 24\right) x^{4} + \left(-2 \alpha^{2} + 30 \alpha + 20\right) x^{3} + \left(-7 \alpha^{2} - 11 \alpha - 3\right) x^{2} + \left(5 \alpha^{2} - \alpha - 2\right) x - \alpha^{2} + \alpha + 1)$ \\
$2.4436$ & $x^{3} - 2 x^{2} + 3 x + 1$ & $x \cdot (x - \alpha) \cdot (x^{4} - 2 \alpha x^{3} + \left(2 \alpha^{2} - \alpha + 2\right) x^{2} + \left(-\alpha^{2} + \alpha + 1\right) x - 1) \cdot (x^{6} - 3 \alpha x^{5} + \left(4 \alpha^{2} - \alpha\right) x^{4} + \left(-4 \alpha^{2} + 9 \alpha + 3\right) x^{3} + \left(-2 \alpha^{2} - 4 \alpha - 1\right) x^{2} + \left(2 \alpha^{2} - 3 \alpha - 1\right) x + \alpha)$ \\
$2.4517$ & $x^{3} + x^{2} - x + 3$ & $\left(\alpha^{2} + 2 \alpha - 2\right) \cdot x^{2} \cdot (x - \alpha)^{2} \cdot (x^{2} - \alpha x + \frac{1}{2} \alpha^{2} - \frac{1}{2}) \cdot (x^{6} - 3 \alpha x^{5} + \left(\frac{179}{46} \alpha^{2} + \frac{6}{23} \alpha + \frac{1}{46}\right) x^{4} + \left(\frac{52}{23} \alpha^{2} - \frac{65}{23} \alpha + \frac{192}{23}\right) x^{3} + \left(\frac{44}{23} \alpha^{2} - \frac{101}{23} \alpha + \frac{51}{23}\right) x^{2} + \left(\frac{67}{46} \alpha^{2} - \frac{16}{23} \alpha + \frac{51}{46}\right) x + \frac{4}{23} \alpha^{2} - \frac{5}{23} \alpha + \frac{13}{23})$ \\
$2.4960$ & $x^{3} + 2 x^{2} + x + 3$ & $\left(3 \alpha^{2} + 9 \alpha + 5\right) \cdot x^{3} \cdot (x - \alpha)^{3} \cdot (x^{4} - 2 \alpha x^{3} + \left(\alpha^{2} - 1\right) x^{2} + \alpha x - \alpha^{2} - 1) \cdot (x^{4} - 2 \alpha x^{3} + \left(2 \alpha^{2} + \alpha\right) x^{2} + \left(\alpha^{2} + \alpha + 3\right) x - \alpha) \cdot (x^{4} - 2 \alpha x^{3} + \left(\frac{10}{7} \alpha^{2} - \frac{2}{7} \alpha - \frac{1}{7}\right) x^{2} + \left(\frac{8}{7} \alpha^{2} + \frac{4}{7} \alpha + \frac{9}{7}\right) x + \frac{2}{7} \alpha^{2} + \frac{1}{7} \alpha + \frac{4}{7})^{2}$ \\
$2.5248$ & $x^{3} - x^{2} - 2 x + 4$ & $\left(\frac{1}{2} \alpha^{2} - \frac{1}{2} \alpha + 1\right) \cdot x \cdot (x - \alpha) \cdot (x - \frac{1}{2} \alpha)^{2} \cdot (x^{2} + \left(-\alpha - 1\right) x + \frac{1}{2} \alpha^{2} + \frac{1}{2} \alpha) \cdot (x^{2} - \alpha x + 1) \cdot (x^{2} + \left(-\alpha + 1\right) x + \frac{1}{2} \alpha^{2} - \frac{1}{2} \alpha)$ \\
$2.5324$ & $x^{3} + x^{2} + 2 x + 4$ & $\left(-\alpha + 1\right) \cdot x \cdot (x - \alpha) \cdot (x - \frac{1}{2} \alpha)^{2} \cdot (x^{2} - \alpha x + \frac{1}{2} \alpha^{2}) \cdot (x^{4} - 2 \alpha x^{3} + \left(\frac{3}{2} \alpha^{2} - \frac{1}{2} \alpha\right) x^{2} + \left(\alpha^{2} + \alpha + 2\right) x + 1)$ \\
$2.5426$ & $x^{3} + x^{2} + x + 4$ & $ \alpha  \cdot x^{3} \cdot (x - \alpha)^{3} \cdot (x^{4} - 2 \alpha x^{3} + 2 \alpha^{2} x^{2} + \left(\alpha^{2} + \alpha + 4\right) x - \alpha + 1) \cdot (x^{4} - 2 \alpha x^{3} + \left(\frac{3}{2} \alpha^{2} - \frac{1}{2} \alpha - \frac{1}{2}\right) x^{2} + \left(\alpha^{2} + \alpha + 2\right) x + 1)^{2}$ \\
$2.5601$ & $x^{3} - x + 4$ & $\left(-\frac{1}{2} \alpha^{2} - \frac{3}{2} \alpha\right) \cdot x \cdot (x - \alpha) \cdot (x^{4} - 2 \alpha x^{3} + \left(\frac{3}{2} \alpha^{2} - \frac{1}{2} \alpha\right) x^{2} + \left(\frac{1}{2} \alpha^{2} - \frac{1}{2} \alpha + 2\right) x - \frac{1}{2} \alpha + \frac{1}{2}) \cdot (x^{4} - 2 \alpha x^{3} + \left(\frac{3}{2} \alpha^{2} + \frac{1}{2} \alpha\right) x^{2} + \left(-\frac{1}{2} \alpha^{2} - \frac{1}{2} \alpha + 2\right) x - 1) \cdot (x^{4} - 2 \alpha x^{3} + \left(\frac{8}{5} \alpha^{2} + \frac{1}{5} \alpha - \frac{1}{5}\right) x^{2} + \left(-\frac{1}{5} \alpha^{2} - \frac{2}{5} \alpha + \frac{12}{5}\right) x + \frac{1}{10} \alpha^{2} - \frac{3}{10} \alpha - \frac{1}{5})$ \\
$2.6335$ & $x^{3} + x^{2} - 3 x + 2$ & $\left(\alpha^{2} + 2 \alpha\right) \cdot x^{2} \cdot (x - \alpha)^{2} \cdot (x^{2} + \left(-\alpha - 1\right) x + 1) \cdot (x^{2} + \left(-\alpha + 1\right) x - \alpha + 1) \cdot (x^{2} - \alpha x + \frac{1}{2} \alpha^{2} + \frac{1}{2} \alpha - \frac{1}{2})^{2} \cdot (x^{4} - 2 \alpha x^{3} + \left(\frac{3}{2} \alpha^{2} - \frac{1}{2} \alpha - \frac{1}{2}\right) x^{2} + \left(\alpha^{2} - \alpha + 1\right) x + \frac{1}{2} \alpha^{2} - \frac{1}{2} \alpha - \frac{1}{2})$ \\
$2.6377$ & $x^{3} - 2 x^{2} - x + 4$ & $\left(-\alpha^{2} + \alpha\right) \cdot x^{2} \cdot (x - \alpha)^{2} \cdot (x^{2} - \alpha x + 1)^{2} \cdot (x^{2} - \alpha x + \frac{1}{2} \alpha^{2} - \frac{1}{2})^{2} \cdot (x^{8} - 4 \alpha x^{7} + \left(7 \alpha^{2} - 1\right) x^{6} + \left(-14 \alpha^{2} - 4 \alpha + 28\right) x^{5} + \left(18 \alpha^{2} - 9 \alpha - 35\right) x^{4} + \left(-8 \alpha^{2} + 8 \alpha + 24\right) x^{3} + \left(-\alpha^{2} - 3 \alpha - 3\right) x^{2} + \left(2 \alpha^{2} - 4\right) x - \frac{1}{2} \alpha^{2} + \frac{1}{2} \alpha + 1)$ \\
\hline\end{tabular}
}
\captionsetup{width=.9\linewidth}
\caption{\label{tab:indec}
A table of minimal polynomials \(f_\alpha\) together with a polynomial \(g\) over \(\Q(\alpha)\) such that \(g(x+\alpha/2)\) is exponentially bounded at radius \(\sqrt{q(\alpha/2)}\), proving \(\alpha\) is indecomposable.}
\end{table}

\vspace{6pt}

\noindent\textbf{Theorem~\ref{thm:counting}. }{\em There are exactly \(2\) indecomposable algebraic integers of degree \(1\), there are exactly \(14\) of degree \(2\), and there are at least \(354\) and at most \(588\) of degree \(3\).}

\begin{proof}
The degree \(1\) case is obvious: \(1\) and \(-1\) are the only indecomposable integers. The degree \(2\) case is Theorem~\ref{thm:deg2_indec}.
The bounds for degree \(3\) are the result of the computation in this section.
\end{proof}

\newpage
\section*{Acknowledgements}

First and foremost I would like to thank H.W. Lenstra, my PhD advisor, for his help with and for his suggestions and corrections to this document.
We would like to thank T. Chinberg for his proof of Theorem~\ref{thm:large} and his suggestion to frame exponentially bounded polynomials in terms of probability measures in Section~\ref{sec:exp_bounded}. We would like to thank O. Berrevoets and B. Kadets for their help in proving Theorem~\ref{thm:onno}.

\bibliographystyle{apa}
\bibliography{references}

\end{document}